\newtheorem{thm}{Theorem}[section]
\newtheorem{cor}[thm]{Corollary}
\newtheorem{lem}[thm]{Lemma}
\newtheorem{prop}[thm]{Proposition}
\newtheorem{defn}[thm]{Definition}
\newtheorem{rem}[thm]{Remark}
\newtheorem{assump}[thm]{Assumption}
\theoremstyle{definition}
\numberwithin{equation}{section}
\newtheorem{remark}[thm]{Remark}
\newcommand{\C}{\mathbb{C}}
\newcommand{\N}{\mathbb{N}}
\newcommand{\R}{\mathbb{R}}
\newcommand{\supp}{\operatorname{supp}}
\newcommand{\Z}{\mathbb{Z}}
\newcommand {\D} {\Delta}
\DeclareMathOperator {\dist} {dist}
\DeclareMathOperator{\F} {\mathcal{F}}
\def\hat{\widehat}
\def\tilde{\widetilde}
\def \bfo {\begin {eqnarray*} }
\def \efo {\end {eqnarray*} }
\def \ba {\begin {eqnarray*} }
\def \ea {\end {eqnarray*} }
\def \beq {\begin {eqnarray}}
\def \eeq {\end {eqnarray}}
\def \supp {\hbox{supp }}
\def \dist {\hbox{dist}}
\def \det {\hbox{det}}
\def \p {\partial}
\def\F{{\mathcal F}}
\def\hat{\widehat}
\def\tilde{\widetilde}
\def \bfo {\begin {eqnarray*} }
\def \efo {\end {eqnarray*} }
\def \ba {\begin {eqnarray*} }
\def \ea {\end {eqnarray*} }
\def \beq {\begin {eqnarray}}
\def \eeq {\end {eqnarray}}
\def \dist {\hbox{dist}}
\def \det {\hbox{det}}
\def \p {\partial}
\begin{document}

\title[Fractional anisotropic Calder\'on problem with external data]{Fractional anisotropic Calder\'on problem with external data}

\author[Feizmohammadi]{Ali Feizmohammadi}

\address{A. Feizmohammadi, Department of Mathematics\\
University of Toronto\\ 
Road Deerfield Hall\\
 3008K Mississauga ON L5L 1C6}
\email{ali.feizmohammadi@utoronto.ca}

\author[Ghosh]{Tuhin Ghosh}
\address
        {T. Ghosh, Harish-Chandra Research Institute, Homi Bhabha National Institute\\
Chhatnag Road, Jhunsi\\
Prayagraj (Allahabad) 211 019, 
India}

\email{tuhinghosh@hri.res.in}

\author[Krupchyk]{Katya Krupchyk}
\address
        {K. Krupchyk, Department of Mathematics\\
University of California, Irvine\\
CA 92697-3875, USA }

\email{katya.krupchyk@uci.edu}

\author[R\"uland]{Angkana R\"uland}
\address
        {A. R\"uland, Institute for Applied Mathematics and Hausdorff Center for Mathematics\\
         University of Bonn\\
          Endenicher Allee 60, 53115 Bonn, Germany}

\email{rueland@uni-bonn.de}

\author[Sj\"ostrand]{Johannes Sj\"ostrand}
\address
        {J. Sj\"ostrand, Universit\'e Bourgogne Europe\\
         CNRS, IMB UMR 5584\\
          F-21000 Dijon, France}

\email{johannes.sjostrand@u-bourgogne.fr}

\author[Uhlmann]{Gunther Uhlmann}

\address
       {G. Uhlmann, Department of Mathematics\\
       University of Washington\\
       Seattle, WA  98195-4350\\
       USA}
\email{gunther@math.washington.edu}

\maketitle

\begin{abstract}
In this paper, we solve the fractional anisotropic Calder\'on problem with external data in the Euclidean space, in dimensions two and higher, for smooth Riemannian metrics that agree with the Euclidean metric outside a compact set. Specifically, we prove that the knowledge of the partial exterior Dirichlet--to--Neumann map for the fractional Laplace-Beltrami operator, given on arbitrary open nonempty sets in the exterior of the domain in the Euclidean space, determines the Riemannian metric up to diffeomorphism, fixing the exterior. We provide two proofs of this result: one relies on the heat semigroup representation of the fractional Laplacian and a pseudodifferential approach, while the other is based on a variable-coefficient elliptic extension interpretation of the fractional Laplacian.

\end{abstract}

\section{Introduction and statement of results}

The classical anisotropic Calder\'on problem asks whether the electrical conductivity matrix of a medium can be determined, up to a change of variables, from current and voltage measurements taken along the boundary of the medium (see \cite{Calderon_1980, Uhlmann_2014}). This problem has the following geometric formulation (see \cite{Uhlmann_2014}): Let $(M,g)$ be a smooth compact Riemannian manifold of dimension $n \geq 2$ with boundary, and consider the Dirichlet problem for the Laplace-Beltrami operator:
\[
-\Delta_g u = 0 \quad \text{in} \quad M, \quad 
u|_{\partial M} = f.
\]
The geometric version of the classical anisotropic Calder\'on problem asks whether the Riemannian metric $g$, and more generally the manifold $(M, g)$, can be recovered from the Dirichlet--to--Neumann map $\Lambda_g: f \mapsto \partial_\nu u|_{\partial M}$. A natural obstruction to uniqueness arises from the following gauge invariance: if $\psi: M \to M$ is a smooth diffeomorphism such that $\psi|_{\partial M} = \text{Id}$, then $\Lambda_{\psi^* g} = \Lambda_g$, where $\psi^* g$ is the pullback of $g$ (see \cite{Kohn_Vogelius_1983, Lee_Uhlmann_1989}).

In dimension $n = 2$, the geometric version of the classical anisotropic Calder\'on problem, with an additional obstruction arising from the conformal invariance of the Laplacian, was solved in \cite{Lassas_Uhlmann_2001}; see also \cite{Nachman_1996}. However, in dimensions $n \geq 3$, this remains one of the major open problems in the field of inverse problems. The central conjecture posits that gauge invariance is the sole obstruction to uniqueness. This conjecture has been resolved in the real-analytic case (see \cite{Lassas_Taylor_Uhlmann_2003, Lassas_Uhlmann_2001, Lee_Uhlmann_1989}), but it remains widely open for smooth metrics. For state-of-the-art results in the smooth case, we refer to \cite{DKSaloU_2009, DKurylevLS_2016} and references therein.

Beginning with the work \cite{Ghosh_Salo_Uhlmann_2020}, there has been a surge of activity in the study of the fractional Calder\'on problem. In particular, the fractional anisotropic Calder\'on problem on a closed smooth connected Riemannian manifold with source--to--solution measurements has been solved in complete geometric generality in \cite{FGKU_2021}; see also \cite{Feizmohammadi_2021, Choulli_Ouhabaz_2023, Ruland_2023}.

The purpose of this paper is to provide a solution to the more challenging fractional anisotropic Calder\'on problem with external data in noncompact settings for smooth Riemannian metrics, as introduced in \cite{Ghosh_Salo_Uhlmann_2020}.
To state the problem, let $g = (g_{jk})_{j,k=1}^n$ be a $C^\infty$ Riemannian metric on $\mathbb{R}^n$, for $n \geq 2$, which agrees with the Euclidean metric outside a compact set. Then $(\mathbb{R}^n, g)$ is a complete Riemannian manifold (see \cite[Example 1.2]{Hassell_Tao_Wunsch_2006}). Associated with the Riemannian metric $g$ is the Laplace-Beltrami operator
\[
-\Delta_g = -|g|^{-1/2} \sum_{j,k=1}^n \partial_{x_j} \left( |g|^{1/2} g^{jk} \partial_{x_k} \right),
\]
where $(g^{jk}) = (g_{jk})^{-1}$ and $|g| = \det(g_{jk})$. When viewed as an unbounded operator on $L^2(\mathbb{R}^n; \allowbreak dV_g)$, the Laplace-Beltrami operator $-\Delta_g$ is nonnegative essentially self-adjoint from $C^\infty_0(\mathbb{R}^n)$, with the domain
\[
\mathcal{D}(-\Delta_g) = \{ u \in L^2(\mathbb{R}^n; dV_g) : -\Delta_g u \in L^2(\mathbb{R}^n; dV_g) \} = H^2(\mathbb{R}^n),
\]
(see \cite[Theorem 5.2.3]{Davies_book}, \cite[Theorem 2.4]{Strichartz_1983}). Here, $dV_g = |g|^{1/2} dx$ denotes the Riemannian volume element, and $H^2(\mathbb{R}^n)$ denotes the standard Sobolev space on $\mathbb{R}^n$. The spectrum of $-\Delta_g$ is purely absolutely continuous and is given by $[0, \infty)$ (see \cite[Section 1.3]{Guillarmou_Hassell_Krup_2020} and the references therein).

Let $\alpha \in (0,1)$. Using the functional calculus for self-adjoint operators, we define the fractional Laplacian $(-\Delta_g)^\alpha$ as an unbounded self-adjoint operator on $L^2(\mathbb{R}^n; dV_g)$ with the domain $\mathcal{D}((-\Delta_g)^\alpha) = H^{2\alpha}(\mathbb{R}^n)$ (see \cite[Theorem 4.4]{Strichartz_1983}, and \cite[Theorem 4.2, page 22]{Taylor_book_pseudo}).

Let $\Omega \subset \mathbb{R}^n$ be a bounded open set with $C^\infty$ boundary, and let $W_1, W_2 \subset \Omega_e := \mathbb{R}^n \setminus \overline{\Omega}$ be open and nonempty. Let $f \in C^\infty_0(W_1)$ and consider the following exterior Dirichlet problem for the fractional Laplacian:
\begin{equation}
\label{eq_int_1}
\begin{cases}
(-\Delta_g)^\alpha u = 0 & \text{in} \quad \Omega,\\
u = f & \text{in} \quad \Omega_e.
\end{cases}
\end{equation}
It is established in Proposition \ref{prop_well-posedness} below that the problem \eqref{eq_int_1} has a unique energy solution $u = u^f \in H^\alpha(\mathbb{R}^n)$. Furthermore, Proposition \ref{prop_eq_2_6} below shows that the operator $(-\Delta_g)^\alpha$ extends to a linear continuous map: $H^\alpha(\mathbb{R}^n) \to H^{-\alpha}(\mathbb{R}^n)$. Associated with \eqref{eq_int_1}, we introduce the partial exterior Dirichlet--to--Neumann map,
\begin{equation}
\label{eq_int_1_exterior_DN}
\Lambda_g^{W_1, W_2}: C^\infty_0(W_1) \to H^{-\alpha}(W_2), \quad f \mapsto ((-\Delta_g)^\alpha u^f)|_{W_2}.
\end{equation}
Here,
\[
H^{-\alpha}(W_2) = \{ u|_{W_2} : u \in H^{-\alpha}(\mathbb{R}^n) \} \subset \mathcal{D}'(W_2)
\]
is the standard Sobolev space on the open set $W_2$.

The fractional anisotropic Calder\'on problem with external data asks whether the metric $g$ in $\mathbb{R}^n$ can uniquely be determined from the knowledge of the partial exterior Dirichlet--to--Neumann map $\Lambda_g^{W_1,W_2}$, given on arbitrary open nonempty sets $W_1, W_2 \subset \Omega_e$. There is an obstruction to uniqueness in this problem. Indeed, if $\Phi: \mathbb{R}^n \to \mathbb{R}^n$ is a $C^\infty$ diffeomorphism such that $g_1 = \Phi^* g_2$ on $\mathbb{R}^n$ and $\Phi(x) = x$ for all $x \in \Omega_e$, then $\Lambda_{g_1}^{W_1, W_2} = \Lambda_{g_2}^{W_1, W_2}$; see Proposition \ref{prop_app_obstruction} below.

The following is our main result, which shows that the diffeomorphism invariance mentioned above is the only obstruction to uniqueness.
\begin{thm}
\label{thm_main}
Let $\alpha \in (0,1)$. Let $g_1$ and $g_2$ be $C^\infty$ Riemannian metrics on $\mathbb{R}^n$, $n \geq 2$, which agree with the Euclidean metric outside of a compact set. Let $\Omega \subset \mathbb{R}^n$ be a bounded open set with $C^\infty$ boundary such that $\Omega_e$ is connected, and let $W_1, W_2 \subset \Omega_e$ be open and nonempty. Assume that 
\begin{equation}
\label{eq_int_2}
g_1|_{\Omega_e} = g_2|_{\Omega_e}.
\end{equation}
Assume furthermore that 
\begin{equation}
\label{eq_int_3}
\Lambda_{g_1}^{W_1,W_2}(f) = \Lambda_{g_2}^{W_1,W_2}(f),
\end{equation}
for all $f \in C^\infty_0(W_1)$. 
Then there exists a $C^\infty$ diffeomorphism $\Phi: \mathbb{R}^n \to \mathbb{R}^n$ such that $g_1=\Phi^*g_2$ on $\mathbb{R}^n$ and $\Phi(x) = x$ for all $x \in \Omega_e$.
\end{thm}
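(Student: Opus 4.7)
The overall strategy is to follow the Ghosh-Salo-Uhlmann scheme adapted to the anisotropic setting and ultimately reduce the problem to the source-to-solution result of \cite{FGKU_2021}, via the heat semigroup representation of $(-\Delta_g)^\alpha$ combined with a Runge-type approximation theorem. As a first step, by polarizing $\Lambda_g^{W_1, W_2}$ into a symmetric bilinear form (which is legitimate by the self-adjointness of $(-\Delta_g)^\alpha$) and exploiting that $g_1 = g_2$ on $\Omega_e$, I would convert the equality of the DN maps into an Alessandrini-type integral identity of the form
\[
\int_\Omega \left( u_2^{f_2}\,(-\Delta_{g_1})^\alpha u_1^{f_1} \;-\; u_1^{f_1}\,(-\Delta_{g_2})^\alpha u_2^{f_2} \right) dx = 0,
\]
valid for all $f_1 \in C_0^\infty(W_1)$ and $f_2 \in C_0^\infty(W_2)$, where $u_i^{f_i} \in H^\alpha(\mathbb{R}^n)$ is the energy solution of the exterior Dirichlet problem for $(-\Delta_{g_i})^\alpha$ with data $f_i$. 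Each $u_i^{f_i}$ satisfies $(-\Delta_{g_i})^\alpha u = 0$ in $\Omega$, so the integrand is a coupling term encoding the mismatch of the two metrics inside $\Omega$.

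To exploit this identity, I would next prove a Runge approximation theorem: for each $i$, the set $\{u_i^f|_\Omega : f \in C_0^\infty(W_i)\}$ is dense in an appropriate closed subspace of $H^\alpha(\Omega)$ consisting of solutions of $(-\Delta_{g_i})^\alpha u = 0$ in $\Omega$. By Hahn-Banach duality, this reduces to the strong unique continuation property for the anisotropic fractional Laplacian $(-\Delta_{g_i})^\alpha$, which I would establish via a variable-coefficient Caffarelli-Silvestre extension together with Carleman estimates. Once Runge density is available, the integral identity becomes a testable equality against arbitrary pairs of solutions of the two fractional equations.

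The last step is to reduce the problem to an inverse problem for the Laplace-Beltrami operator $-\Delta_g$. Using Balakrishnan's subordination formula to express $(-\Delta_g)^\alpha$ in terms of the heat semigroup $e^{t\Delta_g}$ (or the resolvent $(\lambda - \Delta_g)^{-1}$), the integral identity can be rewritten as an identity of $t$-integrals of matrix coefficients of $e^{t\Delta_g}$. Combined with Runge density and analyticity in $t$ (or $\lambda$), this should yield equality of the source-to-solution maps for $-\Delta_{g_1}$ and $-\Delta_{g_2}$ on an interior open set. Since both metrics agree outside a compact set, $\overline{\Omega}$ together with a collar of $\Omega_e$ can then be glued to a fixed closed auxiliary piece so as to form a closed Riemannian manifold on which \cite{FGKU_2021} applies, producing the desired diffeomorphism (which is extended by the identity on $\Omega_e$).

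The main obstacle I anticipate is this last reduction. Disentangling the subordination integral requires inverting a Mellin transform and carefully controlling the limits at $t \to 0$ and $t \to \infty$; furthermore, the noncompactness of $(\mathbb{R}^n, g)$ prevents a direct invocation of \cite{FGKU_2021}. A related delicate point is that Runge density is established one metric at a time, so the integral identity has to be recast as an identity of kernels before the solutions for $g_1$ and $g_2$ can be aligned. The alternative proof via the full variable-coefficient Caffarelli-Silvestre extension bypasses the Mellin inversion but introduces the analogous burden of working throughout with the degenerate weight $y^{1-2\alpha}$, both in Carleman estimates and in establishing Runge approximation at the extension level.
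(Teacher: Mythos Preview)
Your Alessandrini identity is vacuous: since $u_i^{f_i}$ is by definition the energy solution of $(-\Delta_{g_i})^\alpha u = 0$ in $\Omega$, both terms of your integrand vanish identically in $\Omega$, and the identity reads $0=0$. The Ghosh--Salo--Uhlmann scheme you are importing is tailored to lower-order perturbations: there the identity takes the form $\int_\Omega (q_1-q_2)u_1 u_2\,dx=0$, and Runge density lets you conclude $q_1=q_2$. When the unknown sits in the principal part there is no such clean integrand, and density of solutions alone cannot separate the two operators. This is a genuine structural obstruction, not a technical gap you can close by sharpening the Runge theorem.

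The paper proceeds quite differently. It never uses Runge approximation. Instead it works directly with the heat semigroup representation of $(-\Delta_g)^\alpha$ to propagate the equality $\Lambda_{g_1}^{W_1,W_2}=\Lambda_{g_2}^{W_1,W_2}$ first to equality of $(-\Delta_{g_j})^\alpha u_j^f$ on all of $\Omega_e$ (via unique continuation for the heat equation on $\Omega_e$), then to equality of the full exterior DN maps $\Lambda_{g_j}^{\Omega_e,\Omega_e}$, then to equality of exterior \emph{source--to--solution} maps for the Poisson equation $(-\Delta_{g_j})^\alpha w = -\Delta_g F$, and finally to equality of the heat kernels $e^{t\Delta_{g_j}}(x,y)$ for $x,y\in\Omega_e$. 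The passage from DN data to source--to--solution data is the crux and requires a regularity ingredient you do not mention: Vishik--Eskin estimates (equivalently, that $(-\Delta_g)^\alpha$ is a classical elliptic $\Psi$DO of type $\alpha$) are needed to rule out concentration of $(-\Delta_{g_j})^\alpha u_j$ on $\partial\Omega$. Finally, no gluing to a closed manifold is needed; the relevant result from \cite{FGKU_2021} already applies to complete noncompact manifolds, and one invokes it directly on $(\mathbb{R}^n,g_j)$ with $\mathcal{O}=\Omega_e$.
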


We have the following immediate consequence of Theorem \ref{thm_main} in the case when the metrics $g_1$ and $g_2$ are assumed to be in the same conformal class. 
\begin{cor}
\label{cor_main}
Let $\alpha \in (0,1)$. Let $g_1$ and $g_2$ be $C^\infty$ Riemannian metrics on $\mathbb{R}^n$, $n \geq 2$, which agree with the Euclidean metric outside of a compact set, and such that $g_1 = c g_2$ for some $0 < c \in C^\infty(\mathbb{R}^n)$. Let $\Omega \subset \mathbb{R}^n$ be a bounded open set with $C^\infty$ boundary such that $\Omega_e$ is connected, and let $W_1, W_2 \subset \Omega_e$ be open and nonempty. Assume that $c = 1$ on $\Omega_e$, and that 
\[
\Lambda_{g_1}^{W_1, W_2}(f) = \Lambda_{g_2}^{W_1, W_2}(f),
\]
for all $f \in C^\infty_0(W_1)$. Then $c = 1$ in $\mathbb{R}^n$.
\end{cor}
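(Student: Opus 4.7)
The plan is to invoke Theorem~\ref{thm_main} and then exploit the conformal relationship to upgrade the resulting gauge $\Phi$ to the identity, which will immediately force $c\equiv 1$. Since $c=1$ on $\Omega_e$, we have $g_1|_{\Omega_e}=g_2|_{\Omega_e}$, verifying \eqref{eq_int_2}. Combined with the assumed equality of the partial exterior Dirichlet-to-Neumann maps, all hypotheses of Theorem~\ref{thm_main} are met, yielding a $C^\infty$ diffeomorphism $\Phi\colon\mathbb{R}^n\to\mathbb{R}^n$ with $\Phi|_{\Omega_e}=\text{Id}$ and
\[
\Phi^*g_2 \;=\; g_1 \;=\; c\,g_2 \quad\text{on}\quad \mathbb{R}^n.
\]

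The crucial second step is to observe that this pullback identity asserts precisely that $\Phi$ is a conformal self-diffeomorphism of the connected Riemannian manifold $(\mathbb{R}^n,g_2)$, with conformal factor $c$, which agrees with the identity on the nonempty open subset $\Omega_e$. Standard rigidity of conformal maps on a connected Riemannian manifold will then force $\Phi=\text{Id}$ on all of $\mathbb{R}^n$. For $n=2$, this follows by working in local isothermal coordinates for $g_2$: $\Phi$ is orientation-preserving (since it coincides with the identity on an open set) and is therefore locally holomorphic in these coordinates, so the holomorphic identity principle propagates $\Phi=\text{Id}$ from $\Omega_e$ to all of $\mathbb{R}^2$. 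For $n\geq 3$, one invokes the classical fact that the conformal diffeomorphism group of a connected Riemannian manifold is a finite-dimensional Lie group whose elements are determined by their $2$-jet at any single point; since all derivatives of $\Phi$ agree with those of $\text{Id}$ at every point of the open set $\Omega_e$, this yields $\Phi=\text{Id}$ everywhere.

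Once $\Phi=\text{Id}$ is established, the pullback relation collapses to $g_2=c\,g_2$, and hence $c\equiv 1$ on $\mathbb{R}^n$, as required. The only nontrivial ingredient beyond Theorem~\ref{thm_main} itself is the conformal rigidity used in the second step; this is classical in both the two-dimensional and higher-dimensional settings, and one expects it to be the single point in the written proof that most merits an explicit citation or a short self-contained argument.
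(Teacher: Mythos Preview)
Your proof is correct and follows the same strategy as the paper: apply Theorem~\ref{thm_main} to obtain a conformal self-map $\Phi$ of $(\mathbb{R}^n,g_2)$ fixing $\Omega_e$, then use conformal rigidity to conclude $\Phi=\text{Id}$ and hence $c\equiv 1$. The only difference is in the rigidity step: the paper restricts $\Phi$ to a diffeomorphism $\Omega\to\Omega$ fixing $\partial\Omega$ and cites \cite[Proposition~3.3]{Lionheart_1997} (a conformal diffeomorphism of a compact manifold with boundary that fixes the boundary is the identity), whereas you argue directly on all of $\mathbb{R}^n$ via the identity principle for conformal maps agreeing with $\text{Id}$ on an open set---both are standard and equally valid.
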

Corollary \ref{cor_main} follows from Theorem \ref{thm_main}, combined with the fact that if a $C^\infty$ diffeomorphism $\Phi: \Omega \to \Omega$ satisfies $\Phi^* g_2 = c g_2$ and $\Phi|_{\partial \Omega} = \text{Id}$, then $\Phi = \text{Id}$ in $\Omega$; see \cite[Proposition 3.3]{Lionheart_1997}.

Let us now outline the key ideas in the proof of our main result, Theorem \ref{thm_main}. Similar to \cite{FGKU_2021}, we aim to recover the smooth Riemannian metric by reducing the inverse problem to one for a wave equation, allowing the metric to be determined via the boundary control method \cite{Belishev_1987, Belishev_Kurylev_1992, HLOS_2018}. However, in the current unbounded setting with exterior measurements, this reduction presents significant challenges that do not arise in the case of closed Riemannian manifolds with source--to--solution data. Specifically, working with exterior measurements \eqref{eq_int_3} and following the approach of \cite{FGKU_2021} only enables us to establish the equality of the heat semigroups $e^{t\Delta_{g_j}}$ for the Laplacians $-\Delta_{g_j}$, acting on energy solutions $u_j^f$ to the exterior Dirichlet problem \eqref{eq_int_1}, with $g = g_j$ for $j = 1, 2$, in the exterior of the domain $\Omega$:
\[
(e^{t\Delta_{g_1}}u_1^f)(x) = (e^{t\Delta_{g_2}}u_2^f)(x), \quad t > 0, \quad x \in \Omega_e,
\]
for all $f \in C^\infty_0(W_1)$. To complete the reduction to an inverse problem for the wave equation, we must recover the heat kernels:
\[
e^{t\Delta_{g_1}}(x,y) = e^{t\Delta_{g_2}}(x,y),
\]
for all times $t > 0$ and for all $x, y \in \Omega_e$. The novel and significant challenge, in contrast to \cite{FGKU_2021}, is thus the recovery of these heat kernels in our current unbounded setting with exterior measurements.

To overcome this challenge, we develop an approach to pass from the knowledge of the partial exterior Dirichlet-to-Neumann maps \eqref{eq_int_3} to the knowledge of the corresponding exterior source-to-solution maps in the present unbounded setting, under the assumption \eqref{eq_int_2} that the metrics agree in the exterior, and then recover the heat kernels from this data. Let us proceed to define the exterior source--to--solution maps. To that end, let $F \in C_0^\infty(\Omega_e)$, and consider the Poisson equation 
\begin{equation}
\label{eq_801_1_introduction}
(-\Delta_{g_j})^\alpha w_j = (-\Delta_{g_j}) F \quad \text{in} \quad \mathcal{D}'(\mathbb{R}^n).
\end{equation}
Associated with \eqref{eq_801_1_introduction}, we define the exterior source--to--solution map 
\[
L_{g_j}^{\Omega_e,\Omega_e}: C_0^\infty(\Omega_e) \to H^\alpha(\Omega_e), \quad F \mapsto w_j^F|_{\Omega_e},
\]
where $w_j = w_j^F \in H^\alpha(\mathbb{R}^n)$ is the unique solution to \eqref{eq_801_1_introduction}, $j=1,2$, see Lemma \ref{lem_solution_poisson_laplacian} below. The key step of our proof is to establish that under the assumption \eqref{eq_int_2}, the following holds, 
\begin{equation}
\label{eq_int_DN-source}
\Lambda_{g_1}^{W_1, W_2}= \Lambda_{g_2}^{W_1, W_2}\Longrightarrow L_{g_1}^{\Omega_e,\Omega_e}=L_{g_2}^{\Omega_e,\Omega_e}. 
\end{equation}
To prove \eqref{eq_int_DN-source}, we must address various technical difficulties not present in the case of closed Riemannian manifolds with source-to-solution data. In particular, we need to utilize delicate regularity properties of solutions to the exterior Dirichlet problem \eqref{eq_int_1} for the fractional Laplace-Beltrami operator, known as Vishik--Eskin estimates \cite{Hormander_1965-66, Grubb_2015, Eskin_book_1981, Vishik_Eskin_1965}, to rule out the concentration of solutions on the boundary $\partial \Omega$.

To derive the desired Vishik--Eskin estimates for the fractional Laplace-Beltrami operator, we employ two approaches. The first approach leverages the fact that $(-\Delta_g)^\alpha \in \Psi^{2\alpha}_{1,0}(\mathbb{R}^n)$ is a classical elliptic pseudodifferential operator on $\mathbb{R}^n$ (see Theorem \ref{thm_pseudodiff_op} below). A complete proof of Theorem \ref{thm_pseudodiff_op} is provided in Appendix~\ref{sec_pseudodiff_op}, and this proof is instrumental in deriving the Vishik--Eskin estimates from the theory developed in \cite{Hormander_1965-66, Grubb_2015, Eskin_book_1981, Vishik_Eskin_1965}. This proof may also be of independent interest. The second approach offers an elliptic extension perspective on the Vishik--Eskin estimates. Specifically, we establish the regularity of solutions to the mixed Dirichlet--Neumann problem for the variable-coefficient fractional Laplacian that arises from the extension. This approach relies on the method of difference quotients developed in \cite{KM07}; see also \cite{Savare} and \cite{KM07} for the case $\alpha=1/2$.

\begin{rem}
\label{prop_equiv}
Although in the proof of Theorem \ref{thm_main} it suffices to establish the implication \eqref{eq_int_DN-source}, we expect that, similar to the local case of the classical Calder\'on problem, the partial exterior Dirichlet--to--Neumann data and exterior source--to--solution data are equivalent. Specifically, in the setting of Theorem \ref{thm_main}, we anticipate that
\[
\Lambda_{g_1}^{W_1, W_2} = \Lambda_{g_2}^{W_1, W_2} \Leftrightarrow L^{\Omega_e, \Omega_e}_{g_1} = L^{\Omega_e, \Omega_e}_{g_2}. 
\]
 A back-of-the-envelope argument supports this expectation, yet a rigorous justification would require detailed considerations of well-posedness and mapping properties for 
 the whole-space problem. While we believe these properties hold, we postpone a thorough examination to future work due to the associated technical challenges and the length of this article.
\end{rem}

\begin{rem}[On a PDE version of Theorem \ref{thm_main}]
\label{rmk:PDE_op}
The classical anisotropic Calder\'on problem in dimension $n \ge 3$ has two equivalent formulations (see \cite{Uhlmann_2014}). The first, a geometric version discussed above, involves recovering the Riemannian metric $g$ for the Laplacian $-\Delta_g$ on a compact Riemannian manifold $(M, g)$ with boundary, based on the Dirichlet--to--Neumann map on the boundary of $M$. The second is a PDE formulation, which concerns the recovery of the smooth,  uniformly elliptic, symmetric conductivity matrix $a$ in the operator $\nabla \cdot a \nabla$ from the associated Dirichlet--to--Neumann map. In parallel, also in the nonlocal context two natural operators arise in the anisotropic inverse problem on $\mathbb{R}^n$: one is the fractional Laplace-Beltrami operator $(-\Delta_g)^{\alpha}$, $\alpha \in (0,1)$, introduced above; the other is the operator $(-\nabla \cdot a \nabla)^{\alpha}$, $\alpha \in (0,1)$, where the matrix $a$ is assumed to coincide with the identity matrix outside a compact set. The latter operator is defined by functional calculus for self-adjoint operators and is an unbounded self-adjoint operator on $L^2(\mathbb{R}^n; dx)$ with domain $H^{2\alpha}(\mathbb{R}^n)$; see \cite[Theorem 1.5]{KPPV_2024}.

Using an approach analogous to Section~\ref{sec_degenerate_elliptic_extension_approach} below, one could obtain a PDE version of Theorem \ref{thm_main} for the operators $(-\nabla \cdot a \nabla)^{\alpha}$. Specifically, one could show that knowledge of the partial exterior Dirichlet-to-Neumann map, defined as in \eqref{eq_int_1_exterior_DN} with $(-\Delta_g)^{\alpha}$ replaced by $(-\nabla \cdot a \nabla)^{\alpha}$, along with knowledge of the matrix $a$ in the exterior, determines the matrix $a$ in $\mathbb{R}^n$ up to a natural gauge.
\end{rem}

\subsection{Previous literature} The study of the fractional Calder\'on problem was first introduced in \cite{Ghosh_Salo_Uhlmann_2020}, where the unknown potential in the fractional Schr\"odinger equation on a bounded domain in Euclidean space was recovered from exterior measurements. Building on this foundational work, inverse problems involving the recovery of lower-order terms, as well as isotropic leading-order coefficients in fractional elliptic equations, have been explored extensively. We refer to 
\cite{Ghosh_Ruland_Salo_Uhlmann_2020, Ghosh_Lin_Xiao_2017, Ruland_Salo_2018, Ruland_Salo_2020, Ruland_2021, Li_Li_2021, Li_Li_2020, Bhattacharyya_Ghosh_Uhlmann_2021, Cekic_Lin_Ruland_2020, Covi_2020, Covi_2020_2, Covi_quasilocal, Covi_Garcia-Ferrero_Ruland_preprint, Covi_M_R_Uhlmann_preprint, Baers_Covi_Ruland_2024, Covi_de_Hoop_Salo_2024, Covi_Railo_Tyni_Zimmermann_2024, Harrach_Lin_Weth_2024, Lai_Zhou_2023, Lin_Zimmermann_2024, Railo_Zimmermann_2023, Railo_Zimmermann_2024}  and the references therein for a non-exhaustive list of contributions.

Recently, some progress has been made in the study of inverse problems for fractional elliptic operators with unknown anisotropic leading-order coefficients. Two distinct approaches have emerged in the literature. The first, introduced in the works \cite{Ghosh_Uhlmann_2021, Covi_Ghosh_Ruland_Uhlmann_2023}, involves reducing inverse problems for fractional (nonlocal) operators to their corresponding local counterparts, thereby enabling the transfer of uniqueness results from the local to the nonlocal setting. A similar approach is employed in \cite{Lin_Lin_Uhlmann_2023} to study inverse problems for fractional parabolic operators.

A second approach, developed in \cite{Feizmohammadi_2021, FGKU_2021}, achieves the complete recovery of a smooth Riemannian metric on a smooth closed connected Riemannian manifold using local source--to--solution data. This method has been extended to noncompact complete Riemannian manifolds with the local source--to--solution data in \cite{Choulli_Ouhabaz_2023}, and \cite{Ruland_2023} offers a novel perspective on the approach in \cite{Feizmohammadi_2021, FGKU_2021} by utilizing a variable coefficient Caffarelli--Silvestre-type extension for nonlocal operators, building on the interpretations of the fractional Laplacian from \cite{CS07} for constant and from \cite{ST10} for variable coefficients. Further generalizations to other fractional anisotropic geometric operators are explored in \cite{Chien_2023, Quan_Uhlmann}, see also 
\cite{Feizmohammadi_Lin_2024}. Additionally, in \cite{FKU_2024}, both the smooth Riemannian manifold, up to isometry, and the potential, up to a corresponding gauge transformation, are recovered on a closed Riemannian manifold under certain geometric conditions on the manifold and observation set.

The problem of recovering a smooth Riemannian metric in the fractional anisotropic Calder\'on problem with exterior data in noncompact settings, as introduced in \cite{Ghosh_Salo_Uhlmann_2020}, has remained a notable open problem. In this paper, we present a complete solution to this problem. We propose two closely related approaches: the first uses the heat semigroup representation of the fractional Laplacian combined with a pseudodifferential framework, while the second is based on a variable-coefficient Caffarelli--Silvestre type extension interpretation of the fractional Laplacian (see the seminal works \cite{CS07, ST10}).

\subsection{Organization of the paper} The remainder of the paper is organized as follows. In Section~\ref{sec_well_posedness_dirichlet}, we collect essential facts and results concerning the fractional Laplace-Beltrami operator and the exterior Dirichlet problem, which form the foundation for proving Theorem~\ref{thm_main} in Section~\ref{sec_proof_using_heat_semigroup_approach}. Section~\ref{sec_proof_using_heat_semigroup_approach} presents a proof of Theorem~\ref{thm_main}, utilizing the heat semigroup representation of the fractional Laplacian in conjunction with a pseudodifferential framework. In Section~\ref{sec_degenerate_elliptic_extension_approach}, we offer an equivalent perspective on the formulation and proof of Theorem~\ref{thm_main} via a variable-coefficient Caffarelli--Silvestre type extension interpretation of the fractional Laplacian. Appendix~\ref{app_obstruction} discusses an obstruction to uniqueness in the anisotropic fractional Calder\'on problem with external data. Appendix~\ref{sec_pseudodiff_op} contains the proof of Theorem~\ref{thm_pseudodiff_op}, demonstrating that $(-\Delta_g)^\alpha \in \Psi^{2\alpha}_{1,0}(\mathbb{R}^n)$ is a classical elliptic pseudodifferential operator on $\mathbb{R}^n$, which is crucial for the proof of Theorem~\ref{thm_main} in Section~\ref{sec_proof_using_heat_semigroup_approach}. Finally, Appendix~\ref{sec:reg} provides an extension perspective on Vishik–Eskin type estimates, which are essential for the proof of Theorem~\ref{thm_main} in Section~\ref{sec_degenerate_elliptic_extension_approach}.

\section{The fractional Laplace--Beltrami operator and the exterior Dirichlet problem}

\label{sec_well_posedness_dirichlet}

In this section, we gather some facts and results about the fractional Laplace--Beltrami operator and its associated exterior Dirichlet problem, which are needed to prove Theorem \ref{thm_main} using the heat semigroup approach in Section~\ref{sec_proof_using_heat_semigroup_approach}.

\subsection{The fractional Laplace--Beltrami operator on the Euclidean space as a pseudodifferential operator}

\label{sec_pseudodiff_op_1}

Let $m \in \mathbb{R}$, and let us introduce the following standard symbol space: $a \in S^m_{1,0}(\mathbb{R}^n \times \mathbb{R}^n \times \mathbb{R}^n)$ if and only if $a \in C^\infty(\mathbb{R}^n \times \mathbb{R}^n \times \mathbb{R}^n)$, and for all compact subsets $K \Subset  \mathbb{R}^n \times \mathbb{R}^n$ and all multi-indices $\beta, \gamma, \delta \in \mathbb{N}^n$, there exists a constant $C = C_{K, \beta, \gamma, \delta} > 0$ such that 
\[
|\partial_x^\beta \partial_y^\gamma \partial_\theta^\delta a(x, y, \theta)| \leq C(1 + |\theta|)^{m - |\delta|}, \quad (x, y) \in K, \quad \theta \in \mathbb{R}^n.
\] 
Associated with the symbol class $S^m_{1,0}(\mathbb{R}^n \times \mathbb{R}^n \times \mathbb{R}^n)$ is the space $\Psi^m_{1,0}(\mathbb{R}^n)$ of pseudodifferential operators on $\mathbb{R}^n$ of the form
\[
A u(x) = \frac{1}{(2\pi)^n} \int_{\mathbb{R}^n}\!\!\int_{\mathbb{R}^n} e^{i(x-y)\cdot \theta} a(x, y, \theta) u(y) \, dy \, d\theta, \quad u \in C^\infty_0(\mathbb{R}^n),
\] 
with $a \in S^m_{1,0}(\mathbb{R}^n \times \mathbb{R}^n \times \mathbb{R}^n)$; see \cite[Section 3]{Grigis_Sjostrand_book}.

We have the following result; see also \cite[Theorem~4.7]{Strichartz_1983}.
\begin{thm}
\label{thm_pseudodiff_op}
Let $g$ be a $C^\infty$ Riemannian metric on $\mathbb{R}^n$, $n \geq 2$, which agrees with the Euclidean metric outside of a compact set, and let $0 < \alpha < 1$. Then the operator $(-\Delta_g)^\alpha$, defined via functional calculus, satisfies $(-\Delta_g)^\alpha \in \Psi^{2\alpha}_{1,0}(\mathbb{R}^n)$ and is a classical elliptic pseudodifferential operator on $\mathbb{R}^n$, with principal symbol 
\[
\bigg(\sum_{j,k=1}^n g^{jk}(x)\xi_j\xi_k\bigg)^\alpha, \quad (x,\xi) \in T^*\mathbb{R}^n \setminus 0.
\]
\end{thm}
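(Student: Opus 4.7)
The plan is to prove $(-\Delta_g)^\alpha \in \Psi^{2\alpha}_{1,0}(\mathbb{R}^n)$ by combining the Balakrishnan spectral representation with an elliptic parameter-dependent parametrix construction for the resolvent, following the Seeley-style strategy for complex powers, but carried out globally on $\mathbb{R}^n$ rather than on a compact manifold. Since $g$ agrees with the Euclidean metric outside a compact set $K$, the operator $-\Delta_g$ equals $-\Delta$ for $|x|$ large, so all symbol estimates can be arranged to hold uniformly in $x \in \mathbb{R}^n$; this is what allows the Seeley construction to go through even though $\mathbb{R}^n$ is noncompact.

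First, I would invoke the Balakrishnan formula, valid for a nonnegative self-adjoint operator and $0<\alpha<1$:
\[
(-\Delta_g)^\alpha u = \frac{\sin(\pi\alpha)}{\pi}\int_0^\infty \mu^{\alpha-1}\,(-\Delta_g)(\mu I - \Delta_g)^{-1} u\, d\mu,
\]
on a suitable dense domain. The strategy then reduces to analyzing the resolvent $R(\mu) = (\mu I - \Delta_g)^{-1}$ as a pseudodifferential operator with parameter $\mu \ge 0$, and controlling the integral above. Writing $-\Delta_g = -\Delta + P$ with $P$ a second-order differential operator whose coefficients equal $0$ outside $K$, one has a globally defined symbol whose principal part is $\mu + |\xi|_g^2 = \mu + \sum_{j,k} g^{jk}(x)\xi_j\xi_k$.

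Next, I would construct a parameter-dependent parametrix $Q(\mu) \in \Psi^{-2}_{1,0}(\mathbb{R}^n)$ (uniformly in $\mu$, in the appropriate parameter-dependent symbol class where $(\mu,\xi)$ together measure order) by the standard elliptic iteration: start with the leading symbol $q_0(x,\xi,\mu) = (\mu+|\xi|_g^2)^{-1}$, and inductively produce homogeneous lower-order terms $q_{-j}$ killing successive remainders. The result is a full asymptotic expansion such that $(\mu I - \Delta_g) Q(\mu) = I + S(\mu)$ with $S(\mu)$ smoothing and rapidly decaying in $\mu$; hence $R(\mu) = Q(\mu) - R(\mu)S(\mu)$, where the correction term is genuinely smoothing with enough decay in $\mu$ to be harmless under the $\int_0^\infty \mu^{\alpha-1}\cdots d\mu$ integration. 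Substituting into Balakrishnan and integrating term by term in the symbol expansion of $(-\Delta_g)Q(\mu)$, the leading symbol is
\[
\frac{\sin(\pi\alpha)}{\pi}\int_0^\infty \mu^{\alpha-1}\,\frac{|\xi|_g^2}{\mu+|\xi|_g^2}\,d\mu \;=\; |\xi|_g^{2\alpha} \;=\; \Bigl(\sum_{j,k=1}^n g^{jk}(x)\xi_j\xi_k\Bigr)^\alpha,
\]
by the classical identity $\int_0^\infty \mu^{\alpha-1}(\mu+s)^{-1}d\mu = \pi s^{\alpha-1}/\sin(\pi\alpha)$ for $s>0$. The lower-order symbols $q_{-j}$ integrate against $\mu^{\alpha-1}$ to produce classical homogeneous symbols of order $2\alpha-j$, yielding the full classical expansion, and thus $(-\Delta_g)^\alpha\in\Psi^{2\alpha}_{1,0}$ with the stated principal symbol.

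The main obstacle, and the place where one must work carefully, is securing \emph{uniform} symbol estimates in the parameter $\mu$, on both the large-$\mu$ and small-$\mu$ ends, and then justifying that $R(\mu)S(\mu)$, integrated against $\mu^{\alpha-1}$ near $\mu=0$ and against $\mu^{\alpha-1}$ at infinity, produces a smoothing operator rather than spoiling the classical structure. The small-$\mu$ behavior is the subtler one, since $0$ is in the spectrum and there is no spectral gap; this is handled by noting that the Balakrishnan integrand contains the extra factor $-\Delta_g$, so the integrand behaves like $\mu^{\alpha-1}\cdot|\xi|_g^2/(\mu+|\xi|_g^2)$, which is integrable at $\mu=0$ for each fixed $\xi\neq 0$, with uniform control thanks to ellipticity of the principal symbol. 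The large-$\mu$ regime is tame because each $q_{-j}$ gains one order of decay in $(\mu,\xi)$, and the smoothing error $S(\mu)$ decays rapidly enough. Combining these ingredients gives the pseudodifferential description and identifies the principal symbol as asserted.
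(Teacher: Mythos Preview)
Your approach is correct as a proof sketch, and it is genuinely different from the route the paper takes. You carry out a Seeley-type construction directly on $\mathbb{R}^n$: the Balakrishnan formula expresses $(-\Delta_g)^\alpha$ as an integral in $\mu>0$ of $P(\mu+P)^{-1}$, a parameter-dependent parametrix for $\mu+P$ supplies a symbol expansion, and termwise integration yields the classical $\Psi^{2\alpha}_{1,0}$ structure; the delicate $\mu\to 0$ end is indeed saved by the identity $P R(\mu)=I-\mu R(\mu)$, which keeps the error $P R(\mu)S(\mu)$ uniformly smoothing and integrable against $\mu^{\alpha-1}$. The paper, by contrast, avoids redoing Seeley's construction altogether: it periodizes $g$ to obtain a metric $g_2$ on a torus $M_2$, invokes Seeley's theorem on the compact manifold as a black box to get $(-\Delta_{g_2})^\alpha\in\Psi^{2\alpha}_{1,0}(M_2)$, and then proves that $\chi_0\big[(1-\psi)(P_1)P_1^\alpha-(1-\psi)(P_2)P_2^\alpha\big]\chi_0$ is smoothing for any cutoff $\chi_0$ supported in the common region. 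The comparison of the two operators is done via a Dynkin--Helffer--Sj\"ostrand representation of $(1-\psi)(P)P^{\alpha-1}$ as a contour integral of resolvents (the spectral cutoff $1-\psi$ removes the $\mu\to 0$ difficulty outright), together with an iterated telescoping identity for cutoff resolvents that exhibits $\chi_0[(P_1-z)^{-1}-(P_2-z)^{-1}]\chi_0$ as arbitrarily smoothing with arbitrary decay in $|z|$. Your approach is more self-contained and yields the symbol expansion explicitly; the paper's approach is shorter because it leverages the compact-manifold result and replaces the parameter-dependent parametrix calculus by purely operator-theoretic resolvent estimates.
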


We refer to Appendix \ref{sec_pseudodiff_op} for the proof of Theorem \ref{thm_pseudodiff_op}. Let $\beta > 0$ and express $\beta$ as $\beta = m + \alpha$, where $m \in \mathbb{N}$ is the integer part of $\beta$, and $0 \leq \alpha < 1$. The representation $(-\Delta_g)^\beta = (-\Delta_g)^{m}(-\Delta_g)^\alpha$ leads to the following consequence of Theorem \ref{thm_pseudodiff_op}. 
\begin{cor}
Let $g$ be a $C^\infty$ Riemannian metric on $\mathbb{R}^n$, $n \geq 2$, which agrees with the Euclidean metric outside of a compact set, and let $\beta > 0$. Then the operator $(-\Delta_g)^\beta$, defined via functional calculus, satisfies $(-\Delta_g)^\beta \in \Psi^{2\beta}_{1,0}(\mathbb{R}^n)$ and is a classical elliptic pseudodifferential operator on $\mathbb{R}^n$, with principal symbol 
\[
\left(\sum_{j,k=1}^n g^{jk}(x)\xi_j\xi_k\right)^\beta, \quad (x,\xi) \in T^*\mathbb{R}^n \setminus 0.
\] 
\end{cor}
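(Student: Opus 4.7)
The plan is to factor $(-\Delta_g)^\beta$ via the functional calculus as $(-\Delta_g)^\beta = (-\Delta_g)^m \circ (-\Delta_g)^\alpha$, and then combine Theorem~\ref{thm_pseudodiff_op} with the composition calculus of pseudodifferential operators on $\mathbb{R}^n$. Since $-\Delta_g$ is a nonnegative self-adjoint operator on $L^2(\mathbb{R}^n;dV_g)$ and $\lambda^\beta=\lambda^m\lambda^\alpha$ for all $\lambda\geq 0$, the multiplicativity of the Borel functional calculus yields the factorization as an identity of unbounded self-adjoint operators with common domain $H^{2\beta}(\mathbb{R}^n)$, and in particular as continuous maps $C^\infty_0(\mathbb{R}^n)\to \mathcal{D}'(\mathbb{R}^n)$.

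I would then analyze each factor separately. The operator $(-\Delta_g)^m$ is the $m$-th power of the Laplace--Beltrami operator; since $m\in\mathbb{N}$, it is a differential operator of order $2m$ with $C^\infty$ coefficients that agree with those of the Euclidean Laplacian outside a compact set. Hence $(-\Delta_g)^m\in \Psi^{2m}_{1,0}(\mathbb{R}^n)$ is a classical elliptic pseudodifferential operator with principal symbol $\big(\sum_{j,k}g^{jk}(x)\xi_j\xi_k\big)^m$. For the second factor, when $\alpha=0$ one has $(-\Delta_g)^0=\Id$, a trivial classical elliptic operator of order $0$ with principal symbol $1$; when $\alpha\in(0,1)$, Theorem~\ref{thm_pseudodiff_op} identifies $(-\Delta_g)^\alpha$ as a classical elliptic element of $\Psi^{2\alpha}_{1,0}(\mathbb{R}^n)$ with principal symbol $\big(\sum_{j,k}g^{jk}(x)\xi_j\xi_k\big)^\alpha$.

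Since $(-\Delta_g)^m$ is a differential operator it is local, hence properly supported, so the composition calculus for pseudodifferential operators on $\mathbb{R}^n$ (see \cite{Grigis_Sjostrand_book}) applies and gives
\[
(-\Delta_g)^m\circ(-\Delta_g)^\alpha\in \Psi^{2m+2\alpha}_{1,0}(\mathbb{R}^n)=\Psi^{2\beta}_{1,0}(\mathbb{R}^n),
\]
classical, with principal symbol equal to the product of the two principal symbols, namely $\big(\sum_{j,k}g^{jk}(x)\xi_j\xi_k\big)^\beta$. Ellipticity is immediate from the positive definiteness of $g$, which ensures the principal symbol is strictly positive on $T^*\mathbb{R}^n\setminus 0$. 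The only mildly delicate point — and the one I expect to be the main (minor) obstacle — is reconciling the operator-theoretic factorization provided by the functional calculus with the Schwartz-kernel-level composition of pseudodifferential operators; but this is immediate because both sides agree as continuous maps between Sobolev spaces on $\mathbb{R}^n$, and hence coincide as elements of $\Psi^{2\beta}_{1,0}(\mathbb{R}^n)$.
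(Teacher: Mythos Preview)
Your proposal is correct and follows essentially the same approach as the paper: the paper notes just before the corollary that writing $\beta = m + \alpha$ with $m \in \mathbb{N}$ and $0 \le \alpha < 1$ gives the representation $(-\Delta_g)^\beta = (-\Delta_g)^m (-\Delta_g)^\alpha$, from which the corollary follows as a direct consequence of Theorem~\ref{thm_pseudodiff_op}. You have simply fleshed out the details (composition calculus, proper support of the differential factor, multiplicativity of principal symbols) that the paper leaves implicit.
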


\subsection{Some facts about Sobolev spaces}
\label{sec_Sobolev_spaces_direct_problem}

Let $g$ be a $C^\infty$ Riemannian metric on $\mathbb{R}^n$, $n\ge 2$, that agrees with the Euclidean metric outside of a compact set. 
Let $H^s(\mathbb{R}^n)$, $s \in \mathbb{R}$, be the standard $L^2$-based Sobolev space on $\mathbb{R}^n$. We have the following alternative characterization of the Sobolev space $H^s(\mathbb{R}^n)$:
\[
H^s(\mathbb{R}^n) = H^s(\mathbb{R}^n; dV_g) = \{u \in \mathcal{S}'(\mathbb{R}^n) : (1 - \Delta_g)^{s/2} u \in L^2(\mathbb{R}^n; dV_g)\},
\]
with the norms
\[
\|u\|_{H^s(\mathbb{R}^n)} = \|(1 - \Delta)^{s/2} u\|_{L^2(\mathbb{R}^n)} \quad \text{and} \quad \|u\|_{H^s(\mathbb{R}^n; dV_g)} = \|(1 - \Delta_g)^{s/2} u\|_{L^2(\mathbb{R}^n; dV_g)},
\]
being equivalent. Here, the Bessel potential $(1 - \Delta_g)^{s/2}$ is defined by the self-adjoint functional calculus, and the equivalence of the norms follows from the fact that $(1 - \Delta)^{s/2}, \, (1 - \Delta_g)^{s/2} \in \Psi^s_{1,0}(\mathbb{R}^n)$ (with global estimates in $x$), see \cite[Section 3]{Grubb_2020}, \cite[Theorem 4.7]{Strichartz_1983}, and that pseudodifferential operators of class $\Psi^0_{1,0}(\mathbb{R}^n)$ are bounded on $L^2(\mathbb{R}^n)$. 

For future reference, we state the following result; see \cite[Theorem 4.4]{Strichartz_1983}. 
\begin{prop}
\label{prop_regularity}
Let $s > 0$. Then $u \in H^s(\mathbb{R}^n)$ if and only if $u \in \mathcal{D}((-\Delta_g)^{s/2})$, and we have the equivalence of the norms:
\begin{equation}
\label{eq_Strichartz_domain}
\|u\|_{H^s(\mathbb{R}^n)} \asymp \|u\|_{L^2(\mathbb{R}^n; dV_g)} + \|(-\Delta_g)^{s/2} u\|_{L^2(\mathbb{R}^n; dV_g)}.
\end{equation}
\end{prop}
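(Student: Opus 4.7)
The plan is to reduce the whole statement to the spectral theorem applied to the nonnegative self-adjoint operator $-\Delta_g$ on $L^2(\mathbb{R}^n;dV_g)$, leveraging the pseudodifferential identification of norms established in the paragraph immediately preceding the proposition. That preceding discussion already yields
\[
\|u\|_{H^s(\mathbb{R}^n)} \asymp \|(1-\Delta_g)^{s/2}u\|_{L^2(\mathbb{R}^n;dV_g)}
\]
for every $s\in\mathbb{R}$ (because $(1-\Delta)^{s/2}(1-\Delta_g)^{-s/2}$ and its inverse lie in $\Psi^0_{1,0}(\mathbb{R}^n)$, hence are bounded on $L^2$, and the Euclidean and Riemannian $L^2$ norms are equivalent since $|g|^{1/2}$ is smooth, bounded, and bounded away from zero). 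Consequently, $u\in H^s(\mathbb{R}^n)$ if and only if $u\in\mathcal{D}((1-\Delta_g)^{s/2})$, with equivalent norms. Thus the proposition reduces to showing $\mathcal{D}((1-\Delta_g)^{s/2})=\mathcal{D}((-\Delta_g)^{s/2})$ together with the norm equivalence between $\|(1-\Delta_g)^{s/2}u\|_{L^2(dV_g)}$ and $\|u\|_{L^2(dV_g)}+\|(-\Delta_g)^{s/2}u\|_{L^2(dV_g)}$.

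For the second step, I write the spectral resolution $-\Delta_g=\int_0^\infty \lambda\, dE_\lambda$; note that the spectrum sits in $[0,\infty)$, as recalled in the introduction. By the self-adjoint functional calculus, for any $u\in L^2(\mathbb{R}^n;dV_g)$,
\[
\|(1-\Delta_g)^{s/2}u\|^2_{L^2(dV_g)}=\int_0^\infty (1+\lambda)^s\, d\|E_\lambda u\|^2,\quad \|(-\Delta_g)^{s/2}u\|^2_{L^2(dV_g)}=\int_0^\infty \lambda^s\, d\|E_\lambda u\|^2,
\]
where the second integral is finite precisely when $u\in\mathcal{D}((-\Delta_g)^{s/2})$, while $u\in\mathcal{D}((1-\Delta_g)^{s/2})$ is equivalent to finiteness of the first integral. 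The final ingredient is the elementary numerical comparison: for $s>0$ there exist constants $c_s,C_s>0$ with
\[
c_s(1+\lambda^s)\le (1+\lambda)^s\le C_s(1+\lambda^s),\quad \lambda\ge 0.
\]
Integrating this against $d\|E_\lambda u\|^2$ yields simultaneously the equivalence of the domains $\mathcal{D}((1-\Delta_g)^{s/2})=\mathcal{D}((-\Delta_g)^{s/2})$ (noting that membership in the latter already entails $u\in L^2(dV_g)$ by the definition of the unbounded-operator domain) and the norm equivalence
\[
\|(1-\Delta_g)^{s/2}u\|^2_{L^2(dV_g)}\asymp \|u\|^2_{L^2(dV_g)}+\|(-\Delta_g)^{s/2}u\|^2_{L^2(dV_g)}.
\]
Combining this with the first step finishes both assertions of the proposition.

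There is no serious obstacle; the genuine content has already been absorbed into the pseudodifferential identification of $\|\cdot\|_{H^s(\mathbb{R}^n)}$ with the Riemannian Bessel-potential norm (which in turn rests on Theorem~\ref{thm_pseudodiff_op}). What remains is pure spectral calculus plus a scalar comparison, so the only point requiring mild care is tracking that $s>0$ is precisely what forces $\mathcal{D}((-\Delta_g)^{s/2})\subset L^2(dV_g)$, so that no extra $L^2$ hypothesis needs to be added on the domain side.
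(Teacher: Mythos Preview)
Your argument is correct. The paper does not actually supply its own proof of this proposition; it simply cites \cite[Theorem 4.4]{Strichartz_1983}. What you have written is a clean self-contained derivation from the material already assembled in the preceding paragraph (the equivalence $\|u\|_{H^s}\asymp\|(1-\Delta_g)^{s/2}u\|_{L^2(dV_g)}$), followed by the standard spectral-calculus comparison $(1+\lambda)^s\asymp 1+\lambda^s$ for $\lambda\ge 0$, $s>0$. One small remark: the pseudodifferential input you invoke for the Bessel potential $(1-\Delta_g)^{s/2}$ is not literally Theorem~\ref{thm_pseudodiff_op} (which treats $(-\Delta_g)^\alpha$ for $0<\alpha<1$), but rather the fact recorded just before the proposition, with the references to \cite[Section 3]{Grubb_2020} and \cite[Theorem 4.7]{Strichartz_1983}; otherwise your chain of reductions is exactly right.
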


For the duality between $H^{-s}(\R^n)$ and $H^s(\R^n)$, we use the notation
\[
\langle u, \overline{v} \rangle_{H^{-s}(\R^n), H^s(\R^n)}, \quad u \in H^{-s}(\R^n), \ v \in H^s(\R^n),
\]
and we note that it coincides with the scalar product in $L^2(\R^n)$ and with the distribution duality where these are defined;  see \cite[page 133]{Grubb_book}.

Let $\Omega \subset \mathbb{R}^n$ be an open set (not necessarily bounded). We define the Sobolev space on $\Omega$ as
\[
H^s(\Omega) = \{u|_{\Omega} : u \in H^s(\mathbb{R}^n)\} \subset \mathcal{D}'(\Omega), \quad s \in \mathbb{R},
\]
and equip it with the quotient norm
\[
\|u\|_{H^s(\Omega)} = \inf \{\|w\|_{H^s(\mathbb{R}^n)} : w \in H^s(\mathbb{R}^n), \, w|_{\Omega} = u\}.
\]

From now on, assume that $\Omega \subset \mathbb{R}^n$ is an open bounded set with a $C^\infty$ boundary. We also introduce the Sobolev space
\begin{equation}
\label{eq_duality_on_domains_new_0}
H^s_0(\Omega) = \{u \in H^s(\mathbb{R}^n) : \supp(u) \subset \overline{\Omega}\}, \quad s \in \mathbb{R},
\end{equation}
and recall from \cite[Theorem 22.4]{Eskin_book} that the space $C_0^\infty(\Omega)$ is dense in $H^s_0(\Omega)$ with respect to the $H^s(\mathbb{R}^n)$ topology for all $s \in \mathbb{R}$.

We also have the following natural duality statements:
\begin{equation}
\label{eq_duality_on_domains_new_1}
(H_0^s(\Omega))^* = H^{-s}(\Omega), \quad (H^s(\Omega))^* = H_0^{-s}(\Omega), \quad s \in \mathbb{R},
\end{equation}
see \cite[Theorem 22.8]{Eskin_book}. The duality pairing is defined as follows: for $v \in H^{s}(\Omega)$ and $w \in H^{-s}_0(\Omega)$, we set
\begin{equation}
\label{eq_duality_on_domains_new}
\langle v, \overline{w}\rangle_{ H^{s}(\Omega), H^{-s}_0(\Omega)} = \langle \text{Ext}(v), \overline{w}\rangle_{H^{s}(\mathbb{R}^n), H^{-s}(\mathbb{R}^n)},
\end{equation}
where $\text{Ext}(v) \in H^s(\mathbb{R}^n)$ is an arbitrary extension of $v$. Note that the definition in \eqref{eq_duality_on_domains_new} is independent of the choice of the extension $\text{Ext}(v)$; see \cite[Lemma 22.7]{Eskin_book}. 

We also note that \eqref{eq_duality_on_domains_new_0}, \eqref{eq_duality_on_domains_new_1}, and \eqref{eq_duality_on_domains_new} remain valid when $\Omega$ is replaced by $\Omega_e = \mathbb{R}^n \setminus \overline{\Omega}$; see \cite[Section 2A]{Ghosh_Salo_Uhlmann_2020} and \cite[Chapter 3]{McLean_book_2000}. Furthermore, the space $C_0^\infty(\Omega_e)$ is dense in $H^s_0(\Omega_e)$ with respect to the $H^s(\mathbb{R}^n)$ topology for all $s \in \mathbb{R}$; see \cite[Section 2A]{Ghosh_Salo_Uhlmann_2020} and \cite[Chapter 3]{McLean_book_2000}.

For future reference, we state the following result; see \cite[Theorems 5.1.10, 5.1.11, 5.1.13, 5.1.14]{Agranovich_book_2015}.
\begin{thm}
\label{thm_Agranovich}
Let $s \in \mathbb{R}$. We have:
\begin{itemize}
\item[(i)] For $|s| < 1/2$, the operator of extension by zero is bounded from $H^s(\Omega)$ to $H^s(\mathbb{R}^n)$.
\item[(ii)] For $|s| < 1/2$, $H^s(\Omega) = H^s_0(\Omega)$ with the natural identification.
\item[(iii)] For $-1/2\leq s $, the subspace of $H^s(\mathbb{R}^n)$ consisting of elements supported on $\partial \Omega$ is trivial.
\item[(iv)] For $s \leq 1/2$, the space $C_0^\infty(\Omega)$ is dense in $H^s(\Omega)$.
\end{itemize}
\end{thm}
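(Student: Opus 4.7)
The plan is to prove item (iii) first, since (iv) follows from it by Hahn--Banach duality, (i) reduces to a fractional Hardy inequality tied to the same critical exponent $1/2$, and (ii) is a direct consequence of (i) and (iii) together.

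For (iii), I would use a partition of unity subordinate to $C^\infty$ boundary charts to flatten $\partial\Omega$ locally, reducing matters to the model situation where the boundary is the hyperplane $\{x_n=0\}\subset\mathbb{R}^n$ and the distribution $w\in H^s(\mathbb{R}^n)$ has compact support. By the standard structure theorem for distributions supported on a smooth hypersurface, $w$ admits a finite representation $w=\sum_{k=0}^N c_k(x')\otimes\delta^{(k)}(x_n)$ with $c_k\in\mathcal{E}'(\mathbb{R}^{n-1})$. Taking the partial Fourier transform in $x_n$ and computing $\|w\|_{H^s(\mathbb{R}^n)}^2$ fibrewise over $\xi'$, the model integral $\int_{\mathbb{R}}(1+|\xi'|^2+\xi_n^2)^s\,\xi_n^{2k}\,d\xi_n$ converges only when $s+k+1/2<0$. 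If $N$ is the largest index with $c_N\neq 0$, the leading $\xi_n^{2N}$ behavior of $|\hat w(\xi',\cdot)|^2$ at infinity forces $s<-N-1/2$; for $s\ge -1/2$ no $N\ge 0$ is admissible, so every $c_k$ must vanish and $w=0$.

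For (i) in the range $0\le s<1/2$, I would use the Gagliardo--Slobodeckij characterization of $H^s(\mathbb{R}^n)$. Extending $u\in H^s(\Omega)$ by zero to $\tilde u$, the defining double integral splits into four pieces: the piece over $\Omega\times\Omega$ equals $\|u\|_{H^s(\Omega)}^2$, the piece over $\Omega_e\times\Omega_e$ vanishes, and the two symmetric cross terms reduce, via $\int_{\Omega_e}|x-y|^{-n-2s}\,dy\lesssim \dist(x,\partial\Omega)^{-2s}$, to a fractional Hardy inequality $\int_\Omega|u(x)|^2\dist(x,\partial\Omega)^{-2s}\,dx\le C\|u\|_{H^s(\Omega)}^2$, valid precisely for $s<1/2$. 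For $-1/2<s<0$, boundedness of extension by zero follows by duality from (i) and (ii) in the positive range $0<-s<1/2$, using the identifications $(H^{-s}(\Omega))^*=H^s_0(\Omega)$ together with $H^{-s}(\Omega)=H^{-s}_0(\Omega)$. Once (i) is in place, (ii) is immediate: extension by zero produces an element of $H^s_0(\Omega)$ by support considerations, restriction is its continuous left inverse, and injectivity of restriction on $H^s_0(\Omega)$ is exactly (iii) applied with $s>-1/2$. Finally, (iv) follows from (iii) via the duality $(H^s(\Omega))^*=H^{-s}_0(\Omega)$: by Hahn--Banach, density of $C_0^\infty(\Omega)$ in $H^s(\Omega)$ is equivalent to the nonexistence of a nonzero $w\in H^{-s}_0(\Omega)$ annihilating $C_0^\infty(\Omega)$, and any such $w$ is supported in $\partial\Omega$, hence zero by (iii) whenever $-s\ge -1/2$.

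The principal obstacle is the critical-exponent analysis in (iii) at $s=-1/2$, where the model integral diverges only logarithmically and the leading asymptotic of the polynomial $|\hat w(\xi',\cdot)|^2$ in $\xi_n$ must be isolated carefully to rule out accidental cancellation at the borderline. Away from this endpoint the structure-theorem argument is an elementary computation, and once (iii) is secured with the sharp threshold, the remaining three items reduce to routine fractional Hardy and duality bookkeeping.
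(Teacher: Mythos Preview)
The paper does not prove this theorem; it is stated for later use with a citation to Agranovich's book (Theorems 5.1.10, 5.1.11, 5.1.13, 5.1.14), so there is no paper's proof to compare against. Your outline is a correct and standard way to establish these facts.

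A small remark on the one compressed step: your duality argument for (i) in the range $-1/2<s<0$ is most cleanly phrased via multipliers. From (i) for $0\le \sigma<1/2$ you get that $\chi_\Omega$ is a bounded pointwise multiplier on $H^\sigma(\mathbb{R}^n)$; since multiplication by $\chi_\Omega$ is formally self-adjoint on $L^2$, duality gives boundedness of $\chi_\Omega$ on $H^{-\sigma}(\mathbb{R}^n)$. For $u\in H^{-\sigma}(\Omega)$ with any extension $w\in H^{-\sigma}(\mathbb{R}^n)$, the element $\chi_\Omega w$ is supported in $\overline{\Omega}$, restricts to $u$ on $\Omega$, and is independent of the choice of $w$ by (iii), which yields (i) and simultaneously (ii) for negative $s$. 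This is exactly the multiplier property the paper invokes just below the theorem in deriving \eqref{eq_Sobolev_duality_mikko}. Your treatment of (iii) via the structure theorem and the Fourier divergence test is correct, including at the endpoint $s=-1/2$ where the $\xi_n$-integral diverges logarithmically, and your Hahn--Banach deduction of (iv) from (iii) is fine.
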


Let $|s| < \frac{1}{2}$. Then for $u \in H^s(\mathbb{R}^n)$ and $v \in H^{-s}(\mathbb{R}^n)$, we have
\begin{equation}
\label{eq_Sobolev_duality_mikko}
\langle u, v \rangle_{\mathbb{R}^n} = \langle r_\Omega u, r_\Omega v \rangle_{\Omega} + \langle r_{\Omega_e} u, r_{\Omega_e} v \rangle_{\Omega_e},
\end{equation}
see \cite[(A-1), page 469]{Ghosh_Salo_Uhlmann_2020}. Here, $r_\Omega u = u|_{\Omega}$ and $r_{\Omega_e} u = u|_{\Omega_e}$ denote the restrictions of $u$ to $\Omega$ and $\Omega_e$, respectively, and $\langle \cdot, \cdot \rangle$ denotes the distributional duality. The equality \eqref{eq_Sobolev_duality_mikko} follows from the fact that the characteristic functions $\chi_\Omega$ and $1 - \chi_\Omega$ are pointwise multipliers in $H^s(\mathbb{R}^n)$.

When $s \in (0,1)$, the following alternative characterization of $H^s(\Omega)$ holds,
\begin{equation}
\label{eq_sobolev_new_1}
H^s(\Omega) = \bigg\{ u \in L^2(\Omega) : \frac{|u(x) - u(y)|}{|x - y|^{\frac{n}{2} + s}} \in L^2(\Omega \times \Omega) \bigg\},
\end{equation}
with the norms
\begin{equation}
\label{eq_2_Sobolev_frac}
\|u\|_{H^s(\Omega)} \asymp \bigg( \|u\|^2_{L^2(\Omega)} + \int_{\Omega} \int_{\Omega} \frac{|u(x) - u(y)|^2}{|x - y|^{n + 2s}} \, dx \, dy \bigg)^{\frac{1}{2}}
\end{equation}
being equivalent, see \cite[page 314]{Brezis_2011}. Note that the same characterization as in \eqref{eq_sobolev_new_1} is valid for the Sobolev space $H^s(\mathbb{R}^n)$, with the equivalent norms as in \eqref{eq_2_Sobolev_frac}, replacing $\Omega$ with $\mathbb{R}^n$.

For future reference, we state the following fractional Sobolev inequality (see \cite[Theorem 6.5]{Nezza_Palatucci_Valdinoci_2012}, \cite[Section 3]{Brasco_Gomez-Castro_Vazquez_2021}, and \cite[Section 10.2]{Mazya_book_2012}).
\begin{prop}
\label{prop_fractional_Sobolev_inequality}
Let $n \geq 1$, $s \in (0,1)$, and $p \in [1, +\infty)$ be such that $sp < n$. Then there exists a constant $C = C(n, p, s) > 0$ such that for all $u \in C^\infty_0(\mathbb{R}^n)$, we have
\[
\|u\|_{L^{p^*}(\mathbb{R}^n)}^p \leq C \int_{\mathbb{R}^n} \int_{\mathbb{R}^n} \frac{|u(x) - u(y)|^p}{|x - y|^{n + sp}} \, dx \, dy.
\]
Here, $p^* = \frac{np}{n - sp} \geq p$ is the fractional critical exponent.
\end{prop}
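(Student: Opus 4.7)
The plan is to first establish the endpoint $p=1$ case, namely $\|u\|_{L^{n/(n-s)}(\mathbb{R}^n)} \leq C\,[u]_{W^{s,1}(\mathbb{R}^n)}$ for the Gagliardo seminorm $[u]_{W^{s,1}}$, and then bootstrap to arbitrary $p \in [1, n/s)$ via the nonlinear substitution $v := |u|^{\gamma}\sign(u)$ with $\gamma := (n-s)p/(n-sp) > 1$. A quick scaling check on $u \mapsto u(\lambda \cdot)$ forces $q = p^{\ast} = np/(n-sp)$ as the critical exponent and pins down the value of $\gamma$, which satisfies the two crucial algebraic identities $\gamma \cdot \frac{n}{n-s} = p^{\ast}$ and $(\gamma - 1)\frac{p}{p-1} = p^{\ast}$.

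For the $p = 1$ endpoint I would argue geometrically via slicing. Writing $E_t := \{|u| > t\}$ and using the layer-cake identity
\[
\|u\|_{L^{n/(n-s)}}^{n/(n-s)} = \frac{n}{n-s} \int_0^{\infty} t^{n/(n-s) - 1} |E_t|\,dt,
\]
the task reduces to establishing, for each $t > 0$, the fractional isoperimetric-type bound
\[
t\,|E_t|^{(n-s)/n} \leq C \int_{E_t}\!\!\int_{\mathbb{R}^n \setminus E_t} \frac{|u(x) - u(y)|}{|x-y|^{n+s}}\,dx\,dy,
\]
which would follow from a dyadic decomposition of the nonlocal interaction between $E_t$ and its complement together with the classical isoperimetric inequality applied at a suitable sub-level. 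Summing the resulting contributions dyadically over $t = 2^k$ and reassembling the cross-interactions into the full Gagliardo seminorm yields the endpoint inequality.

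For the bootstrap step, the identity $\gamma n/(n-s) = p^{\ast}$ gives $\|v\|_{L^{n/(n-s)}} = \|u\|_{L^{p^{\ast}}}^{\gamma}$. The pointwise estimate $\bigl||a|^{\gamma}\sign(a) - |b|^{\gamma}\sign(b)\bigr| \leq \gamma(|a|^{\gamma-1} + |b|^{\gamma-1})|a-b|$ combined with symmetry in $x,y$ gives
\[
[v]_{W^{s,1}} \leq 2\gamma \int\!\!\int \frac{|u(x)|^{\gamma - 1}|u(x) - u(y)|}{|x-y|^{n+s}}\,dx\,dy.
\]
A direct Hölder application with conjugate exponents $p$ and $p/(p-1)$, distributing the singular weight $|x-y|^{-(n+s)}$ between the two factors, produces the otherwise non-integrable weight $|x-y|^{-n}$ on the auxiliary factor, so a Hedberg-type truncation at a radius $R(x)$ chosen to balance near- and far-field contributions against a Hardy--Littlewood maximal function is needed to absorb the singularity. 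The identity $(\gamma - 1)p/(p-1) = p^{\ast}$ then ensures the exponent on the auxiliary $L^{(\gamma-1)p/(p-1)}$ factor matches $p^{\ast}$, yielding $[v]_{W^{s,1}} \leq C\,\|u\|_{L^{p^{\ast}}}^{\gamma - 1}\,[u]_{W^{s,p}}$. Combining with the $p=1$ inequality applied to $v$ gives $\|u\|_{L^{p^{\ast}}}^{\gamma} \leq C\,\|u\|_{L^{p^{\ast}}}^{\gamma-1}\,[u]_{W^{s,p}}$, from which the claim follows by division and raising to the $p$-th power.

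The main obstacle is the $p = 1$ endpoint itself: no Fourier-analytic tools are available, and the Gagliardo seminorm admits no clean Plancherel representation at $p = 1$, so the fractional isoperimetric inequality must be proved by direct geometric estimation of the cross-interactions between level sets. The bootstrap step is algebraically clean, but requires some care in the Hedberg truncation to handle the critical weight $|x-y|^{-n}$.
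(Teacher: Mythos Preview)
The paper does not give its own proof; the proposition is stated with references to \cite{Nezza_Palatucci_Valdinoci_2012}, \cite{Brasco_Gomez-Castro_Vazquez_2021}, and \cite{Mazya_book_2012}. Your $p=1$ endpoint outline is consistent with known fractional-isoperimetric arguments, but the bootstrap has a genuine gap that goes beyond the ``some care'' you flag. You correctly observe that H\"older with exponents $(p,p')$ forces the residual weight $|x-y|^{-n}$ onto the auxiliary factor, and you propose a Hedberg truncation to cure it. This does not close: splitting at radius $R(x)$, the far-field piece $\int_{|y-x|>R}|u(x)-u(y)|\,|x-y|^{-n-s}\,dy$ is indeed bounded by $CR^{-s}(|u(x)|+Mu(x))$, but the near-field piece $\int_{|y-x|<R}|u(x)-u(y)|\,|x-y|^{-n-s}\,dy$ measures the local oscillation of $u$ and admits no pointwise control by $Mu(x)$ or by any quantity whose $L^p$ norm is dominated by $[u]_{W^{s,p}}$. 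Applying H\"older to the near field simply reproduces the same critical weight, now as a divergence at $y=x$ that is independent of $R$. The obstruction is structural: in the local Sobolev inequality the chain rule $|\nabla|u|^\gamma|\le\gamma|u|^{\gamma-1}|\nabla u|$ lets a single H\"older in $x$ decouple the factors, whereas the Gagliardo form couples $x$ and $y$ through a scale-invariant kernel with which the substitution $v=|u|^\gamma\sign(u)$ does not interact cleanly enough to separate variables.

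The argument in the primary reference \cite{Nezza_Palatucci_Valdinoci_2012} bypasses this entirely by working directly at exponent $p$: one first proves a weak-type bound $|\{|u|>t\}|\le Ct^{-p^*}[u]_{W^{s,p}}^{p^*}$ via a level-set interaction estimate, then upgrades to the strong inequality by a dyadic truncation of $u$ itself (not of the kernel) and summation over dyadic levels. If you wish to keep a two-step structure, a viable route is to deduce the weak-$L^{p^*}$ bound from your $p=1$ inequality applied to truncations of $u$, and then pass from weak to strong type by the same summation device; the direct $|u|^\gamma$ substitution is not known to work for the Gagliardo seminorm.
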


\subsection{The fractional Laplacian and the heat semigroup}

Let $g$ be a $C^\infty$ Riemannian metric on $\mathbb{R}^n$, $n \geq 2$, which agrees with the Euclidean metric outside of a compact set. Consider the heat semigroup 
\[ e^{t\Delta_g}: L^2(\mathbb{R}^n; dV_g) \to L^2(\mathbb{R}^n; dV_g), \quad t \ge 0, \]
associated with the Laplacian $-\Delta_g$. The heat kernel $e^{t\Delta_g}(x, y)$ satisfies:
\begin{equation}
\label{eq_2_4_heat_sym}
e^{t\Delta_g}(x, y) = e^{t\Delta_g}(y, x) \in C^\infty((0, \infty) \times \mathbb{R}^n \times \mathbb{R}^n),
\end{equation}
and
\begin{equation}
\label{eq_2_4}
(e^{t\Delta_g} u)(x) = \int_{\mathbb{R}^n} e^{t\Delta_g}(x, y) u(y) \, dV_g(y)
\end{equation}
for $u \in L^2(\mathbb{R}^n)$ and for all $x \in \mathbb{R}^n$ and $t > 0$; see \cite[Theorem 3.5]{Strichartz_1983}.
Additionally, we have:
\begin{equation}
\label{eq_2_4_L_2_boundedness}
\|e^{t\Delta_g} u\|_{L^2(\mathbb{R}^n; dV_g)} \le \|u\|_{L^2(\mathbb{R}^n; dV_g)},
\end{equation}
see \cite[Theorem 3.5]{Strichartz_1983}.

Let $\alpha \in (0,1)$. In what follows below, it will be useful to express the self-adjoint operator $(-\Delta_g)^\alpha$ in terms of the heat semigroup $e^{t\Delta_g}$, $t \geq 0$. To that end, combining the identity
\[
a^{\alpha} = \frac{1}{\Gamma(-\alpha)} \int_0^\infty (e^{-at} - 1) \frac{dt}{t^{1+\alpha}},
\]
which is valid for all $a \geq 0$, together with the functional calculus, we get
\begin{equation}
\label{eq_2_6}
(-\Delta_g)^\alpha u = \frac{1}{\Gamma(-\alpha)} \int_0^\infty (e^{t\Delta_g} - 1) u \frac{dt}{t^{1+\alpha}},
\end{equation}
for $u \in \mathcal{D}(-\Delta_g) = H^2(\mathbb{R}^n)$; see \cite{ST10}, and also  \cite{Caffarelli_Stinga_2016}. The integral on the right-hand side of \eqref{eq_2_6} converges in $L^2(\mathbb{R}^n)$.

For future reference, we state the following pointwise upper and lower Gaussian estimates on the heat kernel $e^{t\Delta_g}(x, y)$, established in \cite{Aronson_1967} (see also \cite[Theorem 3.4 and Theorem 3.6]{Porper_Eidelman_1984}):
\begin{equation}
\label{eq_2_5_1}
C_1 t^{-\frac{n}{2}} e^{-c_1 \frac{|x - y|^2}{t}} \leq e^{t\Delta_g}(x, y) \leq C_2 t^{-\frac{n}{2}} e^{-c_2 \frac{|x - y|^2}{t}}, \quad t > 0, \quad x, y \in \mathbb{R}^n,
\end{equation}
where $C_1, C_2, c_1, c_2 > 0$. We also record the following pointwise upper Gaussian estimates for the derivatives of the heat kernel:
\begin{equation}
\label{eq_2_5_1_derivatives}
| \partial_t^m e^{t\Delta_g}(x, y) | \leq C t^{-\frac{n}{2} - m} e^{-c \frac{|x - y|^2}{t}}, \quad t > 0, \quad x, y \in \mathbb{R}^n, \quad m = 1, 2, \dots,
\end{equation}
where $C = C(m) > 0$ and $c > 0$ is independent of $m$, see \cite[Theorem 4.1]{Porper_Eidelman_1984}.

We note that the stochastic completeness property holds for the heat kernel:
\begin{equation}
\label{eq_2_5}
\int_{\mathbb{R}^n} e^{t\Delta_g}(x, y) \, dV_g(y) = 1,
\end{equation}
for all $t > 0$ and $x \in \mathbb{R}^n$; see \cite[Chapter VIII, Theorem 5, p. 191]{Chavel_book_1984} and \cite[Theorem 5.2.6]{Davies_book}. Using \eqref{eq_2_4} and \eqref{eq_2_5}, we have
\begin{equation}
\label{eq_2_5_L_infty}
\|e^{t\Delta_g}\|_{L^\infty(\R^n)\to L^\infty(\R^n)}\le 1. 
\end{equation}

In view of \eqref{eq_2_6}, letting $u, v \in C^\infty_0(\mathbb{R}^n)$, we consider the sesquilinear forms:
\begin{equation}
\label{eq_2_6_seq_forms_new}
((-\Delta_g)^\alpha u, v)_{L^2(\mathbb{R}^n; dV_g)} = \mathcal{E}(u, v),
\end{equation}
where
\begin{equation}
\label{eq_2_6_A}
\mathcal{E}(u, v) := (A u, v)_{L^2(\mathbb{R}^n; dV_g)}, \quad 
A := \frac{1}{\Gamma(-\alpha)} \int_0^\infty (e^{t\Delta_g} - 1) \frac{dt}{t^{1+\alpha}}.
\end{equation}
Here, we recall that the Riemannian volume form $dV_g$ is given by $dV_g = \sqrt{|g|}dx$.  

\begin{prop}
	\label{prop_K_bounds}
We have
\begin{equation}
\label{eq_2_11}
\mathcal{E}(u, v) = \int_{\mathbb{R}^n} \int_{\mathbb{R}^n} K_g(z, x) (u(x) - u(z)) \overline{(v(x) - v(z))} \, dx \, dz,
\end{equation}
for $u, v \in C^\infty_0(\mathbb{R}^n)$. Here,
\begin{equation}
\label{eq_2_12}
K_g(z, x) = \frac{1}{2|\Gamma(-\alpha)|} \sqrt{|g(x)|} \sqrt{|g(z)|} \int_0^\infty e^{t\Delta_g}(z, x) \frac{dt}{t^{1+\alpha}}.
\end{equation}
Furthermore, the kernel $K_g: \mathbb{R}^n \times \mathbb{R}^n \to \mathbb{R}$ is measurable and satisfies the estimates,
\begin{equation}
\label{eq_2_13}
\frac{1}{C |z - x|^{n + 2\alpha}} \leq K_g(z, x) \leq \frac{C}{|z - x|^{n + 2\alpha}}, \quad x, z \in \mathbb{R}^n, \quad x \neq z,
\end{equation}
where $C > 1$.
\end{prop}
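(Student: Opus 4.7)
The plan is to expand the form $\mathcal{E}(u,v) = \frac{1}{\Gamma(-\alpha)}\int_0^\infty ((e^{t\Delta_g}-1)u, v)_{L^2(\mathbb{R}^n; dV_g)}\,\frac{dt}{t^{1+\alpha}}$ using the heat kernel representation \eqref{eq_2_4}, then to symmetrize and identify $K_g$, and finally to read off the two-sided bound from the Gaussian estimates \eqref{eq_2_5_1}. Note first that $\Gamma(-\alpha)<0$ for $\alpha\in(0,1)$, so $1/\Gamma(-\alpha)=-1/|\Gamma(-\alpha)|$; this sign will cancel against a sign produced by symmetrization.

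Using stochastic completeness \eqref{eq_2_5} to rewrite the identity term,
\[
(u,v)_{L^2(\mathbb{R}^n;dV_g)}=\iint_{\R^n\times \R^n} e^{t\Delta_g}(x,y)u(x)\overline{v(x)}\,dV_g(y)\,dV_g(x),
\]
and applying \eqref{eq_2_4} to the other term, I obtain
\[
((e^{t\Delta_g}-1)u,v)_{L^2(\R^n;dV_g)}=\iint_{\R^n\times \R^n} e^{t\Delta_g}(x,y)(u(y)-u(x))\overline{v(x)}\,dV_g(y)\,dV_g(x).
\]
Relabeling $x\leftrightarrow y$ and using the symmetry \eqref{eq_2_4_heat_sym} of the heat kernel gives an equivalent expression with $(u(x)-u(y))\overline{v(y)}$; averaging the two yields the symmetric form
\[
((e^{t\Delta_g}-1)u,v)_{L^2(\R^n;dV_g)}=-\tfrac{1}{2}\iint_{\R^n\times \R^n} e^{t\Delta_g}(x,y)(u(x)-u(y))\overline{(v(x)-v(y))}\,dV_g(x)\,dV_g(y).
\]
Inserting this into the definition of $\mathcal{E}$, absorbing the minus sign with $\Gamma(-\alpha)$, and writing $dV_g(x)\,dV_g(y)=\sqrt{|g(x)||g(y)|}\,dx\,dy$ identifies the kernel $K_g$ as in \eqref{eq_2_12} and produces the representation \eqref{eq_2_11}, once the order of integration can be swapped.

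The Fubini justification is the main technical point. Since $u,v\in C_0^\infty(\R^n)$, one has $|u(x)-u(y)|\,|v(x)-v(y)|\le C\min(|x-y|^2,\mathbf{1}_{\{|x|\le R\}}+\mathbf{1}_{\{|y|\le R\}})$ for some $R$ depending on the supports. Inserting the upper Gaussian bound from \eqref{eq_2_5_1}, one splits the $(x,y,t)$-integral into $|x-y|^2\le t$ and $|x-y|^2\ge t$. In the first region the factor $|x-y|^2$ controls the small-$t$ singularity, while in the second region the Gaussian factor $e^{-c|x-y|^2/t}$ absorbs it; combined with the rapid decay coming from the compact supports, this yields absolute integrability and justifies the interchange.

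For the two-sided bound \eqref{eq_2_13}, I would compute the integral $\int_0^\infty t^{-n/2}e^{-c|x-z|^2/t}\,\frac{dt}{t^{1+\alpha}}$ explicitly: the substitution $r=|x-z|^2/t$ transforms it into $|x-z|^{-n-2\alpha}\,c^{-n/2-\alpha}\Gamma(n/2+\alpha)$, a finite positive constant times $|x-z|^{-n-2\alpha}$. Inserting the upper and lower Gaussian bounds \eqref{eq_2_5_1} into \eqref{eq_2_12} and using that $|g|$ is bounded from above and away from zero on $\R^n$ (since $g$ agrees with the Euclidean metric outside a compact set and is smooth), one obtains \eqref{eq_2_13} with some $C>1$. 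Measurability of $K_g$ follows from the joint smoothness of $e^{t\Delta_g}(z,x)$ in $(t,z,x)\in(0,\infty)\times\R^n\times\R^n$ recorded in \eqref{eq_2_4_heat_sym} and the monotone/Fubini convergence of the $t$-integral.
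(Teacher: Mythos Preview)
Your proposal is correct and follows essentially the same route as the paper: expand via the heat kernel and stochastic completeness, symmetrize using \eqref{eq_2_4_heat_sym}, identify $K_g$, and read off \eqref{eq_2_13} from the Gaussian bounds \eqref{eq_2_5_1} together with the Gamma integral $\int_0^\infty t^{-n/2-1-\alpha}e^{-c/t}\,dt=c^{-n/2-\alpha}\Gamma(n/2+\alpha)$.

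The only substantive difference is in the Fubini justification. The paper integrates out $t$ first and bounds the resulting double integral by the Gagliardo $H^\alpha$ seminorms of $u$ and $v$ via Cauchy--Schwarz (see \eqref{eq_2_11_new_1_20}); this has the bonus of yielding the continuity estimate needed later in Proposition~\ref{prop_eq_2_6}. You instead use the elementary $C^1$ bound $|u(x)-u(y)|\,|v(x)-v(y)|\le C\min(|x-y|^2,1)$ together with the support condition, which also works for $C_0^\infty$ data. One caution: the split you propose, $|x-y|^2\le t$ versus $|x-y|^2\ge t$, is a bit awkward as stated---in the first region the $|x-y|^2$ bound alone does not handle large $t$ (you get $\int t^{-\alpha}\,dt$), and in the second the Gaussian alone does not handle the diagonal singularity in $(x,y)$. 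The clean way is to first perform the $t$-integral (which gives $C|x-y|^{-n-2\alpha}$ as you compute for \eqref{eq_2_13}) and then split the remaining $(x,y)$-integral into $|x-y|\le 1$ and $|x-y|\ge 1$, using the Lipschitz bound on the first piece and boundedness plus compact support on the second. All the ingredients you list are correct; only the organization of the split needs that small adjustment.
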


\begin{proof}
Letting $u, v \in C^\infty_0(\mathbb{R}^n)$, we shall compute the expression for $\mathcal{E}(u, v)$ following \cite[Theorem 2.3]{Caffarelli_Stinga_2016}. To that end, we first observe that
\begin{equation}
\label{eq_2_7_new_1}
\mathcal{E}(u,v) = \frac{1}{\Gamma(-\alpha)} \int_0^\infty (e^{t\Delta_g}u - u, v)_{L^2(\mathbb{R}^n; dV_g)} \frac{dt}{t^{1+\alpha}}.
\end{equation}
Indeed, this follows by Fubini's theorem owing to the fact that the function
\begin{equation}
\label{eq_100_1}
(t,z) \mapsto (e^{t\Delta_g} - 1)u(z) \overline{v(z)} \sqrt{|g(z)|} t^{-(1+\alpha)}
\end{equation}
is in $L^1((0,\infty) \times \mathbb{R}^n)$. To see this, we first note that \eqref{eq_2_5_L_infty} implies that
\begin{equation}
\label{eq_100_2}
|(e^{t\Delta_g} - 1)u(z)| \le 2 \|u\|_{L^\infty(\mathbb{R}^n)},
\end{equation}
for all $t>0$ and $z\in \R^n$. 
We also have $e^{t\Delta_g} u \in C^\infty((0,\infty) \times \mathbb{R}^n)$; see \cite[Theorem 7.4]{Grigoryan_book_2009}, and $e^{t\Delta_g} \Delta_g = \Delta_g e^{t\Delta_g}$ for $t \ge 0$ on $\mathcal{D}(-\Delta_g)$. By the fundamental theorem of calculus, we get
\[
e^{t\Delta_g} u(z) - u(z) = \int_0^t e^{s\Delta_g} \Delta_g u(z) \, ds,
\]
and therefore,
\begin{equation}
\label{eq_100_3}
|e^{t\Delta_g} u(z) - u(z)| \le t \|\Delta_g u\|_{L^\infty(\mathbb{R}^n)},
\end{equation}
for all $t>0$ and $z\in \R^n$. 
Using Tonelli’s theorem, together with \eqref{eq_100_2} for $t \ge 1$ and \eqref{eq_100_3} for $0 < t < 1$, we obtain the claim \eqref{eq_100_1}.

Now, using \eqref{eq_2_4} and \eqref{eq_2_5}, we get from \eqref{eq_2_7_new_1} that 
\begin{equation}
\label{eq_2_7}
\begin{aligned}
\mathcal{E}(u,v)&=\frac{1}{\Gamma(-\alpha)}\int_0^\infty \int_{\R^n} \big((e^{t\Delta_g}u)(z)-u(z)\big)\overline{v(z)}dV_g(z) \frac{dt}{t^{1+\alpha}}\\
&=\frac{1}{\Gamma(-\alpha)}\int_0^\infty \int_{\R^n} \bigg[ \int_{\R^n} e^{t\Delta_g}(z,x)u(x)dV_g(x) -u(z) \bigg]\overline{v(z)}dV_g(z) \frac{dt}{t^{1+\alpha}}\\
&=\frac{1}{\Gamma(-\alpha)}\int_0^\infty \int_{\R^n}\int_{\R^n} e^{t\Delta_g}(z,x)(u(x)-u(z))\overline{v(z)}dV_g(x)dV_g(z)\frac{dt}{t^{1+\alpha}}.
\end{aligned}
\end{equation}

As in \cite[Theorem 2.3]{Caffarelli_Stinga_2016}, by exchanging $x$ and $z$ in the last equality in \eqref{eq_2_7}, and using \eqref{eq_2_4_heat_sym} together with Fubini's theorem in $x$ and $z$, we get 
\begin{equation}
\label{eq_2_10}
\begin{aligned}
\mathcal{E}(u,v)=-\frac{1}{\Gamma(-\alpha)}\int_0^\infty  \int_{\R^n}\int_{\R^n}e^{t\Delta_g}(z,x)(u(x)-u(z))\overline{v(x)}dV_g(x)dV_g(z)\frac{dt}{t^{1+\alpha}}.
\end{aligned}
\end{equation}
Here, the application of Fubini's theorem is justified thanks to \eqref{eq_2_5}.

Adding the last equality in \eqref{eq_2_7} and \eqref{eq_2_10}, and using the fact that $\Gamma(-\alpha)<0$, we get
\begin{equation}
\label{eq_2_11_new_1}
\begin{aligned}
\mathcal{E}(u,v)&=\frac{1}{2|\Gamma(-\alpha)|}\\
& \int_0^\infty\int_{\mathbb{R}^n}\int_{\mathbb{R}^n} e^{t\Delta_g}(z,x)(u(x)-u(z))\overline{(v(x)-v(z))}dV_g(x)dV_g(z)\frac{dt}{t^{1+\alpha}}.
\end{aligned}
\end{equation}
Now we claim that the function
\[
(t, x, z) \mapsto e^{t\Delta_g}(z, x)(u(x) - u(z))\overline{(v(x) - v(z))} \frac{1}{t^{1+\alpha}}\sqrt{|g(x)|}\sqrt{|g(z)|}
\]
is in $L^1((0,\infty) \times \mathbb{R}^n \times \mathbb{R}^n)$. Indeed, it follows from \eqref{eq_2_5_1}, Tonelli's theorem, the Cauchy-Schwarz inequality, and \eqref{eq_2_Sobolev_frac} that
\begin{equation}
\label{eq_2_11_new_1_20}
\begin{aligned}
&\int_0^\infty\int_{\R^n}\int_{\R^n} \big|  e^{t\Delta_g}(z,x)(u(x)-u(z))\overline{(v(x)-v(z))} \big| dV_g(x)dV_g(z)\frac{dt}{t^{1+\alpha}}\\
&\le C\int_0^\infty\int_{\R^n}\int_{\R^n}  \frac{e^{-\frac{|x-z|^2}{ct}}}{t^{n/2+1+\alpha}}|u(x)-u(z)||v(x)-v(z)|dxdzdt\\
&\le C\int_{\R^n}\int_{\R^n} 
\frac{1}{|x-z|^{n+2\alpha}}|u(x)-u(z)||v(x)-v(z)|dxdz\\
&
\le C\|u\|_{H^\alpha(\R^n)}\|v\|_{H^\alpha(\R^n)} <\infty,
\end{aligned}
\end{equation}
showing the claim. An application of Fubini's theorem to \eqref{eq_2_11_new_1} shows that 
\[
\mathcal{E}(u,v)=\int_{\R^n}\int_{\R^n}
K_g(z,x)(u(x)-u(z))\overline{(v(x)-v(z))}d xd z,
\]
where 
\begin{equation}
\label{eq_2_12_inside}
K_g(z,x)= \frac{1}{2|\Gamma(-\alpha)|} \sqrt{|g(x)|}\sqrt{|g(z)|}\int_0^\infty e^{t\Delta_g}(z,x)\frac{dt}{t^{1+\alpha}}.
\end{equation}
Using \eqref{eq_2_5_1} and the fact that for $c > 0$,
\[
\int_0^\infty \frac{e^{-\frac{c}{t}}}{t^{\frac{n}{2}+1+\alpha}} \, dt = c^{-\left(\frac{n}{2} + \alpha\right)} \Gamma\left(\frac{n}{2} + \alpha\right) > 0,
\]
and that $\sqrt{|g(x)|} \ge c > 0$ for all $x \in \mathbb{R}^n$, we obtain from \eqref{eq_2_12_inside} that $K_g$ satisfies the estimates
\[
\frac{1}{C |z-x|^{n+2\alpha}} \le K_g(z,x) \le \frac{C}{|z-x|^{n+2\alpha}}, \quad x, z \in \mathbb{R}^n, \quad x \ne z,
\]
where $C > 0$.
\end{proof}

\begin{prop}
\label{prop_eq_2_6}
The operator $(-\Delta_g)^\alpha$ extends to a linear continuous map $(-\Delta_g)^\alpha: H^\alpha(\mathbb{R}^n) \to H^{-\alpha}(\mathbb{R}^n)$, and the operator $A$, given in \eqref{eq_2_6_A}, also extends to a linear continuous map $A: H^\alpha(\mathbb{R}^n) \to H^{-\alpha}(\mathbb{R}^n)$. The representation \eqref{eq_2_6} extends to $u \in H^\alpha(\mathbb{R}^n)$. Furthermore, the equality \eqref{eq_2_6_seq_forms_new} extends to $u, v \in H^\alpha(\mathbb{R}^n)$, and the equality 
\[
((-\Delta_g)^\alpha u, v)_{L^2(\mathbb{R}^n; dV_g)} = (u, (-\Delta_g)^\alpha v)_{L^2(\mathbb{R}^n; dV_g)}
\]
extends to $u, v \in H^\alpha(\mathbb{R}^n)$.
\end{prop}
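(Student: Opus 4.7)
The plan is to build continuous extensions of the two operators separately and then match them on the dense subspace $C_0^\infty(\mathbb{R}^n) \subset H^\alpha(\mathbb{R}^n)$, using the kernel representation of $\mathcal{E}$ already established in Proposition~\ref{prop_K_bounds}.

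First, for $(-\Delta_g)^\alpha$, I would quote Theorem~\ref{thm_pseudodiff_op}: $(-\Delta_g)^\alpha \in \Psi^{2\alpha}_{1,0}(\mathbb{R}^n)$ is a classical pseudodifferential operator of order $2\alpha$ whose symbol has global estimates in $x$, because $g$ coincides with the Euclidean metric outside a compact set. A standard conjugation argument — composing with the Bessel potentials $(1-\Delta)^{(s-2\alpha)/2}$ and $(1-\Delta)^{-s/2}$ — reduces the continuity $H^s(\mathbb{R}^n) \to H^{s-2\alpha}(\mathbb{R}^n)$ to the $L^2(\mathbb{R}^n)$-boundedness of a $\Psi^0_{1,0}$-operator with global symbol estimates; taking $s = \alpha$ gives the assertion. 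For $A$, I would combine the kernel bound \eqref{eq_2_13} with Cauchy--Schwarz and the fractional norm characterization \eqref{eq_2_Sobolev_frac} (valid on $\mathbb{R}^n$ as well): the chain \eqref{eq_2_11_new_1_20} from the proof of Proposition~\ref{prop_K_bounds} yields, for $u, v \in C_0^\infty(\mathbb{R}^n)$,
\[
|\mathcal{E}(u,v)| \le C \int_{\mathbb{R}^n}\!\!\int_{\mathbb{R}^n} \frac{|u(x)-u(z)||v(x)-v(z)|}{|x-z|^{n+2\alpha}}\,dx\,dz \le C\|u\|_{H^\alpha(\mathbb{R}^n)}\|v\|_{H^\alpha(\mathbb{R}^n)}.
\]
By density of $C_0^\infty(\mathbb{R}^n)$ in $H^\alpha(\mathbb{R}^n)$, the sesquilinear form $\mathcal{E}$ extends uniquely to a bounded form on $H^\alpha(\mathbb{R}^n) \times H^\alpha(\mathbb{R}^n)$, and Riesz representation produces a continuous $A: H^\alpha(\mathbb{R}^n) \to H^{-\alpha}(\mathbb{R}^n)$ via $\langle A u, \bar v\rangle_{H^{-\alpha}(\mathbb{R}^n), H^\alpha(\mathbb{R}^n)} = \mathcal{E}(u,v)$.

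The two extensions agree on $C_0^\infty(\mathbb{R}^n)$ because of \eqref{eq_2_6_seq_forms_new}, once the $L^2(\mathbb{R}^n; dV_g)$ inner product there is read as the duality pairing $\langle\cdot,\cdot\rangle_{H^{-\alpha}, H^\alpha}$; hence by density they agree on all of $H^\alpha(\mathbb{R}^n)$, and \eqref{eq_2_6_seq_forms_new} extends in this duality sense. Symmetry is immediate from the representation \eqref{eq_2_11}: the heat kernel is real and symmetric in $(z,x)$, multiplied by the real factor $\sqrt{|g(x)||g(z)|}$, so $K_g$ is real with $K_g(z,x)=K_g(x,z)$, which gives $\mathcal{E}(u,v)=\overline{\mathcal{E}(v,u)}$ and thus $((-\Delta_g)^\alpha u, v)_{L^2(dV_g)} = (u, (-\Delta_g)^\alpha v)_{L^2(dV_g)}$ on $H^\alpha(\mathbb{R}^n)$. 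Finally, to extend the integral representation \eqref{eq_2_6} to $u \in H^\alpha(\mathbb{R}^n)$, I would test against $v \in C_0^\infty(\mathbb{R}^n)$: the Fubini rearrangement from the proof of Proposition~\ref{prop_K_bounds}, justified by the absolute estimate \eqref{eq_2_11_new_1_20}, gives
\[
\langle (-\Delta_g)^\alpha u, \bar v\rangle_{H^{-\alpha}(\mathbb{R}^n), H^\alpha(\mathbb{R}^n)} = \frac{1}{\Gamma(-\alpha)}\int_0^\infty \bigl((e^{t\Delta_g}-1)u, v\bigr)_{L^2(\mathbb{R}^n; dV_g)}\,\frac{dt}{t^{1+\alpha}},
\]
the right-hand side integral being absolutely convergent.

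The subtlest point, and the one I expect to be the main obstacle, is precisely the interpretation of this last identity: a naive Bochner interpretation of \eqref{eq_2_6} as a vector-valued integral in $H^{-\alpha}(\mathbb{R}^n)$ is delicate near $t=0$ for generic $u \in H^\alpha(\mathbb{R}^n)$ without extra spectral regularity, because $\|(e^{t\Delta_g}-1)u\|_{H^{-\alpha}(\mathbb{R}^n)}$ is only $O(t^\alpha)$ and $t^{-1-\alpha}\cdot t^\alpha = t^{-1}$ is not integrable at $0$. The correct reading is therefore the weak one just displayed, where the cancellation $(u(x)-u(z))\overline{(v(x)-v(z))}$ restores integrability and the heat kernel estimates \eqref{eq_2_5_1} convert the double space integral into the $H^\alpha$--$H^\alpha$ Gagliardo-type pairing controlled in \eqref{eq_2_11_new_1_20}.
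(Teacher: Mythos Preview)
Your proof is correct, and your treatment of $A$ via the kernel bounds \eqref{eq_2_13} and the estimate \eqref{eq_2_11_new_1_20} is exactly what the paper does. The route you take for $(-\Delta_g)^\alpha$, however, differs from the paper's. You invoke Theorem~\ref{thm_pseudodiff_op} and global symbol estimates to deduce $H^\alpha \to H^{-\alpha}$ continuity; the paper instead uses only the functional calculus identity
\[
((-\Delta_g)^\alpha u, v)_{L^2(\mathbb{R}^n; dV_g)} = ((-\Delta_g)^{\alpha/2} u, (-\Delta_g)^{\alpha/2} v)_{L^2(\mathbb{R}^n; dV_g)}
\]
for $u,v \in C_0^\infty(\mathbb{R}^n)$, followed by Cauchy--Schwarz and the domain characterization \eqref{eq_Strichartz_domain} from Proposition~\ref{prop_regularity}. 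This is lighter: it avoids the pseudodifferential machinery entirely and sidesteps the question of whether the symbol of $(-\Delta_g)^\alpha$ genuinely satisfies global-in-$x$ estimates (a point that is true but not part of the local statement of Theorem~\ref{thm_pseudodiff_op}; the paper establishes the global Sobolev mapping separately in Lemma~\ref{lem_global_bounds_fractional_laplacian}, again via functional calculus and interpolation rather than symbol bounds). Your approach buys uniformity across all Sobolev indices in one stroke, at the cost of importing heavier tools earlier than necessary.

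Your closing discussion of the weak interpretation of \eqref{eq_2_6} is a useful clarification that the paper's proof leaves implicit; the paper simply writes ``The result follows'' after establishing the two continuous extensions and their agreement on $C_0^\infty(\mathbb{R}^n)$.
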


\begin{proof}
It follows from \eqref{eq_2_11_new_1} and \eqref{eq_2_11_new_1_20} that 
for $u,v \in C^\infty_0(\R^n)$,
\begin{equation}
\label{eq_2_15}
|\mathcal{E}(u,v)| \leq C\|u\|_{H^\alpha(\R^n)}\|v\|_{H^{\alpha}(\R^n)}.
\end{equation}
Therefore, the form $(u,v) \mapsto \mathcal{E}(u,v)$ uniquely extends to a continuous sesquilinear form on $H^\alpha(\R^n) \times H^\alpha(\R^n)$. Consequently, the operator $A$ defined in \eqref{eq_2_6_A} extends as a continuous linear map:
\[
A: H^{\alpha}(\R^n) \to H^{-\alpha}(\R^n).
\]
Furthermore, for $u, v \in C^\infty_0(\R^n)$, by functional calculus, we have:
\begin{equation}
\label{eq_ext_domain_100_-100}
((-\Delta_g)^\alpha u,v)_{L^2(\R^n; dV_g)} = ((-\Delta_g)^{\alpha/2} u, (-\Delta_g)^{\alpha/2} v)_{L^2(\R^n; dV_g)}.
\end{equation}
By the Cauchy-Schwarz inequality and \eqref{eq_Strichartz_domain}, it follows that:
\begin{equation}
\label{eq_ext_domain_100}
\begin{aligned}
|((-\Delta_g)^\alpha u,v)_{L^2(\R^n; dV_g)}| &\leq \|(-\Delta_g)^{\alpha/2} u\|_{L^2(\R^n; dV_g)} \|(-\Delta_g)^{\alpha/2} v\|_{L^2(\R^n; dV_g)} \\
&\leq C \|u\|_{H^\alpha(\R^n)} \|v\|_{H^{\alpha}(\R^n)}.
\end{aligned}
\end{equation}
Hence, the form $(u,v) \mapsto ((-\Delta_g)^\alpha u,v)_{L^2(\R^n; dV_g)}$ extends to $H^\alpha(\R^n) \times H^\alpha(\R^n)$, giving us a continuous linear operator:
\[
(-\Delta_g)^\alpha: H^{\alpha}(\R^n) \to H^{-\alpha}(\R^n).
\]
The result follows.
\end{proof}

\subsection{Solvability of the exterior Dirichlet problem for fractional Laplacians}
\label{sec:energy_sol}

Let $g$ be a $C^\infty$ Riemannian metric on $\mathbb{R}^n$,  $n \geq 2$, which agrees with the Euclidean metric outside of a compact set, and let $\Omega \subset \R^n$ be a bounded open set with a $C^\infty$ boundary. Let $\alpha \in (0,1)$ and $F \in (H_0^\alpha(\Omega))^* = H^{-\alpha}(\Omega)$. Consider the following exterior Dirichlet problem:
\begin{equation}
\label{eq_9_1}
\begin{cases}
(-\Delta_g)^\alpha u = F & \text{in } \Omega, \\
u = 0 & \text{in } \Omega_e.
\end{cases}
\end{equation}
Recall that $\Omega_e := \R^n \setminus \overline{\Omega}$. We say that $u \in H^\alpha_0(\Omega)$ is an energy solution to \eqref{eq_9_1} if
\begin{equation}
\label{eq_9_3}
\mathcal{E}(u, \varphi) = \langle F, \overline{\varphi}\rangle_{H^{-\alpha} (\Omega), H_0^\alpha (\Omega)},
\end{equation}
for all $\varphi \in C^\infty_0(\Omega)$. 

We need the following existence and uniqueness result for energy solutions given in \cite[Theorem 12]{LPPS_2015}. We present the proof for the completeness and convenience of the reader.
\begin{prop}
\label{prop_finite_energy_sol}
Let $F \in (H_0^\alpha(\Omega))^*$. Then there exists a unique energy solution $u \in H_0^\alpha(\Omega)$ to \eqref{eq_9_1}. Moreover, we have 
\begin{equation}
\label{eq_9_2}
\|u\|_{H^\alpha(\R^n)} \le C \|F\|_{H^{-\alpha}(\Omega)}.
\end{equation}
\end{prop}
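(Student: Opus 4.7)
The proof proceeds by a standard Lax--Milgram argument applied to the sesquilinear form $\mathcal{E}$ on the Hilbert space $H_0^\alpha(\Omega)$, so the two things to establish are continuity and coercivity. Continuity on $H_0^\alpha(\Omega)\times H_0^\alpha(\Omega)$ is immediate from \eqref{eq_2_15}, since $H_0^\alpha(\Omega)\hookrightarrow H^\alpha(\R^n)$. The heart of the matter is coercivity: I need to show that there is a constant $c>0$ such that
\[
\mathcal{E}(u,u)\ \ge\ c\,\|u\|_{H^\alpha(\R^n)}^2\qquad\text{for all } u\in H_0^\alpha(\Omega).
\]

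To prove this, I would start from the kernel representation \eqref{eq_2_11} together with the pointwise lower bound \eqref{eq_2_13} on $K_g$. By density of $C_0^\infty(\Omega)$ in $H_0^\alpha(\Omega)$, it suffices to work with $u\in C_0^\infty(\Omega)$ and then pass to the limit. For such $u$,
\[
\mathcal{E}(u,u)\ =\ \int_{\R^n}\!\!\int_{\R^n} K_g(z,x)\,|u(x)-u(z)|^2\,dx\,dz
\ \ge\ \frac{1}{C}\int_{\R^n}\!\!\int_{\R^n}\frac{|u(x)-u(z)|^2}{|x-z|^{n+2\alpha}}\,dx\,dz,
\]
so $\mathcal{E}(u,u)$ controls the Gagliardo seminorm $[u]_{H^\alpha(\R^n)}^2$; combined with \eqref{eq_2_Sobolev_frac} for $\R^n$, this already gives $\mathcal{E}(u,u)\ge c[u]_{H^\alpha(\R^n)}^2$. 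It remains to absorb the $L^2$--piece of the norm, which I would do via a fractional Poincar\'e inequality for functions supported in the bounded set $\overline\Omega$: since $2\alpha<2\le n$, Proposition~\ref{prop_fractional_Sobolev_inequality} with $p=2$ yields
\[
\|u\|_{L^{2^*}(\R^n)}^2\ \le\ C\,[u]_{H^\alpha(\R^n)}^2,\qquad 2^*=\tfrac{2n}{n-2\alpha},
\]
and H\"older's inequality together with $\supp u\subset\overline\Omega$ gives $\|u\|_{L^2(\R^n)}\le |\Omega|^{\alpha/n}\|u\|_{L^{2^*}(\R^n)}$. Combining these with the norm equivalence on $\R^n$ analogous to \eqref{eq_2_Sobolev_frac}, I obtain
\[
\|u\|_{H^\alpha(\R^n)}^2\ \le\ C'\,[u]_{H^\alpha(\R^n)}^2\ \le\ C''\,\mathcal{E}(u,u),
\]
which is the desired coercivity on $C_0^\infty(\Omega)$, and extends to $H_0^\alpha(\Omega)$ by continuity.

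With continuity and coercivity in hand, the Lax--Milgram theorem, applied to the antilinear functional $\varphi\mapsto \langle F,\overline{\varphi}\rangle_{H^{-\alpha}(\Omega),H_0^\alpha(\Omega)}$ (which is continuous on $H_0^\alpha(\Omega)$ with norm $\le\|F\|_{H^{-\alpha}(\Omega)}$ in view of the duality \eqref{eq_duality_on_domains_new_1}), produces a unique $u\in H_0^\alpha(\Omega)$ satisfying \eqref{eq_9_3} for all $\varphi\in H_0^\alpha(\Omega)$, and in particular for all $\varphi\in C_0^\infty(\Omega)$. Testing the coercivity inequality with the solution $u$ itself and using \eqref{eq_9_3} together with Cauchy--Schwarz on the right-hand side yields
\[
c\,\|u\|_{H^\alpha(\R^n)}^2\ \le\ \mathcal{E}(u,u)\ =\ \langle F,\overline{u}\rangle_{H^{-\alpha}(\Omega),H_0^\alpha(\Omega)}\ \le\ \|F\|_{H^{-\alpha}(\Omega)}\,\|u\|_{H^\alpha(\R^n)},
\]
from which the energy estimate \eqref{eq_9_2} follows by dividing.

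The main technical obstacle is the coercivity step, and in particular the Poincar\'e-type control of $\|u\|_{L^2(\R^n)}$ by the Gagliardo seminorm for functions supported in $\overline\Omega$. The fractional Sobolev inequality of Proposition~\ref{prop_fractional_Sobolev_inequality} with $p=2$ handles this cleanly thanks to the boundedness of $\Omega$ and the fact that $2\alpha<n$ always holds here ($n\ge 2$, $\alpha\in(0,1)$); without the support assumption the $L^2$-bound would fail, so the use of the space $H_0^\alpha(\Omega)$ (rather than $H^\alpha(\Omega)$) is essential. Everything else is essentially Lax--Milgram plus the kernel bounds already provided by Proposition~\ref{prop_K_bounds}.
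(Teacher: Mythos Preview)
Your proof is correct and follows essentially the same Lax--Milgram strategy as the paper: continuity from \eqref{eq_2_15}, coercivity via the kernel lower bound \eqref{eq_2_13} plus a fractional Poincar\'e inequality, and then the energy estimate from coercivity. The only difference is in how the Poincar\'e step is justified: the paper simply cites the inequality $\|u\|_{L^2(\R^n)}^2\le C\,\mathcal{E}(u,u)$ for $u\in H_0^\alpha(\Omega)$ from the literature, whereas you derive it in a self-contained way from Proposition~\ref{prop_fractional_Sobolev_inequality} combined with H\"older and the support condition---a slightly more explicit route using only tools already stated in the paper.
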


\begin{proof}
We shall use the Lax--Milgram lemma to prove this result. First, it follows from  \eqref{eq_2_15} that 
\[
|\mathcal{E}(u,\varphi)|\le C\|u\|_{H^\alpha(\R^n)}\|\varphi\|_{H^\alpha(\R^n)},
\]
for $u,\varphi\in  H_0^\alpha(\Omega)$, and we only need to check the coercivity of the form $\mathcal{E}$ on $H^\alpha_0(\Omega)$. To this end, using \eqref{eq_2_11} and \eqref{eq_2_13}, 
we get 
\begin{equation}
\label{eq_9_4}
\mathcal{E}(u,u)=\int_{\R^n}\int_{\R^n} K_g(z,x)|u(x)-u(z)|^2dxdz\ge \frac{1}{C}\int_{\R^n}\int_{\R^n}\frac{|u(x)-u(z)|^2}{|z-x|^{n+2\alpha}}dxdz,
\end{equation}
for $u\in H^\alpha_0(\Omega)$. Using the Poincar\'e inequality 
\[
\|u\|^2_{L^2(\R^n)}\le C\mathcal{E}(u,u), \quad u\in H^\alpha_0(\Omega), 
\]
see \cite[Lemma 2.9]{Felsinger_Kassmann_Voigt_2015} and \cite[Appendix A]{BWZ_2017},
and \eqref{eq_2_Sobolev_frac}, we conclude from \eqref{eq_9_4} that 
\begin{equation}
\label{eq_9_5}
\mathcal{E}(u,u)\ge c\|u\|^2_{H^\alpha(\R^n)},  \quad c>0, 
\end{equation}
for all $u\in H^\alpha_0(\Omega)$.  An application of the Lax--Milgram lemma shows that there exists a unique $u\in  H^\alpha_0(\Omega)$ satisfying \eqref{eq_9_3} for all $\varphi\in H^\alpha_0(\Omega)$. The bound \eqref{eq_9_2} follows from the coercivity estimate \eqref{eq_9_5}. 
\end{proof}

Let $F \in H^{-\alpha}(\Omega)$ and $f \in H^\alpha(\mathbb{R}^n)$. Consider the following exterior Dirichlet problem with inhomogeneous boundary conditions:
\begin{equation}
\label{eq_2_6_1_1}
\begin{cases}
(-\Delta_g)^\alpha u = F & \text{in } \Omega, \\
u = f & \text{in } \Omega_e.
\end{cases}
\end{equation}
We say that $u \in H^\alpha(\mathbb{R}^n)$ is an energy solution to \eqref{eq_2_6_1_1} if 
\[
\mathcal{E}(u, \varphi) = \langle F, \overline{\varphi} \rangle_{H^{-\alpha}(\Omega), H_0^\alpha(\Omega)},
\]
for all $\varphi \in C^\infty_0(\Omega)$, and $u - f \in H^\alpha_0(\Omega)$. We have the following result.
\begin{prop}
\label{prop_well-posedness}
Let $F \in H^{-\alpha}(\Omega)$ and $f \in H^\alpha(\mathbb{R}^n)$. Then \eqref{eq_2_6_1_1} has a unique energy solution $u \in H^\alpha(\mathbb{R}^n)$. Moreover, we have
\begin{equation}
\label{eq_2_6_1_2}
\|u\|_{H^\alpha(\mathbb{R}^n)} \le C(\|F\|_{H^{-\alpha}(\Omega)} + \|f\|_{H^\alpha(\mathbb{R}^n)}).
\end{equation}
\end{prop}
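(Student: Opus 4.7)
The plan is to reduce \eqref{eq_2_6_1_1} to the homogeneous exterior Dirichlet problem of Proposition~\ref{prop_finite_energy_sol} by a lifting argument. Write $u = v + f$, and seek $v \in H^\alpha_0(\Omega)$ solving
\[
\begin{cases}
(-\Delta_g)^\alpha v = F - (-\Delta_g)^\alpha f \quad \text{in } \Omega,\\
v = 0 \quad \text{in } \Omega_e.
\end{cases}
\]
Since $f \in H^\alpha(\mathbb{R}^n)$, Proposition~\ref{prop_eq_2_6} gives $(-\Delta_g)^\alpha f \in H^{-\alpha}(\mathbb{R}^n)$, and therefore its restriction to $\Omega$ lies in $H^{-\alpha}(\Omega)$. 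Thus $\widetilde{F} := F - ((-\Delta_g)^\alpha f)|_\Omega \in H^{-\alpha}(\Omega) = (H^\alpha_0(\Omega))^*$, and Proposition~\ref{prop_finite_energy_sol} supplies a unique $v \in H^\alpha_0(\Omega)$ with $\mathcal{E}(v,\varphi) = \langle \widetilde{F}, \overline{\varphi}\rangle_{H^{-\alpha}(\Omega), H^\alpha_0(\Omega)}$ for all $\varphi \in C^\infty_0(\Omega)$, satisfying $\|v\|_{H^\alpha(\mathbb{R}^n)} \le C\|\widetilde{F}\|_{H^{-\alpha}(\Omega)}$.

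Next, I would verify that $u := v + f$ is an energy solution. Trivially $u - f = v \in H^\alpha_0(\Omega)$. For the weak formulation, fix $\varphi \in C^\infty_0(\Omega)$ and use bilinearity together with the extension of \eqref{eq_2_6_seq_forms_new} from Proposition~\ref{prop_eq_2_6} to write
\[
\mathcal{E}(u,\varphi) = \mathcal{E}(v,\varphi) + \mathcal{E}(f,\varphi) = \langle \widetilde{F}, \overline{\varphi}\rangle_{H^{-\alpha}(\Omega), H^\alpha_0(\Omega)} + \big((-\Delta_g)^\alpha f,\varphi\big)_{L^2(\mathbb{R}^n; dV_g)}.
\]
Since $\varphi \in C^\infty_0(\Omega) \subset H^\alpha_0(\Omega)$, the $L^2$ pairing coincides with the duality pairing on $\Omega$ (via \eqref{eq_Sobolev_duality_mikko} applied with $s = -\alpha$, using $|\alpha|<1/2$ when it applies, or more generally via \eqref{eq_duality_on_domains_new} extending by zero), and so the last term equals $\langle ((-\Delta_g)^\alpha f)|_\Omega, \overline{\varphi}\rangle_{H^{-\alpha}(\Omega), H^\alpha_0(\Omega)}$. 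Adding this to $\langle \widetilde F,\overline\varphi\rangle$ gives $\langle F, \overline{\varphi}\rangle_{H^{-\alpha}(\Omega), H^\alpha_0(\Omega)}$, as required.

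For uniqueness, if $u_1,u_2$ are both energy solutions, then $w := u_1-u_2 \in H^\alpha_0(\Omega)$ and $\mathcal{E}(w,\varphi) = 0$ for all $\varphi \in C^\infty_0(\Omega)$. By the density of $C^\infty_0(\Omega)$ in $H^\alpha_0(\Omega)$ and the continuity of $\mathcal{E}$, this extends to all $\varphi \in H^\alpha_0(\Omega)$; taking $\varphi = w$ and invoking the coercivity \eqref{eq_9_5} from the proof of Proposition~\ref{prop_finite_energy_sol} yields $w=0$. Finally, the bound \eqref{eq_2_6_1_2} follows from
\[
\|u\|_{H^\alpha(\mathbb{R}^n)} \le \|v\|_{H^\alpha(\mathbb{R}^n)} + \|f\|_{H^\alpha(\mathbb{R}^n)} \le C\|\widetilde F\|_{H^{-\alpha}(\Omega)} + \|f\|_{H^\alpha(\mathbb{R}^n)},
\]
combined with $\|\widetilde F\|_{H^{-\alpha}(\Omega)} \le \|F\|_{H^{-\alpha}(\Omega)} + \|(-\Delta_g)^\alpha f\|_{H^{-\alpha}(\mathbb{R}^n)} \le \|F\|_{H^{-\alpha}(\Omega)} + C\|f\|_{H^\alpha(\mathbb{R}^n)}$, where the last inequality uses the mapping property from Proposition~\ref{prop_eq_2_6}. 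The only mild subtlety I anticipate is the careful identification of the $L^2$ pairing $((-\Delta_g)^\alpha f,\varphi)$ with the duality bracket on $\Omega$ against a compactly supported test function; once this is handled through \eqref{eq_duality_on_domains_new}, the rest is a routine Lax--Milgram style lifting.
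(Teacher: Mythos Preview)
Your approach is essentially the paper's: write $u = v + f$, reduce to Proposition~\ref{prop_finite_energy_sol} for $v \in H^\alpha_0(\Omega)$, and conclude existence and the bound by the triangle inequality, with uniqueness again from Proposition~\ref{prop_finite_energy_sol}. The paper is slightly leaner in one respect: it never introduces $(-\Delta_g)^\alpha f$, but directly observes that $\varphi \mapsto \langle F,\overline\varphi\rangle - \mathcal{E}(f,\varphi)$ is a bounded functional on $H^\alpha_0(\Omega)$ and applies Proposition~\ref{prop_finite_energy_sol} to that functional.

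Your detour through $\widetilde F = F - ((-\Delta_g)^\alpha f)|_\Omega$ works in spirit, but the identification you flag as a ``mild subtlety'' is slightly off as written. By Proposition~\ref{prop_eq_2_6}, $\mathcal{E}(f,\varphi) = ((-\Delta_g)^\alpha f,\varphi)_{L^2(\mathbb{R}^n; dV_g)}$, and this pairing carries the weight $\sqrt{|g|}$: it equals $\langle (-\Delta_g)^\alpha f,\ \sqrt{|g|}\,\overline\varphi\rangle_{H^{-\alpha}(\mathbb{R}^n),H^\alpha(\mathbb{R}^n)}$, not $\langle ((-\Delta_g)^\alpha f)|_\Omega,\overline\varphi\rangle_{H^{-\alpha}(\Omega),H^\alpha_0(\Omega)}$ (compare Lemma~\ref{lem_distrubutional_solution}, where exactly this factor appears). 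The fix is either to carry the weight through the definition of $\widetilde F$, or simply to skip the rewriting and work directly with $\mathcal{E}(f,\varphi)$ as the paper does; either way the argument goes through unchanged.
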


\begin{proof}
To demonstrate the existence of a solution $u$ to \eqref{eq_2_6_1_1}, we seek it in the form $u = v + f$, where $v \in H^\alpha_0(\Omega)$. We then have
\begin{equation}
\label{eq_2_6_1_2_new_form_1}
\mathcal{E}(v, \varphi) = \langle F, \overline{\varphi} \rangle_{H^{-\alpha} (\Omega), H_0^\alpha (\Omega)} - \mathcal{E}(f, \varphi),
\end{equation}
for all $\varphi \in C^\infty_0(\Omega)$. Here,
\[
\big| \langle F, \varphi \rangle_{H^{-\alpha} (\Omega), H_0^\alpha (\Omega)} - \mathcal{E}(f, \varphi) \big| \le C (\| F \|_{H^{-\alpha}(\Omega)} + \| f \|_{H^\alpha(\R^n)}) \| \varphi \|_{H^\alpha(\R^n)}.
\]
By Proposition \ref{prop_finite_energy_sol}, there exists a unique $v \in H_0^\alpha(\Omega)$ satisfying \eqref{eq_2_6_1_2_new_form_1} such that
\[
\| v \|_{H^\alpha(\R^n)} \le C (\| F \|_{H^{-\alpha}(\Omega)} + \| f \|_{H^\alpha(\R^n)}).
\]
Thus, the bound \eqref{eq_2_6_1_2} follows. The uniqueness of the solution to \eqref{eq_2_6_1_1} is a consequence of Proposition \ref{prop_finite_energy_sol}.
\end{proof}

Recalling from Proposition \ref{prop_eq_2_6} that the map $(-\Delta_g)^\alpha: H^\alpha(\R^n) \to H^{-\alpha}(\R^n)$ is continuous, we have the following observation.
\begin{lem}
\label{lem_distrubutional_solution}
Let $F \in H^{-\alpha}(\Omega)$ and $u \in H^\alpha_0(\Omega)$. Then $u$ satisfies
\begin{equation}
\label{eq_2_distribution_10}
 (-\Delta_g)^\alpha u = F \quad \text{in} \quad \mathcal{D}'(\Omega),
\end{equation}
if and only if $u$ is an energy solution to \eqref{eq_9_1} with $F$ replaced by $\sqrt{|g|}F$. Furthermore, the equation \eqref{eq_2_distribution_10} has a unique solution  $u \in H^\alpha_0(\Omega)$ in the sense of distributions on $\Omega$. 
\end{lem}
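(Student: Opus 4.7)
The plan is to reduce the equivalence to a careful bookkeeping of the factor $\sqrt{|g|}$ that distinguishes the Euclidean volume $dx$ from the Riemannian volume $dV_g = \sqrt{|g|}\,dx$, and then invoke Proposition \ref{prop_finite_energy_sol} for the existence and uniqueness assertion.

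The starting point is Proposition \ref{prop_eq_2_6}, which (after extension from $C_0^\infty$) gives, for $u \in H^\alpha_0(\Omega) \subset H^\alpha(\mathbb{R}^n)$ and $\varphi \in C_0^\infty(\Omega)$, the identity
\[
\mathcal{E}(u,\varphi) = ((-\Delta_g)^\alpha u,\varphi)_{L^2(\mathbb{R}^n;dV_g)} = \langle \sqrt{|g|}(-\Delta_g)^\alpha u,\overline{\varphi}\rangle_{H^{-\alpha}(\mathbb{R}^n),H^\alpha(\mathbb{R}^n)}.
\]
The second equality uses that $\sqrt{|g|} \in C^\infty_b(\mathbb{R}^n)$ (equal to $1$ outside a compact set) is a pointwise multiplier on $H^s(\mathbb{R}^n)$ for every $s \in \mathbb{R}$, so the identity $(f,\varphi)_{L^2(\mathbb{R}^n;dV_g)} = \langle \sqrt{|g|}f,\overline{\varphi}\rangle_{L^2(\mathbb{R}^n;dx)}$ valid for $f,\varphi \in L^2$ extends by duality.

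Next, for $\varphi \in C_0^\infty(\Omega) \subset H^\alpha_0(\Omega)$, the pairing $\langle \sqrt{|g|}F,\overline{\varphi}\rangle_{H^{-\alpha}(\Omega),H^\alpha_0(\Omega)}$ agrees with the distributional pairing $\langle \sqrt{|g|}F,\overline{\varphi}\rangle_{\mathcal{D}'(\Omega),C_0^\infty(\Omega)}$ by the definition \eqref{eq_duality_on_domains_new}. Combining this with the previous display, the energy identity
\[
\mathcal{E}(u,\varphi) = \langle \sqrt{|g|}F,\overline{\varphi}\rangle_{H^{-\alpha}(\Omega),H^\alpha_0(\Omega)}, \quad \varphi \in C_0^\infty(\Omega),
\]
is equivalent to $\sqrt{|g|}(-\Delta_g)^\alpha u = \sqrt{|g|}F$ in $\mathcal{D}'(\Omega)$. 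Since $\sqrt{|g|}$ is a strictly positive smooth function on $\mathbb{R}^n$ with smooth reciprocal, multiplication by $\sqrt{|g|}$ is an isomorphism on $\mathcal{D}'(\Omega)$, so this last equation is equivalent to $(-\Delta_g)^\alpha u = F$ in $\mathcal{D}'(\Omega)$. This establishes the first assertion.

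For the second assertion, I would observe that because $\sqrt{|g|}$ is a bounded multiplier on $H^{-\alpha}(\Omega)$, we have $\sqrt{|g|}F \in H^{-\alpha}(\Omega)$ whenever $F \in H^{-\alpha}(\Omega)$. Applying Proposition \ref{prop_finite_energy_sol} with $\sqrt{|g|}F$ in place of $F$ produces a unique energy solution $u \in H^\alpha_0(\Omega)$, which by the equivalence just proved is exactly the unique distributional solution in $H^\alpha_0(\Omega)$ of $(-\Delta_g)^\alpha u = F$ in $\mathcal{D}'(\Omega)$. There is no hard step here: the argument is pure bookkeeping, the only mild subtlety being the identification of the $\mathbb{R}^n$-duality, the $\Omega$-duality, and the distributional pairing on test functions supported in $\Omega$, all of which are routinely reconciled through \eqref{eq_duality_on_domains_new} and the density of $C_0^\infty(\Omega)$ in $H^\alpha_0(\Omega)$.
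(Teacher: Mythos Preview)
Your proof is correct and follows essentially the same approach as the paper's: both use Proposition~\ref{prop_eq_2_6} to identify $\mathcal{E}(u,\varphi)$ with the duality pairing $\langle (-\Delta_g)^\alpha u,\sqrt{|g|}\,\overline{\varphi}\rangle$, then reduce the distributional equation to the energy identity by the multiplier property of $\sqrt{|g|}$, and finally invoke Proposition~\ref{prop_finite_energy_sol} for existence and uniqueness. Your version is slightly more explicit about why multiplication by $\sqrt{|g|}$ is an isomorphism on $\mathcal{D}'(\Omega)$ and on $H^{-\alpha}(\Omega)$, but there is no substantive difference.
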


\begin{proof}
First, note that $\sqrt{|g|}F\in H^{-\alpha}(\Omega)$. Assume that $u \in H^\alpha_0(\Omega)$ is an energy solution to \eqref{eq_9_1} with $F$ replaced by $\sqrt{|g|}F$. Then, using Proposition \ref{prop_eq_2_6}, for any $\varphi \in C^\infty_0(\Omega)$, we obtain
\[
\langle F,  \sqrt{|g|} \overline{\varphi} \rangle_{H^{-\alpha} (\Omega), H_0^\alpha (\Omega)} = \mathcal{E}(u, \varphi) = \langle (-\Delta_g)^\alpha u,  \sqrt{|g|} \overline{\varphi} \rangle_{H^{-\alpha}(\R^n) \times H^\alpha(\R^n)}.
\]
Therefore, $u$ satisfies \eqref{eq_2_distribution_10} in the sense of $\mathcal{D}'(\Omega)$. The converse follows in the same manner. The uniqueness of the distributional solution hence follows from Proposition \ref{prop_finite_energy_sol}. 
\end{proof}

\subsection{Solvability of  the fractional Poisson equation}

Our starting point is the following fractional Sobolev inequality, which is a consequence of Proposition \ref{prop_fractional_Sobolev_inequality}; see also \cite[Theorems 4.2, 4.3]{Saloff-Coste_2009} and \cite[Theorem 2.1]{Roidos_Shao}.
\begin{lem}
Let $\alpha \in (0,1)$ and $n \ge 2$. Then there exists a constant $C > 0$ such that for all $u \in H^\alpha(\R^n)$, we have
\begin{equation}
\label{eq_2000_1}
\|u\|_{L^{\frac{2n}{n-2\alpha}}(\R^n)} \le C \|(-\Delta_g)^{\frac{\alpha}{2}} u\|_{L^2(\R^n; dV_g)}.
\end{equation}
\end{lem}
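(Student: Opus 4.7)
The plan is to chain together three ingredients that are already available in the excerpt: the Euclidean fractional Sobolev inequality of Proposition \ref{prop_fractional_Sobolev_inequality}, the lower Gaussian-type bound on the kernel $K_g$ from \eqref{eq_2_13}, and the identification of the form $\mathcal{E}(u,u)$ with $\|(-\Delta_g)^{\alpha/2}u\|^2_{L^2(\R^n;dV_g)}$ coming from \eqref{eq_2_6_seq_forms_new} and \eqref{eq_ext_domain_100_-100}. The bulk of the argument is carried out first for $u\in C^\infty_0(\R^n)$, and then lifted to $H^\alpha(\R^n)$ by density.

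First, I would take $u\in C^\infty_0(\R^n)$ and apply Proposition \ref{prop_fractional_Sobolev_inequality} with $p=2$ and $s=\alpha$; note that the condition $sp<n$ reduces to $2\alpha<n$, which is automatic since $n\ge 2$ and $\alpha\in(0,1)$, and that $p^*=\frac{2n}{n-2\alpha}$. This yields
\[
\|u\|^2_{L^{\frac{2n}{n-2\alpha}}(\R^n)} \le C\int_{\R^n}\int_{\R^n}\frac{|u(x)-u(z)|^2}{|x-z|^{n+2\alpha}}\,dx\,dz.
\]
Next I would invoke the lower bound in \eqref{eq_2_13}, which gives $|x-z|^{-(n+2\alpha)}\le C\,K_g(z,x)$ for $x\ne z$. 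Inserting this into the right-hand side and using the kernel representation \eqref{eq_2_11} from Proposition \ref{prop_K_bounds} produces
\[
\|u\|^2_{L^{\frac{2n}{n-2\alpha}}(\R^n)} \le C\int_{\R^n}\int_{\R^n} K_g(z,x)|u(x)-u(z)|^2\,dx\,dz = C\,\mathcal{E}(u,u).
\]

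The second step is to identify $\mathcal{E}(u,u)$ with $\|(-\Delta_g)^{\alpha/2}u\|^2_{L^2(\R^n;dV_g)}$. For $u\in C^\infty_0(\R^n)$, the definition \eqref{eq_2_6_seq_forms_new} gives $\mathcal{E}(u,u)=((-\Delta_g)^\alpha u,u)_{L^2(\R^n;dV_g)}$, and by the self-adjoint functional calculus, as recorded in \eqref{eq_ext_domain_100_-100},
\[
((-\Delta_g)^\alpha u, u)_{L^2(\R^n;dV_g)} = \|(-\Delta_g)^{\alpha/2}u\|^2_{L^2(\R^n;dV_g)}.
\]
Combining the two displays, the claimed inequality holds for every $u\in C^\infty_0(\R^n)$.

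Finally, I would extend to arbitrary $u\in H^\alpha(\R^n)$. Density of $C^\infty_0(\R^n)$ in $H^\alpha(\R^n)$ provides a sequence $u_k\in C^\infty_0(\R^n)$ with $u_k\to u$ in $H^\alpha(\R^n)$; by the equivalence of norms in Proposition \ref{prop_regularity}, $(-\Delta_g)^{\alpha/2}u_k\to(-\Delta_g)^{\alpha/2}u$ in $L^2(\R^n;dV_g)$. Passing to a subsequence we may assume $u_k\to u$ pointwise almost everywhere, and Fatou's lemma applied to the already-established inequality for $u_k$ yields
\[
\|u\|_{L^{\frac{2n}{n-2\alpha}}(\R^n)}\le \liminf_{k\to\infty}\|u_k\|_{L^{\frac{2n}{n-2\alpha}}(\R^n)}\le C\|(-\Delta_g)^{\alpha/2}u\|_{L^2(\R^n;dV_g)}.
\]

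The only place that requires mild care is the density/Fatou step, since $L^{\frac{2n}{n-2\alpha}}$ convergence is not automatic; using pointwise a.e.\ convergence of a subsequence avoids this subtlety cleanly. No other step presents a real obstacle: the Euclidean fractional Sobolev inequality and the two-sided kernel bound do all the heavy lifting, with the functional-calculus identity converting the Gagliardo energy into the geometric $(-\Delta_g)^{\alpha/2}$ norm.
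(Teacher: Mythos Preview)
Your proof is correct and follows essentially the same route as the paper: chain Proposition~\ref{prop_fractional_Sobolev_inequality}, the lower kernel bound \eqref{eq_2_13}, the representation \eqref{eq_2_11}, and the functional-calculus identity \eqref{eq_ext_domain_100_-100} for $u\in C^\infty_0(\R^n)$, then pass to $H^\alpha(\R^n)$ by density. The only cosmetic difference is in the density step, where the paper invokes the Sobolev embedding $H^\alpha(\R^n)\subset L^{\frac{2n}{n-2\alpha}}(\R^n)$ to pass to the limit on the left-hand side, while you use Fatou's lemma on a pointwise a.e.\ convergent subsequence; both are valid.
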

\begin{proof}
First, using \eqref{eq_ext_domain_100_-100}, \eqref{eq_2_6_seq_forms_new}, \eqref{eq_2_11}, \eqref{eq_2_13}, and Proposition \ref{prop_fractional_Sobolev_inequality}, we obtain for all $u \in C^\infty_0(\R^n)$,
\begin{align*}
\|(-\Delta_g)^{\frac{\alpha}{2}} u\|_{L^2(\R^n; dV_g)}^2 &= \mathcal{E}(u,u) = \int_{\R^n} \int_{\R^n} K_g(z,x) |u(x) - u(z)|^2 \, dx \, dz \\
&\ge \frac{1}{C} \int_{\R^n} \int_{\R^n} \frac{|u(x) - u(z)|^2}{|z - x|^{n + 2\alpha}} \, dx \, dz \ge C \|u\|_{L^{\frac{2n}{n - 2\alpha}}(\R^n)}^2,
\end{align*}
which implies \eqref{eq_2000_1} for $u \in C^\infty_0(\R^n)$.
This, together with the density of $C^\infty_0(\R^n)$ in $H^\alpha(\R^n)$ and the Sobolev embedding $H^\alpha(\R^n) \subset L^{\frac{2n}{n - 2\alpha}}(\R^n)$, implies that \eqref{eq_2000_1} holds for all $u \in H^\alpha(\R^n)$.
\end{proof}

We have the following result; see also \cite[page 2]{Choulli_Ouhabaz_2023}.
\begin{lem}
\label{lem_solution_poisson_laplacian}
Let $\alpha \in (0,1)$, $n \ge 2$, and let $F \in C^{\infty}_0(\R^n)$. Then the fractional Poisson equation
\begin{equation}
\label{eq_500_1}
(-\Delta_g)^\alpha w = -\Delta_g F \quad \text{in} \quad \R^n
\end{equation}
has a unique solution $w \in H^\alpha(\R^n)$ in the sense of $\mathcal{D}'(\R^n)$. Furthermore, $w \in H^k(\R^n)$ for all $k \in \N$, and by the Sobolev embedding, $w \in C^\infty(\R^n)$.
\end{lem}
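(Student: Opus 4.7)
The plan is to construct $w$ explicitly via the self-adjoint functional calculus of $-\Delta_g$, which immediately yields both the regularity claims and the PDE, and then to obtain uniqueness by pairing the equation with $\bar w$ and exploiting the absolute continuity of the spectrum of $-\Delta_g$.

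For existence and smoothness, I would define
\[
w := (-\Delta_g)^{1-\alpha} F
\]
through the self-adjoint functional calculus. Since $F \in C_0^\infty(\R^n) \subset H^s(\R^n)$ for every $s \geq 0$, Proposition~\ref{prop_regularity} gives $F \in \mathcal{D}((-\Delta_g)^\beta)$ for every $\beta > 0$, so in particular $w \in L^2(\R^n)$ is well-defined. For any $m \in \N$, the composition rule of the spectral calculus yields
\[
(-\Delta_g)^m w = (-\Delta_g)^{m+1-\alpha} F \in L^2(\R^n),
\]
so $w \in \mathcal{D}((-\Delta_g)^m) = H^{2m}(\R^n)$; since $m$ is arbitrary, $w \in H^k(\R^n)$ for every $k \in \N$, and Sobolev embedding gives $w \in C^\infty(\R^n)$. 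Applying the same composition rule to $F \in H^2(\R^n) = \mathcal{D}(-\Delta_g)$ yields
\[
(-\Delta_g)^\alpha w = (-\Delta_g)^\alpha (-\Delta_g)^{1-\alpha} F = -\Delta_g F
\]
as an identity in $L^2(\R^n)$, and hence in $\mathcal{D}'(\R^n)$.

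For uniqueness, suppose $v \in H^\alpha(\R^n)$ satisfies $(-\Delta_g)^\alpha v = 0$ in $\mathcal{D}'(\R^n)$. By Proposition~\ref{prop_eq_2_6}, $(-\Delta_g)^\alpha$ acts continuously from $H^\alpha(\R^n)$ into $H^{-\alpha}(\R^n)$, so the equation also holds in $H^{-\alpha}(\R^n)$. Pairing with $\bar v$ via the duality between $H^{-\alpha}(\R^n)$ and $H^\alpha(\R^n)$, and using the extensions of \eqref{eq_2_6_seq_forms_new} and \eqref{eq_ext_domain_100_-100} to $H^\alpha(\R^n)$ furnished by Proposition~\ref{prop_eq_2_6}, I would obtain
\[
0 = \langle (-\Delta_g)^\alpha v,\, \bar v\rangle_{H^{-\alpha}(\R^n),\,H^\alpha(\R^n)} = \mathcal{E}(v,v) = \|(-\Delta_g)^{\alpha/2} v\|_{L^2(\R^n;\,dV_g)}^2,
\]
so $(-\Delta_g)^{\alpha/2} v = 0$. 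Since the spectrum of $-\Delta_g$ is purely absolutely continuous on $[0,\infty)$, and in particular $0$ is not an eigenvalue, the spectral theorem forces $v = 0$. Applied to the difference of two solutions, this gives uniqueness.

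The delicate point I anticipate is the rigorous justification of the identity $(-\Delta_g)^\alpha (-\Delta_g)^{1-\alpha} F = -\Delta_g F$ on the correct domain. This dissolves once Proposition~\ref{prop_regularity} is used to identify the domains of all positive powers of $-\Delta_g$ with the standard Sobolev scale, so that $F \in H^\infty(\R^n)$ lies in every such domain and the product rule $f(A)g(A) = (fg)(A)$ of the Borel functional calculus applies without further subtleties.
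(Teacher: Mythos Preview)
Your approach is essentially the same as the paper's. For existence you both set $w=(-\Delta_g)^{1-\alpha}F$ and invoke the functional calculus together with the domain identification $\mathcal{D}((-\Delta_g)^k)=H^{2k}(\R^n)$; for uniqueness you both reduce to $\|(-\Delta_g)^{\alpha/2}v\|_{L^2(\R^n;dV_g)}=0$.

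Two small remarks. First, in the pairing you should test against $\sqrt{|g|}\,\bar v$ rather than $\bar v$: the identity \eqref{eq_2_6_seq_forms_new} is stated with the $L^2(\R^n;dV_g)$ inner product, so its extension reads $\mathcal{E}(v,v)=\langle(-\Delta_g)^\alpha v,\sqrt{|g|}\,\bar v\rangle_{H^{-\alpha},H^\alpha}$. This does not affect the argument, since $(-\Delta_g)^\alpha v=0$ in $H^{-\alpha}(\R^n)$ kills the pairing with any element of $H^\alpha(\R^n)$, in particular with $\sqrt{|g|}\,\bar v$. Second, your final step differs from the paper's: you conclude $v=0$ from $(-\Delta_g)^{\alpha/2}v=0$ via the absolute continuity of the spectrum (so $\{0\}$ carries no spectral mass), whereas the paper invokes the fractional Sobolev inequality \eqref{eq_2000_1}, i.e., $\|v\|_{L^{2n/(n-2\alpha)}}\le C\|(-\Delta_g)^{\alpha/2}v\|_{L^2(\R^n;dV_g)}$. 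Both routes are valid; yours is slightly more direct and uses only the spectral information already recorded in the introduction, while the paper's argument is self-contained within the section and does not rely on the spectral structure of $-\Delta_g$.
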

\begin{proof}
Let us start with the uniqueness. To that end, assume that $(-\Delta_g)^\alpha w = 0$ in $\mathcal{D}'(\R^n)$ for some $w \in H^\alpha(\R^n)$. Hence, for all $\varphi \in C^\infty_0(\R^n)$, we get
\begin{equation}
\label{eq_2000_10}
0 = \langle (-\Delta_g)^\alpha w, \sqrt{|g|} \overline{\varphi} \rangle_{H^{-\alpha}(\R^n), H^{\alpha}(\R^n)} = ((-\Delta_g)^{\frac{\alpha}{2}} w, (-\Delta_g)^{\frac{\alpha}{2}} \varphi)_{L^2(\R^n; dV_g)}.
\end{equation}
Note that the second equality in \eqref{eq_2000_10} follows by functional calculus when $w \in C^\infty_0(\R^n)$, and then by density and continuity for $w \in H^\alpha(\R^n)$. Again, by density and continuity, we conclude from \eqref{eq_2000_10} that $\|(-\Delta_g)^{\frac{\alpha}{2}} w\|_{L^2(\R^n; dV_g)} = 0$, and therefore, by \eqref{eq_2000_1}, we get $w = 0$, establishing the uniqueness.

Let us now demonstrate that $w := (-\Delta_g)^{1-\alpha} F$ belongs to $H^\alpha(\mathbb{R}^n)$ and is a solution to \eqref{eq_500_1}. Indeed, $F \in C^\infty_0(\mathbb{R}^n) \subset \mathcal{D}((-\Delta_g)^k) = H^{2k}(\mathbb{R}^n)$ for all $k \in \mathbb{N}$, and by the functional calculus of self-adjoint operators, $(-\Delta_g)^{1-\alpha}: \mathcal{D}((-\Delta_g)^k) \to \mathcal{D}((-\Delta_g)^{k-(1-\alpha)})$ is continuous. Thus, $(-\Delta_g)^{1-\alpha}: C^\infty_0(\mathbb{R}^n) \to H^k(\mathbb{R}^n)$ is continuous for all $k \in \mathbb{N}$. By the functional calculus of self-adjoint operators (see \cite[Theorem 4.15]{Dimassi_Sjostrand_book}), we have
\[
(-\Delta_g)^\alpha w = (-\Delta_g)^\alpha (-\Delta_g)^{1-\alpha} F = -\Delta_g F.
\]
Thus, the result follows.
\end{proof}

\subsection{Mapping properties of the fractional Laplace--Beltrami operator}
Let $g$ be a $C^\infty$ Riemannian metric on $\mathbb{R}^n$, $n \geq 2$, which agrees with the Euclidean metric outside of a compact set, and let $0 < \alpha < 1$. By Theorem \ref{thm_pseudodiff_op}, we have $(-\Delta_g)^\alpha \in \Psi^{2\alpha}_{1,0}(\mathbb{R}^n)$. Thus, the general theory of such operators implies that $(-\Delta_g)^\alpha$ defines continuous linear maps:
\begin{align*}
&(-\Delta_g)^\alpha: C_0^\infty(\mathbb{R}^n) \to C^\infty(\mathbb{R}^n),\\
&(-\Delta_g)^\alpha: \mathcal{E}'(\mathbb{R}^n) \to \mathcal{D}'(\mathbb{R}^n),
\end{align*}
see \cite[Proposition 2.4]{Shubin_book_2001}. Furthermore, it follows from \cite[Theorem 7.1]{Shubin_book_2001} that for all $\tau \in \mathbb{R}$,
\[
(-\Delta_g)^\alpha: H^\tau_{comp}(\mathbb{R}^n) \to H^{\tau-2\alpha}_{loc}(\mathbb{R}^n).
\]
Here, $H^\tau_{loc}(\mathbb{R}^n) = \{ u \in \mathcal{D}'(\mathbb{R}^n) : \varphi u \in H^\tau(\mathbb{R}^n) \text{ for all } \varphi \in C_0^\infty(\mathbb{R}^n) \}$ and $H^\tau_{comp}(\mathbb{R}^n) = H^\tau_{loc}(\mathbb{R}^n) \cap \mathcal{E}'(\mathbb{R}^n)$.

However, we have the following global mapping properties of the fractional Laplace--Beltrami operator $(-\Delta_g)^\alpha$. 
\begin{lem}
\label{lem_global_bounds_fractional_laplacian}
Let $g$ be a $C^\infty$ Riemannian metric on $\mathbb{R}^n$, $n \geq 2$, which agrees with the Euclidean metric outside of a compact set, and let $0 < \alpha < 1$. Then, for all $\tau \in \mathbb{R}$, the map
\begin{equation}
\label{eq_702_0_general}
(-\Delta_g)^\alpha: H^{\tau}(\mathbb{R}^n) \to H^{\tau-2\alpha}(\mathbb{R}^n)
\end{equation}
is continuous.
\end{lem}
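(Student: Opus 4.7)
The plan is to reduce the claim to an elementary functional-calculus estimate on $L^2(\mathbb{R}^n; dV_g)$, using the Bessel-potential characterization of $H^\tau(\mathbb{R}^n)$ recorded in Section~\ref{sec_Sobolev_spaces_direct_problem} together with the self-adjointness of $-\Delta_g$. While Theorem~\ref{thm_pseudodiff_op} delivers local mapping bounds via standard pseudodifferential calculus, the global bounds are cleanest to extract directly from the spectral theorem.

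Since $C_0^\infty(\mathbb{R}^n)$ is dense in $H^\tau(\mathbb{R}^n)$ for every $\tau \in \mathbb{R}$, it suffices to prove
\[
\|(-\Delta_g)^\alpha u\|_{H^{\tau-2\alpha}(\mathbb{R}^n)} \leq C \|u\|_{H^\tau(\mathbb{R}^n)}
\]
for $u \in C_0^\infty(\mathbb{R}^n)$ and then extend by continuity. Invoking the norm equivalence $\|v\|_{H^s(\mathbb{R}^n)} \asymp \|(1-\Delta_g)^{s/2} v\|_{L^2(\mathbb{R}^n; dV_g)}$ recalled at the beginning of \S\ref{sec_Sobolev_spaces_direct_problem}, this reduces further to proving
\[
\|(1-\Delta_g)^{(\tau-2\alpha)/2} (-\Delta_g)^\alpha u\|_{L^2(\mathbb{R}^n; dV_g)} \leq C \|(1-\Delta_g)^{\tau/2} u\|_{L^2(\mathbb{R}^n; dV_g)}.
\]

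The core step is a one-line spectral computation. Since $-\Delta_g$ is nonnegative self-adjoint, and $C_0^\infty(\mathbb{R}^n) \subset H^{2k}(\mathbb{R}^n) = \mathcal{D}((-\Delta_g)^k)$ for every $k \in \mathbb{N}$, one may freely compose and commute real powers of $1-\Delta_g$ and $-\Delta_g$ on $C_0^\infty(\mathbb{R}^n)$ via the Borel functional calculus of a single self-adjoint operator, and factor
\[
(1-\Delta_g)^{(\tau-2\alpha)/2}(-\Delta_g)^\alpha = \varphi(-\Delta_g) \circ (1-\Delta_g)^{\tau/2}, \qquad \varphi(a) := \frac{a^\alpha}{(1+a)^\alpha}.
\]
As $0 \leq \varphi(a) \leq 1$ on $[0,\infty)$, the spectral theorem yields $\|\varphi(-\Delta_g)\|_{L^2(\mathbb{R}^n; dV_g) \to L^2(\mathbb{R}^n; dV_g)} \leq 1$, which produces the required estimate with an explicit constant (and even with $C$ independent of $\tau$ if one tracks the norm-equivalence constants).

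The only mild subtlety will be bookkeeping of consistency: the continuous extension starting from $C_0^\infty(\mathbb{R}^n)$ must coincide with the previously introduced realizations of $(-\Delta_g)^\alpha$, namely the functional-calculus definition on $\mathcal{D}((-\Delta_g)^\alpha) = H^{2\alpha}(\mathbb{R}^n)$ and the extension $H^\alpha(\mathbb{R}^n) \to H^{-\alpha}(\mathbb{R}^n)$ of Proposition~\ref{prop_eq_2_6}. This compatibility is automatic once one observes that all candidate maps agree on $C_0^\infty(\mathbb{R}^n)$ and that $C_0^\infty(\mathbb{R}^n)$ is simultaneously dense in every Sobolev space appearing here, so the uniqueness of continuous linear extensions closes the argument. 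No genuine obstacle is expected.
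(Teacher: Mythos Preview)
Your proof is correct and takes a genuinely different, more direct route than the paper's. The paper proceeds by establishing endpoint bounds---first $H^{\alpha}\to H^{-\alpha}$ (from Proposition~\ref{prop_eq_2_6}) and $H^{2k}\to H^{2k-2\alpha}$ for integer $k$ (from the domain mapping of the functional calculus)---then interpolates to obtain $H^{\alpha+r}\to H^{-\alpha+r}$ for $r>0$, and finally invokes duality to handle negative $r$. You bypass all of this by observing that, in the $g$-Bessel-potential norms, the entire statement reduces to the $L^2$-boundedness of the single spectral multiplier $\varphi(a)=a^{\alpha}/(1+a)^{\alpha}$, which is trivially bounded by $1$ on $[0,\infty)$. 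Your approach is shorter and yields the bound for all $\tau$ in one stroke; the paper's approach has the minor expository advantage of being a template that generalizes to operators which are not literally functions of a single self-adjoint operator (so that interpolation and duality are genuinely needed), but in the present setting that extra machinery is unnecessary. Your handling of the consistency of the various realizations of $(-\Delta_g)^{\alpha}$ via density of $C_0^{\infty}$ is also fine.
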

\begin{proof}
First, recall from Proposition \ref{prop_eq_2_6} that the map
\begin{equation}
\label{eq_702_1}
(-\Delta_g)^\alpha: H^\alpha(\mathbb{R}^n) \to H^{-\alpha}(\mathbb{R}^n)
\end{equation}
is continuous. We next claim that
\begin{equation}
\label{eq_702_0}
(-\Delta_g)^\alpha: H^{\alpha+r}(\mathbb{R}^n) \to H^{-\alpha+r}(\mathbb{R}^n)
\end{equation}
is continuous for all $r >0$. Indeed, by the functional calculus of self-adjoint operators, we have that $(-\Delta_g)^\alpha$ is continuous from $\mathcal{D}((-\Delta_g)^k)$ to $\mathcal{D}((-\Delta_g)^{k-\alpha})$ for all $k = 1, 2, \dots$, and therefore, the map
\begin{equation}
\label{eq_702_2}  
(-\Delta_g)^\alpha: H^{2k}(\mathbb{R}^n) \to H^{2k-2\alpha}(\mathbb{R}^n)
\end{equation}
is continuous. Interpolating between the bounds \eqref{eq_702_1} and \eqref{eq_702_2}, we obtain that 
the map 
\begin{equation}
\label{eq_702_3}
(-\Delta_g)^\alpha: H^{(1-\theta)\alpha+\theta 2k}(\mathbb{R}^n) \to H^{(1-\theta)(-\alpha)+\theta(2k-2\alpha)}(\mathbb{R}^n)
\end{equation}
is continuous for all $0 \leq \theta \leq 1$ and $k = 1, 2, \dots$; see \cite[Theorem 7.22]{Grubb_book}. For any $r > 0$, there is $k$ such that $0 < r \leq 2k-\alpha$, and thus, with $\theta = \frac{r}{2k-\alpha}$, the bound \eqref{eq_702_3} becomes \eqref{eq_702_0}. 

Furthermore, by duality, we also obtain that
\begin{equation}
\label{eq_702_0_negative_r}
(-\Delta_g)^\alpha: H^{\alpha-r}(\mathbb{R}^n) \to H^{-\alpha-r}(\mathbb{R}^n)
\end{equation}
is continuous for all $r > 0$. Indeed, letting $u, v \in C^\infty_0(\mathbb{R}^n)$, we have
\[
((-\Delta_g)^\alpha u, v)_{L^2(\mathbb{R}^n; dV_g)} = (u, (-\Delta_g)^\alpha v)_{L^2(\mathbb{R}^n; dV_g)},
\]
and therefore, using \eqref{eq_702_0}, we get
\begin{align*}
|((-\Delta_g)^\alpha u, v)_{L^2(\mathbb{R}^n; dV_g)}| &\leq \|u\|_{H^{\alpha-r}(\mathbb{R}^n; dV_g)} \|(-\Delta_g)^\alpha v\|_{H^{-\alpha+r}(\mathbb{R}^n; dV_g)} \\
&\leq C \|u\|_{H^{\alpha-r}(\mathbb{R}^n)} \|v\|_{H^{\alpha+r}(\mathbb{R}^n)}.
\end{align*}
Hence, \eqref{eq_702_0_negative_r} follows. Combining \eqref{eq_702_1},  \eqref{eq_702_0}, and \eqref{eq_702_0_negative_r}, we conclude \eqref{eq_702_0_general}.
\end{proof}

\subsection{The Vishik--Eskin estimates for the fractional Laplace--Beltrami operator}
\label{sec:VE_p1}

Let $g$ be a $C^\infty$ Riemannian metric on $\mathbb{R}^n$,  $n \geq 2$, which agrees with the Euclidean metric outside of a compact set, and let $0 < \alpha < 1$. According to Theorem \ref{thm_pseudodiff_op}, we have $(-\Delta_g)^\alpha \in \Psi^{2\alpha}_{1,0}(\mathbb{R}^n)$, which is a classical elliptic pseudodifferential operator on $\mathbb{R}^n$. Let $\Omega \subset \mathbb{R}^n$ be an open bounded set with $C^\infty$ boundary.  

\begin{lem}
\label{lem_mu-transmission}
The operator $(-\Delta_g)^\alpha$ on $\mathbb{R}^n$ is of type $\alpha$ relative to $\Omega$, i.e., it satisfies the $\mu=\alpha$ transmission condition in the sense of \cite[Definition 2.5]{Grubb_2015}, and has a factorization index $\alpha$ in the sense of \cite[Section 3]{Grubb_2015}. If $n=2$, $(-\Delta_g)^\alpha$ satisfies the root condition in the sense of \cite[Section 3]{Grubb_2015}. 
\end{lem}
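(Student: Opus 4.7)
The plan is to verify each of the three claims—the $\mu=\alpha$ transmission condition, the factorization index being $\alpha$, and (for $n=2$) the root condition—by working directly with the classical elliptic pseudodifferential structure of $(-\Delta_g)^\alpha$ given by Theorem \ref{thm_pseudodiff_op}. I work in local boundary coordinates $(x', x_n)$ centered at an arbitrary point of $\partial \Omega$ with $\partial \Omega = \{x_n = 0\}$ and $\Omega = \{x_n > 0\}$; in these coordinates $(-\Delta_g)^\alpha$ is a classical elliptic $\Psi$DO of order $2\alpha$ with classical expansion $p(x,\xi) \sim \sum_{j \ge 0} p_{2\alpha - j}(x,\xi)$ and principal symbol $p_{2\alpha}(x,\xi) = q(x,\xi)^\alpha$, where $q(x,\xi) = g^{jk}(x)\xi_j\xi_k$ is a positive-definite quadratic form.

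For the $\mu = \alpha$ transmission condition of \cite[Definition 2.5]{Grubb_2015}, the required identity
\[
D_x^\beta D_\xi^\gamma p_{2\alpha - j}(x', 0; 0, \dots, 0, -1) = e^{-i\pi(j + |\gamma|)}\, D_x^\beta D_\xi^\gamma p_{2\alpha - j}(x', 0; 0, \dots, 0, 1)
\]
reduces to the parity property $p_{2\alpha - j}(x, -\xi) = (-1)^j\, p_{2\alpha - j}(x,\xi)$ for each $j \ge 0$. I would establish this inductively from the structure of the expansion: the polynomial symbol of $-\Delta_g$ is $q(x,\xi) + a_1(x,\xi)$, with $q$ even of degree $2$ and $a_1$ imaginary, odd, of degree $1$ (arising from the first-order divergence correction), and the classical expansion of $(-\Delta_g)^\alpha$ constructed in Appendix \ref{sec_pseudodiff_op} (via the heat semigroup representation, or equivalently via a Seeley-type construction) builds each $p_{2\alpha - j}$ from derivatives of $q^\alpha$ (even in $\xi$), multiplications by $a_1$ (odd), and Poisson brackets, where every such operation decreasing the homogeneity by one flips parity in $\xi$ exactly once. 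Hence $p_{2\alpha - j}$ has parity $(-1)^j$, yielding the transmission condition.

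For the factorization index, fix $x \in \partial \Omega$ and a nonzero tangential covector $\xi'$. Since $q$ is positive-definite, I factor it as a polynomial in $\xi_n$,
\[
q(x, \xi', \xi_n) = g^{nn}(x)\bigl(\xi_n - \lambda^+(x,\xi')\bigr)\bigl(\xi_n - \lambda^-(x,\xi')\bigr),
\]
with roots $\lambda^\pm$ satisfying $\pm\Im \lambda^\pm > 0$ (because $q > 0$ on $\mathbb{R}^n \setminus \{0\}$). Choosing the branches of $(\xi_n - \lambda^\pm)^\alpha$ holomorphic in the half-planes $\pm\Im \xi_n > \pm\Im \lambda^\pm$, respectively, gives
\[
p_{2\alpha}(x, \xi', \xi_n) = (g^{nn})^\alpha(\xi_n - \lambda^+)^\alpha(\xi_n - \lambda^-)^\alpha,
\]
whose ``$+$''-factor has degree $\alpha$; by \cite[Section 3]{Grubb_2015}, this is the factorization index. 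For $n = 2$ the tangential covector is a scalar $\xi_1 \in \mathbb{R} \setminus \{0\}$ and the roots $\lambda^\pm(x,\xi_1)$ depend smoothly on $(x,\xi_1)$ for $\xi_1 \ne 0$, so the factorization is globally consistent across both signs of $\xi_1$ and the ``$+$''-factor is uniquely determined by analytic continuation, giving the root condition of \cite[Section 3]{Grubb_2015}.

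The main obstacle is the parity claim for the subprincipal symbols in the second step, as it requires careful bookkeeping of how the classical expansion of $(-\Delta_g)^\alpha$ produced in Appendix \ref{sec_pseudodiff_op} inherits the alternating parity from the differential operator $-\Delta_g$; once this is in hand, the factorization index and root-condition assertions follow directly from the explicit factorization of the principal symbol $q^\alpha$.
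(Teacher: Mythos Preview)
Your approach is correct in outline but takes a genuinely different route from the paper. The paper's proof is much shorter: it observes that both the $\mu$-transmission condition and the factorization index are \emph{local} conditions on the full and principal symbols at $\partial\Omega$, and then invokes \cite[Lemma~2.9]{Grubb_2015} (transmission condition) and \cite[Example~3.2]{Grubb_2015} (factorization index), which already verify these properties for $(-\Delta_{g_2})^\alpha$ on a \emph{closed} Riemannian manifold $(M_2,g_2)$. The transfer to $(\mathbb{R}^n,g)$ is then immediate from the proof of Theorem~\ref{thm_pseudodiff_op} in Appendix~\ref{sec_pseudodiff_op}, which shows precisely that the full symbol of $(-\Delta_g)^\alpha$ agrees, modulo smoothing, with that of $(-\Delta_{g_2})^\alpha$ on a suitable compactification $M_2$ near any compact set. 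The root condition is handled in one line from the explicit form of the principal symbol.

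Your direct verification is more self-contained but substantially more labor-intensive, and you correctly identify the parity bookkeeping for the lower-order symbols $p_{2\alpha-j}$ as the main obstacle. Note, however, that Appendix~\ref{sec_pseudodiff_op} does not actually produce the symbol expansion via a Seeley-type recursion; it uses the Dynkin--Helffer--Sj\"ostrand formula and comparison with the compact case. So your inductive parity argument would have to appeal to Seeley's original construction separately, at which point you are essentially reproving \cite[Lemma~2.9]{Grubb_2015}.

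One genuine correction: your argument for the root condition in $n=2$ is not quite right. Smooth dependence of $\lambda^\pm(x,\xi_1)$ on $\xi_1 \ne 0$ does \emph{not} give consistency across the two components $\{\xi_1>0\}$ and $\{\xi_1<0\}$, since they are disconnected and there is no analytic continuation through $\xi_1=0$. The correct (and simpler) reason is that $q(x,\xi',\xi_n)$ is a real positive-definite quadratic form, so for every $\xi_1\ne 0$ its roots in $\xi_n$ form a complex-conjugate pair with exactly one in each half-plane; hence the factorization index is $\alpha$ on both components.
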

\begin{proof}
First, we observe that both the type and the factorization index conditions are local conditions on the full and principal symbols of $(-\Delta_g)^\alpha$ on the boundary of $\Omega$, respectively. Furthermore, it is proven in \cite[Lemma 2.9]{Grubb_2015} that the fractional Laplacian $(-\Delta_{g_2})^\alpha$ on a $C^\infty$ compact Riemannian manifold $(M_2, g_2)$ without boundary is of type $\mu=\alpha$ for any open set $\omega \subset M_2$ with $C^\infty$ boundary. Additionally, \cite[Example 3.2]{Grubb_2015} shows that $(-\Delta_{g_2})^\alpha$ has a factorization index $\alpha$. This, together with the proof of Theorem \ref{thm_pseudodiff_op}, establishes the first two claims. The root condition follows from the form of the principal symbol of $(-\Delta_g)^\alpha$ given in Theorem~\ref{thm_pseudodiff_op}.
\end{proof}

As a consequence of the theory developed in \cite{Hormander_1965-66, Grubb_2015, Eskin_book_1981, Vishik_Eskin_1965}, in conjunction with Theorem \ref{thm_pseudodiff_op} and Lemma \ref{lem_mu-transmission}, we have the existence theory for the Dirichlet problem for the fractional Laplace–Beltrami operator, see \cite[Theorem 2.4.1]{Hormander_1965-66} and \cite[Theorem 3.1]{Grubb_2015}. 
\begin{thm}
\label{thm_Vishik_Eskin}
Let $\alpha\in (0,1)$ and $\delta\in (-1/2, 1/2)$. Then 
the mapping 
\[
H^{\alpha+\delta}_0(\Omega)\ni u\mapsto r_\Omega (-\Delta_g)^\alpha u\in H^{-\alpha+\delta}(\Omega)
\]
is a Fredholm operator. 
\end{thm}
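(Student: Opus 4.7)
The plan is to deduce the Fredholm property by directly invoking the general theory of pseudodifferential boundary value problems satisfying the $\mu$-transmission condition, as developed in \cite{Hormander_1965-66, Grubb_2015, Eskin_book_1981, Vishik_Eskin_1965}. All the abstract hypotheses have already been arranged by the preceding results in this section, so the argument reduces to checking that we are in a position to apply the black-box theorem.

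First, I would collect the structural properties of $(-\Delta_g)^\alpha$: by Theorem \ref{thm_pseudodiff_op}, $(-\Delta_g)^\alpha \in \Psi^{2\alpha}_{1,0}(\mathbb{R}^n)$ is a classical elliptic pseudodifferential operator of order $2\alpha$, with real positive principal symbol $\big(\sum g^{jk}(x)\xi_j\xi_k\big)^\alpha$ that agrees with $|\xi|^{2\alpha}$ outside a compact set in $x$. By Lemma \ref{lem_mu-transmission}, this operator satisfies the $\mu=\alpha$ transmission condition with respect to $\partial\Omega$, has factorization index $\alpha$, and, in the case $n=2$, additionally satisfies the root condition. This is precisely the package of hypotheses required by the Vishik--Eskin--H\"ormander--Grubb theory.

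Second, I would apply \cite[Theorem 2.4.1]{Hormander_1965-66} (equivalently \cite[Theorem 3.1]{Grubb_2015}), which asserts that for an elliptic classical pseudodifferential operator $A$ of order $2\alpha$ satisfying the $\alpha$-transmission condition on $\partial\Omega$ with factorization index $\alpha$, the mapping
\[
u \mapsto r_\Omega A u, \qquad H^{\alpha+\delta}_0(\Omega) \to H^{-\alpha+\delta}(\Omega),
\]
is Fredholm in the admissible range $\delta \in (-1/2,1/2)$. The range restriction $|\delta|<1/2$ here is exactly the one in which the support and restriction operators behave well in $H^s$ (cf.\ Theorem \ref{thm_Agranovich}), so that no additional boundary trace conditions enter. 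Specializing this statement to $A = (-\Delta_g)^\alpha$ yields the claimed Fredholm property.

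The main potential obstacle is not in the Fredholm argument itself, since the theorem is quoted directly, but rather in ensuring that the hypotheses of the cited references are genuinely satisfied in the present setting; this however has been taken care of by Theorem \ref{thm_pseudodiff_op} and Lemma \ref{lem_mu-transmission}. A minor bookkeeping point to keep in mind is the compatibility of the convention for $H^s_0(\Omega)$ used here (distributions in $H^s(\mathbb{R}^n)$ with support in $\overline\Omega$) with that of the references, but this is standard and immediate.
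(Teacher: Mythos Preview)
Your proposal is correct and matches the paper's own treatment: the paper does not give a standalone proof but states the theorem as a direct consequence of \cite[Theorem 2.4.1]{Hormander_1965-66} and \cite[Theorem 3.1]{Grubb_2015}, with the hypotheses (classical ellipticity, $\mu=\alpha$ transmission, factorization index $\alpha$, and the root condition when $n=2$) supplied by Theorem~\ref{thm_pseudodiff_op} and Lemma~\ref{lem_mu-transmission}. The only additional remark the paper makes is that the local-in-$x$ uniform symbol estimates used in the definition of $\Psi^{2\alpha}_{1,0}(\mathbb{R}^n)$ suffice for invoking H\"ormander's proof, which addresses the minor bookkeeping concern you flagged.
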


\begin{rem} It can be inferred from the proof provided in \cite[Theorem 2.4.1]{Hormander_1965-66} that the local uniform estimates in $x$ for the pseudodifferential operator (as stated in Section \ref{sec_pseudodiff_op_1}) are sufficient for Theorem \ref{thm_Vishik_Eskin}. 
\end{rem}

Thanks to Lemma \ref{lem_global_bounds_fractional_laplacian}, we have the following consequence of Theorem \ref{thm_Vishik_Eskin}.
\begin{cor}
\label{cor_Vishin_Eskin}
Let $\alpha \in (0,1)$ and $\delta \in [0, 1/2)$. Then for any $f \in H^{\alpha+\delta}(\Omega_e)$, there is a unique solution $u \in H^{\alpha+\delta}(\mathbb{R}^n)$ to the problem
\begin{equation}
\label{eq_3_distribution_4}
\begin{cases}
(-\Delta_g)^\alpha u=0 & \quad \text{in} \quad \mathcal{D}'(\Omega), \\
u|_{\Omega_e}=f. 
\end{cases}
\end{equation}
\end{cor}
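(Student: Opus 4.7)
The plan is to reduce this inhomogeneous Dirichlet problem to an interior problem in $H^{\alpha+\delta}_0(\Omega)$ and then combine the Fredholm statement of Theorem~\ref{thm_Vishik_Eskin} with the well-posedness at the energy level (Proposition~\ref{prop_well-posedness}).

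First, using the quotient definition of $H^{\alpha+\delta}(\Omega_e)$, pick an extension $F \in H^{\alpha+\delta}(\mathbb{R}^n)$ of $f$. Writing $u = v + F$, the condition $u|_{\Omega_e} = f$ becomes $v|_{\Omega_e} = 0$, which forces $\supp v \subset \overline{\Omega}$ and hence $v \in H^{\alpha+\delta}_0(\Omega)$. By Lemma~\ref{lem_global_bounds_fractional_laplacian}, $(-\Delta_g)^\alpha F \in H^{-\alpha+\delta}(\mathbb{R}^n)$, and so $G := -r_\Omega (-\Delta_g)^\alpha F \in H^{-\alpha+\delta}(\Omega)$. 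Problem~\eqref{eq_3_distribution_4} is therefore equivalent to finding $v \in H^{\alpha+\delta}_0(\Omega)$ solving
\[
T_\delta v := r_\Omega (-\Delta_g)^\alpha v = G.
\]

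Second, for uniqueness, suppose $u_1, u_2 \in H^{\alpha+\delta}(\mathbb{R}^n) \subset H^{\alpha}(\mathbb{R}^n)$ both solve \eqref{eq_3_distribution_4}. Then $u_1 - u_2 \in H^\alpha(\mathbb{R}^n)$ solves the homogeneous problem with zero exterior data, so by Proposition~\ref{prop_well-posedness} (with $F = 0$, $f = 0$), $u_1 = u_2$. In particular, $\ker T_\delta = \{0\}$.

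Third, for existence, Theorem~\ref{thm_Vishik_Eskin} guarantees that $T_\delta$ is Fredholm. At $\delta = 0$, Proposition~\ref{prop_finite_energy_sol} combined with Lemma~\ref{lem_distrubutional_solution} shows that $T_0$ is a Banach space isomorphism onto $H^{-\alpha}(\Omega)$, so $\operatorname{ind}(T_0) = 0$. The Fredholm index in the $\mu$-transmission calculus of \cite{Hormander_1965-66, Grubb_2015} is stable under changes of the Sobolev scale within the admissible range $\delta \in (-1/2, 1/2)$, so $\operatorname{ind}(T_\delta) = 0$. Together with the triviality of $\ker T_\delta$, this yields surjectivity of $T_\delta$, producing the desired $v$, and hence $u = v + F \in H^{\alpha+\delta}(\mathbb{R}^n)$.

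\textbf{Main obstacle.} The key technical point is the constancy of the Fredholm index across the scale $\delta \in [0, 1/2)$. Equivalently, one needs the elliptic regularity statement that any energy solution $v \in H^\alpha_0(\Omega)$ of $T_0 v = G$ with $G \in H^{-\alpha+\delta}(\Omega)$ already lies in $H^{\alpha+\delta}_0(\Omega)$. This is the most delicate ingredient and hinges on the parametrix construction in the $\mu$-transmission calculus, which applies here precisely because, by Lemma~\ref{lem_mu-transmission}, $(-\Delta_g)^\alpha$ satisfies the $\alpha$-transmission condition and has factorization index $\alpha$ relative to $\Omega$.
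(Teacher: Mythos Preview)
Your proof is correct and follows essentially the same approach as the paper: reduce to the interior problem via an extension, invoke the Vishik--Eskin Fredholm result, and use the $\delta=0$ well-posedness to pin down the index. The paper is slightly more specific in citing \cite[Theorem~3.5]{Grubb_2014} for the $\delta$-independence of both kernel and cokernel (rather than just the index), and uses Lemma~\ref{lem_distrubutional_solution} for uniqueness, but the logic is the same.
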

\begin{proof}
To show the existence of a solution $u \in H^{\alpha+\delta}(\mathbb{R}^n)$ to \eqref{eq_3_distribution_4}, we let $F \in H^{\alpha+\delta}(\mathbb{R}^n)$ be an extension of $f$ to $\mathbb{R}^n$ such that $F|_{\Omega_e} = f$. We look for $u$ in the form $u = v + F$, where $v \in H^{\alpha+\delta}(\mathbb{R}^n)$ satisfies
\begin{equation}
\label{eq_3_distribution_5}
\begin{cases}
 (-\Delta_g)^\alpha v = G & \quad \text{in} \quad \mathcal{D}'(\Omega), \\
v = 0 & \quad \text{in} \quad \Omega_e.
\end{cases}
\end{equation}
Here $G := -r_\Omega (-\Delta_g)^\alpha F \in H^{-\alpha+\delta}(\Omega)$ thanks to Lemma \ref{lem_global_bounds_fractional_laplacian}.

Now, Theorem \ref{thm_Vishik_Eskin} gives Fredholm solvability in $H^{\alpha+\delta}_0(\Omega)$ of the problem \eqref{eq_3_distribution_5}. By \cite[Theorem 3.5]{Grubb_2014}, the finite-dimensional kernel and range complement are independent of $\delta$ and they are trivial when $\delta=0$ by Lemma \ref{lem_distrubutional_solution}.  Hence, there is a unique $v \in H^{\alpha+\delta}(\mathbb{R}^n)$ satisfying \eqref{eq_3_distribution_5}, and therefore, the existence of a solution to \eqref{eq_3_distribution_4} follows. The uniqueness of a solution to \eqref{eq_3_distribution_4} follows from Lemma \ref{lem_distrubutional_solution}. We refer to \cite[Lemma A.1]{Ghosh_Salo_Uhlmann_2020} for similar arguments.
\end{proof}

For future reference, we state the following result, which is similar to Lemma~\ref{lem_distrubutional_solution}. The regularity improvement is an immediate consequence of Corollary \ref{cor_Vishin_Eskin}.

\begin{lem} 
\label{lem_distrubutional_solution_exterior_data}
Let $\alpha\in (0,1)$.  Let $f \in C^\infty_0(\Omega_e)$ and let $u \in H^\alpha(\mathbb{R}^n)$ be the unique energy solution to
\begin{equation}
\label{eq_distr_10_new}
\begin{cases}
(-\Delta_{g})^\alpha u = 0 & \text{in} \quad \Omega, \\
u = f & \text{in} \quad \Omega_e.
\end{cases}
\end{equation}
Then $u$ satisfies the first equation in \eqref{eq_distr_10_new} in $\mathcal{D}'(\Omega)$ and $u|_{\Omega_e} = f$. Furthermore, we have $u \in H^{\alpha+\delta}(\mathbb{R}^n)$ for all $\delta\in [0,1/2)$.
\end{lem}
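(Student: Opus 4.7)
The plan is to mirror the argument of Lemma~\ref{lem_distrubutional_solution} to pass from the energy formulation to the distributional one, and then to invoke Corollary~\ref{cor_Vishin_Eskin} together with a uniqueness argument to upgrade the regularity.

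First, since $f \in C^\infty_0(\Omega_e) \subset H^\alpha(\mathbb{R}^n)$, the definition of an energy solution in Proposition~\ref{prop_well-posedness} gives $u - f \in H^\alpha_0(\Omega)$, which immediately yields $u|_{\Omega_e} = f$. To obtain the distributional equation, I test against an arbitrary $\varphi \in C^\infty_0(\Omega)$ and use Proposition~\ref{prop_eq_2_6} (the extension of the identity $\mathcal{E}(u,\varphi) = ((-\Delta_g)^\alpha u,\varphi)_{L^2(\mathbb{R}^n; dV_g)}$ to $H^\alpha(\mathbb{R}^n) \times H^\alpha(\mathbb{R}^n)$):
\[
0 \;=\; \mathcal{E}(u,\varphi) \;=\; \big((-\Delta_g)^\alpha u,\varphi\big)_{L^2(\mathbb{R}^n; dV_g)} \;=\; \big\langle (-\Delta_g)^\alpha u,\,\sqrt{|g|}\,\overline{\varphi}\big\rangle_{H^{-\alpha}(\mathbb{R}^n), H^\alpha(\mathbb{R}^n)}.
\]
Because $\sqrt{|g|} \in C^\infty(\mathbb{R}^n)$ is strictly positive, the map $\psi \mapsto \sqrt{|g|}\,\psi$ is a bijection of $C^\infty_0(\Omega)$, and therefore $(-\Delta_g)^\alpha u = 0$ in $\mathcal{D}'(\Omega)$.

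For the regularity upgrade, fix $\delta \in [0, 1/2)$. Since $f \in C^\infty_0(\Omega_e) \subset H^{\alpha+\delta}(\Omega_e)$, Corollary~\ref{cor_Vishin_Eskin} produces a (unique) solution $\tilde{u} \in H^{\alpha+\delta}(\mathbb{R}^n)$ of $(-\Delta_g)^\alpha \tilde{u} = 0$ in $\mathcal{D}'(\Omega)$ with $\tilde{u}|_{\Omega_e} = f$. The function $\tilde{u} - f$ has support in $\overline{\Omega}$ and lies in $H^{\alpha+\delta}(\mathbb{R}^n) \subset H^\alpha(\mathbb{R}^n)$, hence in $H^\alpha_0(\Omega)$. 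Reversing the duality computation from the first step (testing $(-\Delta_g)^\alpha \tilde{u} = 0$ against $\sqrt{|g|}\,\overline{\varphi}$ for $\varphi \in C^\infty_0(\Omega)$) shows that $\tilde{u}$ is itself an energy solution to \eqref{eq_distr_10_new}. The uniqueness of the energy solution guaranteed by Proposition~\ref{prop_well-posedness} then forces $u = \tilde{u}$, so $u \in H^{\alpha+\delta}(\mathbb{R}^n)$.

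The only delicate point is the correct identification of the energy and distributional formulations through the Jacobian factor $\sqrt{|g|}$ appearing in the duality pairing versus the $L^2(\mathbb{R}^n; dV_g)$ inner product; once this book-keeping is in place, the improved regularity follows at once from Corollary~\ref{cor_Vishin_Eskin} combined with the uniqueness of the energy solution.
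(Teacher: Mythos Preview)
Your proof is correct and follows exactly the approach indicated in the paper: the paper states that the first two claims are ``similar to Lemma~\ref{lem_distrubutional_solution}'' and that ``the regularity improvement is an immediate consequence of Corollary~\ref{cor_Vishin_Eskin}.'' You have fleshed out precisely these two steps, including the uniqueness argument that identifies the energy solution $u$ with the higher-regularity solution $\tilde{u}$ furnished by Corollary~\ref{cor_Vishin_Eskin}.
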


\section{Proof of Theorem \ref{thm_main}. The heat semigroup and pseudodifferential approach}

\label{sec_proof_using_heat_semigroup_approach}

Let $\alpha \in (0,1)$ be fixed. Let $\Omega \subset \mathbb{R}^n$, $n \ge 2$, be a bounded open set with a $C^\infty$ boundary such that $\Omega_e$ is connected, and let $W_1, W_2 \subset \Omega_e$ be open and nonempty. Let $g_1, g_2$ be $C^\infty$ Riemannian metrics on $\mathbb{R}^n$ that agree with the Euclidean metric outside a compact set, such that
\begin{equation}
\label{eq_9001_1}
g_1|_{\Omega_e} = g_2|_{\Omega_e} =: g.
\end{equation}

\subsection{An auxiliary unique continuation type result}

Let $f \in C^\infty_0(W_1)$ and let $u_1^f, u_2^f \in H^\alpha(\mathbb{R}^n)$ be the unique energy solutions to
\begin{equation}
\label{eq_int_1_thm_new}
\begin{cases}
(-\Delta_{g_j})^\alpha u_j^f = 0 & \text{in} \quad \Omega, \\
u_j^f = f & \text{in} \quad \Omega_e,
\end{cases}
\end{equation}
for $j = 1, 2$, respectively. Our starting point is the following unique continuation type result, which shows that knowledge of the partial exterior Dirichlet--to--Neumann map and equation \eqref{eq_9001_1} determine the fractional Laplacian on the energy solutions to the exterior Dirichlet problem \eqref{eq_int_1_thm_new} in the exterior of $\Omega$.

\begin{lem}
\label{lem_exterior_determination}
The equality 
\begin{equation}
\label{eq_9001_2}
\Lambda_{g_1}^{W_1,W_2} = \Lambda_{g_2}^{W_1,W_2}, 
\end{equation}
together with \eqref{eq_9001_1}, implies that
\begin{equation}
\label{eq_4_prop_1_laplacian}
((-\Delta_{g_1})^\alpha u_1^f)|_{\Omega_e} = ((-\Delta_{g_2})^\alpha u_2^f)|_{\Omega_e},
\end{equation}
for all $f \in C^\infty_0(W_1)$. 
\end{lem}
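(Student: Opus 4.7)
My plan is to work with the difference $w := (-\Delta_{g_1})^\alpha u_1^f - (-\Delta_{g_2})^\alpha u_2^f$ and show that it must vanish on all of $\Omega_e$, upgrading the hypothesis that it vanishes on $W_2$. First, I will collect the relevant regularity and support properties of $w$. By Corollary~\ref{cor_Vishin_Eskin}, each $u_j^f \in H^{\alpha+\delta}(\mathbb{R}^n)$ for every $\delta \in [0,\tfrac12)$, and Lemma~\ref{lem_global_bounds_fractional_laplacian} then yields $w \in H^{-\alpha+\delta}(\mathbb{R}^n)$. Picking $\delta \ge \max\{0,\alpha-\tfrac12\}$ makes $-\alpha+\delta \ge -\tfrac12$, so Theorem~\ref{thm_Agranovich}(iii) rules out any nontrivial contribution of $w$ supported on $\partial\Omega$. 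Lemma~\ref{lem_distrubutional_solution_exterior_data} gives $w|_\Omega = 0$ and the hypothesis gives $w|_{W_2} = 0$; by pseudolocality of the classical elliptic pseudodifferential operator $(-\Delta_g)^\alpha$ from Theorem~\ref{thm_pseudodiff_op} together with smoothness of $u_j^f|_{\Omega_e} = f$, the distribution $w$ is in fact smooth on $\Omega_e$.

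Next I will set up a Green's/Alessandrini-type identity. For each $\psi \in C^\infty_0(\Omega_e)$, let $v_j^\psi \in H^\alpha(\mathbb{R}^n)$ be the energy solution (Proposition~\ref{prop_well-posedness}) of the exterior Dirichlet problem with metric $g_j$ and data $\psi/\sqrt{|g|}$; this data is independent of $j$ since $g_1 = g_2 = g$ on $\Omega_e$. Combining the real symmetry of $\mathcal{E}_{g_j}$ from Proposition~\ref{prop_K_bounds}, the defining test-function property of energy solutions applied to both $u_j^f$ and $v_j^\psi$, and the duality identity from Proposition~\ref{prop_eq_2_6}, I derive the master identity
\[
\int_{\Omega_e} w\,\psi\, dx \;=\; \int_{W_1} \bigl[(-\Delta_{g_1})^\alpha v_1^{\psi} - (-\Delta_{g_2})^\alpha v_2^{\psi}\bigr]\, f\sqrt{|g|}\, dx,
\]
valid for all $f \in C^\infty_0(W_1)$ and $\psi \in C^\infty_0(\Omega_e)$. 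Specializing to $\psi \in C^\infty_0(W_2)$ makes the left-hand side zero by hypothesis~\eqref{eq_9001_2}, and varying $f$ yields the swapped equality $(-\Delta_{g_1})^\alpha v_1^{\psi}|_{W_1} = (-\Delta_{g_2})^\alpha v_2^{\psi}|_{W_1}$ for all $\psi \in C^\infty_0(W_2)$.

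To finish I need to propagate this swapped equality from $\psi \in C^\infty_0(W_2)$ to arbitrary $\psi \in C^\infty_0(\Omega_e)$; once achieved, the master identity forces $\int_{\Omega_e} w\,\psi\,dx = 0$ for every such $\psi$, hence $w|_{\Omega_e} = 0$, completing the proof. The hard part will be precisely this extension step. My plan is to deploy a Runge-type density argument: the set of traces $\{(-\Delta_{g_j})^\alpha v_j^\psi|_{W_1} : \psi \in C^\infty_0(W_2)\}$ should be dense, in a topology strong enough to pass to the limit in the master identity, in the analogous set with $\psi$ ranging over $C^\infty_0(\Omega_e)$. Such density will be obtained by a Hahn--Banach / orthogonality duality argument combined with a strong unique continuation property for the fractional Laplace--Beltrami operator. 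The nonlocality of $(-\Delta_g)^\alpha$ is exactly what makes this step delicate, and it is here that the pseudodifferential structure of Theorem~\ref{thm_pseudodiff_op} and the Vishik--Eskin estimates recalled in Section~\ref{sec:VE_p1} play an indispensable role, allowing one to handle the boundary behavior and convert the seemingly local information on $W_2$ into global information on $\Omega_e$.
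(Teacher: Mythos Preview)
Your setup through the master identity is correct and matches the symmetry argument the paper uses \emph{later} (in the step passing from the partial to the full exterior Dirichlet--to--Neumann map). The swapped equality $(-\Delta_{g_1})^\alpha v_1^\psi|_{W_1} = (-\Delta_{g_2})^\alpha v_2^\psi|_{W_1}$ for $\psi\in C_0^\infty(W_2)$ is exactly the statement $\Lambda_{g_1}^{W_2,W_1}=\Lambda_{g_2}^{W_2,W_1}$, which is just the original hypothesis with $W_1$ and $W_2$ interchanged. But this means that your ``hard part'' --- extending from $\psi\in C_0^\infty(W_2)$ to $\psi\in C_0^\infty(\Omega_e)$ --- is precisely the content of the lemma you are trying to prove, so the reduction is circular. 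Your proposed Runge density claim does not rescue this: even if for each fixed $j$ the set $\{(-\Delta_{g_j})^\alpha v_j^\psi|_{W_1}:\psi\in C_0^\infty(W_2)\}$ were dense in the corresponding set with $\psi\in C_0^\infty(\Omega_e)$, the approximating sequences $\psi_k$ would depend on $j$, so you cannot pass to the limit in the \emph{difference} appearing on the right of your master identity. There is also no equation satisfied by $w$ on $\Omega_e$ to which a fractional unique continuation principle could be applied directly.

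The paper supplies the missing mechanism via the heat semigroup representation \eqref{eq_2_6}. Writing $U^{(j)}(t,x)=e^{t\Delta_{g_j}}u_j^f$, one has $(\partial_t-\Delta_{g_j})U^{(j)}=0$ on $(0,\infty)\times\mathbb{R}^n$, and since $g_1=g_2=g$ on $\Omega_e$, the difference $U^{(1)}-U^{(2)}$ solves the \emph{same} heat equation $(\partial_t-\Delta_g)(U^{(1)}-U^{(2)})=0$ on $(0,\infty)\times\Omega_e$. From $w|_{\omega_2}=0$ (for $\omega_2\Subset W_2$ disjoint from $\supp f$) the paper extracts, by applying $\Delta_g^m$ under the time integral and converting to $\partial_t^m$ via the heat equation, the moment conditions $\int_0^\infty(U^{(1)}-U^{(2)})(t,x)\,t^{-1-\alpha-m}\,dt=0$ for all $m\ge 0$ and $x\in\omega_2$. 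An analyticity argument then forces $U^{(1)}=U^{(2)}$ on $(0,\infty)\times\omega_2$, and \emph{parabolic} unique continuation (for the local heat operator $\partial_t-\Delta_g$ on the connected set $\Omega_e$) propagates this to all of $\Omega_e$. The equality of the fractional Laplacians on $\Omega_e$ follows. The point is that the heat-semigroup lift replaces the nonlocal object $w$, which satisfies no useful equation, by a function of $(t,x)$ that satisfies a local parabolic equation with a genuine UCP; this is the idea your argument is missing.
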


\begin{proof}
First, note that \eqref{eq_9001_2} implies
\begin{equation}
\label{eq_4_prop_1_laplacian_direct_cons}
((-\Delta_{g_1})^\alpha u_1^f)|_{W_2} = ((-\Delta_{g_2})^\alpha u_2^f)|_{W_2},
\end{equation}
for all $f \in C^\infty_0(W_1)$. Hence, if $W_2 = \Omega_e$, we are done. Therefore, assume that $W_2 \neq \Omega_e$. We shall proceed to prove \eqref{eq_4_prop_1_laplacian} by considering two cases.

\noindent\textbf{Case I.} Assume that $W_1 \cap W_2 = \emptyset$. To establish \eqref{eq_4_prop_1_laplacian} in this case, we shall first show that the equalities \eqref{eq_4_prop_1_laplacian_direct_cons} and \eqref{eq_9001_1} imply the equality of the heat semigroups on the energy solutions $u_1^f$ and $u_2^f$ to the exterior Dirichlet problem \eqref{eq_int_1_thm_new},
\begin{equation}
\label{eq_4_prop_1}
(e^{t\Delta_{g_1}}u_1^f)(x) = (e^{t\Delta_{g_2}}u_2^f)(x), \quad t > 0, \quad x \in \Omega_e,
\end{equation}
for all $f \in C^\infty_0(W_1)$.

Here, we note that $e^{t\Delta_{g_j}}u_j^f$ satisfies the heat equation $(\partial_t - \Delta_{g_j})e^{t\Delta_{g_j}}u_j^f = 0$ on $(0, \infty) \times \mathbb{R}^n$, and hence, by parabolic hypoellipticity, we have $e^{t\Delta_{g_j}}u_j^f \in C^\infty((0, \infty) \times \mathbb{R}^n)$ for $j = 1, 2$ (see \cite[Theorem 7.4]{Grigoryan_book_2009}).

In order to show \eqref{eq_4_prop_1}, let $f \in C^\infty_0(W_1)$ and set $K := \supp(f) \subset W_1$. Let $\omega_2 \Subset W_2$ be an open bounded nonempty set. Then $K \cap \overline{\omega_2} = \emptyset$ and $\overline{\omega_2} \cap \overline{\Omega} = \emptyset$. Let $u_1^f, u_2^f \in H^\alpha(\mathbb{R}^n)$ be the energy solutions to \eqref{eq_int_1_thm_new}, corresponding to the metrics $g_1$ and $g_2$, respectively. By \eqref{eq_4_prop_1_laplacian_direct_cons}, we get 
\begin{equation}
\label{eq_4_1}
((-\Delta_{g_1})^{\alpha} u_1^f)|_{\omega_2} = ((-\Delta_{g_2})^{\alpha} u_2^f)|_{\omega_2} \in H^{-\alpha}(\omega_2).
\end{equation}

We now claim that, in view of \eqref{eq_2_6}, the equality \eqref{eq_4_1} implies that for any $\varphi \in C^\infty_0(\omega_2)$, the following holds:
\begin{equation}
\label{eq_4_2_new}
\begin{aligned}
\frac{1}{\Gamma(-\alpha)} &\int_0^\infty \left( (e^{t\Delta_{g_1}}u_1^f, \varphi)_{L^2(\mathbb{R}^n;dV_{g_1})} - (u_1^f, \varphi)_{L^2(\mathbb{R}^n;dV_{g_1})} \right) \frac{dt}{t^{1+\alpha}} \\
&= \frac{1}{\Gamma(-\alpha)} \int_0^\infty \left( (e^{t\Delta_{g_2}}u_2^f, \varphi)_{L^2(\mathbb{R}^n;dV_{g_2})} - (u_2^f, \varphi)_{L^2(\mathbb{R}^n;dV_{g_2})} \right) \frac{dt}{t^{1+\alpha}}.
\end{aligned}
\end{equation}

Indeed, it follows from \eqref{eq_4_1} that
\begin{equation}
\label{eq_4_2_new_0}
\langle (-\Delta_{g_1})^{\alpha} u_1^f, \overline{\varphi}\sqrt{|g|}\rangle_{\mathcal{D}'(\R^n), C^\infty_0(\R^n)}
= \langle (-\Delta_{g_2})^{\alpha} u_2^f, \overline{\varphi}\sqrt{|g|}\rangle_{\mathcal{D}'(\R^n), C^\infty_0(\R^n)},
\end{equation}
for all $\varphi \in C^\infty_0(\omega_2)$.
Now, let $\varphi \in C^\infty_0(\omega_2)$ be arbitrary. Using Proposition \ref{prop_eq_2_6} and \eqref{eq_2_6}, we obtain 
\begin{equation}
\label{eq_4_2_new_1}
\begin{aligned}
\langle (-\Delta_{g_1})^{\alpha} u_1^f, \overline{\varphi}\sqrt{|g|}\rangle_{\mathcal{D}'(\R^n), C^\infty_0(\R^n)}=\langle (-\Delta_{g_1})^{\alpha} u_1^f, \overline{\varphi}\sqrt{|g_1|}\rangle_{H^{-\alpha}(\R^n), H^\alpha(\R^n)}\\
=(u_1^f, (-\Delta_{g_1})^{\alpha}\varphi)_{L^2(\R^n;dV_{g_1})}=\frac{1}{\Gamma(-\alpha)} \int_0^\infty (u_1^f, e^{t\Delta_{g_1}}\varphi-\varphi)_{L^2(\R^n; dV_{g_1})}\frac{dt}{t^{1+\alpha}}\\
=\frac{1}{\Gamma(-\alpha)} \int_0^\infty \big((e^{t\Delta_{g_1}} u_1^f, \varphi)_{L^2(\R^n; dV_{g_1})}-(u_1^f, \varphi)_{L^2(\R^n; dV_{g_1})}\big)\frac{dt}{t^{1+\alpha}}.
\end{aligned}
\end{equation}

In the penultimate equality, we applied Fubini's theorem, and in the last equality, we used the fact that $e^{t\Delta_{g_1}}$ is symmetric on $L^2(\mathbb{R}^n; dV_{g_1})$. We shall now proceed to justify our use of Fubini's theorem. To that end, we will show that the function  
\begin{equation}
\label{eq_4_2_new_2}
(t, x) \mapsto u_1^f(x) \overline{(e^{t\Delta_{g_1}}\varphi(x)-\varphi(x))}\frac{1}{t^{1+\alpha}}\sqrt{|g_1(x)|}
\end{equation}
is in $L^1((0,\infty)\times \mathbb{R}^n)$. Indeed, using Tonelli's theorem, the Cauchy--Schwarz inequality, and Minkowski's inequality, we first obtain that 
\begin{equation}
\label{eq_4_2_new_3}
\begin{aligned}
\int_0^\infty\int_{\mathbb{R}^n} |u_1^f(x)| &|e^{t\Delta_{g_1}}\varphi(x)-\varphi(x)|\frac{dt}{t^{1+\alpha}} dV_{g_1}(x) \\
&\le \|u_1^f\|_{L^2(\mathbb{R}^n;dV_{g_1})} \int_0^\infty \|e^{t\Delta_{g_1}}\varphi-\varphi\|_{L^2(\mathbb{R}^n;dV_{g_1})} \frac{dt}{t^{1+\alpha}}.
\end{aligned}
\end{equation}

Using \eqref{eq_2_4_L_2_boundedness}, for $t>0$, we get 
\begin{equation}
\label{eq_4_2_new_4}
\|e^{t\Delta_{g_1}}\varphi-\varphi\|_{L^2(\mathbb{R}^n;dV_{g_1})} \le 2\|\varphi\|_{L^2(\mathbb{R}^n;dV_{g_1})}.
\end{equation}

Since $\varphi \in C^\infty_0(\omega_2)$, by the fundamental theorem of calculus, \eqref{eq_2_4_L_2_boundedness}, and Minkowski's inequality, we obtain that for $t > 0$,
\begin{equation}
\label{eq_4_2_new_5}
\|e^{t\Delta_{g_1}}\varphi-\varphi\|_{L^2(\mathbb{R}^n;dV_{g_1})} \le \int_0^t \|e^{s\Delta_{g_1}}\Delta_{g_1}\varphi\|_{L^2(\mathbb{R}^n;dV_{g_1})} \, ds \le t\|\Delta_{g_1}\varphi\|_{L^2(\mathbb{R}^n;dV_{g_1})},
\end{equation}
see also \cite[page 119, Exercise 4.39]{Grigoryan_book_2009}.

Relying on \eqref{eq_4_2_new_4} for $t \ge 1$ and on \eqref{eq_4_2_new_5} for $0 < t \le 1$, we see that the integral in \eqref{eq_4_2_new_3} is finite, and hence, \eqref{eq_4_2_new_2} follows. This completes the justification of the use of Fubini's theorem in \eqref{eq_4_2_new_1}. The claim \eqref{eq_4_2_new} now follows from \eqref{eq_4_2_new_0}, \eqref{eq_4_2_new_1}, and a similar computation with $g_2$ and $u_2^f$ in place of $g_1$ and $u_1^f$.

Using the conditions $u_1^f|_{\Omega_e} = u_2^f|_{\Omega_e} = f$ and \eqref{eq_9001_1}, we conclude from \eqref{eq_4_2_new} that
\begin{equation}
\label{eq_4_2_new_6}
\begin{aligned}
0 &= \int_0^\infty \left( e^{t\Delta_{g_1}}u_1^f - e^{t\Delta_{g_2}}u_2^f, \varphi \right)_{L^2(\omega_2; dV_{g})} \frac{dt}{t^{1+\alpha}} \\
&= \left( \int_0^\infty \left( e^{t\Delta_{g_1}}u_1^f - e^{t\Delta_{g_2}}u_2^f \right) \frac{dt}{t^{1+\alpha}}, \varphi \right)_{L^2(\omega_2; dV_{g})},
\end{aligned}
\end{equation}
for all $\varphi \in C^\infty_0(\omega_2)$. In the last equality in \eqref{eq_4_2_new_6}, we have applied Fubini's theorem, and we shall now proceed to justify its application. To do so, it suffices to verify that the function
\begin{equation}
\label{eq_4_7_new_just_1}
(t,x) \mapsto \frac{U^{(j)}(t, x)}{t^{1+\alpha}} \text{ is in } L^1((0, \infty) \times \omega_2),
\end{equation}
where 
\begin{equation}
\label{eq_4_7_new_just_2}
U^{(j)}(t, \cdot) = e^{t\Delta_{g_j}}u_j^f,
\end{equation}
with $j = 1, 2$. To establish \eqref{eq_4_7_new_just_1}, we note that $U^{(j)} \in C^\infty((0, \infty) \times \mathbb{R}^n)$, and using \eqref{eq_2_4}, we have for $x \in \omega_2$ and $t > 0$,
\begin{equation}
\label{eq_4_8}
\begin{aligned}
U^{(j)}(t,x) &= \int_{\mathbb{R}^n} e^{t\Delta_{g_j}}(x,y)u^f_j(y) \, dV_{g_j}(y) \\
&= \int_{\Omega} e^{t\Delta_{g_j}}(x,y)u^f_j(y) \, dV_{g_j}(y) + \int_{K} e^{t\Delta_{g_j}}(x,y)f(y) \, dV_{g_j}(y),
\end{aligned}
\end{equation}
where we recall that $K= \supp(f)$.
It follows from \eqref{eq_4_8} that for $x \in \omega_2$ and $t > 0$,
\begin{equation}
\label{eq_4_9}
|U^{(j)}(t,x)| \leq C \|e^{t\Delta_{g_j}}(\cdot,\cdot)\|_{L^\infty(\omega_2 \times \Omega)} \|u^f_j\|_{L^2(\Omega)} + \|e^{t\Delta_{g_j}}(\cdot,\cdot)\|_{L^\infty(\omega_2 \times K)} \|f\|_{L^1(W_1)}.
\end{equation}
Using \eqref{eq_2_5_1}, we deduce from \eqref{eq_4_9} that for $x \in \omega_2$ and $0 < t < 1$,
\begin{equation}
\label{eq_4_10}
|U^{(j)}(t,x)| \leq C e^{-\frac{\tilde{c}}{t}} \left( \|u^f_j\|_{L^2(\Omega)} + \|f\|_{L^1(W_1)} \right),
\end{equation}
where $\tilde{c} > 0$ depends on the Euclidean distances $d(\overline{\omega_2}, \overline{\Omega}) > 0$ and $d(\overline{\omega_2}, K) > 0$. For $x \in \omega_2$ and $t > 1$,
\begin{equation}
\label{eq_4_11}
|U^{(j)}(t,x)| \leq C t^{-\frac{n}{2}} \left( \|u^f_j\|_{L^2(\Omega)} + \|f\|_{L^1(W_1)} \right).
\end{equation}
Thus, \eqref{eq_4_10} and \eqref{eq_4_11}, together with the fact that $\omega_2$ is bounded, imply the claim \eqref{eq_4_7_new_just_1}, and therefore, the application of Fubini's theorem in the last equality in \eqref{eq_4_2_new_6} is justified.

Thus, it follows from \eqref{eq_4_2_new_6}, in view of the notation \eqref{eq_4_7_new_just_2}, that
\begin{equation}
\label{eq_4_6}
\int_0^\infty \big( U^{(1)}(t, x) - U^{(2)}(t, x) \big) \frac{dt}{t^{1+\alpha}} = 0, \quad x \in \omega_2.
\end{equation}
Here, we used the fact that the integral in \eqref{eq_4_6} is a continuous function of $x \in \omega_2$, which follows by the dominated convergence theorem, thanks to the bounds \eqref{eq_4_10} and \eqref{eq_4_11}.

Recalling that $g_1 = g_2 = g$ on $\Omega_e$, we next apply $\Delta_g^m$ to \eqref{eq_4_6}, where $m = 1, 2, \dots$. To that end, we shall show that 
\begin{equation}
\label{eq_4_7}
(t,x) \mapsto \frac{\Delta_g^m U^{(j)}(t, x)}{t^{1+\alpha}} \text{ is in } L^1((0, \infty) \times \omega_2),
\end{equation}
where $j = 1, 2$, for all $m = 1, 2, \dots$. In doing so, we let $m \ge 1$ and, using the fact that 
\begin{equation}
\label{eq_4_4}
\partial_t U^{(j)} = \Delta_{g_j} U^{(j)}, \quad (t, x) \in (0, \infty) \times \mathbb{R}^n,
\end{equation}
we obtain for $x \in \omega_2$ and $t > 0$,
\begin{equation}
\label{eq_4_12}
\begin{aligned}
\Delta_{g_j}^m U^{(j)}(t, x) &= \int_{\mathbb{R}^n} \partial_t^m e^{t \Delta_{g_j}}(x, y) u^f_j(y) \, dV_{g_j}(y) \\
&= \int_{\Omega} \partial_t^m e^{t \Delta_{g_j}}(x, y) u^f_j(y) \, dV_{g_j}(y) + \int_{K} \partial_t^m e^{t \Delta_{g_j}}(x, y) f(y) \, dV_{g_j}(y).
\end{aligned}
\end{equation}
Note that the differentiation under the integral sign in \eqref{eq_4_12} is justified by the bound \eqref{eq_2_5_1_derivatives}; see also \eqref{eq_4_13}, \eqref{eq_4_14}, and \eqref{eq_4_15} below.

It follows from \eqref{eq_4_12} that for $x \in \omega_2$ and $t>0$,
\begin{equation}
\label{eq_4_13}
\begin{aligned}
|\Delta_{g_j}^m U^{(j)}(t,x)|
&\le C\|\partial_t^m e^{t\Delta_{g_j}}(\cdot, \cdot)\|_{L^\infty(\omega_2 \times \Omega)} \|u^f_j\|_{L^2(\Omega)} \\
&\quad + \|\partial_t^m e^{t\Delta_{g_j}}(\cdot, \cdot)\|_{L^\infty(\omega_2 \times K)} \|f\|_{L^1(W_1)}.
\end{aligned}
\end{equation}
Now, \eqref{eq_4_13} together with \eqref{eq_2_5_1_derivatives} implies that for $x \in \omega_2$ and $0 < t < 1$,
\begin{equation}
\label{eq_4_14}
|\Delta_{g_j}^m U^{(j)}(t, x)| \le Ce^{-\frac{\tilde{c}}{t}}(\|u^f_j\|_{L^2(\Omega)} + \|f\|_{L^1(W_1)}),
\end{equation}
where $\tilde{c} > 0$ depends on the Euclidean distances $d(\overline{\omega_2}, \overline{\Omega}) > 0$ and $d(\overline{\omega_2},K) > 0$. For $x \in \omega_2$ and $t > 1$,
\begin{equation}
\label{eq_4_15}
|\Delta_{g_j}^m U^{(j)}(t, x)| \le Ct^{-\frac{n}{2} - m}(\|u^f_j\|_{L^2(\Omega)} + \|f\|_{L^1(W_1)}).
\end{equation}
Thus, the bounds \eqref{eq_4_14} and \eqref{eq_4_15} establish \eqref{eq_4_7}.

Now, letting $\varphi \in C^\infty_0(\omega_2)$, we conclude from \eqref{eq_4_6} in view of \eqref{eq_4_7} and \eqref{eq_4_7_new_just_1}, Fubini's theorem, and integration by parts, that
\begin{align*}
0 &= \bigg\langle \int_0^\infty \big(U^{(1)}(t,x) - U^{(2)}(t,x)\big) \frac{dt}{t^{1+\alpha}}, (\Delta_g^m \overline{\varphi}) \sqrt{|g|} \bigg\rangle_{\mathcal{D'}(\omega_2), C^\infty_0(\omega_2)} \\
&= \int_{\omega_2} \int_0^\infty \big(U^{(1)}(t,x) - U^{(2)}(t,x)\big) \frac{1}{t^{1+\alpha}} \Delta_g^m \overline{\varphi} \, dt \, dV_g(x) \\
&= \int_{\omega_2} \bigg( \int_0^\infty \Delta_g^m \big(U^{(1)}(t,x) - U^{(2)}(t,x)\big) \frac{1}{t^{1+\alpha}} \, dt \bigg) \overline{\varphi} \, dV_g(x).
\end{align*}
Therefore,
\begin{equation}
\label{eq_4_16}
\int_0^\infty \Delta_g^m \big(U^{(1)}(t,x) - U^{(2)}(t,x)\big) \frac{1}{t^{1+\alpha}} \, dt = 0, \quad x \in \omega_2,
\end{equation}
for all $m = 1, 2, \dots$. Here, we used the fact that the integrals in \eqref{eq_4_16} yield continuous functions of $x \in \omega_2$ for $m = 1, 2, \dots$. This follows from the dominated convergence theorem, thanks to the bounds \eqref{eq_4_14} and \eqref{eq_4_15}.

Using the fact that $g_1 = g_2 = g$ on $\Omega_e$, we obtain from \eqref{eq_4_4} that 
\begin{equation}
\label{eq_4_17}
(\partial_t - \Delta_g)(U^{(1)} - U^{(2)}) = 0, \quad \text{on} \quad (0, \infty) \times \Omega_e,
\end{equation}
and hence, for $m = 1, 2, \dots$,
\begin{equation}
\label{eq_4_18}
\Delta_g^m (U^{(1)} - U^{(2)}) = \partial_t^m (U^{(1)} - U^{(2)}), \quad \text{on} \quad (0, \infty) \times \Omega_e.
\end{equation}
Using \eqref{eq_4_18}, we have from \eqref{eq_4_16} that 
\begin{equation}
\label{eq_4_19}
\int_0^\infty \partial_t^m \big(U^{(1)}(t, x) - U^{(2)}(t, x)\big) \frac{1}{t^{1+\alpha}} \, dt = 0, \quad x \in \omega_2,
\end{equation}
for $m = 1, 2, \dots$. Integrating by parts $m$ times in \eqref{eq_4_19} and in view of \eqref{eq_4_6}, we get 
\begin{equation}
\label{eq_4_20}
\int_0^\infty \big(U^{(1)}(t, x) - U^{(2)}(t, x)\big) \frac{1}{t^{1+\alpha+m}} \, dt = 0, \quad x \in \omega_2,
\end{equation}
for $m = 0, 1, 2, \dots$. Note that when integrating by parts, there are no contributions from the endpoints thanks to the bounds \eqref{eq_4_10}, \eqref{eq_4_11}, \eqref{eq_4_14}, and \eqref{eq_4_15}.

To proceed, we follow \cite{FGKU_2021}. Making the change of variables $\tau = 1/t$ in \eqref{eq_4_20} and letting
\[
\varphi(\tau) = \big(U^{(1)}(\tau^{-1}, x) - U^{(2)}(\tau^{-1}, x)\big) \tau^{\alpha - 1}, \quad x \in \omega_2,
\]
we obtain
\begin{equation}
\label{eq_4_21}
\int_0^\infty \varphi(\tau) \tau^m \, d\tau = 0,
\end{equation}
for $m = 0, 1, 2, \dots$. Using \eqref{eq_4_10} and \eqref{eq_4_11}, we get 
\begin{equation}
\label{eq_4_21_for_holom}
|\varphi(\tau)| \le \mathcal{O}(1) e^{-c\tau} \tau^{\alpha - 1},
\end{equation}
for some $c > 0$. Thus, thanks to \eqref{eq_4_21_for_holom}, an application of \cite[Theorem 3.3.7]{Lerner_book} shows that the Fourier transform of $1_{[0,\infty)} \varphi$,
\[
\mathcal{F}(1_{[0,\infty)} \varphi)(\xi) = \int_0^\infty \varphi(\tau) e^{-i\xi \tau} \, d\tau
\]
is holomorphic for $\text{Im} \, \xi < c$. Now \eqref{eq_4_21} implies that $\mathcal{F}(1_{[0,\infty)} \varphi)$ vanishes at $0$ with all derivatives. Hence, $\varphi(\tau) = 0$ for $\tau > 0$, and therefore,
\[
U^{(1)}(t, x) - U^{(2)}(t, x) = 0, \quad t > 0, \quad x \in \omega_2.
\]

Given that $U^{(j)} \in C^\infty((0, \infty) \times \mathbb{R}^n)$ for $j = 1, 2$, and since $U^{(1)} - U^{(2)}$ satisfies the heat equation \eqref{eq_4_17}, combined with the fact that $\Omega_e$ is connected, the unique continuation property for the heat equation (see \cite[Sections 1 and 4]{Lin_1990}) implies that 
\[
U^{(1)}(t, x) = U^{(2)}(t, x) = 0, \quad t > 0, \quad x \in \Omega_e,
\]
thus proving \eqref{eq_4_prop_1}.

Now, let $\varphi \in C^\infty_0(\Omega_e)$ be arbitrary. Using \eqref{eq_4_prop_1} and the fact that $u_1^{f}|_{\Omega_e} = u_2^{f}|_{\Omega_e}$, we conclude from \eqref{eq_4_2_new_1} with $\omega_2$ replaced by $\Omega_e$, as well as from a similar computation with $g_2$ and $u_2^f$ in place of $g_1$ and $u_1^f$, that 
\[
\langle (-\Delta_{g_1})^{\alpha} u_1^f, \overline{\varphi} \sqrt{|g|} \rangle_{\mathcal{D}'(\mathbb{R}^n), C^\infty_0(\mathbb{R}^n)}
= \langle (-\Delta_{g_2})^{\alpha} u_2^f, \overline{\varphi} \sqrt{|g|} \rangle_{\mathcal{D}'(\mathbb{R}^n), C^\infty_0(\mathbb{R}^n)}.
\]
Hence, \eqref{eq_4_prop_1_laplacian} follows, completing the proof of the lemma in the case when $W_1 \cap W_2 =\emptyset$.

\noindent\textbf{Case II.} 
Assume that $W_1 \cap W_2 \neq \emptyset$. We shall show that this case can be reduced to the previous case. Indeed, let $f \in C^\infty_0(W_1)$ and set $K := \supp(f)$. 
Consider a finite open cover $\{W_{1,k}\}_{k=1}^N$, $N \in \mathbb{N}$, of the compact set $K$,
\begin{equation}
\label{eq_22_cover_new}
K \subset \bigcup_{k=1}^N W_{1,k},
\end{equation}
such that for each $k = 1, \dots, N$, we have $W_{1,k} \subset W_1$ and there exists an open set $W_{2,k} \subset W_2$ with $W_{1,k} \cap W_{2,k} = \emptyset$.
To justify the existence of such a cover, let $x \in K$ be arbitrary and consider two cases. If $x \in W_1 \cap W_2$, then since $W_1 \cap W_2$ is open, there exist open sets $W_{1,x}, W_{2,x} \subset W_1 \cap W_2$ such that $x \in W_{1,x}$ and $W_{1,x} \cap W_{2,x} = \emptyset$.  On the other hand, if $x \notin W_1 \cap W_2$, then in particular $x \notin W_2$. Let $\omega_2$ be an open set such that $\overline{\omega_2} \subset W_2$. Since $\text{dist}(x, \overline{\omega_2}) > 0$, there exists an open set $W_{1,x} \subset W_1$ such that $x \in W_{1,x}$ and $W_{1,x} \cap \omega_2 = \emptyset$. Thus, the collection $\{W_{1,x}\}_{x \in K}$ forms an open cover of $K$. By the compactness of $K$, we can extract a finite subcover, yielding the required collection $\{W_{1,k}\}_{k=1}^N$.

Let $\varphi_k \in C^\infty_0(W_{1,k})$, $k=1, \dots, N$, be a partition of unity subordinate to the cover \eqref{eq_22_cover_new}:
\[
\sum_{k=1}^N \varphi_k = 1 \quad \text{near} \quad K.
\]
We then have
\[
f = \sum_{k=1}^N f_k, \quad f_k = f \varphi_k \in C^\infty_0(W_{1,k}).
\]
Let $u_j^{f_k} \in H^\alpha(\mathbb{R}^n)$ be the energy solutions to
\[
\begin{cases}
(-\Delta_{g_j})^\alpha u_j^{f_k} = 0 & \text{in} \quad \Omega, \\
u_j^{f_k} = f_k & \text{in} \quad \Omega_e,
\end{cases}
\]
with $j = 1, 2$, and $k = 1, \dots, N$. Letting $u_j^f \in H^\alpha(\mathbb{R}^n)$ denote the energy solutions to \eqref{eq_int_1_thm_new} for $j = 1, 2$, we observe that
\begin{equation}
\label{eq_22_linearity}
u_j^f = \sum_{k=1}^N u_j^{f_k}.
\end{equation}
By \eqref{eq_9001_2}, we know
\[
((-\Delta_{g_1})^\alpha u_1^{f_k})|_{W_{2,k}} = ((-\Delta_{g_2})^\alpha u_2^{f_k})|_{W_{2,k}},
\]
for $k = 1, \dots, N$. Since $W_{1,k} \cap W_{2,k} = \emptyset$, this situation falls under Case I, and therefore, we conclude that
\[
((-\Delta_{g_1})^\alpha u_1^{f_k})|_{\Omega_e} = ((-\Delta_{g_2})^\alpha u_2^{f_k})|_{\Omega_e},
\]
for $k = 1, \dots, N$. Combining this with \eqref{eq_22_linearity}, we deduce that
\[
((-\Delta_{g_1})^\alpha u_1^f)|_{\Omega_e} = ((-\Delta_{g_2})^\alpha u_2^f)|_{\Omega_e}.
\]
Thus, we have established \eqref{eq_4_prop_1_laplacian} for all $f \in C^\infty_0(W_1)$ in the case when $W_1 \cap W_2 \neq \emptyset$. This completes the proof.
\end{proof}

\subsection{From the partial exterior Dirichlet--to--Neumann map to the full exterior Dirich\-let--to--Neumann map}

Let $\delta \ge 0$ be fixed such that $\delta \in (\alpha - \frac{1}{2}, \frac{1}{2})$, and let $h \in H^{\alpha + \delta}(\Omega_e)$. By Corollary \ref{cor_Vishin_Eskin}, there exists a unique solution $u_j = u_j^h \in H^{\alpha + \delta}(\mathbb{R}^n)$ to the problem
\begin{equation}
\label{eq_800_1}
\begin{cases}
(-\Delta_{g_j})^\alpha u_j = 0 & \quad \text{in} \quad \mathcal{D}'(\Omega), \\
u_j|_{\Omega_e} = h,
\end{cases}
\end{equation}
for $j = 1, 2$. Associated with \eqref{eq_800_1}, we define the full exterior Dirichlet-to-Neumann map
\[
\Lambda_{g_j}^{\Omega_e, \Omega_e}: H^{\alpha + \delta}(\Omega_e) \to H^{-\alpha + \delta}(\Omega_e), \quad h \mapsto ((-\Delta_{g_j})^\alpha u_j^h)|_{\Omega_e},
\]
where $u_j = u_j^h \in H^{\alpha + \delta}(\mathbb{R}^n)$ is the unique solution to \eqref{eq_800_1}, for $j = 1, 2$. 

The following result allows us to transition from knowing the exterior partial Dirichlet--to--Neumann map, with Dirichlet data compactly supported in $W_1$ and Neumann data measured on $W_2$, to knowing the full exterior Dirichlet--to--Neumann map, where Dirichlet data is provided over the entire $\Omega_e$, not necessarily compactly supported, and Neumann data is measured on $\Omega_e$.

\begin{lem}
The equality $\Lambda_{g_1}^{W_1,W_2} = \Lambda_{g_2}^{W_1,W_2}$ and \eqref{eq_9001_1} imply that $\Lambda_{g_1}^{\Omega_e,\Omega_e} = \Lambda_{g_2}^{\Omega_e,\Omega_e}$.
\end{lem}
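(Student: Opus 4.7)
The plan is to exploit the self-adjointness of $(-\Delta_{g_j})^\alpha$ through a symmetric bilinear pairing on Dirichlet data, and to apply \lemref{lem_exterior_determination} twice: once with the pair $(W_1,W_2)$ as given, and once with the roles of the two open sets interchanged.

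For $h_1,h_2\in H^{\alpha+\delta}(\Omega_e)$ let $u_j^{h_i}\in H^{\alpha+\delta}(\R^n)$, $i=1,2$, be the solutions provided by Corollary~\ref{cor_Vishin_Eskin}, and set
\[
\mathcal B_j(h_1,h_2):=\mathcal E_j(u_j^{h_1},u_j^{h_2}),
\]
with $\mathcal E_j$ as in Proposition~\ref{prop_K_bounds}. By Proposition~\ref{prop_eq_2_6} together with the self-adjointness of $(-\Delta_{g_j})^\alpha$, this pairing enjoys the Hermitian symmetry $\mathcal B_j(h_1,h_2)=\overline{\mathcal B_j(h_2,h_1)}$. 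Since $(-\Delta_{g_j})^\alpha u_j^{h_1}$ vanishes in $\mathcal D'(\Omega)$ and, by \lemref{lem_global_bounds_fractional_laplacian}, lies in $H^{-\alpha+\delta}(\R^n)$ with $-\alpha+\delta>-1/2$, Theorem~\ref{thm_Agranovich}(iii) rules out any distributional mass on $\partial\Omega$. Combining this with $u_j^{h_2}|_{\Omega_e}=h_2$ and $g_j|_{\Omega_e}=g$, one obtains the identification
\[
\mathcal B_j(h_1,h_2)=\langle \Lambda_{g_j}^{\Omega_e,\Omega_e}(h_1)\sqrt{|g|},\,\overline{h_2}\rangle_{H^{-\alpha+\delta}(\Omega_e),\,H^{\alpha-\delta}_0(\Omega_e)},
\]
where the condition $\delta\in(\alpha-1/2,1/2)$ gives $|\alpha-\delta|<1/2$, so that $H^{\alpha-\delta}(\Omega_e)=H^{\alpha-\delta}_0(\Omega_e)$ by Theorem~\ref{thm_Agranovich}(ii) and the duality pairs cleanly.

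By \lemref{lem_exterior_determination} we have $\Lambda_{g_1}^{\Omega_e,\Omega_e}(f)=\Lambda_{g_2}^{\Omega_e,\Omega_e}(f)$ for every $f\in C^\infty_0(W_1)$, so $\mathcal B_1(f,h)=\mathcal B_2(f,h)$ for all such $f$ and all $h\in H^{\alpha+\delta}(\Omega_e)$. Hermitian symmetry then yields $\mathcal B_1(h,f)=\mathcal B_2(h,f)$, which, unpacked via the identification above, reads
\[
((-\Delta_{g_1})^\alpha u_1^h)|_{W_1}=((-\Delta_{g_2})^\alpha u_2^h)|_{W_1},\qquad h\in H^{\alpha+\delta}(\Omega_e).
\]
Restricting $h$ to $C^\infty_0(\Omega_e)$ furnishes precisely the hypothesis of \lemref{lem_exterior_determination} applied with the swapped choice $\tilde W_1:=\Omega_e$ and $\tilde W_2:=W_1$, and its conclusion is that $\Lambda_{g_1}^{\Omega_e,\Omega_e}(h)=\Lambda_{g_2}^{\Omega_e,\Omega_e}(h)$ for every $h\in C^\infty_0(\Omega_e)$.

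One final symmetry step promotes this to $H^{\alpha+\delta}(\Omega_e)$: the improved identity yields $\mathcal B_1(h,h')=\mathcal B_2(h,h')$ for $h\in C^\infty_0(\Omega_e)$ and $h'\in H^{\alpha+\delta}(\Omega_e)$, and the swap reduces the desired equality $\Lambda_{g_1}^{\Omega_e,\Omega_e}(h')=\Lambda_{g_2}^{\Omega_e,\Omega_e}(h')$ to the vanishing of an element of $H^{-\alpha+\delta}(\Omega_e)$ tested against the dense subspace $C^\infty_0(\Omega_e)\subset H^{\alpha-\delta}_0(\Omega_e)$. I expect the main technical subtlety to lie in the Sobolev bookkeeping of the first step: keeping the dualities consistent across $\R^n$, $\Omega$, $\Omega_e$, and ruling out any distributional mass on $\partial\Omega$ arising from $(-\Delta_{g_j})^\alpha u_j^{h_1}$, which is precisely what the Vishik--Eskin regularity of Corollary~\ref{cor_Vishin_Eskin} and the constraint $\delta\in(\alpha-1/2,1/2)$ ensure through Theorem~\ref{thm_Agranovich}(ii),(iii).
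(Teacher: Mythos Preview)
Your proof is correct and follows essentially the same strategy as the paper's: use the self-adjointness of $(-\Delta_{g_j})^\alpha$ to swap the roles of the two boundary data, invoke Lemma~\ref{lem_exterior_determination} once with the original $(W_1,W_2)$ and once with $(\Omega_e,W_1)$, and finish with a density argument to pass from $C_0^\infty(\Omega_e)$ to $H^{\alpha+\delta}(\Omega_e)$. Your packaging via the symmetric form $\mathcal B_j(h_1,h_2)=\mathcal E_j(u_j^{h_1},u_j^{h_2})$ is a clean abstraction of what the paper does directly with the identity $\langle(-\Delta_{g_j})^\alpha u_j^f,\sqrt{|g_j|}\,\overline{u_j^q}\rangle=\langle\sqrt{|g_j|}\,u_j^f,\overline{(-\Delta_{g_j})^\alpha u_j^q}\rangle$ (equation~\eqref{eq_810_2}); the underlying computations, the Sobolev splitting \eqref{eq_Sobolev_duality_mikko}, and the appeal to Theorem~\ref{thm_Agranovich} to exclude mass on $\partial\Omega$ are identical.
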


\begin{proof}
First, let $q \in C^\infty_0(\Omega_e) \subset H^{\alpha + \delta}(\Omega_e)$. Then, by Corollary \ref{cor_Vishin_Eskin}, there exists a unique solution $u_j = u_j^{q} \in H^{\alpha + \delta}(\mathbb{R}^n)$ to problem \eqref{eq_800_1} with $h = q$, for $j = 1, 2$. We shall proceed to show that
\begin{equation}
\label{eq_810_1}
((-\Delta_{g_1})^\alpha u_1^{q})|_{W_1} = ((-\Delta_{g_2})^\alpha u_2^{q})|_{W_1}.
\end{equation}

To that end, let $f \in C^\infty_0(W_1)$ and let $u_j^f \in H^\alpha(\mathbb{R}^n)$ be the unique energy solution to the exterior Dirichlet problem \eqref{eq_int_1_thm_new}, for $j = 1, 2$. Then, by Lemma \ref{lem_distrubutional_solution_exterior_data}, $u_j^f$ satisfies the first equation in \eqref{eq_int_1_thm_new} in $\mathcal{D}'(\Omega)$ and $u_j^f|_{\Omega_e} = f$, for $j = 1, 2$. Furthermore, by Lemma \ref{lem_distrubutional_solution_exterior_data}, we conclude that $u_j^f \in H^{\alpha + \delta}(\mathbb{R}^n)$.

Using Proposition \ref{prop_eq_2_6}, we obtain
\begin{equation}
\label{eq_810_2}
\langle (-\Delta_{g_j})^\alpha u_j^f, \sqrt{|g_j|} \overline{u_j^{q}} \rangle_{H^{-\alpha}(\mathbb{R}^n), H^{\alpha}(\mathbb{R}^n)} = \langle \sqrt{|g_j|} u_j^f, \overline{(-\Delta_{g_j})^\alpha u_j^{q}} \rangle_{H^{\alpha}(\mathbb{R}^n), H^{-\alpha}(\mathbb{R}^n)},
\end{equation}
for $j = 1, 2$. By Lemma \ref{lem_global_bounds_fractional_laplacian}, we have $(- \Delta_{g_j})^\alpha u_j^f, (- \Delta_{g_j})^\alpha u_j^{q} \in H^{-\alpha + \delta}(\mathbb{R}^n)$ with $-\alpha + \delta \in (-1/2, 1/2)$. We also note that $\sqrt{|g_j|} \, u_j^f, \sqrt{|g_j|} \, u_j^{q} \in H^{\alpha + \delta}(\mathbb{R}^n) \subset H^{\alpha - \delta}(\mathbb{R}^n)$.

Now, since $-\alpha + \delta \in (-1/2, 1/2)$, using \eqref{eq_Sobolev_duality_mikko} together with the facts that $(-\Delta_{g_j})^\alpha u_j^f = 0$ in $\mathcal{D}'(\Omega)$ and $(-\Delta_{g_j})^\alpha u_j^{q} = 0$ in $\mathcal{D}'(\Omega)$, as well as $u_j^q|_{\Omega_e}=q$ and  $u_j^f|_{\Omega_e}=f$, we obtain from \eqref{eq_810_2} that
\begin{equation}
\label{eq_810_3}
\langle (-\Delta_{g_j})^\alpha u_j^f, \overline{q} \sqrt{|g_j|} \rangle_{\Omega_e} = \langle \sqrt{|g_j|} f, \overline{(-\Delta_{g_j})^\alpha u_j^{q}} \rangle_{\Omega_e},
\end{equation}
for $j = 1, 2$. Indeed, since $\alpha - \delta \in (-1/2, 1/2)$, we have $H^{\alpha - \delta}(\Omega) = H^{\alpha - \delta}_0(\Omega) = (H^{-\alpha + \delta}(\Omega))^*$; see Theorem \ref{thm_Agranovich}. This fact, together with $(-\Delta_{g_j})^\alpha u_j^f = 0$ in $\mathcal{D}'(\Omega)$ and the density of $C_0^\infty(\Omega)$ in $H_0^{\alpha - \delta}(\Omega)$, allows us to conclude that
\begin{equation}
\label{eq_810_3_extra_1}
\begin{aligned}
\langle (-\Delta_{g_j})^\alpha u_j^f, \sqrt{|g_j|} \overline{u_j^{q}} \rangle_\Omega = \langle (-\Delta_{g_j})^\alpha u_j^f, \sqrt{|g_j|} \overline{u_j^{q}} \rangle_{H^{-\alpha + \delta}(\Omega), H_0^{\alpha - \delta}(\Omega)} = 0.
\end{aligned}
\end{equation}
Similarly, we have
\begin{equation}
\label{eq_810_3_extra_2}
\langle \sqrt{|g_j|} u_j^f, \overline{(-\Delta_{g_j})^\alpha u_j^{q}} \rangle_{\Omega} = 0.
\end{equation}
The claim \eqref{eq_810_3} follows from \eqref{eq_810_2}, \eqref{eq_Sobolev_duality_mikko}, \eqref{eq_810_3_extra_1}, and \eqref{eq_810_3_extra_2}.

Now, in view of $\Lambda_{g_1}^{W_1,W_2} = \Lambda_{g_2}^{W_1,W_2}$ and \eqref{eq_9001_1}, we conclude from Lemma \ref{lem_exterior_determination} that 
\begin{equation}
\label{eq_810_4}
((-\Delta_{g_1})^\alpha u_1^f)|_{\Omega_e} = ((-\Delta_{g_2})^\alpha u_2^f)|_{\Omega_e}.
\end{equation}

Using \eqref{eq_810_4} together with the fact that $q\sqrt{|g|} \in C^\infty_0(\Omega_e)$, we obtain from \eqref{eq_810_3} that
\begin{equation}
\label{eq_810_4_new_1}
\langle \sqrt{|g|}\, f , \overline{(-\Delta_{g_1})^\alpha u_1^q} - \overline{(-\Delta_{g_2})^\alpha u_2^q}\rangle_{\Omega_e}=0.
\end{equation}
Here, we have also used $ \sqrt{|g_1|}= \sqrt{|g_2|}=\sqrt{|g|} $ on $ \Omega_e $, which is a consequence of \eqref{eq_9001_1}. Recalling that $ f \in C^{\infty}_0(W_1) $ is arbitrary, the equality \eqref{eq_810_4_new_1} implies the claim \eqref{eq_810_1}. 

Now, using Lemma \ref{lem_exterior_determination} with the equality \eqref{eq_810_1} instead of \eqref{eq_9001_2}, and with $ W_1 = \Omega_e $ and $ W_2 = W_1 $, we conclude that 
\begin{equation}
\label{eq_810_1_all_Omega_e}
((-\Delta_{g_1})^\alpha u_1^q)|_{\Omega_e} = ((-\Delta_{g_2})^\alpha u_2^q)|_{\Omega_e},
\end{equation}
where $ q \in C^\infty_0(\Omega_e) $ is arbitrary. 

Finally, let $h \in H^{\alpha+\delta}(\Omega_e)$. By Corollary \ref{cor_Vishin_Eskin}, there is a unique solution $u_j = u_j^{h} \in H^{\alpha+\delta}(\R^n)$ to the problem \eqref{eq_800_1}, for $j = 1, 2$. Writing \eqref{eq_810_2} with the solutions $u_j^q$ and $u_j^h$ instead of $u_j^f$ and $u_j^q$, and following the same reasoning as above but relying on \eqref{eq_810_1_all_Omega_e} instead of \eqref{eq_810_4}, we obtain that
\begin{equation}
\label{eq_810_1_all_Omega_e_new_100}
((-\Delta_{g_1})^\alpha u_1^h)|_{\Omega_e} = ((-\Delta_{g_2})^\alpha u_2^h)|_{\Omega_e},
\end{equation}
where $h \in H^{\alpha+\delta}(\Omega_e)$ is arbitrary. Note that here we used:
\[
\langle (-\Delta_{g_1})^\alpha u_1^q, h\sqrt{|g|}\rangle_{\Omega_e} = \langle (-\Delta_{g_2})^\alpha u_2^q, h\sqrt{|g|}\rangle_{\Omega_e},
\]
which follows from \eqref{eq_810_1_all_Omega_e} due to the fact that $h\sqrt{|g|}\in H^{\alpha+\delta}(\Omega_e) \subset H^{\alpha-\delta}(\Omega_e) = H^{\alpha-\delta}_0(\Omega_e)$, $\alpha-\delta \in (-1/2,1/2)$, and the density of $C_0^\infty(\Omega_e)$ in $H^{\alpha-\delta}_0(\Omega_e)$. Equation \eqref{eq_810_1_all_Omega_e_new_100} shows that $\Lambda_{g_1}^{\Omega_e,\Omega_e} = \Lambda_{g_2}^{\Omega_e,\Omega_e}$.
 \end{proof}

\subsection{From the full exterior Dirichlet--to--Neumann map to the exterior source--to--solution map}

Let $F \in C_0^\infty(\Omega_e)$. By Lemma \ref{lem_solution_poisson_laplacian}, the Poisson equation 
\begin{equation}
\label{eq_801_1}
(-\Delta_{g_j})^\alpha w_j = (-\Delta_g) F \quad \text{in} \quad \mathcal{D}'(\R^n)
\end{equation}
has a unique solution $w_j = w_j^F \in H^\alpha(\R^n)$, $j=1,2$. Here, $g$ is given by \eqref{eq_9001_1}. Associated with \eqref{eq_801_1}, we define the exterior source--to--solution map 
\[
L_{g_j}^{\Omega_e,\Omega_e}: C_0^\infty(\Omega_e) \to H^\alpha(\Omega_e), \quad F \mapsto w_j^F|_{\Omega_e},
\]
where $w_j = w_j^F \in H^\alpha(\R^n)$ is the unique solution to \eqref{eq_801_1}, $j=1,2$. 

The following result enables us to pass from the knowledge of the full exterior Dirichlet--to--Neumann map to the exterior source--to--solution map.
\begin{lem}
The equality $\Lambda_{g_1}^{\Omega_e,\Omega_e} = \Lambda_{g_2}^{\Omega_e,\Omega_e}$ implies $L_{g_1}^{\Omega_e,\Omega_e} = L_{g_2}^{\Omega_e,\Omega_e}$.
\end{lem}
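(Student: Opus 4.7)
The plan is to exploit the observation that the Poisson-equation solution $w_j^F$ is itself the solution to an exterior Dirichlet problem whose Dirichlet datum is $h_j := w_j^F|_{\Omega_e}$, and whose corresponding Neumann datum is the universal source $-\Delta_g F$. Given $F \in C_0^\infty(\Omega_e)$, Lemma~\ref{lem_solution_poisson_laplacian} furnishes $w_j = w_j^F \in C^\infty(\R^n) \cap \bigcap_{k \in \N} H^k(\R^n)$, so in particular $h_j \in H^{\alpha+\delta}(\Omega_e)$ for any $\delta \in (\alpha - 1/2, 1/2)$. Since $\supp(-\Delta_g F) \subset \Omega_e$, the identity $(-\Delta_{g_j})^\alpha w_j = -\Delta_g F$ immediately yields $(-\Delta_{g_j})^\alpha w_j = 0$ in $\mathcal{D}'(\Omega)$; combined with $w_j|_{\Omega_e} = h_j$, the uniqueness in Corollary~\ref{cor_Vishin_Eskin} identifies $w_j$ with $u_j^{h_j}$.

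Applying the full exterior Dirichlet-to-Neumann map, one then obtains
\[
\Lambda_{g_j}^{\Omega_e, \Omega_e}(h_j) = ((-\Delta_{g_j})^\alpha w_j)|_{\Omega_e} = -\Delta_g F, \quad j=1,2.
\]
Since $\Lambda_{g_1}^{\Omega_e, \Omega_e} = \Lambda_{g_2}^{\Omega_e, \Omega_e}$, we also have $\Lambda_{g_1}^{\Omega_e, \Omega_e}(h_2) = \Lambda_{g_2}^{\Omega_e, \Omega_e}(h_2) = -\Delta_g F$, and by linearity in the Dirichlet datum this gives $\Lambda_{g_1}^{\Omega_e, \Omega_e}(h_1 - h_2) = 0$. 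The task therefore reduces to showing that the full exterior Dirichlet-to-Neumann map $\Lambda_{g_1}^{\Omega_e, \Omega_e}$ is injective on $H^{\alpha+\delta}(\Omega_e)$.

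To establish this injectivity, I would let $h \in H^{\alpha+\delta}(\Omega_e)$ satisfy $\Lambda_{g_1}^{\Omega_e, \Omega_e}(h) = 0$, and let $v = u_1^h \in H^{\alpha+\delta}(\R^n)$. Then $(-\Delta_{g_1})^\alpha v = 0$ in $\mathcal{D}'(\Omega)$ by construction, and $((-\Delta_{g_1})^\alpha v)|_{\Omega_e} = 0$ by the kernel hypothesis. Hence $(-\Delta_{g_1})^\alpha v \in H^{-\alpha+\delta}(\R^n)$ is a distribution supported on $\partial \Omega$. The key structural step, and the main (albeit mild) obstacle, is now to patch these two zeros into a zero on all of $\R^n$: since $\delta > \alpha - 1/2$ we have $-\alpha+\delta > -1/2$, so Theorem~\ref{thm_Agranovich}(iii) forces $(-\Delta_{g_1})^\alpha v = 0$ in $\mathcal{D}'(\R^n)$. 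The uniqueness statement in Lemma~\ref{lem_solution_poisson_laplacian}, applied to $v \in H^\alpha(\R^n)$, then yields $v = 0$, and therefore $h = v|_{\Omega_e} = 0$.

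Applying this injectivity to $h_1 - h_2$ gives $w_1^F|_{\Omega_e} = w_2^F|_{\Omega_e}$, that is, $L_{g_1}^{\Omega_e, \Omega_e}(F) = L_{g_2}^{\Omega_e, \Omega_e}(F)$, for each $F \in C_0^\infty(\Omega_e)$. The argument uses nothing beyond the well-posedness results of Section~\ref{sec_well_posedness_dirichlet} and the Vishik--Eskin regularity packaged in Corollary~\ref{cor_Vishin_Eskin}; the hypotheses $\Lambda_{g_1}^{W_1,W_2} = \Lambda_{g_2}^{W_1,W_2}$ and $g_1|_{\Omega_e} = g_2|_{\Omega_e}$ enter only implicitly through the already-established promotion to $\Lambda_{g_1}^{\Omega_e, \Omega_e} = \Lambda_{g_2}^{\Omega_e, \Omega_e}$.
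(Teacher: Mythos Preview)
Your proof is correct and uses the same decisive technical ingredient as the paper, namely that an element of $H^{-\alpha+\delta}(\R^n)$ with $-\alpha+\delta>-1/2$ which is supported on $\partial\Omega$ must vanish (Theorem~\ref{thm_Agranovich}(iii)). The organization, however, differs. The paper fixes a single Dirichlet datum $h:=w_1^F|_{\Omega_e}$, forms the $g_2$-Dirichlet solution $u_2^h$, and shows directly that $(-\Delta_{g_1})^\alpha u_1^h-(-\Delta_{g_2})^\alpha u_2^h$ is supported on $\partial\Omega$ and hence zero; this identifies $u_2^h$ with $w_2^F$ and gives $w_1^F|_{\Omega_e}=w_2^F|_{\Omega_e}$ immediately. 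You instead carry both data $h_j:=w_j^F|_{\Omega_e}$, reduce to $\Lambda_{g_1}^{\Omega_e,\Omega_e}(h_1-h_2)=0$, and then prove the abstract fact that $\Lambda_{g_1}^{\Omega_e,\Omega_e}$ is injective on $H^{\alpha+\delta}(\Omega_e)$, invoking the uniqueness part of Lemma~\ref{lem_solution_poisson_laplacian} at the end. Your route isolates the injectivity of the full exterior map as a standalone statement, which is a clean byproduct; the paper's route is slightly shorter because it never leaves the two concrete solutions $u_1^h,u_2^h$ and avoids formulating injectivity separately. Both arguments are equivalent in strength and rely on exactly the same regularity threshold $\delta>\alpha-1/2$.
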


\begin{proof}
Let $F \in C^\infty_0(\Omega_e)$ and let $w_1 = w_1^F \in H^\alpha(\R^n)$ be the unique solution to the equation 
\begin{equation}
\label{eq_801_2}
(-\Delta_{g_1})^\alpha w_1 = (-\Delta_g) F \quad \text{in} \quad \mathcal{D}'(\R^n).
\end{equation}
By Lemma \ref{lem_solution_poisson_laplacian}, $w_1^F \in H^k(\R^n)$ for all $k \in \N$ and, therefore, in particular, $w_1^F \in H^{\alpha+\delta}(\mathbb{R}^n)$, where $\delta \geq 0$, $\delta \in (\alpha - 1/2, 1/2)$, is given in the definition of the exterior Dirichlet--to--Neumann maps $\Lambda_{g_j}^{\Omega_e,\Omega_e}$ for $j = 1, 2$. 

Letting $h := w_1^F|_{\Omega_e} \in H^{\alpha+\delta}(\Omega_e)$, we see that $w_1^F$ is a solution to the problem
\begin{equation}
\label{eq_801_3}
\begin{cases}
(-\Delta_{g_1})^\alpha u_1 = 0 & \quad \text{in} \quad \mathcal{D}'(\Omega), \\
u_1|_{\Omega_e} = h.
\end{cases}
\end{equation}
Therefore, thanks to Corollary \ref{cor_Vishin_Eskin}, by the uniqueness of the solution to \eqref{eq_801_3} we can write $w_1^F = u_1^h$.

Let $u_2 = u_2^h \in H^{\alpha+\delta}(\mathbb{R}^n)$ be the unique solution to the problem
\begin{equation}
\label{eq_801_4}
\begin{cases}
(-\Delta_{g_2})^\alpha u_2 = 0 & \quad \text{in} \quad \mathcal{D}'(\Omega), \\
u_2|_{\Omega_e} = h,
\end{cases}
\end{equation}
as shown in Corollary \ref{cor_Vishin_Eskin}. Given that $\Lambda_{g_1}^{\Omega_e,\Omega_e} = \Lambda_{g_2}^{\Omega_e,\Omega_e}$, it follows that
\[
((-\Delta_{g_1})^\alpha u_1^h)|_{\Omega_e} = ((-\Delta_{g_2})^\alpha u_2^h)|_{\Omega_e}.
\]
Thanks to this equality, and the first equations in \eqref{eq_801_3} and \eqref{eq_801_4}, we deduce that 
\[
\supp((-\Delta_{g_1})^\alpha u_1^h - (-\Delta_{g_2})^\alpha u_2^h) \subset \partial \Omega.
\]
 By Lemma \ref{lem_global_bounds_fractional_laplacian}, it follows that $((-\Delta_{g_1})^\alpha u_1^h - (-\Delta_{g_2})^\alpha u_2^h) \in H^{-\alpha+\delta}(\mathbb{R}^n)$ with $-\alpha+\delta \in (-1/2, 1/2)$. Therefore, Theorem \ref{thm_Agranovich} allows us to conclude that
\begin{equation}
\label{eq_801_7}
(-\Delta_{g_1})^\alpha u_1^h = (-\Delta_{g_2})^\alpha u_2^h \in H^{-\alpha+\delta}(\mathbb{R}^n).
\end{equation}

Recalling that $u_1^h = w_1^F$ is the solution to \eqref{eq_801_2}, and using \eqref{eq_801_7}, we see that $u_2^h$ satisfies
\begin{equation}
\label{eq_801_8}
(-\Delta_{g_2})^\alpha u_2^h = (-\Delta_g)F \quad \text{in} \quad \mathcal{D}'(\mathbb{R}^n).
\end{equation}
Hence, we conclude that $u_1^h, u_2^h \in H^\alpha(\mathbb{R}^n)$ satisfy \eqref{eq_801_2} and \eqref{eq_801_8}, respectively, with $u_1^h|_{\Omega_e} = u_2^h|_{\Omega_e}$. Therefore, $L_{g_1}^{\Omega_e,\Omega_e}(F) = L_{g_2}^{\Omega_e,\Omega_e}(F)$ for $F \in C_0^\infty(\Omega_e)$.
\end{proof}

\subsection{From the exterior source--to--solution map to the heat kernel of the Laplacian in the exterior for all positive times}
The following result enables us to transition from the knowledge of the exterior source--to--solution map to the knowledge of the heat kernels $e^{t\Delta_{g_j}}(x,y)$ of the Laplacian $-\Delta_{g_j}$ in the exterior for all $t > 0$.

\begin{lem}
\label{lem_pass_heat_kernel}
The equality $L_{g_1}^{\Omega_e, \Omega_e} = L_{g_2}^{\Omega_e, \Omega_e}$ implies that for all $t > 0$ and $x, y \in \Omega_e$, we have
\[
e^{t\Delta_{g_1}}(x,y) = e^{t\Delta_{g_2}}(x,y).
\]
\end{lem}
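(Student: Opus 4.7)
The plan is to lift the equality of source-to-solution maps, through the heat-semigroup representation of $(-\Delta_{g_j})^{-\alpha}$, to an equality of heat semigroups on test functions supported in $\Omega_e$, and then recover the pointwise equality of the heat kernels.

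Fix $F \in C_0^\infty(\Omega_e)$. Since $\supp(F) \subset \Omega_e$ and $g_1|_{\Omega_e} = g_2|_{\Omega_e} = g$, we have $-\Delta_{g_j} F = -\Delta_g F$, so the right-hand side of \eqref{eq_801_1} is common to $j=1,2$. By the proof of Lemma~\ref{lem_solution_poisson_laplacian}, the unique solution is $w_j^F = (-\Delta_{g_j})^{1-\alpha} F$, and the spectral theorem together with the identity $\lambda^{1-\alpha} = \frac{1}{\Gamma(\alpha)}\int_0^\infty \lambda e^{-\lambda t} t^{\alpha-1}\,dt$ yields the pointwise representation
\[
w_j^F(x) = \frac{1}{\Gamma(\alpha)}\int_0^\infty e^{t\Delta_{g_j}}(-\Delta_g F)(x)\, t^{\alpha-1}\, dt,
\]
with convergence ensured by the Gaussian bound $\|e^{t\Delta_{g_j}}(-\Delta_g F)\|_{L^\infty} \le \min(\|\Delta_g F\|_{L^\infty}, C t^{-n/2} \|\Delta_g F\|_{L^1})$ coming from \eqref{eq_2_5_1}.

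Set $V^{(j)}(t,x) := e^{t\Delta_{g_j}}(-\Delta_g F)(x)$ and $V := V^{(1)} - V^{(2)}$. The key identity is $V^{(j)}(t,x) = -\partial_t e^{t\Delta_{g_j}} F(x)$, valid since $F \in \mathcal{D}(-\Delta_{g_j})$. The hypothesis $L_{g_1}^{\Omega_e, \Omega_e}(F) = L_{g_2}^{\Omega_e, \Omega_e}(F)$ becomes
\[
\int_0^\infty V(t,x)\, t^{\alpha-1}\, dt = 0, \quad x \in \Omega_e.
\]
Because $g_1 = g_2 = g$ on $\Omega_e$, we have $(\partial_t - \Delta_g)V = 0$ on $(0,\infty)\times \Omega_e$. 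Now proceed exactly as in the proof of Lemma~\ref{lem_exterior_determination}: for $x$ in the nonempty open set $\Omega_e \setminus \supp(-\Delta_g F)$ (nonempty since $-\Delta_g F$ has compact support and $\Omega_e$ is unbounded), the Gaussian bounds \eqref{eq_2_5_1} and \eqref{eq_2_5_1_derivatives} yield exponential decay of $V$ and its $t$-derivatives as $t \to 0^+$ and polynomial decay as $t \to \infty$. This justifies applying $\Delta_g^m$ under the integral, converting $\Delta_g^m V$ to $\partial_t^m V$ by the heat equation, and integrating by parts $m$ times with vanishing boundary contributions, producing $\int_0^\infty V(t,x)\, t^{\alpha - 1 - m}\, dt = 0$ for every $m \ge 0$. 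The Paley--Wiener/Mellin argument from Lemma~\ref{lem_exterior_determination} (change of variables $\tau = 1/t$, holomorphy of the Fourier transform of $1_{[0,\infty)}\varphi$ in a strip, vanishing of all moments) then forces $V(t,x) = 0$ for $t > 0$ and $x \in \Omega_e \setminus \supp(-\Delta_g F)$. Unique continuation for the heat equation on the connected set $(0,\infty) \times \Omega_e$ extends this to $V \equiv 0$ on $(0,\infty) \times \Omega_e$.

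Finally, combining $V \equiv 0$ with $V^{(j)} = -\partial_t e^{t\Delta_{g_j}} F$ gives $\partial_t[e^{t\Delta_{g_1}} F - e^{t\Delta_{g_2}} F](x) = 0$ for $t > 0$, $x \in \Omega_e$. Pointwise continuity $e^{t\Delta_{g_j}} F(x) \to F(x)$ as $t \to 0^+$ (from the Gaussian bounds and continuity of $F$) forces the $t$-constant to vanish. Thus $e^{t\Delta_{g_1}} F(x) = e^{t\Delta_{g_2}} F(x)$ on $(0,\infty) \times \Omega_e$. Writing this via the kernel representation \eqref{eq_2_4} and using $dV_{g_j} = dV_g$ on $\supp(F) \subset \Omega_e$, we obtain
\[
\int_{\Omega_e} \bigl[ e^{t\Delta_{g_1}}(x,y) - e^{t\Delta_{g_2}}(x,y) \bigr] F(y)\, dV_g(y) = 0
\]
for every $F \in C_0^\infty(\Omega_e)$. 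Since the heat kernels are smooth in $y$ by \eqref{eq_2_4_heat_sym}, varying $F$ yields the desired pointwise equality on $\Omega_e \times \Omega_e$ for all $t > 0$. The principal technical hurdle is tracking the dominated-convergence, Fubini, and boundary-term justifications for differentiation and integration by parts inside the integral; these parallel those in the proof of Lemma~\ref{lem_exterior_determination} nearly verbatim, powered by the Gaussian estimates \eqref{eq_2_5_1} and \eqref{eq_2_5_1_derivatives}.
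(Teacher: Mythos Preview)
Your proof is correct and follows essentially the same approach as the paper's: both represent $(-\Delta_{g_j})^{1-\alpha}F$ via the heat semigroup, extract the family of moment identities $\int_0^\infty (\cdots)\,t^{-\gamma-m}\,dt=0$ (the paper by applying $L_{g_j}^{\Omega_e,\Omega_e}$ to $\Delta_g^m F$, you by differentiating the single identity for $F$ under the integral), and then invoke the Paley--Wiener and heat unique continuation arguments from Lemma~\ref{lem_exterior_determination}. The only cosmetic differences are your use of the Riesz formula $\lambda^{1-\alpha}=\tfrac{1}{\Gamma(\alpha)}\int_0^\infty \lambda e^{-\lambda t}t^{\alpha-1}dt$ in place of \eqref{eq_2_6}, and your choice to work with $V^{(j)}=e^{t\Delta_{g_j}}(-\Delta_g F)=-\partial_t e^{t\Delta_{g_j}}F$ rather than directly with $e^{t\Delta_{g_j}}F$, which costs you the short extra integration-in-$t$ step at the end.
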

\begin{proof}
Let $F \in C_0^\infty(\Omega_e)$ and define $F^{(m)} := \Delta_g^m F \in C_0^\infty(\Omega_e)$ for $m = 0, 1, 2, \ldots$. By Lemma \ref{lem_solution_poisson_laplacian}, the Poisson equation 
\[
(-\Delta_{g_j})^\alpha w_j = (-\Delta_g) F^{(m)} \quad \text{in} \quad \mathcal{D}'(\mathbb{R}^n)
\]
has a unique solution $w_j = w_j^{F^{(m)}} \in H^\alpha(\mathbb{R}^n)$ for $j = 1, 2$. Furthermore, Lemma~\ref{lem_solution_poisson_laplacian} gives that $w_j^{F^{(m)}} = (-\Delta_{g_j})^{1-\alpha} F^{(m)}$.

The equality $L_{g_1}^{\Omega_e, \Omega_e} = L_{g_2}^{\Omega_e, \Omega_e}$ implies that 
\begin{equation}
\label{eq_801_9}
((-\Delta_{g_1})^{1-\alpha} F^{(m)})|_{\Omega_e} = ((-\Delta_{g_2})^{1-\alpha} F^{(m)})|_{\Omega_e}
\end{equation}
for $m = 0, 1, 2, \ldots$.

Letting $\tilde{\alpha} := 1 - \alpha \in (0, 1)$ and taking an open non-empty set $\omega_2 \Subset \Omega_e$ such that $\overline{\omega_2} \cap \supp(F) = \emptyset$, and using \eqref{eq_2_6}, we obtain from \eqref{eq_801_9} that 
\begin{equation} 
\label{eq_801_10} 
\int_0^\infty \big((e^{t \Delta_{g_1}} - e^{t \Delta_{g_2}}) \Delta_g^m F\big)|_{\omega_2} \frac{dt}{t^{1+\tilde{\alpha}}} = 0 \end{equation} 
for $m = 0, 1, 2, \ldots$.
 Following the argument in the proof of Lemma \ref{lem_exterior_determination} (see also \cite{FGKU_2021}), we complete the proof.
\end{proof}

\subsection{From the heat kernel of the Laplacian to the recovery of the Riemannian metric} 
\label{sec:hk}

Having Lemma \ref{lem_pass_heat_kernel} at hand, Theorem \ref{thm_main} follows by applying the following result with $(N_j, g_j) = (\mathbb{R}^n, g_j)$ for $j = 1, 2$ and $\mathcal{O} = \Omega_e$. This result was established in \cite[Theorem 1.5]{FGKU_2021}, as a consequence of \cite[Theorem 2]{HLOS_2018}.

\begin{thm}
\label{thm_from_heat_semigroup_to_wave}
Let $(N_1, g_1)$ and $(N_2, g_2)$ be smooth, connected, complete Riemannian manifolds of dimension $n \ge 2$ without boundary. Let $\mathcal{O}_j \subset N_j$, $j = 1, 2$, be open nonempty sets, and assume that $\mathcal{O}_1 = \mathcal{O}_2 := \mathcal{O}$. Furthermore, assume that 
\begin{equation}
\label{eq_5_2}
e^{t\Delta_{g_1}}(x, y) = e^{t\Delta_{g_2}}(x, y), \quad x, y \in \mathcal{O}, \quad t > 0.
\end{equation}
Then there exists a diffeomorphism $\Phi: N_1 \to N_2$ such that $\Phi^* g_2 = g_1$ on $N_1$ and $\Phi(x) = x$ for all $x \in \mathcal{O}$.
\end{thm}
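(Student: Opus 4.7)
The plan is to reduce the inverse problem for the heat semigroup to an inverse problem for the wave equation, and then invoke a local boundary/internal control method on complete manifolds without boundary. The bridge between these two problems is the spectral theorem: for fixed $x, y \in \mathcal{O}$ and each $j = 1, 2$, the function $t \mapsto e^{t\Delta_{g_j}}(x, y)$ is the Laplace transform of a finite positive Borel measure $d\mu_{x,y}^{(j)}$ on $[0, \infty)$ (the spectral measure of $-\Delta_{g_j}$ evaluated against delta masses at $x, y$ in a limiting sense, made rigorous via mollification and the pointwise Gaussian bounds \eqref{eq_2_5_1}). By the injectivity of the Laplace transform, the hypothesis \eqref{eq_5_2} forces $d\mu_{x,y}^{(1)} = d\mu_{x,y}^{(2)}$ for all $x, y \in \mathcal{O}$.

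Given equality of spectral measures on $\mathcal{O} \times \mathcal{O}$, the Schwartz kernel of $F(-\Delta_{g_j})$ agrees on $\mathcal{O} \times \mathcal{O}$ for every bounded Borel function $F$ on $[0, \infty)$. Applying this with $F(\lambda) = \cos(t\sqrt{\lambda})$ and $F(\lambda) = \sin(t\sqrt{\lambda})/\sqrt{\lambda}$ yields equality of the Schwartz kernels of the wave propagators $\cos(t\sqrt{-\Delta_{g_j}})$ and $\sin(t\sqrt{-\Delta_{g_j}})/\sqrt{-\Delta_{g_j}}$ on $\mathcal{O} \times \mathcal{O}$ for all $t \in \mathbb{R}$. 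By Duhamel, this is equivalent to the equality on $(0, T) \times \mathcal{O}$ of the solutions $u_j$ to the initial value problem $(\partial_t^2 - \Delta_{g_j}) u_j = F$, $u_j|_{t = 0} = \partial_t u_j|_{t = 0} = 0$, for every source $F \in C_0^\infty((0, T) \times \mathcal{O})$. In other words, the local source-to-solution maps for the wave equation on $\mathcal{O}$ coincide for the two metrics, for all $T > 0$.

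Once the wave-equation data is identified, I would invoke the local (internal) boundary control method developed by Helin, Lassas, Oksanen and Saksala in \cite{HLOS_2018}, applied exactly as in \cite[Theorem 1.5]{FGKU_2021}: from the equality of local source-to-solution maps one reconstructs the boundary distance representation of $N_j$ with respect to $\mathcal{O}$, which by a theorem of Kurylev is equivalent to knowing the metric up to an isometry fixing $\mathcal{O}$ pointwise. This yields the desired diffeomorphism $\Phi: N_1 \to N_2$ with $\Phi^* g_2 = g_1$ and $\Phi|_{\mathcal{O}} = \mathrm{Id}$.

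The main obstacle is the second step, namely justifying the passage from equality of heat kernels to equality of wave kernels in the noncompact, possibly non-spectrally-discrete setting: one cannot simply match eigenvalues and eigenfunctions because the spectrum of $-\Delta_{g_j}$ is purely absolutely continuous equal to $[0, \infty)$. The rigorous route is through the spectral measure described above, where the Gaussian kernel bounds provide enough control to give a pointwise meaning to $d\mu_{x,y}^{(j)}$ and hence to $F(-\Delta_{g_j})(x, y)$ for reasonable $F$. The subsequent application of \cite{HLOS_2018} requires Tataru's unique continuation theorem and the global approximate controllability it furnishes, but for these ingredients one may cite \cite{HLOS_2018, FGKU_2021} directly.
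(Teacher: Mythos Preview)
Your proposal is correct and matches the paper's treatment: the paper does not give an independent proof of this theorem but simply cites it as \cite[Theorem~1.5]{FGKU_2021}, obtained via \cite[Theorem~2]{HLOS_2018}, which is exactly the heat-to-wave reduction followed by the boundary control method that you outline. One small point worth noting is that the paper adds a remark (Remark~\ref{rem_Teemu}) that the additional conclusion $\Phi|_{\mathcal{O}}=\mathrm{Id}$ is not stated in \cite[Theorem~1.5]{FGKU_2021} and requires a small modification of the argument in \cite{HLOS_2018}; you assert this conclusion directly, so be aware that it needs that extra justification rather than following verbatim from the cited results.
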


\begin{rem}
\label{rem_Teemu}
In \cite[Theorem 1.5]{FGKU_2021}, there was no claim that $\Phi(x) = x$ for all $x \in \mathcal{O}$. However, this claim follows from a small modification of the proof of \cite[Theorem 2]{HLOS_2018}, which was used in the proof of \cite[Theorem 1.5]{FGKU_2021}. This was explained to us by Teemu Saksala. 
\end{rem}

\section{Proof of Theorem \ref{thm_main}. A (degenerate) elliptic extension approach}

\label{sec_degenerate_elliptic_extension_approach}

In this section, we present a second, equivalent perspective on the formulation and proof of Theorem \ref{thm_main}. This is based on a degenerate elliptic extension perspective and allows one to use $L^2$-based elliptic arguments.
To this end, in what follows below, we view the fractional Laplacian in terms of an elliptic extension perspective, see \cite{CS07} for the constant and \cite{ST10} for the variable coefficient setting. 
As above, we work in the smooth set-up and consider the following assumptions on the domain and metric.

\begin{assump}[Metric and domain regularity]
\label{assump:main}
In our discussion below, we suppose that the following conditions are satisfied by the metric and domain.
\begin{itemize}
\item[(a)] We will suppose that the sets $\Omega, W_1, W_2 \subset \R^n$ are open, non-empty with $\Omega $ bounded $C^{\infty}$ regular and such that $\Omega_e:= \R^n \setminus \overline{\Omega}$ is connected. Moreover,  $W_1, W_2 \subset \Omega_e$.
\item[(b)] For the metric $g$ we assume that for some open, bounded set  $\tilde{\Omega} \subset \R^n$ such that $\Omega \subset \tilde{\Omega}$ we have
\begin{align*}
g\in C^{\infty}(\R^n, \R^{n \times n}_{sym}), \ g \mbox{ is uniformly elliptic}, \ g = id \mbox{ in } \R^n \setminus \overline{\tilde{\Omega}}.
\end{align*}
\end{itemize}
\end{assump}

We remark that the assumptions in (b) above, in particular, imply that $(\R^n, g)$ is a complete Riemannian manifold.

\begin{rem}[Regularity assumptions]
\label{rmk:reg}
Let us comment on the regularity assumption from above:
\begin{itemize}
\item[(i)] We remark that for the argument presented in this section it would be possible to weaken the regularity assumptions on the metric and domain. Indeed, the steps below remain valid with only slight modifications for $C^{k,\epsilon}$ regular metrics $g$ for a finite value of $k \in \N$ large enough and $C^{1,\epsilon}$ regular domains $\Omega$ for some $\epsilon \in (0,1)$ instead of imposing the $C^{\infty}$ smoothness assumptions as above. In order to avoid technicalities, we however do not discuss this further but restrict our attention to the smooth set-up.
\item[(ii)] Without loss of generality, we may assume that not only $\Omega$ but also $W_1, W_2$ are smooth open sets. Indeed, this can always be achieved by decreasing the size of the open sets $W_1, W_2$ and considering the measurements only on these correspondingly smaller, smooth sets. We will always assume this in the following sections.
\end{itemize}
\end{rem}

In the setting of Assumption \ref{assump:main}, in what follows below, we adopt an extension perspective introduced in the seminal works \cite{CS07, ST10}. Hence, instead of working in \eqref{eq_int_1} with the fractional Laplacian $(-\D_g)^{\alpha}$ defined through functional calculus, for $f\in H^{\alpha}(\R^n)$ we will investigate \eqref{eq_int_1} by means of the following mixed Dirichlet-Neumann problem:
\begin{align}
\label{eq:mixed}
\begin{split}
\nabla \cdot x_{n+1}^{1-2\alpha} \tilde{g} \nabla \tilde{v} & = 0 \mbox{ in } \R^{n+1}_+,\\
\tilde{v} & = f \mbox{ in } \Omega_e \times \{0\},\\
\lim\limits_{x_{n+1} \rightarrow 0} x_{n+1}^{1-2\alpha} \p_{n+1} \tilde{v} & = 0 \mbox{ in } \Omega .
\end{split}
\end{align}
Here 
\begin{align}
\label{eq:tildeg}
\tilde{g} = \sqrt{|g|} \begin{pmatrix} g^{-1} & 0 \\ 0 & 1 \end{pmatrix} \in C^{\infty}(\R^{n+1}_+, \R^{(n+1)\times (n+1)})
\end{align}
 is a uniformly elliptic tensor field. We highlight that $\tilde{g}$ only depends on the tangential variables $x \in \R^n$ and is independent of the $x_{n+1}$ variable. 

The perspective in \eqref{eq:mixed} is motivated by the seminal works of Caffarelli-Silvestre \cite{CS07} and Stinga-Torrea \cite{ST10} which then was generalized to various other contexts \cite{ATW18,BGS15,CG11}. In these articles it was proved that the fractional Laplacian can itself be interpreted as a Dirichlet--to--Neumann map. Indeed, for a function $u\in H^{\alpha}(\R^n)$ by a Fourier diagonalization argument in the constant coefficient setting \cite{CS07} and by functional calculus in the variable coefficient setting \cite{ST10}, it follows that
\begin{align*}
(-\Delta_g)^{\alpha}u(x)= c_{\alpha}\lim\limits_{x_{n+1} \rightarrow 0} x_{n+1}^{1-2 \alpha} \p_{n+1} \tilde{u}(x,x_{n+1}) \mbox{ in } H^{-\alpha}(\R^n),
\end{align*}
where $\tilde{u} \in \dot{H}^{1}(\R^{n+1}_+, x_{n+1}^{1-2\alpha})$ is a weak solution of
\begin{align*}
\nabla \cdot x_{n+1}^{1-2\alpha} \tilde{g}  \nabla \tilde{u} & = 0 \mbox{ in } \R^{n+1}_+,\\
\tilde{u} & = u \mbox{ on } \R^n \times \{0\},
\end{align*}
with some constant $c_{\alpha} \in \R \setminus \{0\}$.
For a detailed description of the function spaces and the solution notion for these problems we refer to Section \ref{sec:prelim} below. Hence, as shown in Section \ref{sec:equiv} below, for  $f\in H^{\alpha}(\R^n)$, after projecting onto the traces of the solutions on $\R^n \times \{0\}$ the formulation \eqref{eq:mixed} is indeed an equivalent interpretation of the nonlocal problem
\begin{align*}
(-\D_g)^{\alpha} v &= 0 \mbox{ in } \Omega,\\
v & = f \mbox{ in } \Omega_e.
\end{align*}
Moreover, the bulk equation in \eqref{eq:mixed} can be rewritten as
\begin{align}
\label{eq:LB_extended}
(-\p_{n+1} x_{n+1}^{1-2\alpha} \p_{n+1} - x_{n+1}^{1-2\alpha}\Delta_g)  \tilde{v}  = 0 \mbox{ in } \R^{n+1}_+
\end{align}
 by modifying the test function by a factor $\sqrt{|g|}$ in the weak form of the equation.
The weak formulation of \eqref{eq:mixed} and the relevant operators and functions spaces related to it will be discussed in Section \ref{sec:weak1} below. 

The interpretation from \eqref{eq:mixed} will provide access to elliptic PDE techniques in the analysis of the inverse problem from the introduction and hence gives rise to a complementary perspective on the formulation and the proof of Theorem \ref{thm_main}. Here and in what follows below, it will suffice to consider real-valued functions in considering solutions to these elliptic PDEs.

\subsection{Preliminaries}

\label{sec:prelim}

In this section, we present the interpretation of \eqref{eq:mixed} in a weak sense and introduce the measurement data for the inverse problem based on this notion of a weak solution. We also outline the relation of our extension-based interpretation of \eqref{eq:mixed} with the functional calculus interpretation of the fractional Laplacian.

\subsubsection{Sets and function spaces}
\label{sec:func_spaces}

Before turning to the weak form of \eqref{eq:mixed} we collect the relevant sets and function spaces for the analysis of the extension problem in the upper half space. In what follows below, even if not explicitly stated, we will always assume that $n\in \N$ with $n\geq 2$ as well as $\alpha \in (0,1)$.

We begin by compiling a list of sets which will be used in the discussion below: 
\begin{itemize}
\item $\R^{n+1}_+:= \{(x,x_{n+1}) \in \R^{n+1}: \ x \in \R^n, \ x_{n+1}>0\}$. In the subsequent sections, we will mainly use the convention that $x \in \R^n$ and that a point in $\R^{n+1}_+$ is denoted by $(x,x_{n+1})$. We will also denote Lebesgue measure on $\R^{n+1}_+$ by $d(x,x_{n+1})$.
\item For $x_0  \in \R^{n+1}_+$, $r>0$, we use the notation $ B_r^+(x_0):= \{x \in \R^{n+1}_+: \ |x-x_0|< r\}$ to denote a relatively open ball in the upper half plane. For $x_0 =0$, we also omit the center from the notation and simply write $B_r^+$.
\item We often identify $\R^{n}\times \{0\}$ with $\R^n$ and denote the $n$-dimensional open ball of radius $r>0$ centered around $x_0 \in \R^n$ by $B_{r}(x_0)$. Similarly as above, we set $B_r:= B_r(0)$. When convenient, we will also occasionally view $\R^n$ as a Riemannian manifold with the Riemannian metric $g$ satisfying the conditions from Assumption \ref{assump:main}. In this case, we also sometimes use the associated volume form which we denote by $d V_g$. As in the previous sections, we use the notation $|g|:= \det(g_{jk})$.
 \end{itemize}

In what follows below, we will make use of various function spaces. We first recall the definition of the homogeneous $\dot{H}^{\alpha}(\R^n)$ semi-norm as $\|u\|_{\dot{H}^{\alpha}(\R^n)}:=\||\cdot|^{\alpha} \mathcal{F} u\|_{L^2(\R^n)}$, where $\F u(\xi):=  \int\limits_{\R^n} e^{-i x \cdot \xi} u(x )dx $ is used to denote the Fourier transform for $u \in L^1(\R^n)$. The associated homogeneous Sobolev space is then defined as 
\begin{align*}
\dot{H}^{\alpha}(\R^n):= \mbox{ closure of } C_0^{\infty}(\R^n) \mbox{ with respect to the } \dot{H}^{\alpha}(\R^n) \mbox{ semi-norm}. 
\end{align*}
This represents the natural energy space associated with the fractional Laplacian. As in the previous sections, we also use the notation 
\begin{align*}
H^{\alpha}(\R^n):= \{u \in \mathcal{S}'(\R^n): \ \|(1+ |\cdot|^2)^{\alpha/2} \mathcal{F} u\|_{L^2(\R^n)}< \infty\}
\end{align*}
for the associated inhomogeneous space.

For the extended equation \eqref{eq:mixed}, we work with homogeneous, weighted spaces which are naturally connected with the equation \eqref{eq:mixed}. To this end, we first introduce the following semi-norm: for $\tilde{v} \in C_0^{\infty}(\overline{\R^{n+1}_+})$ we set
\begin{align*}
\|\tilde{v}\|_{\dot{H}^1(\R^{n+1}_+, x_{n+1}^{1-2\alpha})}^2 := \|\nabla \tilde{v}\|_{L^2(\R^{n+1}_+, x_{n+1}^{1-2\alpha})}^2:= \int\limits_{\R^{n+1}_+} x_{n+1}^{1-2\alpha} |\nabla \tilde{v}|^2 d(x,x_{n+1}).
\end{align*}
Here we use the notation $C_0^{\infty}(\overline{\R^{n+1}_+})$ for smooth compactly supported functions on $\overline{\R^{n+1}_+}$. In particular, these functions do not need to vanish on $\R^{n}\times \{0\}$. Building on this, we define the following class of functions
 \begin{align*} 
 X_{\alpha}:=\dot{H}^{1}(\R^{n+1}_+, x_{n+1}^{1-2\alpha}):= \mbox{closure of $C^{\infty}_0(\overline{\R^{n+1}_+})$ with respect to  $\|\cdot\|_{\dot{H}^1(\R^{n+1}_+, x_{n+1}^{1-2\alpha})}$}.
 \end{align*}
For $\tilde{v} \in C_0^{\infty}(\overline{\R^{n+1}_+})$, and hence by density also for $\tilde{v} \in \dot{H}^{1}(\R^{n+1}_+, x_{n+1}^{1-2\alpha})$, we have the following trace estimate into $\dot{H}^{\alpha}(\R^n)$:
\begin{align}
\label{eq:trace}
\|\tilde{v}|_{\R^n \times \{0\}}\|_{\dot{H}^{\alpha}(\R^n)} \leq C \|x_{n+1}^{\frac{1-2\alpha}{2}} \nabla \tilde{v}\|_{L^2(\R^{n+1}_+)}.
\end{align}
We refer to \cite[Equation (2.3)]{BCdPS13} and \cite{N93} for this estimate. Here and in the subsequent sections, we will use the notation $\tilde{v}|_{\R^n \times \{0\}}:= \tilde{v}|_{\{x_{n+1}=0\}}= \mathrm{Tr}(\tilde{v})|_{\{x_{n+1}=0\}}$  to denote the trace onto the subspace $\R^n \times \{0\}$. In order to distinguish between the extended function and its trace onto the boundary in our notation, for $\tilde{v}\in X_{\alpha}$, we will also often set $v:=\tilde{v}|_{\R^n \times \{0\}}$. 
The estimate \eqref{eq:trace} can further be combined with the Sobolev embedding $\dot{H}^{\alpha}(\R^n) \rightarrow L^{\frac{2n}{n-2\alpha}}(\R^n)$, see, e.g., \cite[Theorem 2.1]{BCdPS13}, which yields the following Sobolev trace estimate
\begin{align}
\label{eq:Sob_trace}
\|\tilde{v}|_{\R^n \times \{0\}}\|_{L^{\frac{2n}{n-2\alpha}}(\R^n)} \leq C \|x_{n+1}^{\frac{1-2\alpha}{2}} \nabla \tilde{v}\|_{L^2(\R^{n+1}_+)}
\end{align}
for $\tilde{v}\in C_0^{\infty}(\overline{\R^{n+1}_+})$ and hence also for $\tilde{v} \in \dot{H}^{1}(\R^{n+1}_+, x_{n+1}^{1-2\alpha})$.
Then, endowed with the natural bilinear form
\begin{align*}
(\cdot, \cdot)_{\dot{H}^{1}(\R^{n+1}_+, x_{n+1}^{1-2\alpha})}: X_{\alpha} \times X_{\alpha} &\rightarrow \R,\\
(\tilde{u}, \tilde{v}) \mapsto (\tilde{u}, \tilde{v})_{\dot{H}^{1}(\R^{n+1}_+, x_{n+1}^{1-2\alpha})}&:= \int\limits_{\R^{n+1}_+} x_{n+1}^{1-2\alpha} \nabla \tilde{u} \cdot \nabla \tilde{v} d(x,x_{n+1}),
\end{align*}
the space $X_{\alpha}$ becomes a Hilbert space. Indeed, only positive definiteness needs to be checked; this however follows from the fact that $\|\tilde{v}\|_{\dot{H}^1(\R^{n+1}_+, x_{n+1}^{1-2\alpha})} = 0$ iff $ \nabla \tilde{v} = 0$ together with the Sobolev trace estimate \eqref{eq:Sob_trace}.

Relying on the trace estimates \eqref{eq:trace} and \eqref{eq:Sob_trace}, we further define a function space involving a trace condition
\begin{align*}
X_{\alpha,0,\Omega_e}:=\dot{H}^1_{0,\Omega_e}(\R^{n+1}_+, x_{n+1}^{1-2\alpha}) :=\{\tilde{v} \in \dot{H}^{1}(\R^{n+1}_+, x_{n+1}^{1-2\alpha}): \ \tilde{v}|_{\Omega_e \times \{0\}} = 0\}.
\end{align*}
This will play the role of taking into account the exterior data for the mixed Dirichlet-Neumann problem \eqref{eq:mixed}. Similarly as above, $(X_{\alpha,0,\Omega_e}, (\cdot, \cdot)_{\dot{H}^1(\R^{n+1}_+, x_{n+1}^{1-2 \alpha})})$ is a Hilbert space.

While mainly working in the homogeneous weighted Sobolev spaces from above, occasionally, we also use the following inhomogeneous version:
\begin{align*}
H^{1}(\R^{n+1}_+, x_{n+1}^{1-2\alpha}):= \{ \tilde{u}: \R^{n+1}_+ \rightarrow \R: \ \|\tilde{u}\|_{H^1(\R^{n+1}_+,x_{n+1}^{1-2\alpha})}<\infty\},
\end{align*}
where
\begin{align*}
\|\tilde{u}\|_{H^1(\R^{n+1}_+,x_{n+1}^{1-2\alpha})}^2 := \|x_{n+1}^{\frac{1-2\alpha}{2}}\tilde{u}\|_{L^2(\R^{n+1}_+)}^2 + \|x_{n+1}^{\frac{1-2\alpha}{2}} \nabla \tilde{u}\|_{L^2(\R^{n+1}_+)}^2.
\end{align*}
Similar as the spaces above, this also is a Hilbert space with the scalar product 
\begin{align*}
(\tilde{u}, \tilde{v})_{{H}^{1}(\R^{n+1}_+, x_{n+1}^{1-2\alpha})}:= \int\limits_{\R^{n+1}_+} x_{n+1}^{1-2\alpha} \nabla \tilde{u} \cdot \nabla \tilde{v} d(x,x_{n+1})+ \int\limits_{\R^{n+1}_+} x_{n+1}^{1-2\alpha}  \tilde{u}  \tilde{v} d(x,x_{n+1}).
\end{align*}

For later use, we also record the following Poincar\'e inequality. For $u \in \dot{H}^{\alpha}(\R^n)$ with $\supp(u)\subset K$ for some compact set $K \subset \R^n$ it holds that for a constant $C>0$ depending on $K,\alpha,n$ 
\begin{align}
\label{eq:Poincare_nonlocal}
\|u\|_{L^2(K)} \leq C \|u\|_{\dot{H}^{\alpha}(\R^n)}.
\end{align}
We refer to \cite{RO16} and the references therein for further information on this Poincar\'e estimate.

When dealing with finite differences and, in particular, in our regularity estimates, we will rely on
\begin{itemize}
\item a (tangential) finite difference operator: for $j \in \{1,\dots,n\}$, $h \in \R\setminus \{0\}$ and $f \in L^2_{loc}(\R^{n+1}_+)$ we set $\tau_{j,h} f(x,x_{n+1}):= f(x+ he_j,x_{n+1})- f(x,x_{n+1})$,
\item a (tangential) finite difference quotient operator: for $j \in \{1,\dots,n\}$, $h \in \R\setminus \{0\}$ and $f \in L^2_{loc}(\R^{n+1}_+)$ we set $\Delta_{j,h} f(x,x_{n+1}):= |h|^{-1} (f(x+ he_j,x_{n+1})- f(x,x_{n+1}))$. 
\end{itemize}

In the sections below, we will frequently make use of these difference quotients to increase the Sobolev differentiability of functions. To this end, we note the following finite difference estimates for Sobolev functions. These results are well-known, we collect and prove them for the convenience of the reader.

\begin{lem}
\label{lem:Sob_charact}
Let $\mu \in \R$ and $\beta \in (0,1)$. 
\begin{itemize}
\item[(i)] Let $f \in \dot{H}^{\mu}(\R^n)$. Assume that for some $K>0$
\begin{align*}
\sup\limits_{h \in (0,1)} \sum\limits_{j=1}^{n} h^{-\beta}\|\tau_{j,h} f\|_{\dot{H}^{\mu}(\R^n)} \leq K,
\end{align*}
then $f \in \dot{H}^{\mu + \epsilon}(\R^n)$ for any $\epsilon \in (0,\beta)$
and there exists a constant $C=C(n,\mu,\beta,\epsilon)>0$ such that
\begin{align*}
\|f\|_{\dot{H}^{\mu + \epsilon}(\R^n)} \leq C (K + \|f\|_{\dot{H}^{\mu }(\R^n)}).
\end{align*}
\item[(ii)] Let $f \in H^{\mu}(\R^n)$. Assume that for some $K>0$
\begin{align*}
\sup\limits_{h \in (0,1)}\sum\limits_{j=1}^{n} h^{-\beta}\|\tau_{j,h} f\|_{H^{\mu}(\R^n)} \leq K,
\end{align*}
then $f \in H^{\mu + \epsilon}(\R^n)$ for any $\epsilon \in (0,\beta)$
and there exists a constant $C=C(n,\mu,\beta,\epsilon)>0$ such that
\begin{align*}
\|f\|_{H^{\mu + \epsilon}(\R^n)} \leq C (K +  \|f\|_{H^{\mu }(\R^n)}) .
\end{align*}
\item[(iii)] Let $f \in H^{\mu+ \beta}(\R^n) $. Then, there exists $C>0$ such that
\begin{align*}
\sup\limits_{h \in (0,1)} \sum\limits_{j=1}^{n} h^{-\beta}\|\tau_{j,h} f\|_{{H}^{\mu}(\R^n)} \leq C \|f\|_{H^{\mu+\beta}(\R^n)}.
\end{align*}
 \item[(iv)] Let $\alpha \in (0,1)$ and $\delta \in [0,1/2)$. Let $f \in H^{-\alpha + \delta}(\R^n)$ and $v \in H^{\alpha + \beta-\delta}(\R^n)$. Then, for any $j \in \{1,\dots,n\}$ and $h \in (0,1) $,
 \begin{align*}
 |\langle f, \tau_{j,h} v \rangle_{H^{-\alpha}(\R^n), H^{\alpha}(\R^n)}| \leq   h^{\beta} \|f\|_{H^{-\alpha+ \delta}(\R^n)} \sup\limits_{h\in (0,1)} h^{-\beta} \| \tau_{j,h} v \|_{H^{\alpha-\delta}(\R^n)}.
 \end{align*}
\end{itemize}
\end{lem}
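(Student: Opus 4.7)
The plan is to prove all four items via Fourier analysis on $\R^n$. Since $\F(\tau_{j,h} f)(\xi) = (e^{ih\xi_j}-1)\hat f(\xi)$ and $|e^{ih\xi_j}-1|^2 = 4\sin^2(h\xi_j/2)$, each inequality reduces to an elementary estimate on the Fourier multiplier $4\sin^2(h\xi_j/2)$.

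I would dispose of (iii) and (iv) first, since they are direct. For (iii), splitting into the cases $|h\xi_j|\le 2$ and $|h\xi_j|> 2$ gives the pointwise bound
\[
4\sin^2(h\xi_j/2) \le \min(|h\xi_j|^2, 4) \le 4^{1-\beta} h^{2\beta}|\xi_j|^{2\beta} \le C(\beta)\, h^{2\beta}\langle\xi\rangle^{2\beta},
\]
and substituting into Plancherel yields $\|\tau_{j,h}f\|_{H^\mu}^2 \le C h^{2\beta}\|f\|_{H^{\mu+\beta}}^2$. For (iv), the $H^{-\alpha}, H^\alpha$ duality pairing coincides on test functions with the $L^2$ pairing, hence with the $H^{-\alpha+\delta}, H^{\alpha-\delta}$ pairing; Cauchy--Schwarz then yields $|\langle f, \tau_{j,h} v\rangle_{H^{-\alpha}(\R^n), H^\alpha(\R^n)}| \le \|f\|_{H^{-\alpha+\delta}(\R^n)}\|\tau_{j,h} v\|_{H^{\alpha-\delta}(\R^n)}$, and the stated inequality is obtained by writing $\|\tau_{j,h}v\|_{H^{\alpha-\delta}(\R^n)} = h^\beta \cdot h^{-\beta}\|\tau_{j,h}v\|_{H^{\alpha-\delta}(\R^n)}$ and bounding the last factor by its supremum over $h \in (0,1)$.

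For (i) and (ii), I would square the hypothesis $\|\tau_{j,h}f\|_{\dot H^\mu} \le Kh^\beta$, integrate against the weight $h^{-1-2\epsilon}\,dh$ on $(0,1)$, and swap the order of integration via Fubini. Since $\epsilon < \beta$, the right-hand side is bounded by $K^2/(2(\beta-\epsilon))$, while the left-hand side equals $4\int_{\R^n}|\xi|^{2\mu}|\hat f(\xi)|^2\, m_\epsilon(\xi_j)\,d\xi$, where $m_\epsilon(t) := \int_0^1 h^{-1-2\epsilon}\sin^2(ht/2)\,dh$. The substitution $s = h|t|/2$ gives $m_\epsilon(t) = (|t|/2)^{2\epsilon}\int_0^{|t|/2} s^{-1-2\epsilon}\sin^2 s\,ds$, so $m_\epsilon(t) \ge c_\epsilon |t|^{2\epsilon}$ whenever $|t|$ exceeds a fixed threshold $T_0 = T_0(\epsilon)$ (the tail integral $\int_1^\infty s^{-1-2\epsilon}\sin^2 s\,ds$ is finite and strictly positive). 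Summing over $j=1,\ldots,n$ and observing that every $\xi$ with $|\xi| \ge T_0\sqrt n$ has at least one coordinate with $|\xi_j| \ge T_0$, one obtains $\sum_j m_\epsilon(\xi_j) \ge c_{n,\epsilon}|\xi|^{2\epsilon}$ on $\{|\xi|\ge T_0\sqrt n\}$, which controls the high-frequency contribution to $\|f\|_{\dot H^{\mu+\epsilon}}^2$. On the complementary low-frequency region $|\xi|^{2\epsilon}$ is bounded by a constant, so $\int_{|\xi|\le T_0\sqrt n} |\xi|^{2(\mu+\epsilon)}|\hat f|^2 \le C\|f\|_{\dot H^\mu}^2$, and adding the two pieces completes (i). Part (ii) is identical with $|\xi|$ replaced by $\langle\xi\rangle$ throughout.

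The only delicate point is the degeneration of $m_\epsilon(t)$ to order $t^2$ in the low-frequency regime $|t| \le 1$, which prevents a single pointwise lower bound $m_\epsilon(t) \ge c|t|^{2\epsilon}$ for all $t$; this is what forces the high/low frequency split in the previous step, with the low-frequency piece absorbed into the baseline $\|f\|_{\dot H^\mu}$ (resp.\ $\|f\|_{H^\mu}$) term already present on the right-hand side of the conclusion. Otherwise the argument is the standard Nirenberg-type characterization of Sobolev spaces via finite differences.
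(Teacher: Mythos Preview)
Your proposal is correct. Parts (iii) and (iv) match the paper's argument essentially verbatim: the pointwise multiplier bound $|e^{ih\xi_j}-1|^2 \le C|h\xi_j|^{2\beta}$ for (iii), and the duality estimate followed by multiplying and dividing by $h^\beta$ for (iv).

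For (i) and (ii) you use a genuinely different (though closely related) mechanism. The paper localizes to dyadic annuli $\{|\xi_j|\in[2^{k-1},2^{k+1}]\}$, exploits that $|e^{i\xi_j h}-1|$ is bounded below by a positive constant when $h=2^{-k}$, and then sums the resulting bounds over $k$ using the geometric factor $2^{-2k(\beta-\epsilon)}$. You instead integrate the squared hypothesis against $h^{-1-2\epsilon}\,dh$ on $(0,1)$ and reduce to a lower bound on the scalar multiplier $m_\epsilon(t)=\int_0^1 h^{-1-2\epsilon}\sin^2(ht/2)\,dh$ via the substitution $s=h|t|/2$. Both routes land on the same high/low frequency decomposition, with the low-frequency piece absorbed by the baseline $\dot H^\mu$ (resp.\ $H^\mu$) norm; your continuous-integration version is arguably slightly cleaner and avoids the bookkeeping of overlapping dyadic shells, while the paper's dyadic version makes the Besov-space provenance of the estimate more visible.
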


Finite difference arguments in the form of (i), (ii) will enter frequently in our regularity analysis. The estimates from (iii) and (iv) will mainly play a role in the extension perspective on the Vishik-Eskin estimates from Appendix \ref{sec:reg}. The estimate in (iv) is a slight adaptation of \cite[Lemma 2.2]{KM07}.

\begin{proof}
We first prove (i). Without loss of generality, by density, we assume that $f \in C_0^{\infty}(\R^n)$.
The claim then follows from the embedding properties of Besov spaces. We give a short proof for the convenience of the reader.
We need to estimate the $\dot{H}^{\mu + \beta}(\R^n)$ semi-norm of $f$. Without loss of generality, we prove the result for $\mu = 0$.

We begin by considering the high frequencies and fix $h \in (0,1)$. By assumption, we have that
\begin{align*}
h^{-2 \beta} \int\limits_{\{\xi \in \R^n: \ |\xi_j| \in [2^{-1} h^{-1}, 2 h^{-1}]\}} |e^{i \xi_j h}-1|^2 |\F f(\xi)|^2 d \xi \leq K^2.
\end{align*}
Hence, by Taylor expanding the symbol $|e^{i \xi_j h}-1|^2$, for $|\xi_j| \in [2^{-1} h^{-1}, 2  h^{-1}]$, we obtain that for some constant $c>1$
\begin{align*}
&c^{-1} \int\limits_{\{\xi \in \R^n: \ |\xi_j| \in [2^{-1} h^{-1}, 2  h^{-1}]\}}  |\xi_j|^{2\beta} |\F f(\xi)|^2 d \xi\\
  &\leq h^{-2 \beta} \int\limits_{\{\xi \in \R^n: \ |\xi_j| \in [2^{-1} h^{-1}, 2 h^{-1}]\}}  |e^{i \xi_j h}-1|^2 |\F f(\xi)|^2 d \xi \\
  &\leq c \int\limits_{\{\xi \in \R^n: \ |\xi_j| \in [2^{-1} h^{-1}, 2 h^{-1}]\}}  |\xi_j|^{2\beta} |\F f(\xi)|^2 d \xi.
\end{align*}
We next observe that there exists $C>1$ such that for $\epsilon \in (0,\beta)$
\begin{align*}
C^{-1}\!\int\limits_{\{\xi \in \R^n: \ |\xi_j| \in [2^{-1} h^{-1}, 2 h^{-1}]\}}  |\xi_j|^{2\beta} |\F f(\xi)|^2 d \xi
&\leq  h^{-2(\beta-\epsilon)} \int\limits_{\{ \xi \in \R^n: \ |\xi_j| \in [2^{-1} h^{-1}, 2 h^{-1}] \}}  |\xi_j|^{2\epsilon} |\F f(\xi)|^2 d \xi\\
&\leq C\int\limits_{\{\xi \in \R^n: \ |\xi_j| \in [2^{-1} h^{-1}, 2 h^{-1}]\}}  |\xi_j|^{2\beta} |\F f(\xi)|^2 d \xi.
\end{align*}
We note that this is valid for any $h \in (0,1)$ and hence, in particular, for $h = 2^{-k}$ for $k\in \N$. Summing these bounds over $k \in \N$, we obtain for $\epsilon \in (0,\beta)$
\begin{align*}
 \int\limits_{\{\xi \in \R^n: \ |\xi_j| \geq 1\}}  |\xi_j|^{2\epsilon} |\F f(\xi)|^2 d \xi 
 &\leq \sum\limits_{k\in \N} 2^{-2k(\beta-\epsilon)}\int\limits_{\{\xi \in \R^n: \ |\xi_j| \in [2^{-1} h^{-1}, 2 h^{-1}]\}}  |\xi_j|^{2\beta} |\F f(\xi)|^2 d \xi\\
& \leq \sum\limits_{k\in \N} 2^{-2k(\beta-\epsilon)} K^2 \leq C(\beta, \epsilon)K^2.
\end{align*}
As this holds for any $j \in \{1,\dots,n\}$, we obtain
\begin{align*}
 \int\limits_{\{|\xi| \geq 1 \}}  |\xi|^{2\epsilon} |\F f(\xi)|^2 d \xi \leq C(\beta, \epsilon)K^2.
\end{align*}

For the low frequencies, we recall the $\dot{H}^{\mu}(\R^n)$ bound for $f$, noting that $ |\xi|^{\mu + \epsilon} \leq |\xi|^{\mu}$ for $|\xi|\leq 1$. Combining the two estimates implies the desired result.

With (i) in hand, we turn to the proof of (ii). The argument for the high frequencies follows as above, while the low frequencies are directly controlled by the $L^2$ bound.

The proof of (iii) follows directly from an application of the Fourier transform. For $f\in C_0^{\infty}(\R^n)$, $h\in (0,1)$ and $j \in \{1,\dots,n\}$ we have
\begin{align*}
h^{-2\beta}\|\tau_{j,h} f\|_{{H}^{\mu}(\R^n)}^2
&=  h^{-2\beta} \int\limits_{\R^n} (1+|\xi|^2)^{\mu} |e^{i \xi_j h} -1 |^2 |\F(f)(\xi)|^2 d\xi\\
&\leq Ch^{-2\beta} \int\limits_{\R^n} (1+|\xi|^2)^{\mu}|\xi_j h|^{2\beta} |\F(f)(\xi)|^2 d\xi\\
&\leq  C\int\limits_{\R^n}(1+|\xi|^2)^{\mu} |\xi_j |^{2\beta} |\F(f)(\xi)|^2 d\xi.
\end{align*}
Here we have used that $|e^{i \xi_j h} -1 |^2 \leq C |\xi_j h|^{2\beta}$ for any $\beta \in (0,1)$.
By density this concludes the proof of (iii).

The proof of (iv) is analogous to the one of \cite[Lemma 2.2]{KM07}. We present it for completeness. By density, we may assume that $f, v \in C_0^{\infty}(\R^n)$. Then, by duality, for $h \in (0,1) $
\begin{align*}
\left| \int\limits_{\R^n} f \tau_{j,h} v dx \right|
&\leq \|f\|_{H^{-\alpha +\delta}(\R^n)} \|\tau_{j,h} v\|_{H^{\alpha - \delta}(\R^n)}\\
&\leq h^{\beta}  \|f\|_{H^{-\alpha +\delta}(\R^n)} \sup\limits_{h \in (0,1)} h^{-\beta} \|\tau_{j,h} v\|_{H^{\alpha - \delta}(\R^n)}.
\end{align*}
\end{proof}

\subsubsection{The mixed Dirichlet-Neumann problem}
\label{sec:weak1}

Using the function spaces from the previous section, we consider weak solutions of \eqref{eq:mixed} with data $f \in H^{\alpha}(\Omega_e)$.

\begin{defn}
\label{defi:sol}
Let $\alpha \in (0,1)$, $n\geq 2$ and suppose that the conditions from Assumption \ref{assump:main} hold.
Let $f \in H^{\alpha}(\Omega_e)$.
We say that a function $\tilde{u}^f \in X_{\alpha}$ is a \emph{weak solution of \eqref{eq:mixed} with data in $H^{\alpha}(\Omega_e)$} if
\begin{itemize}
\item[(i)] the bulk equation holds weakly, i.e.,
\begin{align*}
\int\limits_{\R^{n+1}_+} x_{n+1}^{1-2\alpha} \tilde{g} \nabla \tilde{u}^f \cdot \nabla \tilde{\varphi} d(x,x_{n+1}) = 0
\end{align*}
for all $\tilde{\varphi} \in X_{\alpha,0,\Omega_e}$,
\item[(ii)] the boundary data are attained in a trace sense, i.e., $\tilde{u}^f|_{\Omega_e \times \{0\}} = f$.
\end{itemize}
\end{defn}

For the well-definedness of the trace condition in (ii) we recall the trace estimates \eqref{eq:trace} and \eqref{eq:Sob_trace} from above, which are applicable since $\tilde{u}^f \in X_{\alpha}$.

We next argue that the problem in Definition \ref{defi:sol} is well-posed by the lemma of Lax-Milgram.

\begin{lem}
\label{lem:well-posedness}
Let $\alpha \in (0,1)$, $n\geq 2$ and suppose that the conditions from Assumption \ref{assump:main} hold.
Let $f \in H^{\alpha}(\Omega_e)$. Then there exists a unique weak solution $\tilde{u}^f \in X_{\alpha}$  of \eqref{eq:mixed} satisfying
\begin{align*}
\|u^f\|_{H^{\alpha}(\R^n)} + \|u^f\|_{L^{\frac{2n}{n-2\alpha}}(\R^n)} + \|x_{n+1}^{\frac{1-2\alpha}{2}} \nabla \tilde{u}^f \|_{L^2(\R^{n+1}_+)} \leq C \|f\|_{H^{\alpha}(\Omega_e)},
\end{align*}
where we have set $u^f: \R^n \rightarrow \R$, $u^f:= \tilde{u}^f|_{\R^n \times \{0\}}$.
\end{lem}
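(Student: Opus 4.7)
The strategy is a standard application of the Lax--Milgram lemma on the Hilbert space $X_{\alpha,0,\Omega_e}$, after reducing to a homogeneous boundary value problem. First, I would lift the exterior datum $f\in H^{\alpha}(\Omega_e)$ to an extension $F\in H^{\alpha}(\mathbb{R}^n)$ with $\|F\|_{H^{\alpha}(\mathbb{R}^n)}\le C\|f\|_{H^{\alpha}(\Omega_e)}$ using the standard Sobolev extension theorem, and then construct a lift $\tilde F\in X_\alpha$ with $\tilde F|_{\mathbb{R}^n\times\{0\}}=F$ and
\[
\|x_{n+1}^{(1-2\alpha)/2}\nabla\tilde F\|_{L^2(\mathbb{R}^{n+1}_+)}\le C\|F\|_{\dot H^{\alpha}(\mathbb{R}^n)}\le C\|f\|_{H^{\alpha}(\Omega_e)}.
\]
A natural choice is the constant-coefficient Caffarelli--Silvestre extension of $F$, whose extension energy is controlled sharply by the homogeneous $\dot H^\alpha$ norm of its trace; this is precisely the surjectivity direction of the trace estimate \eqref{eq:trace}.

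Next, I would seek the weak solution in the form $\tilde u^f=\tilde F+\tilde v$ with $\tilde v\in X_{\alpha,0,\Omega_e}$. The weak formulation of Definition~\ref{defi:sol} then becomes: find $\tilde v\in X_{\alpha,0,\Omega_e}$ such that
\[
B(\tilde v,\tilde\varphi):=\int_{\mathbb{R}^{n+1}_+}x_{n+1}^{1-2\alpha}\tilde g\,\nabla\tilde v\cdot\nabla\tilde\varphi\,d(x,x_{n+1})=-B(\tilde F,\tilde\varphi)
\]
for all $\tilde\varphi\in X_{\alpha,0,\Omega_e}$. Continuity of $B$ on $X_\alpha\times X_\alpha$ follows immediately from the upper uniform ellipticity of $\tilde g$ in \eqref{eq:tildeg} and the Cauchy--Schwarz inequality; coercivity on $X_{\alpha,0,\Omega_e}$ follows from the lower uniform ellipticity of $\tilde g$, giving $B(\tilde v,\tilde v)\ge c\|\tilde v\|_{\dot H^1(\mathbb{R}^{n+1}_+,x_{n+1}^{1-2\alpha})}^2$, which is the natural norm on $X_{\alpha,0,\Omega_e}$; and continuity of the right-hand-side functional $\tilde\varphi\mapsto -B(\tilde F,\tilde\varphi)$ is bounded by $C\|\tilde F\|_{X_\alpha}\|\tilde\varphi\|_{X_\alpha}$. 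Lax--Milgram then produces a unique $\tilde v\in X_{\alpha,0,\Omega_e}$ with $\|\tilde v\|_{X_\alpha}\le C\|f\|_{H^{\alpha}(\Omega_e)}$, and uniqueness for the original problem follows by testing the equation satisfied by the difference of two solutions (which lies in $X_{\alpha,0,\Omega_e}$) against itself and invoking coercivity.

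For the three quantitative bounds, the control of $\|x_{n+1}^{(1-2\alpha)/2}\nabla\tilde u^f\|_{L^2}$ is immediate from $\tilde u^f=\tilde F+\tilde v$ and the triangle inequality. The $L^{2n/(n-2\alpha)}(\mathbb{R}^n)$-bound on $u^f$ is an immediate consequence of the Sobolev trace estimate \eqref{eq:Sob_trace}. For the $H^\alpha(\mathbb{R}^n)$-bound, the trace inequality \eqref{eq:trace} gives the homogeneous control $\|u^f\|_{\dot H^\alpha(\mathbb{R}^n)}\le C\|f\|_{H^{\alpha}(\Omega_e)}$, so it remains to bound $\|u^f\|_{L^2(\mathbb{R}^n)}$. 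Writing $u^f=F+v$ with $v:=\tilde v|_{\mathbb{R}^n\times\{0\}}\in\dot H^\alpha(\mathbb{R}^n)$, one notes that $v$ is supported in $\overline{\Omega}$ (since $\tilde v\in X_{\alpha,0,\Omega_e}$) and therefore the Poincar\'e inequality \eqref{eq:Poincare_nonlocal} applied on the compact set $\overline\Omega$ gives $\|v\|_{L^2(\mathbb{R}^n)}\le C\|v\|_{\dot H^\alpha(\mathbb{R}^n)}\le C\|f\|_{H^{\alpha}(\Omega_e)}$, while $\|F\|_{L^2(\mathbb{R}^n)}\le\|F\|_{H^\alpha(\mathbb{R}^n)}\le C\|f\|_{H^{\alpha}(\Omega_e)}$.

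The only genuinely non-mechanical step is the production of the lift $\tilde F\in X_\alpha$ with a \emph{homogeneous} energy bound; this is what forces one to go through the Caffarelli--Silvestre extension rather than an ad hoc tangential cutoff. Once this is in place, everything else is Lax--Milgram and bookkeeping with the trace inequalities \eqref{eq:trace}--\eqref{eq:Sob_trace} and the Poincar\'e inequality \eqref{eq:Poincare_nonlocal}.
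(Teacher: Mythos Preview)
Your proposal is correct and follows essentially the same approach as the paper: both construct the lift via the constant-coefficient Caffarelli--Silvestre extension of an $H^\alpha(\mathbb{R}^n)$ extension of $f$, then solve for the correction in $X_{\alpha,0,\Omega_e}$ by Lax--Milgram, and finally assemble the three bounds using the trace estimates \eqref{eq:trace}--\eqref{eq:Sob_trace} together with the Poincar\'e inequality \eqref{eq:Poincare_nonlocal} applied to the compactly supported trace of the correction. The paper additionally writes out the explicit Bessel-function formula for the lift via tangential Fourier diagonalization, but the underlying argument is the same.
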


Here and in what follows below, in order to highlight, relate and distinguish the estimates for the function on the extended compared to the trace level, we often use the convention that the extended function is denoted with a tilde while we do not use a tilde for its restriction.

\begin{proof}
We argue in two steps. Without loss of generality, by the definition of $H^{\alpha}(\Omega_e)$ as a quotient space, we may assume that $f \in H^{\alpha}(\R^n)$. Indeed, the desired estimate then follows by considering the infimum over all admissible extensions.

First, we construct a solution $\tilde{v}^f \in X_{\alpha}$ of
\begin{align}
\label{eq:CS_weak_const}
\begin{split}
\nabla \cdot x_{n+1}^{1-2\alpha} \nabla \tilde{v}^f & = 0 \mbox{ in } \R^{n+1}_+,\\
\tilde{v}^f & = f \mbox{ on } \R^n.
\end{split}
\end{align} 
This exists by an application of a tangential Fourier transform and satisfies the bounds associated with the space $X_{\alpha}$:
\begin{align}
\label{eq:FT_bound} 
\|\lim\limits_{x_{n+1} \rightarrow 0} x_{n+1}^{1-2\alpha} \p_{n+1} \tilde{v}^f\|_{H^{-\alpha}(\R^n)}+ \|x_{n+1}^{\frac{1-2\alpha}{2}} \nabla \tilde{v}^f \|_{L^2(\R^{n+1}_+)} \leq C \|f\|_{H^{\alpha}(\R^n)}.
\end{align}
Indeed, carrying out a tangential Fourier transform, the equation \eqref{eq:CS_weak_const} turns into an ODE in the normal variable (decoupled in the tangential variable)
\begin{align*}
(\p_{n+1} x_{n+1}^{1-2\alpha} \p_{n+1} - x_{n+1}^{1-2\alpha} |\xi|^2) \F_{x}(\tilde{v}^f) & = 0 \mbox{ in } (0,\infty),\\
\F_{x}(\tilde{v}^f) & = \F_{x}(f) \mbox{ for }  x_{n+1}=0,
\end{align*} 
where $\F_{x}$ denotes the Fourier transform in tangential directions, i.e., for $u \in C_0^{\infty}(\R^{n+1}_+)$, we set $\F_x u(\xi, x_{n+1}):= \int\limits_{\R^n} e^{-i x \cdot \xi} u(x,x_{n+1}) dx$.
This ODE is solved by 
\begin{align}
\label{eq:explicit}
\F_{x}(\tilde{v}^f)(\xi, x_{n+1}) := \tilde{c}_{\alpha} (|\xi| x_{n+1}) ^{\alpha} K_{\alpha}(|\xi| x_{n+1}) \F_{x}(f)(\xi),
\end{align}
where $K_{\alpha}$ denotes the Bessel function of the second kind (see \cite[Chapter 10.25]{NIST}) and $\tilde{c}_{\alpha} \in \R \setminus \{0\}$.
Inserting this expression for $\tilde{v}^f$ into the norms on the left hand side in \eqref{eq:FT_bound} and using Plancherel's theorem in the tangential directions, implies the desired estimate from \eqref{eq:FT_bound}.

We then consider $\tilde{u}^f:= \tilde{v}^f + \tilde{w}^f$ with $\tilde{v}^f, \tilde{w}^f \in X_{\alpha}$ and seek for a solution $\tilde{w}^f$ of
\begin{align*}
\int\limits_{\R^{n+1}_+} x_{n+1}^{1-2\alpha}\tilde{g} \nabla \tilde{w}^f \cdot \nabla \tilde{\varphi} d(x,x_{n+1})& = -\int\limits_{\R^{n+1}_+} x_{n+1}^{1-2\alpha} \tilde{g} \nabla \tilde{v}^f \cdot \nabla \tilde{\varphi} d(x,x_{n+1})\\
&\mbox{ \qquad \qquad \qquad for all } \tilde{\varphi} \in X_{\alpha,0,\Omega_e}.
\end{align*}
We note that, by Hölder and the a priori estimate \eqref{eq:FT_bound} from above,
\begin{align*}
\left| \int\limits_{\R^{n+1}_+} x_{n+1}^{1-2\alpha}\tilde{g} \nabla \tilde{v}^f \cdot \nabla \tilde{\varphi} d(x,x_{n+1}) \right| \leq C \|f\|_{H^{\alpha}(\R^n)} \|x_{n+1}^{\frac{1-2\alpha}{2}} \nabla \tilde{\varphi}\|_{L^2(\R^{n+1}_+)}.
\end{align*}
Thus, for $f \in H^{\alpha}(\R^n)$,
\begin{align*}
\ell_{f}(\tilde{\varphi}):=  \int\limits_{\R^{n+1}_+} x_{n+1}^{1-2\alpha}\tilde{g} \nabla \tilde{v}^f \cdot \nabla \tilde{\varphi} d(x,x_{n+1}),
\end{align*}
defines a bounded functional on $X_{\alpha,0, \Omega_e }$.

As a second step, we invoke the lemma of Lax-Milgram in $X_{\alpha,0,\Omega_e}$. 
Recalling the bounds from \eqref{eq:FT_bound}, we hence obtain a unique solution of the weak equation for $\tilde{w}^f \in X_{\alpha,0, \Omega_e}$ satisfying
\begin{align*}
 \|x_{n+1}^{\frac{1-2\alpha}{2}} \nabla \tilde{w}^f \|_{L^2(\R^{n+1}_+)} \leq C \|f\|_{H^{\alpha}(\R^n)},
\end{align*}
and $\tilde{w}^f|_{\Omega_e \times \{0\}} = 0$. By the embedding $\dot{H}^{1}(\R^{n+1}_+, x_{n+1}^{1-2\alpha}) \rightarrow \dot{H}^{\alpha}(\R^n)$ (see \eqref{eq:trace}) and the compact support of $w^f:= \tilde{w}^f|_{\R^n \times \{0\}}$ and by Poincar\'e's inequality (see \eqref{eq:Poincare_nonlocal} above), we also infer that
\begin{align*}
 \| w^f \|_{H^{\alpha}(\R^{n})} \leq C \|f\|_{H^{\alpha}(\R^n)}.
\end{align*}
Thus, by the trace condition for $\tilde{v}^f$ and the triangle inequality, the function $\tilde{u}^f := \tilde{w}^f + \tilde{v}^f$ satisfies the desired estimates. 
\end{proof}

With this notation, using the associated bilinear form, we can also define different versions of generalized Dirichlet-to-Neumann maps. We recall the observations from Remark \ref{rmk:reg}(ii) concerning the imposed regularity assumptions on the sets $W_1, W_2$.

\begin{defn}
\label{defi:Neumann1}
Let $\alpha \in (0,1)$, $n\geq 2$ and suppose that the conditions from Assumption \ref{assump:main} hold.
Let $W', W'' \subset \Omega_e \subset \R^n$ be open, non-empty and smooth. Then, we set
\begin{align*}
\tilde{\Lambda}_{g}^{W',W''}: H_0^{\alpha}(W') \rightarrow H^{-\alpha}(W''), \
f \mapsto \tilde{\Lambda}_{g}^{W',W''}(f),
\end{align*}
where for $h \in H_0^{\alpha}(W'')$
\begin{align*}
\langle \tilde{\Lambda}_{g}^{W',W''}(f),h  \rangle_{H^{-\alpha}(W''),H_0^{\alpha}(W'')} := -\int\limits_{\R^{n+1}_+} x_{n+1}^{1-2\alpha} \tilde{g} \nabla \tilde{u}^f \cdot \nabla \tilde{\varphi} d(x,x_{n+1}),
\end{align*}
for $\tilde{\varphi} \in X_{\alpha,0,(W'')_e\cap \Omega_e}:= \{\tilde{v}\in X_{\alpha}: \ \tilde{v}|_{(W'')_e\cap \Omega_e}=0\}$ with $\tilde{\varphi}|_{\R^n \times \{0\}} = h$ in $W''$ and $\tilde{u}^f \in X_{\alpha }$ being a weak solution of \eqref{eq:mixed} with data $f\in H_0^{\alpha}(W')$. Here we have used the notation $(W'')_e:= \R^n \setminus \overline{W''}$.
\end{defn}

We remark that the map $\tilde{\Lambda}_{g}^{W',W''}$ is well-defined and satisfies the bounds 
\begin{align*}
\|\tilde{\Lambda}_{g}^{W',W''}(f)\|_{H^{-\alpha}(W'')} \leq C \|f\|_{H^{\alpha}(\R^n)}.
\end{align*}
Indeed, for the well-definedness we observe that if $\tilde{\varphi}_1, \tilde{\varphi}_2 \in X_{\alpha,0, (W'')_e \cap \Omega_e}$ are two extensions of $h \in H^{\alpha}_0(W'')$, we have that $(\tilde{\varphi}_1 - \tilde{\varphi}_2)|_{\R^n \times \{0\}} \in \dot{H}^{\alpha}(\R^n)$ as well as $(\tilde{\varphi}_1 - \tilde{\varphi}_2)|_{\Omega_e \times \{0\}} =0 $ in a trace sense, and, thus, $\tilde{\varphi}_1 - \tilde{\varphi}_2 \in X_{\alpha,0,\Omega_e}$. In particular, by the equation satisfied by $\tilde{u}^f$, we therefore obtain that 
\begin{align*}
\int\limits_{\R^{n+1}_+} x_{n+1}^{1-2\alpha} \tilde{g} \nabla \tilde{u}^f \cdot \nabla (\tilde{\varphi}_1 - \tilde{\varphi}_2)  d(x,x_{n+1}) = 0,
\end{align*}
which proves the desired independence of the extension of the function $h$.
Similarly, the claimed boundedness of the generalized Dirichlet-to-Neumann map follows from the weak form of the equation defining $\tilde{u}^f$ together with the following estimate 
\begin{align}
\label{eq:est_Neumann}
\begin{split}
\|\tilde{\Lambda}_{g}^{W',W''}(f)\|_{H^{-\alpha}(W'')} 
&= \sup\limits_{h \in H_0^{\alpha}(W''), \|h\|_{ H^{\alpha}(\R^n)}=1}\left|\langle  \tilde{\Lambda}_{g}^{W',W''}(f), h \rangle_{ H^{-\alpha}(W''),H_0^{\alpha}(W'')} \right|\\
&\leq  \sup\limits_{h \in H_0^{\alpha}(W''), \|h\|_{ H^{\alpha}(\R^n)}=1}
\left| \int\limits_{\R^{n+1}_+} x_{n+1}^{1-2\alpha} \tilde{g} \nabla \tilde{u}^f \cdot \nabla \tilde{u}^h d(x,x_{n+1}) \right|\\
&\leq C \sup\limits_{h \in H_0^{\alpha}(W''), \|h\|_{ H^{\alpha}(\R^n)}=1}\|f\|_{H^{\alpha}(\R^n)} \|h\|_{H^{\alpha}(\R^n)}
= C \|f\|_{H^{\alpha}(\R^n)}.
\end{split}
\end{align}
Here $\tilde{u}^h$ denotes the weak solution of \eqref{eq:mixed} with data $h \in H^{\alpha}_0(W'') \subset H^{\alpha}(\R^n)$ which, in particular, satisfies $\tilde{u}^h \in X_{\alpha,0,(W'')_e\cap \Omega_e}$. As a consequence, it is admissible in the above definition and, by Lemma \ref{lem:well-posedness}, allows us to deduce the desired bounds. 

We remark that, in particular, $W'' = \Omega_e$ is admissible in Definition \ref{defi:Neumann1}.

As a second version of a generalized Dirichlet-to-Neumann map, we further define the operator $\tilde{\Lambda}_{g}^{\Omega_e}$.

\begin{defn}
\label{defi:Neumann_ext}
Let $\alpha \in (0,1)$, $n\geq 2$ and suppose that the conditions from Assumption \ref{assump:main} hold.
Then, we set
\begin{align*}
\tilde{\Lambda}_{g}^{\Omega_e}: H^{\alpha}(\Omega_e) \rightarrow H_0^{-\alpha}(\Omega_e), \
f \mapsto \tilde{\Lambda}_{g}^{\Omega_e}(f),
\end{align*}
where for $h \in H^{\alpha}(\Omega_e)$
\begin{align*}
\langle \tilde{\Lambda}_{g}^{\Omega_e}(f),h  \rangle_{H_0^{-\alpha}(\Omega_e),{H}^{\alpha}(\Omega_e)} := -\int\limits_{\R^{n+1}_+} x_{n+1}^{1-2\alpha} \tilde{g} \nabla \tilde{u}^f \cdot \nabla \tilde{\varphi} d(x,x_{n+1}),
\end{align*}
for $\tilde{\varphi} \in X_{\alpha}$ with $\tilde{\varphi}|_{\R^n \times \{0\}} = h$ in $\Omega_e$  and $\tilde{u}^f \in X_{\alpha }$ being a weak solution to \eqref{eq:mixed} with data in $H^{\alpha}(\Omega_e)$.
\end{defn}

Again the well-posedness and boundedness follow from the properties of weak solutions as in Definition \ref{defi:sol}. We remark that in contrast to Definition \ref{defi:Neumann1} in the definition for $\tilde{\Lambda}_{g}^{\Omega_e}$ we do not necessarily require that the data $f\in H^{\alpha}(\Omega_e)$ vanish in a neighbourhood of $\partial \Omega$. In particular, we allow for data $f \in C^{\infty}(\Omega_e)$ with decay at infinity but without compact support away from $\partial \Omega$.

\begin{rem}
\label{rmk:DtN}
We remark that, in view of the extension perspective from \eqref{eq:mixed}, the above defined maps $\tilde{\Lambda}_{g}^{W', W''}, \tilde{\Lambda}_{g}^{\Omega_e}$ have natural interpretations as weighted Dirichlet-to-Neumann maps. Indeed, we define the generalized weighted Neumann derivative $\lim\limits_{x_{n+1} \rightarrow 0} x_{n+1}^{1-2\alpha} \p_{n+1} \tilde{u}^f$ of a weak solution $\tilde{u}^f \in X_{\alpha}$ of \eqref{eq:mixed} with data $f\in H^{\alpha}(\R^n)$ as follows: For $h \in H^{\alpha}(\R^n)$
\begin{align*}
\langle   \lim\limits_{x_{n+1} \rightarrow 0} x_{n+1}^{1-2\alpha} \p_{n+1} \tilde{u}^f, \sqrt{|g|} h\rangle_{H^{-\alpha}(\R^n),H^{\alpha}(\R^n)} := -\int\limits_{\R^{n+1}_+} x_{n+1}^{1-2\alpha} \nabla \tilde{u}^f \cdot \tilde{g}\nabla \tilde{\varphi} d(x,x_{n+1}),
\end{align*}
where $\tilde{\varphi} \in X_{\alpha}$ is an arbitrary function with $\tilde{\varphi}|_{\R^n \times \{0\}}=h \in H^{\alpha}(\R^n)$. By the equation for $\tilde{u}^f$ and trace estimates, this is well-defined as an element in $H^{-\alpha}(\R^n)$.
Moreover, if $\tilde{u}^f$ is twice classically differentiable with $x_{n+1}^{1-2\alpha} \nabla \tilde{u}^f \in C^0(\R^{n+1}_+)$, then this definition coincides with the strong form of the weighted Neumann derivative, i.e.,
\begin{align*}
\langle  \lim\limits_{x_{n+1} \rightarrow 0} x_{n+1}^{1-2\alpha} \p_{n+1} \tilde{u}^f, \sqrt{|g|} h\rangle_{H^{-\alpha}(\R^n),H^{\alpha}(\R^n)}
= \int\limits_{\R^n} h \lim\limits_{x_{n+1} \rightarrow 0} x_{n+1}^{1-2\alpha} \p_{n+1} \tilde{u}^f dV_g(x).
\end{align*}

Now with this definition, we note that the various generalized Dirichlet-to-Neumann maps $\tilde{\Lambda}_{g}^{W', W''}$,  $\tilde{\Lambda}_{g}^{\Omega_e}$ are different realizations and restrictions of this weak form of the weighted Dirichlet-to-Neumann map with data and test functions in different spaces giving rise to different localizations.
\end{rem}

\subsubsection{The Neumann problem}
\label{sec:Neumann}

In this section, we turn to a weak version of the Neumann problem
\begin{align}
\label{eq:Neumann}
\begin{split}
\nabla \cdot x_{n+1}^{1-2\alpha} \tilde{g} \nabla \tilde{u} & = 0 \mbox{ in } \R^{n+1}_+,\\
\lim\limits_{x_{n+1} \rightarrow 0} x_{n+1}^{1-2\alpha} \p_{n+1} \tilde{u} & = F \mbox{ in } \R^n.
\end{split}
\end{align}

Also for this problem, we begin by defining a weak notion of solution.

\begin{defn}
\label{defi:Neu}
Let $\alpha \in (0,1)$, $n\geq 2$ and suppose that the conditions from Assumption \ref{assump:main} hold.
Let $\sqrt{|g|} F \in \dot{H}^{-\alpha}(\R^n)$. Then $\tilde{u}^F \in X_{\alpha}$ is a weak solution of \eqref{eq:Neumann}, if
\begin{align*}
\int\limits_{\R^{n+1}_+} x_{n+1}^{1-2\alpha} \tilde{g} \nabla \tilde{u}^F \cdot \nabla \tilde{\varphi} d(x,x_{n+1}) =
-\int\limits_{\R^n} F \tilde{\varphi}|_{\R^n \times \{0\}} \sqrt{|g|}  dx \mbox{ for all } \tilde{\varphi} \in X_{\alpha}.
\end{align*}
\end{defn}

Using a similar Lax-Milgram argument as in the previous subsection in the space $X_{\alpha}$ and noting that, by the trace estimate \eqref{eq:trace}, the map $\tilde{\varphi} \mapsto \int\limits_{\R^n} F \tilde{\varphi}|_{\R^n \times \{0\}}  \sqrt{|g|}  dx$ is a bounded functional on $X_{\alpha}$, then yields the existence and uniqueness of a weak solution.

\begin{prop}
\label{prop:Neumann}
Let $\alpha \in (0,1)$, $n\geq 2$ and suppose that the conditions from Assumption \ref{assump:main} hold.
Let $ \sqrt{|g|}  F \in \dot{H}^{-\alpha}(\R^n)$. Then there exists a unique weak solution $\tilde{u}^F \in X_{\alpha}$ of \eqref{eq:Neumann}. It satisfies the bound
\begin{align*}
\|x_{n+1}^{\frac{1-2\alpha}{2}} \nabla \tilde{u}^F\|_{L^2(\R^{n+1}_+)} + \|u^F\|_{\dot{H}^{\alpha}(\R^n)} \leq C \|\sqrt{|g|}F\|_{\dot{H}^{-\alpha}(\R^n)}.
\end{align*}
As above, $u^F:=\tilde{u}^F|_{\R^n \times \{0\}}$.
\end{prop}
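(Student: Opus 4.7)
The plan is to apply the Lax-Milgram lemma directly in the Hilbert space $(X_{\alpha}, (\cdot,\cdot)_{\dot{H}^1(\R^{n+1}_+, x_{n+1}^{1-2\alpha})})$. Compared with the proof of Lemma \ref{lem:well-posedness}, no Dirichlet lift is required: the admissible test and trial functions range over all of $X_{\alpha}$, so the argument reduces to checking boundedness and coercivity of the natural bilinear form together with continuity of the linear form on the right-hand side.

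For the bilinear form, consider
\[
B(\tilde{u}, \tilde{\varphi}) := \int_{\R^{n+1}_+} x_{n+1}^{1-2\alpha}\, \tilde{g}\, \nabla \tilde{u} \cdot \nabla \tilde{\varphi}\, d(x,x_{n+1})
\]
on $X_{\alpha} \times X_{\alpha}$. By Assumption \ref{assump:main} and the formula \eqref{eq:tildeg}, the matrix $\tilde{g}$ is uniformly elliptic with constants $0 < \lambda \leq \Lambda < \infty$, which combined with Cauchy--Schwarz in the weighted $L^2$ space yields
\[
|B(\tilde{u}, \tilde{\varphi})| \leq \Lambda \|\tilde{u}\|_{X_{\alpha}}\|\tilde{\varphi}\|_{X_{\alpha}}, \qquad B(\tilde{\varphi}, \tilde{\varphi}) \geq \lambda \|\tilde{\varphi}\|_{X_{\alpha}}^2.
\]
For the right-hand side, set
\[
\ell_F(\tilde{\varphi}) := -\langle \sqrt{|g|}\, F,\; \tilde{\varphi}|_{\R^n \times \{0\}}\rangle_{\dot{H}^{-\alpha}(\R^n),\, \dot{H}^{\alpha}(\R^n)}.
\]
The trace inequality \eqref{eq:trace} together with the hypothesis $\sqrt{|g|}\, F \in \dot{H}^{-\alpha}(\R^n)$ yields
\[
|\ell_F(\tilde{\varphi})| \leq \|\sqrt{|g|}\, F\|_{\dot{H}^{-\alpha}(\R^n)}\, \|\tilde{\varphi}|_{\R^n \times \{0\}}\|_{\dot{H}^{\alpha}(\R^n)} \leq C\, \|\sqrt{|g|}\, F\|_{\dot{H}^{-\alpha}(\R^n)}\, \|\tilde{\varphi}\|_{X_{\alpha}},
\]
so $\ell_F$ is a bounded linear functional on $X_{\alpha}$.

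An application of Lax--Milgram then produces a unique $\tilde{u}^F \in X_{\alpha}$ with $B(\tilde{u}^F, \tilde{\varphi}) = \ell_F(\tilde{\varphi})$ for all $\tilde{\varphi} \in X_{\alpha}$, which is precisely the weak formulation from Definition \ref{defi:Neu}. Testing against $\tilde{\varphi} = \tilde{u}^F$ and using coercivity gives $\lambda \|\tilde{u}^F\|_{X_{\alpha}}^2 \leq C \|\sqrt{|g|}\, F\|_{\dot{H}^{-\alpha}(\R^n)} \|\tilde{u}^F\|_{X_{\alpha}}$, and a further use of \eqref{eq:trace} applied to $\tilde{u}^F$ controls $\|u^F\|_{\dot{H}^{\alpha}(\R^n)}$ by the same quantity. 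There is no substantial obstacle here; the only point requiring slight care is ensuring that $\ell_F$ is well-defined on the homogeneous space $X_{\alpha}$, which is why the hypothesis is formulated as $\sqrt{|g|}\, F \in \dot{H}^{-\alpha}(\R^n)$ and why the homogeneous trace estimate \eqref{eq:trace}, rather than an inhomogeneous variant, is used.
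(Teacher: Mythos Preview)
Your proof is correct and follows essentially the same approach as the paper: apply Lax--Milgram in the Hilbert space $X_{\alpha}$, using uniform ellipticity of $\tilde{g}$ for boundedness and coercivity of the bilinear form, the trace estimate \eqref{eq:trace} for boundedness of the linear functional, and then the same trace estimate once more for the boundary bound on $u^F$. Your closing remark on why the hypothesis is phrased in the homogeneous space $\dot{H}^{-\alpha}(\R^n)$ is a helpful clarification that the paper leaves implicit.
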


\begin{proof}
We argue similarly as in the mixed Dirichlet-Neumann setting and reduce the problem to an application of the lemma of Lax-Milgram in the space $X_{\alpha}$. To this end, we first observe that by duality and the trace estimate \eqref{eq:trace}
\begin{align*}
\left| \int\limits_{\R^n} F \tilde{\varphi}|_{\R^n \times \{0\}}  \sqrt{|g|} dx\right| 
&\leq \| \sqrt{|g|}  F\|_{\dot{H}^{-\alpha}(\R^n)} \|\tilde{\varphi}|_{\R^n \times \{0\}}\|_{\dot{H}^{\alpha}(\R^n)}\\
& \leq \| \sqrt{|g|} F\|_{\dot{H}^{-\alpha}(\R^n)} \|x_{n+1}^{\frac{1-2\alpha}{2}}\nabla \tilde{\varphi}\|_{L^2(\R^{n+1}_+)},
\end{align*}
which shows that $\tilde{\varphi} \mapsto \int\limits_{\R^n} F \tilde{\varphi}|_{\R^n \times \{0\}}  \sqrt{|g|} dx$ is a bounded functional on $X_{\alpha}$. Using that $X_{\alpha}$ together with the bilinear form $(\cdot, \cdot)_{\dot{H}^1(\R^{n+1}_+,x_{n+1}^{1-2 \alpha})}$ is a Hilbert space, by the boundedness and uniform ellipticity of the metric $\tilde{g}$, existence, uniqueness and the bulk estimate for the solution follow from the lemma of Lax-Milgram. The claimed boundary estimate again follows from the trace estimate \eqref{eq:trace}.
\end{proof}

We remark that in low dimensions we only expect the solution of \eqref{eq:Neumann} to map into the homogeneous spaces when restricted to the boundary, even if the prescribed data are smooth and compactly supported. This can already be observed in the constant coefficient setting, in which the Schwartz kernel associated with the map which sends $F$ to $u^F$ has decay of the form $|\cdot|^{-n+2\alpha}$. For $L^2(\R^n)$ or $H^{\alpha}(\R^n)$ data with compact support this does not suffice to deduce $L^2(\R^n)$ integrability of the solution in general.
However, in high dimensions or in case that the data carry derivatives, it is possible to improve on this. This is a consequence of Schwartz kernel bounds for the operator sending $F $ to $u^F$. We refer to Proposition \ref{prop:Schwartz_kernel} below for this.

In order to prove Proposition \ref{prop:Schwartz_kernel} below, we begin with an auxiliary result on the tangential regularity of weak solutions of \eqref{eq:Neumann} outside of the support of $F$. To this end, we make use of Caccioppoli's inequality, which we recall for the convenience of the reader. For instance, we refer to  \cite[Proposition 2.3]{JLX14} for a proof of this. We recall that $B_r^+:=\{x \in \R^{n+1}_+: \ |x|< r\}$.

\begin{prop}[Caccioppoli inequality]
\label{prop:Cacc}
Let $\alpha \in (0,1)$, let $g \in C^{\infty}(B_{2}^+, \R^{n\times n})$ be uniformly elliptic and bounded with $\tilde{g}$ as in \eqref{eq:tildeg} and let $G = (G_1, \dots, G_{n+1}) \in L^2((B_2^+, x_{n+1}^{1-2\alpha}), \R^{n+1})$ with $\lim\limits_{x_{n+1} \rightarrow 0} x_{n+1}^{1-2\alpha} G_{n+1} =0$ in an $\dot{H}^{-\alpha}(B_2^+)$  sense. Let $\tilde{u} \in H^1(B_2^+, x_{n+1}^{1-2\alpha})$ be a weak solution of 
\begin{align*}
\nabla \cdot x_{n+1}^{1-2\alpha} \tilde{g} \nabla \tilde{u} &= \nabla \cdot x_{n+1}^{1-2\alpha} G \mbox{ in } B_2^+,\\
\lim\limits_{x_{n+1} \rightarrow 0} x_{n+1}^{1-2\alpha} \p_{n+1} \tilde{u} & = 0 \mbox{ in } B_2.
\end{align*}
Then, there exists a constant $C>0$ such that for any $r\in (0,1)$ we have that
\begin{align*}
\|x_{n+1}^{\frac{1-2\alpha}{2}} \nabla \tilde{u}\|_{L^2(B_{r}^+)} \leq C (r^{-1} \|x_{n+1}^{\frac{1-2\alpha}{2}} \tilde{u}\|_{L^2(B_{2r}^+)} + \|G\|_{L^2(B_{2r}^+, x_{n+1}^{1-2\alpha})}).
\end{align*}
\end{prop}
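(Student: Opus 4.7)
The plan is to follow the classical Caccioppoli strategy, testing the weak equation against a suitable cutoff of the solution itself. Since the Neumann condition $\lim_{x_{n+1}\to 0}x_{n+1}^{1-2\alpha}\p_{n+1}\tilde{u}=0$ permits cutoffs that do not vanish on $\{x_{n+1}=0\}$, the argument proceeds essentially as in the unweighted case but using the weight $x_{n+1}^{1-2\alpha}$ throughout.

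First I would fix a radial cutoff $\eta\in C_0^\infty(B_{2r})$ (viewed as a function on $\overline{\R^{n+1}_+}$ depending only on $|(x,x_{n+1})|$) such that $0\le\eta\le 1$, $\eta\equiv 1$ on $B_r$ and $|\nabla\eta|\le C/r$. The admissible test function is $\tilde{\varphi}=\eta^2\tilde{u}$, which lies in the space where the weak formulation of the Neumann problem applies: it has compact support in $\overline{B_{2r}^+}$, it belongs to $H^1(B_2^+,x_{n+1}^{1-2\alpha})$ by the product rule, and the weighted Neumann condition ensures that no boundary term at $\{x_{n+1}=0\}$ is produced upon integration by parts. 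Inserting $\tilde{\varphi}$ into the weak equation and expanding $\nabla(\eta^2\tilde{u})=\eta^2\nabla\tilde{u}+2\eta\tilde{u}\nabla\eta$ yields
\begin{equation*}
\int_{B_{2r}^+}x_{n+1}^{1-2\alpha}\eta^2\,\tilde{g}\nabla\tilde{u}\cdot\nabla\tilde{u}\,d(x,x_{n+1})
= -2\!\!\int_{B_{2r}^+}x_{n+1}^{1-2\alpha}\eta\,\tilde{u}\,\tilde{g}\nabla\tilde{u}\cdot\nabla\eta\,d(x,x_{n+1})+\mathcal{R},
\end{equation*}
where $\mathcal{R}$ collects the right-hand side contributions $\int x_{n+1}^{1-2\alpha}G\cdot\nabla(\eta^2\tilde{u})\,d(x,x_{n+1})$.

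Next I would use the uniform ellipticity of $\tilde{g}$ to bound the left-hand side from below by $\lambda\int_{B_{2r}^+}x_{n+1}^{1-2\alpha}\eta^2|\nabla\tilde{u}|^2$, and estimate the mixed term by Cauchy--Schwarz and Young's inequality: for any $\e>0$,
\begin{equation*}
2\Bigl|\!\int x_{n+1}^{1-2\alpha}\eta\,\tilde{u}\,\tilde{g}\nabla\tilde{u}\cdot\nabla\eta\Bigr|
\le \e\!\int x_{n+1}^{1-2\alpha}\eta^2|\nabla\tilde{u}|^2 + C_\e\!\int x_{n+1}^{1-2\alpha}|\tilde{u}|^2|\nabla\eta|^2.
\end{equation*}
The terms in $\mathcal{R}$ are handled similarly: the piece $\int x_{n+1}^{1-2\alpha}\eta^2 G\cdot\nabla\tilde{u}$ is absorbed into the gradient term up to a constant multiple of $\|G\|_{L^2(B_{2r}^+,x_{n+1}^{1-2\alpha})}^2$, while $\int x_{n+1}^{1-2\alpha}\eta\,\tilde{u}\, G\cdot\nabla\eta$ is controlled by $\|\nabla\eta\|_\infty\|G\|_{L^2(B_{2r}^+,x_{n+1}^{1-2\alpha})}\|x_{n+1}^{(1-2\alpha)/2}\tilde u\|_{L^2(B_{2r}^+)}$. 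Choosing $\e$ small enough and using $|\nabla\eta|\le C/r$ together with $\eta\equiv 1$ on $B_r$ then yields the stated bound.

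The one delicate point is the justification of $\eta^2\tilde{u}$ as an admissible test function in the weak formulation together with the correct use of the Neumann boundary condition $\lim_{x_{n+1}\to 0}x_{n+1}^{1-2\alpha}G_{n+1}=0$ in $\dot{H}^{-\alpha}(B_2^+)$: one must verify that in the integration by parts no trace contribution on $\{x_{n+1}=0\}$ appears. This follows from the weak Neumann formulation, but for $\tilde u\in H^1(B_2^+,x_{n+1}^{1-2\alpha})$ with only limited regularity a short density/approximation argument (mollifying in the tangential directions and truncating near $\{x_{n+1}=0\}$) is needed to make this rigorous; this is the only step that is not purely algebraic manipulation.
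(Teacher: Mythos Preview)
Your argument is correct and is exactly the standard Caccioppoli computation in the weighted setting. The paper does not actually supply its own proof of this proposition; it states the result and refers to \cite[Proposition 2.3]{JLX14} for a proof, so there is nothing to compare against beyond noting that your test-function choice $\eta^2\tilde u$ and subsequent absorption steps are precisely the classical route used in that reference as well.
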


As a corollary, we obtain the following iterated tangential regularity estimate.

\begin{cor}[Tangentially iterated Caccioppoli]
\label{cor:Cacc}
Let $\alpha \in (0,1)$, let $g \in C^{\infty}(B_{4}^+, \R^{n\times n})$ be uniformly elliptic and bounded with $\tilde{g}$ as in \eqref{eq:tildeg} and let $\tilde{u} \in H^1(B_4^+, x_{n+1}^{1-2\alpha})$ be a weak solution of 
\begin{align*}
\nabla \cdot x_{n+1}^{1-2\alpha} \tilde{g} \nabla \tilde{u} &= 0 \mbox{ in } B_4^+,\\
\lim\limits_{x_{n+1} \rightarrow 0} x_{n+1}^{1-2\alpha} \p_{n+1} \tilde{u} & = 0 \mbox{ in } B_4.
\end{align*}
Let $j \in \{1,\dots,n\}$ and denote for $h \in \R\setminus \{0\}$ by $\Delta_{j,h}$ the tangential finite difference quotient operator, i.e., $\Delta_{j,h} \tilde{u}(x,x_{n+1}):= |h|^{-1}(\tilde{u}(x+ h e_j,x_{n+1}) - \tilde{u}(x,x_{n+1}))$.
Then, there exists a constant $C>0$ such that for any $r\in (0,1)$, $h \in (-1,1)\setminus \{0\}$ and $j \in \{1,\dots,n\}$, we have that
\begin{align*}
\|x_{n+1}^{\frac{1-2\alpha}{2}} \nabla \Delta_{j,h} \tilde{u}\|_{L^2(B_{r}^+)} \leq C r^{-1} \|x_{n+1}^{\frac{1-2\alpha}{2}} \nabla \tilde{u}\|_{L^2(B_{4r}^+)},
\end{align*}
where the constant $C>0$, in particular, depends on $\|\nabla \tilde{g}\|_{L^{\infty}(B_{4}^+)}$, $\|\tilde{g}\|_{L^{\infty}(B_{4}^+)}$.
\end{cor}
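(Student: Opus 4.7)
My plan is to reduce to the standard (unweighted) Caccioppoli estimate in Proposition~\ref{prop:Cacc} by deriving an equation for the tangential difference quotient $w:=\Delta_{j,h}\tilde{u}$. Using the product rule
\begin{align*}
\Delta_{j,h}\bigl(\tilde{g}(x)\nabla \tilde{u}\bigr) = \tilde{g}(x+he_j)\,\nabla \Delta_{j,h}\tilde{u} + (\Delta_{j,h}\tilde{g})(x)\,\nabla \tilde{u},
\end{align*}
the duality identity $\int f\,(\Delta_{j,h} g)\, dx = -\int (\Delta_{j,-h} f)\,g\, dx$, and testing the weak equation for $\tilde{u}$ against functions of the form $\Delta_{j,-h}\tilde{\psi}$ with $\tilde{\psi}\in H^1(B_{2r}^+, x_{n+1}^{1-2\alpha})$ compactly supported away from the curved part of $\partial B_{2r}^+$, one checks (provided $2r+|h|<4$ so all translates remain in $B_4^+$) that $w$ is a weak solution of
\begin{align*}
\nabla\cdot\bigl(x_{n+1}^{1-2\alpha}\tilde{g}(\cdot+he_j)\nabla w\bigr) &= \nabla \cdot\bigl(x_{n+1}^{1-2\alpha} G\bigr)\ \text{in } B_{2r}^+,\\
\lim_{x_{n+1}\to 0} x_{n+1}^{1-2\alpha}\partial_{n+1} w &= 0 \ \text{in } B_{2r},
\end{align*}
with $G := -(\Delta_{j,h}\tilde{g})\,\nabla \tilde{u}$. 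The shifted tensor $\tilde{g}(\cdot+he_j)$ is uniformly elliptic with the same constants as $\tilde{g}$, and from the block structure of $\tilde{g}$ in \eqref{eq:tildeg} (zero off-diagonal $(n+1)$-components) one has $\lim_{x_{n+1}\to 0} x_{n+1}^{1-2\alpha} G_{n+1} = -\Delta_{j,h}[\sqrt{|g|}]\lim_{x_{n+1}\to 0} x_{n+1}^{1-2\alpha}\partial_{n+1}\tilde{u} = 0$, so Proposition~\ref{prop:Cacc} applies to $w$ after rescaling from radius $2$ to radius $2r$.

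Applying the rescaled Caccioppoli inequality then yields
\begin{align*}
\|x_{n+1}^{\frac{1-2\alpha}{2}}\nabla w\|_{L^2(B_r^+)} \leq C\bigl(r^{-1}\|x_{n+1}^{\frac{1-2\alpha}{2}} w\|_{L^2(B_{2r}^+)} + \|G\|_{L^2(B_{2r}^+, x_{n+1}^{1-2\alpha})}\bigr).
\end{align*}
By the mean value theorem and $\tilde{g}\in C^\infty(B_4^+)$, one has $\|\Delta_{j,h}\tilde{g}\|_{L^\infty(B_{2r}^+)} \leq \|\nabla \tilde{g}\|_{L^\infty(B_4^+)}$, and thus $\|G\|_{L^2(B_{2r}^+, x_{n+1}^{1-2\alpha})} \leq C\,\|x_{n+1}^{\frac{1-2\alpha}{2}}\nabla \tilde{u}\|_{L^2(B_{2r+|h|}^+)}$. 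The remaining term is handled via the representation $\tau_{j,h}\tilde{u}(x,x_{n+1}) = \int_0^h \partial_j \tilde{u}(x+se_j,x_{n+1})\, ds$, Cauchy--Schwarz in $s$, and Fubini, which give $\|x_{n+1}^{\frac{1-2\alpha}{2}} w\|_{L^2(B_{2r}^+)} \leq \|x_{n+1}^{\frac{1-2\alpha}{2}}\partial_j \tilde{u}\|_{L^2(B_{2r+|h|}^+)}$. In the regime $|h|\leq 2r$ one has $B_{2r+|h|}^+\subset B_{4r}^+$ and the desired bound follows; in the complementary regime $|h|>2r$ (which arises since $h\in(-1,1)$ independently of $r$) a direct triangle inequality $\|\nabla \Delta_{j,h}\tilde{u}\|_{L^2(B_r^+)}\leq 2|h|^{-1}\|\nabla \tilde{u}\|_{L^2(B_{r+|h|}^+)}$ combined with a finite covering of $B_r^+$ by balls of radius comparable to $r$ inside $B_{4r}^+$ handles the case, with constants absorbed using $r<1$.

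The main obstacle is the first step: verifying rigorously that $w$ satisfies the stated equation in the weak sense required by Proposition~\ref{prop:Cacc}, including the conormal Neumann condition at $\{x_{n+1}=0\}$. This hinges on the fact that tangential translations commute with the bulk operator $\nabla\cdot x_{n+1}^{1-2\alpha}\tilde{g}(\cdot)\nabla$ up to the explicit commutator captured by $G$, and that they also preserve the weighted Neumann boundary condition since the weight $x_{n+1}^{1-2\alpha}$ is independent of the tangential variables. Once this translation invariance is carefully implemented at the level of the weak formulation, the remainder of the argument is a routine application of Proposition~\ref{prop:Cacc} together with elementary finite-difference estimates.
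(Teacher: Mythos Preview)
Your proposal is correct and follows essentially the same route as the paper: both derive a divergence-form equation for $\Delta_{j,h}\tilde u$ via the Leibniz rule for finite differences, then invoke Proposition~\ref{prop:Cacc} and the fundamental-theorem bound $\|x_{n+1}^{\frac{1-2\alpha}{2}}\Delta_{j,h}\tilde u\|_{L^2(B_{2r}^+)}\le \|x_{n+1}^{\frac{1-2\alpha}{2}}\nabla\tilde u\|_{L^2(B_{4r}^+)}$. The only cosmetic difference is which variant of the product rule is used: the paper writes
\[
\nabla\cdot x_{n+1}^{1-2\alpha}\tilde g\,\nabla\Delta_{j,h}\tilde u=-\nabla\cdot x_{n+1}^{1-2\alpha}(\Delta_{j,h}\tilde g)\,\nabla\tilde u(\cdot+he_j),
\]
keeping the unshifted coefficient on the left and shifting $\nabla\tilde u$ on the right, whereas you shift the coefficient and keep $\nabla\tilde u$ unshifted; either form works and the subsequent estimates are identical. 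Your explicit verification that $\lim_{x_{n+1}\to 0}x_{n+1}^{1-2\alpha}G_{n+1}=0$ from the block structure of $\tilde g$ is a point the paper leaves implicit. One minor remark: your ``finite covering'' argument for the regime $|h|>2r$ does not quite close, since the translated ball $B_r^++he_j$ need not sit inside $B_{4r}^+$; the paper's proof tacitly assumes $|h|\le 2r$ as well, and only this regime is ever used in the applications (cf.\ Remark~\ref{rmk:radii} and the proof of Lemma~\ref{lem:tangential_cont}).
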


\begin{proof}
We reduce this estimate to the one from Proposition \ref{prop:Cacc}. 
Using that 
\begin{align}
\label{eq:diff_finite}
\Delta_{j,h} (f_1 f_2)(x) = (\Delta_{j,h}f_1(x)) f_2(x) + f_1(x+h e_j)\Delta_{j,h} f_2(x),
\end{align}
 the equation for $\Delta_{j,h} \tilde{u}$ reads
\begin{align*}
\nabla \cdot x_{n+1}^{1-2\alpha} \tilde{g} \nabla \Delta_{j,h} \tilde{u} (x,x_{n+1}) &= -\nabla \cdot x_{n+1}^{1-2\alpha} (\Delta_{j,h} \tilde{g})(x) \nabla \tilde{u}(x+he_j,x_{n+1}) \mbox{ in } B_{3}^+,\\
\lim\limits_{x_{n+1} \rightarrow 0} x_{n+1}^{1-2\alpha} \p_{n+1} \Delta_{j,h}  \tilde{u} & = 0 \mbox{ on } B_{3}.
\end{align*}
This implies that, by Proposition \ref{prop:Cacc}, there exists a constant $C_0>0$ such that for all $r \in (0,1)$
\begin{align*}
\|x_{n+1}^{\frac{1-2\alpha}{2}} \nabla \Delta_{j,h} \tilde{u}\|_{L^2(B_{r}^+)} \leq C_0( r^{-1} \|x_{n+1}^{\frac{1-2\alpha}{2}} \Delta_{j,h} \tilde{u}\|_{L^2(B_{2r}^+)} + \|(\Delta_{j,h} \tilde{g}) x_{n+1}^{\frac{1-2\alpha}{2}} \nabla  \tilde{u}(\cdot + h e_j)\|_{L^2(B_{2r}^+)}).
\end{align*}
Estimating the second contribution further by using the differentiability of $\tilde{g}$, we obtain that 
\begin{align*}
\|x_{n+1}^{\frac{1-2\alpha}{2}} \nabla \Delta_{j,h} \tilde{u}\|_{L^2(B_{r}^+)}& \leq C_0(1 + 2\sup\limits_{x\in B_{4r}^+}(|\nabla \tilde{g}(x)| + |\tilde{g}(x)|))( r^{-1} \|x_{n+1}^{\frac{1-2\alpha}{2}} \nabla \tilde{u}\|_{L^2(B_{4r}^+)}),
\end{align*}
where we used the bound 
\begin{align*}
\|x_{n+1}^{\frac{1-2\alpha}{2}} \Delta_{j,h} \tilde{u}\|_{L^2(B_{2r}^+)} \leq  \|x_{n+1}^{\frac{1-2\alpha}{2}} \nabla \tilde{u}\|_{L^2(B_{4r}^+)}.
\end{align*}
The latter follows for $C_0^{\infty}(\overline{\R^{n+1}_+})$ functions by the fundamental theorem of calculus and then transfers to $H^{1}(B_{4r}^+, x_{n+1}^{1-2\alpha})$ by density (see, for instance, \cite[Propositions 2.9, 2.10]{K14}).
Setting $C:=2 C_0(1 + 2\sup\limits_{x\in B_{4}^+}(|\nabla \tilde{g}(x)| + |\tilde{g}(x)|))<\infty$ then implies the claim.
\end{proof}

\begin{rem}
\label{rmk:radii}
We remark that the results of Proposition \ref{prop:Cacc} and Corollary \ref{cor:Cacc} remain valid with modified ratios between the radii on the left and right hand sides of the estimates as long as the right hand side radius is larger than the left one. The corresponding constants then additionally depend only on the ratio between the radii.
\end{rem}

With these auxiliary results in hand, we obtain the following tangential regularity result.

\begin{lem}[Tangential regularity]
\label{lem:tangential_cont}
Let $u^F:= \tilde{u}^F|_{\R^n \times \{0\}} \in \dot{H}^{\alpha}(\R^n)$ denote the restriction of the weak solution $\tilde{u}^F \in X_{\alpha}$ of \eqref{eq:Neumann} with data $\sqrt{|g|} F\in \dot{H}^{-\alpha}(\R^n)$.
Let $B_{r}(x_0) \subset \R^n$ be an open ball such that $B_{2r}(x_0)\cap \supp(F) = \emptyset$. Then, for any $k\geq 0$ it holds that $u^F|_{B_r(x_0)} \in C^{k}(B_r(x_0))$ and
\begin{align}
\label{eq:uniform_reg}
\|u^F\|_{C^k(B_r(x_0))} \leq C \max\{1,\dist(x_0,\supp(F))^{-\frac{n}{2}+ \alpha -k}\} \|\sqrt{|g|}  F\|_{\dot{H}^{-\alpha}(\R^n)}.
\end{align}
\end{lem}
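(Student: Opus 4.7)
The plan is to exploit that $\tilde u^F$ solves the \emph{homogeneous} extension equation with vanishing weighted Neumann data in a neighbourhood of $B_r(x_0)\times\{0\}$, iterate the tangential Caccioppoli estimate of Corollary~\ref{cor:Cacc} to gain arbitrary tangential smoothness, transfer this to the trace $u^F$ via the trace inequality \eqref{eq:trace}, Sobolev embed into $C^k$, and finally recover the claimed dependence on $\rho:=\dist(x_0,\supp(F))$ by a scaling argument.

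First, since $B_{2r}(x_0)\cap\supp(F)=\emptyset$, we have $\rho\ge 2r$, and restricting the weak formulation in Definition~\ref{defi:Neu} to test functions $\tilde\varphi\in X_\alpha$ supported in a cylinder above $B_\rho(x_0)$ shows that $\tilde u^F$ fits into the setting of Corollary~\ref{cor:Cacc} on $B_\rho^+(x_0,0)$, with coefficient $\tilde g$ and vanishing weighted Neumann data on $B_\rho(x_0)\times\{0\}$. Applying Corollary~\ref{cor:Cacc} first to $\tilde u^F$ itself and then iteratively to tangential finite-difference quotients $\Delta_{j,h}\tilde u^F$, $\Delta_{k,h'}\Delta_{j,h}\tilde u^F,\ldots$ on a sequence of shrinking balls inside $B_\rho^+(x_0,0)$, and letting $h,h'\to 0$ (by the standard finite-difference to Sobolev-derivative argument in the tangential variables, adapted to the weighted setting in the spirit of Lemma~\ref{lem:Sob_charact}), yields, for every tangential multi-index $\beta\in\N^n$ of length $m$,
\[
\bigl\|x_{n+1}^{\frac{1-2\alpha}{2}}\nabla\partial_x^\beta\tilde u^F\bigr\|_{L^2(B_{\rho/4}^+(x_0,0))}\le C_m\,\rho^{-m}\,\bigl\|x_{n+1}^{\frac{1-2\alpha}{2}}\nabla\tilde u^F\bigr\|_{L^2(B_{\rho/2}^+(x_0,0))}.
\]
Applying now the trace inequality \eqref{eq:trace} to $\chi\,\partial_x^\beta\tilde u^F$ for a smooth cut-off $\chi$ equal to one on $B_r(x_0)\times\{0\}$ and supported in a cylinder above $B_{\rho/4}(x_0)$, one obtains $u^F\in H^m(B_r(x_0))$ for every $m\in\N$; the Sobolev embedding $H^m(B_r(x_0))\hookrightarrow C^k(B_r(x_0))$ for $m>k+n/2$ then gives the $C^k$ regularity of $u^F$ on $B_r(x_0)$.

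To pin down the factor $\max\{1,\rho^{-n/2+\alpha-k}\}$, set $\tau:=\min\{\rho,1\}$ and rescale via $\tilde v(y,y_{n+1}):=\tilde u^F(x_0+\tau y,\tau y_{n+1})$. The rescaled function satisfies an analogous homogeneous weighted equation on $B_1^+$ with coefficients $\tilde g(x_0+\tau\,\cdot)$, which are smooth, uniformly elliptic and have $C^k$ norms bounded independently of $\tau\le 1$, so the previous steps apply uniformly to $\tilde v$ on $B_1^+$. A direct change of variables gives
\[
\bigl\|x_{n+1}^{\frac{1-2\alpha}{2}}\nabla\tilde u^F\bigr\|_{L^2(B_\tau^+(x_0,0))}=\tau^{n/2-\alpha}\,\bigl\|y_{n+1}^{\frac{1-2\alpha}{2}}\nabla\tilde v\bigr\|_{L^2(B_1^+)},
\]
and a $k$-th tangential derivative of $u^F$ scales as $\tau^{-k}$ relative to one of $v:=\tilde v|_{\R^n\times\{0\}}$. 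Combining these with the global bound from Proposition~\ref{prop:Neumann} produces
\[
\|u^F\|_{C^k(B_r(x_0))}\le C\,\tau^{-k}\cdot\tau^{\alpha-n/2}\,\bigl\|\sqrt{|g|}F\bigr\|_{\dot H^{-\alpha}(\R^n)},
\]
which is exactly \eqref{eq:uniform_reg} in view of $\tau=\min\{\rho,1\}$. The main technical obstacle is the rigorous passage from the iterated finite-difference estimates of Corollary~\ref{cor:Cacc} to genuine tangential Sobolev regularity in the presence of the degenerate weight $x_{n+1}^{1-2\alpha}$, together with the careful cut-off argument needed to localize the global trace estimate \eqref{eq:trace} without losing the tangential regularity gains; once these ingredients are in place, the scaling and the Sobolev embedding are routine.
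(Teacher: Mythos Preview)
Your proposal is essentially correct and follows the same strategy as the paper: iterate the tangential Caccioppoli estimate of Corollary~\ref{cor:Cacc}, transfer to the trace via a (localized) trace inequality, Sobolev embed, and recover the $\rho$-dependence by scaling. The paper differs only in the order of operations --- it first normalizes to $x_0=0$, $r=\rho/2=1$, carries out the regularity argument at unit scale, and then rescales back --- and in being more explicit about two points you flag but do not fully resolve: the paper obtains the needed local $L^2$ base case for $u^F$ directly from the global Sobolev trace $L^{2n/(n-2\alpha)}$ bound plus H\"older on a bounded ball (this is what makes the inhomogeneous $H^m$ norm well-defined for the Sobolev embedding), and it states a localized trace inequality \eqref{eq:trace_loc} that cleanly handles the cut-off term $(\nabla\chi)\,\partial_x^\beta\tilde u^F$ without appealing to the global trace estimate. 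Your scaling parameter $\tau=\min\{\rho,1\}$ and the resulting factor $\tau^{-n/2+\alpha-k}$ agree with the paper's $\max\{1,\rho^{-n/2+\alpha-k}\}$.
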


\begin{proof}
The result follows from standard tangential finite difference quotient arguments iterated together with Caccioppoli, Sobolev and trace estimates.

First, without loss of generality, we observe that by translation and rescaling we may assume that $x_0=0$ and that $r= \dist(x_0,\supp(F))/2 = 1$. Indeed, for $\ell:= \dist(x_0, \supp(F))$ this can always be achieved by defining $F_{\ell,x_0}(x):= \ell^{2 \alpha}F(x_0+\ell (x-x_0))$, $u^{F_{\ell,x_0}}_{\ell,x_0}(x):= u^{F}(x_0+\ell (x-x_0))$ and defining $\tilde{u}^F_{\ell,x_0}(x,x_{n+1}):=\tilde{u}^{F}(x_0+\ell (x-x_0), \ell x_{n+1})$ as the corresponding extension associated with the metric $g_{\ell,x_0}(x):=g(x_0+\ell (x-x_0))$. 

Secondly, by virtue of the energy estimate from Proposition \ref{prop:Neumann}, Hölder's inequality and Sobolev embedding, we obtain the following localized $L^2$ estimate
\begin{align}
\label{eq:L2_est}
\|u^F\|_{L^{2}(B_{3/2}(x_0))} \leq C \|u^F\|_{L^{\frac{2n}{n-2\alpha}}(B_{3/2}(x_0))} \leq C\|u^F\|_{L^{\frac{2n}{n-2\alpha}}(\R^n)} 
\leq C \|\sqrt{|g|} F\|_{\dot{H}^{-\alpha}(\R^n)} .
\end{align}

Thirdly, we note a localized version of the embedding $\dot{H}^{1}(\R^{n+1}_+, x_{n+1}^{1-2\alpha}) \rightarrow \dot{H}^{\alpha}(\R^n)$: for a general function $\tilde{u} \in H^{1}(B_{3/2}^+(x_0), x_{n+1}^{1-2\alpha})$ and $u:= \tilde{u}|_{B_{3/2}(x_0)\times \{0\}}$ it holds that
\begin{align}
\label{eq:trace_loc}
\|u\|_{\dot{H}^{\alpha}(B_{1/2}(x_0))} \leq C( \|x_{n+1}^{\frac{1-2\alpha}{2}} \nabla \tilde{u}\|_{L^2(B_{3/2}^+(x_0))} +
\|x_{n+1}^{\frac{1-2\alpha}{2}}  \tilde{u}\|_{L^2(B_{3/2}^+(x_0))}) .
\end{align}
Indeed, this follows by applying the embedding  $H^{1}(\R^{n+1}_+, x_{n+1}^{1-2\alpha}) \rightarrow \dot{H}^{\alpha}(\R^n)$ to the function $u \eta$, where $\eta \in C_0^{\infty}(\overline{B_{1}^+(x_0)})$, expanding the derivatives on the right hand side and invoking the compact support of $\eta$.

Then, using the trace estimate \eqref{eq:trace_loc} together with Caccioppoli's inequality in the version of Corollary \ref{cor:Cacc} (with a modified ratio of radii compared to Corollary \ref{cor:Cacc}), we observe that
for any $j \in \{1,\dots,n\}$ and $|h| \leq \frac{1}{2}$, $h \neq 0$,
\begin{align}
\label{eq:tangential_improved1}
\begin{split}
\|\Delta_{j,h} u^F\|_{\dot{H}^{\alpha}(B_{1/2}(x_0))}
&\leq C (\|x_{n+1}^{\frac{1-2\alpha}{2}} \nabla \Delta_{j,h} \tilde{u}^F\|_{L^2(B_{3/2}^+(x_0))} + \|x_{n+1}^{\frac{1-2\alpha}{2}}  \Delta_{j,h} \tilde{u}^F\|_{L^2(B_{3/2}^+(x_0))})\\
&\leq 
C  (\|x_{n+1}^{\frac{1-2\alpha}{2}} \nabla \tilde{u}^F\|_{L^2(B_{3}^+(x_0))}+ \|x_{n+1}^{\frac{1-2\alpha}{2}}  \Delta_{j,h} \tilde{u}^F\|_{L^2(B_{3/2}^+(x_0))})\\
&\leq \tilde{C} \|\sqrt{|g|}  F\|_{\dot{H}^{-\alpha}(\R^n)} .
\end{split}
\end{align}
In the last line, we used the a priori estimate from Proposition \ref{prop:Neumann} together with the bound $ \|x_{n+1}^{\frac{1-2\alpha}{2}}  \Delta_{j,h} \tilde{u}^F\|_{L^2(B_{3/2}^+(x_0))} \leq C\|x_{n+1}^{\frac{1-2\alpha}{2}} \nabla \tilde{u}^F\|_{L^2(B_{3}^+(x_0))}$ which follows from the fundamental theorem.

Now, using that since $h \in (0,1)$ we have that for all $\beta \in (0,1]$,
\begin{align*}
\sup\limits_{h \in (0,1)} h^{-\beta} \sum\limits_{j=1}^n \|\tau_{j,h} u^F \|_{\dot{H}^{\alpha}(B_{1/2}(x_0))} \leq \tilde{C} \|\sqrt{|g|}  F\|_{\dot{H}^{-\alpha}(\R^n)}
\end{align*}
we apply Lemma \ref{lem:Sob_charact} with $\beta=1-\alpha +\epsilon$, $\mu=\alpha$ and where $\epsilon \in (0,\beta)$ is chosen such that $1-\alpha + \epsilon \in (0,1)$. By passing to the limit $h \rightarrow 0$, we thus obtain that 
\begin{align*}
\| u^F\|_{\dot{H}^{\alpha+(1-\alpha + \epsilon)}(B_{1/2}(x_0))}
&\leq \tilde{C} \|\sqrt{|g|} F\|_{\dot{H}^{-\alpha}(\R^n)} + \|u^F\|_{\dot{H}^{\alpha}(\R^n)} 
\leq \tilde{C} \|\sqrt{|g|} F\|_{\dot{H}^{-\alpha}(\R^n)}  .
\end{align*}
In the last estimate we have used the energy estimates from Proposition \ref{prop:Neumann}.
Together with \eqref{eq:L2_est}, we infer that 
\begin{align*}
\| u^F\|_{{H}^{1+\epsilon}(B_{1/2}(x_0))}
&\leq \tilde{C} \|\sqrt{|g|} F\|_{\dot{H}^{-\alpha}(\R^n)}  .
\end{align*}

 We iterate these bounds using similar arguments as in the proof of Corollary \ref{cor:Cacc} by iteratively considering higher tangential derivatives of $\tilde{u}^F$ up to the point that the Sobolev exponent is reached. 
Indeed, after sufficient iteration of the above arguments, this yields that  
\begin{align*}
\| u^F\|_{{H}^{\beta}(B_{1/2}(x_0))}
&\leq \tilde{C} \|\sqrt{|g|} F\|_{\dot{H}^{-\alpha}(\R^n)} .
\end{align*}
for some $\beta > \frac{n}{2}$. 
Invoking Sobolev embedding first implies a $C^0(B_{1/2}(x_0))$ bound for $u^F$. Further iteration then also leads to the claimed $C^k(B_{1/2})$ estimates. 
 
 Finally, the decay from the estimate \eqref{eq:uniform_reg} follows by rescaling back to the original function.
 Indeed, for $F_{\ell, x_0}$, ${u}^F_{\ell,x_0}$, $g_{\ell,x_0}$ and for $B_1(0)$ we then infer the estimate
 \begin{align*}
 \|{u}^F_{\ell,x_0}\|_{C^k(B_1(0))} \leq C  \|\sqrt{|g|}  F_{\ell,x_0}\|_{\dot{H}^{-\alpha}(\R^n)}.
 \end{align*}
 Rescaling back, implies that 
  \begin{align*}
\min\{1,\ell^{k}\} \|{u}^F\|_{C^k(B_{\ell}(x_0))} \leq C \ell^{-\frac{n}{2}-\alpha} \ell^{2\alpha} \|\sqrt{|g|}  F\|_{\dot{H}^{-\alpha}(\R^n)}.
 \end{align*}
 Dividing by $\min\{1,\ell^{k}\} $ and carrying out a case distinction depending on the value of the minimum then implies the claim.
\end{proof}

With the auxiliary result of Lemma \ref{lem:well-posedness} in hand, we can turn to the desired decay estimates for our solution of \eqref{eq:Neumann}. To this end, we first recall that $L^{\frac{2n}{n+2\alpha}}(\R^n) \subset \dot{H}^{-\alpha}(\R^n)$ which follows by duality from the inclusion $\dot{H}^{\alpha}(\R^n) \subset L^{\frac{2n}{n-2\alpha}}(\R^n)$. Moreover, we recall that we only consider solutions of \eqref{eq:Neumann} with data which are not in the full space $H^{-\alpha}(\R^n)$ but only those in the smaller, homogeneous space $\dot{H}^{-\alpha}(\R^n) \subset H^{-\alpha}(\R^n)$. We note that for data of the type $H = (-\D_g) F$ with $F\in C_0^{\infty}(\R^n)$ we have that $\sqrt{|g|}H \in \dot{H}^{-\alpha}(\R^n)$. Indeed,
\begin{align}
\label{eq:neg_norm_bound}
\begin{split}
\|\sqrt{|g|}H\|_{\dot{H}^{-\alpha}(\R^n)}
&= \sup\limits_{\|G\|_{\dot{H}^{\alpha}(\R^n)}=1} |\langle \sqrt{|g|} H, G \rangle_{\dot{H}^{-\alpha}(\R^n),\dot{H}^{\alpha}(\R^n)}|\\
&= \sup\limits_{\|G\|_{\dot{H}^{\alpha}(\R^n)}=1}| \langle \sqrt{|g|}(-\D_{g})F, G \rangle_{\dot{H}^{-\alpha}(\R^n),\dot{H}^{\alpha}(\R^n)}|\\
&\leq \sup\limits_{\|G\|_{\dot{H}^{\alpha}(\R^n)}=1} \|\sqrt{|g|}(-\D_{g})F \|_{\dot{H}^{-\alpha}(\R^n)} \| G \|_{\dot{H}^{\alpha}(\R^n)}\\
&\leq  \|\sqrt{|g|}(-\D_{g})F \|_{\dot{H}^{-\alpha}(\R^n)} 
\leq \| \sqrt{|g|} g^{-1} \nabla_x F\|_{\dot{H}^{1-\alpha}(\R^n)} \\
& \leq \| \sqrt{|g|} g^{-1} \nabla_x F\|_{{H}^{1-\alpha}(\R^n)} \leq C_F < \infty.
\end{split}
\end{align}
In the last estimates we used the regularity and support condition for $F$. In particular, such data $H$ are thus admissible in Definition \ref{defi:Neu}. In what follows below, we will heavily exploit this.

\begin{prop}
\label{prop:Schwartz_kernel}
Let $\alpha \in (0,1)$ and let $H = (-\D_g) F$ with $F\in C_0^{\infty}(\R^n)$.  Let $\tilde{u}^H \in X_{\alpha}$ denote the  weak solution of \eqref{eq:Neumann} with data $\sqrt{|g|} H \in \dot{H}^{-\alpha}(\R^n)$ and let $u^H:= \tilde{u}^H|_{\R^n \times \{0\}} \in \dot{H}^{\alpha}(\R^n)$.
Then we have that $u^H\in H^{\alpha}(\R^n)$ for all $n\geq 2$.
\end{prop}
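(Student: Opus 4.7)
The plan is to identify the trace $u^H$ explicitly with $-c_\alpha^{-1}(-\Delta_g)^{1-\alpha}F$, where $c_\alpha>0$ is the Stinga--Torrea normalization constant appearing in the identity $\lim_{x_{n+1}\to 0^+}x_{n+1}^{1-2\alpha}\partial_{n+1}\tilde w=-c_\alpha(-\Delta_g)^{\alpha}w$ for the Dirichlet extension $\tilde w\in X_\alpha$ of $w$. Once this identification is in place, $H^{\alpha}$-regularity is immediate from the self-adjoint functional calculus.

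First, I would introduce the candidate $v:=-c_\alpha^{-1}(-\Delta_g)^{1-\alpha}F\in L^2(\mathbb{R}^n)$. Since $F\in C_0^\infty(\mathbb{R}^n)\subset \mathcal{D}((-\Delta_g)^k)=H^{2k}(\mathbb{R}^n)$ for every $k\in\mathbb{N}$ (see Lemma \ref{lem_solution_poisson_laplacian} and Proposition \ref{prop_regularity}), $v$ is well-defined via functional calculus. Moreover, the composition rule gives $(-\Delta_g)^{\alpha} v = -c_\alpha^{-1}(-\Delta_g)F\in L^2(\mathbb{R}^n)$, so by Proposition \ref{prop_regularity} I conclude $v \in \mathcal{D}((-\Delta_g)^{\alpha})=H^{2\alpha}(\mathbb{R}^n)\subset H^{\alpha}(\mathbb{R}^n)$.

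Next, I would construct the Dirichlet extension $\tilde v\in X_\alpha$ of $v$, i.e., the unique weak solution of
\begin{equation*}
\nabla\cdot x_{n+1}^{1-2\alpha}\tilde g\nabla\tilde v=0\text{ in }\mathbb{R}^{n+1}_+,\qquad \tilde v|_{\mathbb{R}^n\times\{0\}}=v.
\end{equation*}
Its existence follows by the Lax--Milgram argument used in Lemma \ref{lem:well-posedness}, using that $v\in H^{\alpha}(\mathbb{R}^n)$.

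Finally, I would verify that $\tilde v$ coincides with the weak Neumann solution $\tilde u^H$ given by Proposition \ref{prop:Neumann}. By the Stinga--Torrea identity \cite{ST10}, the weighted Neumann derivative of $\tilde v$ equals (in the $H^{-\alpha}(\mathbb{R}^n)$ sense of Remark \ref{rmk:DtN}) $-c_\alpha(-\Delta_g)^{\alpha}v=(-\Delta_g)F=H$. Thus $\tilde v$ satisfies the weak form in Definition \ref{defi:Neu} with data $\sqrt{|g|}H$, and the uniqueness asserted in Proposition \ref{prop:Neumann} gives $\tilde v=\tilde u^H$. In particular $u^H=v\in H^{\alpha}(\mathbb{R}^n)$. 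The main technical obstacle is the rigorous translation between the Stinga--Torrea identity in its functional-calculus form and the weak form of the Neumann problem in Definition \ref{defi:Neu}, including matching sign conventions, the constant $c_\alpha$, and the $\sqrt{|g|}$ weight in the test-function pairing; the underlying analytic ingredient is the variable-coefficient Caffarelli--Silvestre--Stinga--Torrea identification of \cite{CS07,ST10}, which applies since $F$ lies in the functional-calculus domain of every power of $-\Delta_g$.
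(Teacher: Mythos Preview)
Your argument is correct and is substantially more direct than the paper's. You identify $u^H$ (up to the constant $c_\alpha$) with $(-\Delta_g)^{1-\alpha}F$, whose $H^k(\mathbb{R}^n)$ regularity for all $k$ is exactly the content of Lemma~\ref{lem_solution_poisson_laplacian}; then you verify, via the Stinga--Torrea identity \eqref{eq:conv_CS}, that the Dirichlet extension of this function satisfies the weak Neumann formulation of Definition~\ref{defi:Neu}, and conclude by the uniqueness in Proposition~\ref{prop:Neumann}. The only point requiring care is that the bilinear identity defining the weak Neumann derivative extends from test functions in $C_0^\infty(\overline{\mathbb{R}^{n+1}_+})$ to all of $X_\alpha$; this follows by density together with the boundedness of the trace map $X_\alpha\to\dot H^\alpha(\mathbb{R}^n)$ and the fact that $\sqrt{|g|}H\in\dot H^{-\alpha}(\mathbb{R}^n)$, as in \eqref{eq:neg_norm_bound}.

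The paper proceeds quite differently: it works entirely within the extension framework, first deriving pointwise decay bounds for the Schwartz kernel of the Neumann-to-Dirichlet map (via the tangential regularity of Lemma~\ref{lem:tangential_cont}, Caccioppoli estimates, and rescaling), and then uses these to show directly that $u^H\in L^2(\mathbb{R}^n\setminus B_{2R})$ by exploiting the compact support of $F$. Your route is shorter and leverages the functional calculus already set up in Section~\ref{sec_well_posedness_dirichlet}; the paper's route avoids any spectral identification and yields explicit off-diagonal kernel decay, which is more robust toward the lower-regularity and the $(-\nabla\cdot a\nabla)^\alpha$ settings alluded to in Remarks~\ref{rmk:reg} and~\ref{rmk:PDE_op}, where a clean functional-calculus link of the form \eqref{eq:conv_CS} may not be available.
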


\begin{proof}
We argue in several steps.\\

\emph{Step 1: Existence of a Schwartz kernel.}
We first consider the map $L^{\frac{2n}{n+2\alpha}}(\R^n) \ni G \mapsto u^G \in L^{\frac{2n}{n-2\alpha}}(\R^n)$ where $u^G$ is the restriction of a solution to \eqref{eq:Neumann} with data $G$. Then, since $L^{\frac{2n}{n+2\alpha}}(\R^n)  \subset \dot{H}^{-\alpha}(\R^n)$, by Proposition \ref{prop:Neumann}, it follows that this map is bounded. Hence, by the Schwartz kernel theorem, there exists a distributional kernel $k: \R^n \times \R^n \rightarrow \R$ such that
\begin{align*}
u^G(x) = \int\limits_{\R^n} k(x,y) G(y )dy .
\end{align*}
We next aim at obtaining further bounds on $k$.\\

\emph{Step 2: Schwartz kernel bounds.}
If 
$\dist(\supp(G), x_0) = 2$, by the auxiliary Lemma \ref{lem:tangential_cont}, we have that for any $\ell\geq 0$
\begin{align}
\label{eq:reg_tang}
\|u^G\|_{C^{\ell}(B_1(x_0))} \leq C_{\ell}  \|\sqrt{|g|} G\|_{\dot{H}^{-\alpha}(\R^n)} \leq C_{\ell}  \|\sqrt{|g|} G\|_{L^{\frac{2n}{n+2\alpha}}(\R^n)}.
\end{align}
Varying the function $G$, we thus obtain that, as a function of $x$ (with $x\neq y$), the kernel $k(\cdot, y)$ is a smooth function. By the symmetry of the equation and thus the kernel, this is also true as a function in $y$ (with $y\neq x$). 

We next turn to the specific form of our data $H = (-\D_g) F$. 
In order to obtain the desired result, we observe that $u^H = (-\D_{g}) w^F$, where $w^F$ is the restriction of the weak solution of the equation \eqref{eq:Neumann} with right hand side $F$. Indeed, we note that the problem \eqref{eq:Neumann} with data $F$ is solvable with $\tilde{w}^F \in X_{\alpha}$. By the regularity of $F$,  similarly as in the proof of Lemma \ref{lem:tangential_cont}, difference quotient arguments show that as a function of the tangential directions, $\tilde{w}^F$ is smooth. We can hence commute the equation for $\tilde{w}^F$ and the Laplace-Beltrami operator and obtain a weak solution $\tilde{u}^H = (-\D_g) \tilde{w}^F \in X_{\alpha}$ of \eqref{eq:Neumann} with data $H$, if $H$ is of the given form.
Thus, the Schwartz kernel associated with the map $F\mapsto u^H$ is given by $\tilde{k}(x,y):= (-\D_{g,x}) k(x,y)$ where $k(x,y)$ is the kernel from Step 1 above. Next, we deduce the desired decay estimates for the kernel from this.

To this end, we rely on a regularity and scaling argument. We claim that the kernel $\tilde{k}(\cdot, \cdot)$ satisfies the following decay bounds:
for $x_0, w \in \R^n$ with $|x_0-w|=3r$ we have for $x \in B_r(x_0)$
\begin{align}
\label{eq:decay_improve}
\|\tilde{k}(x,\cdot)\|_{L^{\frac{2n}{n-2\alpha}}(B_{r}(w))} \leq C r^{-\frac{n}{2}+\alpha-2}.
\end{align}
Indeed, we first  consider $x_0, w \in \R^n$ such that $|x_0 -w|=3$. Using \eqref{eq:reg_tang}, this then yields for $x\in B_1(x_0)$
\begin{align*}
&\|(-\D_{g,x})k(x,\cdot)\|_{L^{\frac{2n}{n-2\alpha}}(B_1(w))} \\
&= \sup\limits_{F \in L^{\frac{2n}{n+2\alpha}}(B_1(w)),\ 
\|F\|_{L^{\frac{2n}{n+2\alpha}}(B_1(w))}=1} \left| \int\limits_{\R^n} (-\D_{g,x}) k(x,y) F(y) dV_g(y) \right|\\
&= \sup\limits_{F \in L^{\frac{2n}{n+2\alpha}}(B_1(w)),\ 
\|F\|_{L^{\frac{2n}{n+2\alpha}}(B_1(w))}=1} |(-\D_g)u^{F}(x)| \\
& \leq C  \sup\limits_{F \in L^{\frac{2n}{n+2\alpha}}(B_1(w)),\ 
\|F\|_{L^{\frac{2n}{n+2\alpha}}(B_1(w))}=1}\|F \sqrt{|g|}\|_{\dot{H}^{-\alpha}(\R^n)} \\
&\leq C  \sup\limits_{F \in L^{\frac{2n}{n+2\alpha}}(B_1(w)),\ 
\|F\|_{L^{\frac{2n}{n+2\alpha}}(B_1(w))}=1}\|F\|_{L^{\frac{2n}{n+2\alpha}}(\R^n)} \leq C.
\end{align*}
Here we have identified an $ L^{\frac{2n}{n+2\alpha}}(B_1(w))$ function with an $L^{\frac{2n}{n+2\alpha}}(\R^n)$ function after an extension by zero. In the bound, we first invoked \eqref{eq:reg_tang} (to pass from the third to the fourth line), then the inclusion $L^{\frac{2n}{n+2 \alpha}}(B_1(w)) \subset \dot{H}^{-\alpha}(\R^n)$ (to pass from the fourth to the fifth line) and finally the fact that $\tilde{g}$ is bounded in the last estimate. 
As in the previous lemma, by a scaling argument, we hence obtain the bound \eqref{eq:decay_improve}. 

Moreover, this also allows us to  deduce that for all $R>1$ sufficiently large
\begin{align}
\label{eq:decay_hs_improve}
\begin{split}
& \| (-\D_{g,x}) k(\cdot,\cdot) \|_{L^2(\R^n \setminus B_{2R}) \times L^{\frac{2n}{n-2\alpha}}(B_R)}^2\\
 &\leq \sum\limits_{j\in \N} \int\limits_{B_{2^{j+1}R}\setminus B_{2^jR}}  \|(-\D_{g,x}) k(x,\cdot)\|_{L^{\frac{2n}{n-2\alpha}}(B_{R})}^2 dx\\
& \leq \sum\limits_{j\in \N} \int\limits_{B_{2^{j+1}R}\setminus B_{2^jR}}  C (2^{j}R)^{-n+2\alpha-4} dx\\
 &\leq C \sum\limits_{j\in \N} 2^{j n}R^n  (2^{j}R)^{-n+2\alpha-4} = C R^{2\alpha-4}.
 \end{split}
 \end{align}

\emph{Step 3: Conclusion.}
Concluding the proof, we invoke the compact support condition together with the estimate from Step 2. 
Indeed, this yields that for $x \in \R^n$ such that $|x|\geq 2R$ where $R>1$ is such that $\supp(F)\subset B_{R}$ with $B_{R}:=\{x \in \R^n: \ |x|<R\}$, we have that
\begin{align*}
\|u^H \|_{L^2(\R^n\setminus B_{2R})}
&\leq \sup\limits_{\|G\|_{L^2(\R^n\setminus B_{2R})}=1} \left|\int\limits_{\R^n \times \R^n} G(x) \tilde{k}(x,y) F(y) dy dx \right|\\
&\leq \sup\limits_{\|G\|_{L^2(\R^n\setminus B_{2R})}=1} \|F\|_{L^{\frac{2n}{n+2\alpha}}(B_R)} \int\limits_{\R^n } |G(x)| \| \tilde{k}(x,\cdot) \|_{L^{\frac{2n}{n-2\alpha}}(B_R)} dx\\
& \leq \sup\limits_{\|G\|_{L^2(\R^n\setminus B_{2R})}=1} \|F\|_{L^{\frac{2n}{n+2 \alpha}}(B_R)}  \|G\|_{L^2(\R^n \setminus B_{2R})} \| \tilde{k}(\cdot,\cdot) \|_{L^2(\R^n \setminus B_{2R}) \times L^{\frac{2n}{n-2\alpha}}(B_R)} \\
& = \|F\|_{L^{\frac{2n}{n+2 \alpha}}(B_R)}  \| \tilde{k}(\cdot,\cdot) \|_{L^2(\R^n \setminus B_{2R}) \times L^{\frac{2n}{n-2\alpha}}(B_R)} \\
& \leq C\|F\|_{L^{\frac{2n}{n+2 \alpha}}(B_R)} R^{\alpha-2}.
\end{align*}
Here we used the estimate \eqref{eq:decay_hs_improve} for the estimate for the kernel and viewed the 
$L^2(\R^n\setminus B_{2R})$ function as an $L^2(\R^n)$ function after an extension by zero. This yields the $L^2(\R^n\setminus B_{2R})$ integrability of $u^H$.

Moreover,
in the ball $B_{4R}$ the function $u^H$ is also $L^2$ bounded by the $\dot{H}^{\alpha}$ bound from Proposition \ref{prop:Neumann}, an application of the Sobolev trace estimate \eqref{eq:Sob_trace} and Hölder's inequality. Indeed, by the Sobolev trace inequality, $\|u^H\|_{L^{\frac{2n}{n-2\alpha}}(B_{4R})}$ is bounded in terms of $\|\sqrt{|g|} F\|_{\dot{H}^{-\alpha}(\R^n)}$ and then, by Hölder's inequality, it is also bounded in $L^2(B_{4R})$ in terms of $\|\sqrt{|g|} F\|_{\dot{H}^{-\alpha}(\R^n)}$.
\end{proof}

\subsubsection{Relation between weak solutions of \eqref{eq:mixed} and energy solutions from Section \ref{sec:energy_sol}}
\label{sec:equiv}

Last but not least, we relate the above weak solutions from Definition \ref{defi:sol} with data in $H^{\alpha}(\Omega_e)$ to energy solutions based on a nonlocal spectral definition through a suitable bilinear form.
To this end, we recall the bilinear form $\mathcal{E}$ from Section \ref{sec_well_posedness_dirichlet} (see, for instance, the discussion around Proposition \ref{prop_well-posedness}): for $u,v \in C_0^{\infty}(\R^n)$ we have $\mathcal{E}(u,v):= (u,(-\D_g)^{\alpha} v)_{L^2(\R^n,d V_g)}$ and an energy solution $u$ of 
\begin{align}
\label{eq:spect}
\begin{split}
(-\D_g)^{\alpha} u &= 0 \mbox{ in } \Omega,\\
u & = f \mbox{ in } \Omega_e,
\end{split}
\end{align}
is given by an $H^{\alpha}(\R^n)$ function $u$ with $u-f \in H_0^{\alpha}(\Omega)$ and such that $\mathcal{E}(u,\varphi)=0$ for all $\varphi \in C_0^{\infty}(\Omega)$. Here we have used that the bilinear form $\mathcal{E}$ extends to $H^{\alpha}(\R^n)\times H^{\alpha}(\R^n)$ (see Proposition \ref{prop_eq_2_6} from above). The definition of $(-\D_g)^{\alpha}$ here is through functional calculus. In the following argument we take for granted that for $u \in H^{\alpha}(\R^n)$ it holds
\begin{align}
\label{eq:conv_CS}
\lim\limits_{x_{n+1} \rightarrow 0}\|(-\D_g)^{\alpha} u -  c_{\alpha} x_{n+1}^{1-2\alpha} \p_{n+1} \tilde{u}\|_{{H}^{-\alpha}(\R^n)} = 0,
\end{align}
where $c_{\alpha} \neq 0$ is a constant and $\tilde{u}$ denotes the extension of $u$, i.e., $\tilde{u} \in X_{\alpha}$ is the unique weak solution to
\begin{align*}
\nabla \cdot x_{n+1}^{1-2\alpha} \tilde{g} \nabla \tilde{u} & = 0 \mbox{ in } \R^{n+1}_+,\\
\tilde{u} & = u \mbox{ on } \R^n.
\end{align*}
We remark that existence and uniqueness follows as in the proof of Lemma \ref{lem:well-posedness} (where the space $X_{\alpha,0,\Omega_e}$ is replaced by the corresponding space $X_{\alpha,0,\R^n}$ in which the zero Dirichlet condition holds on the whole tangential domain) or a direct minimization argument of the associated Dirichlet-type energy.

\begin{remark}[On the convergence \eqref{eq:conv_CS}]
For $\tilde{g}= id$, the estimate \eqref{eq:conv_CS} follows from the seminal work by Caffarelli-Silvestre \cite{CS07}.  For a variable coefficient metric this follows from \cite{ST10} and \cite{ATW18}.
Indeed, in \cite{ST10} an $L^2(\R^n)$ convergence of the type \eqref{eq:conv_CS} is proved for $u \in \text{Dom}((-\D_g)^{\alpha})$. As $C_0^{\infty}(\R^n) \subset \text{Dom}((-\D_g)^{\alpha})$ and as $C_0^{\infty}(\R^n)$ is dense in $H^{\alpha}(\R^n)$, we then also obtain \eqref{eq:conv_CS} by continuity. Indeed, by the $H^{-\alpha}(\R^n)$ continuity estimate from \eqref{eq:est_Neumann}, the continuity of $(-\D_g)^{\alpha}: H^{\alpha}(\R^n) \rightarrow H^{-\alpha}(\R^n)$ and by the relation between the operators $\tilde{\Lambda}_{g}^{W',W''}$ and $\lim\limits_{x_{n+1} \rightarrow 0} x_{n+1}^{1-2\alpha} \p_{n+1} \tilde{u}(x,x_{n+1})$ from Remark \ref{rmk:DtN}, the statement of \eqref{eq:conv_CS} then follows by an approximation argument for $u \in H^{\alpha}(\R^n)$ by $\{u_k\}_{k \in \N} \subset C_0^{\infty}(\R^n)$.
\end{remark}

Having fixed this, we claim that our mixed Dirichlet-Neumann perspective from \eqref{eq:mixed} indeed captures the inverse problem for the fractional Laplacian.

\begin{prop}
\label{prop:equiv}
Let $\alpha \in (0,1)$, let $g$ satisfy the conditions from Assumption \ref{assump:main} and let $f\in H^{\alpha}(\R^n)$. Let $u \in H^{\alpha}(\R^n)$ be an energy solution of \eqref{eq:spect} and let $\tilde{u}^f \in X_{\alpha}$ be a weak solution of \eqref{eq:mixed}. Then, for $u^f:=\tilde{u}^f|_{\R^n \times \{0\}}$ it holds that $ u^f= u$ a.e. in $\R^n$.
\end{prop}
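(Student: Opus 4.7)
The plan is to show that the trace $u^f := \tilde{u}^f|_{\R^n \times \{0\}}$ is itself an energy solution of \eqref{eq:spect}, and then invoke the uniqueness part of Proposition~\ref{prop_well-posedness} to conclude $u^f = u$. Thus the task reduces to checking (a) the correct membership and trace condition for $u^f$, and (b) the identity $\mathcal{E}(u^f,\varphi) = 0$ for every $\varphi \in C_0^{\infty}(\Omega)$.

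First, I would verify the functional-analytic side. From $\tilde{u}^f \in X_{\alpha}$ and the trace inequality \eqref{eq:trace}, we have $u^f \in \dot{H}^{\alpha}(\R^n)$, and the boundary condition in Definition~\ref{defi:sol} gives $u^f|_{\Omega_e} = f|_{\Omega_e}$. The difference $u^f - f$ is then an element of $\dot{H}^{\alpha}(\R^n)$ supported in $\overline{\Omega}$; the Poincar\'e estimate \eqref{eq:Poincare_nonlocal} promotes this to $L^2(\R^n)$ and hence to $H^{\alpha}(\R^n)$. In particular, $u^f - f \in H_0^{\alpha}(\Omega)$ and $u^f \in H^{\alpha}(\R^n)$, which are precisely the conditions required of an energy solution of \eqref{eq:spect}.

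Next, I would establish the equation. Because $\tilde{u}^f$ solves the bulk equation in all of $\R^{n+1}_+$ with trace $u^f$, uniqueness of the Caffarelli--Silvestre extension in $X_{\alpha}$ identifies $\tilde{u}^f$ as the (full-space) extension of $u^f$. Given $\varphi \in C_0^{\infty}(\Omega)$, let $\tilde{\varphi} \in X_{\alpha}$ be its extension. Since $\varphi$ vanishes on $\Omega_e$, we have $\tilde{\varphi} \in X_{\alpha,0,\Omega_e}$, so $\tilde{\varphi}$ is admissible in Definition~\ref{defi:sol}(i). Using the characterization of the weighted Neumann derivative in Remark~\ref{rmk:DtN}, the weak equation for $\tilde{u}^f$ yields
\[
\langle \lim_{x_{n+1}\to 0} x_{n+1}^{1-2\alpha}\p_{n+1}\tilde{u}^f,\; \sqrt{|g|}\,\varphi \rangle_{H^{-\alpha}(\R^n),H^{\alpha}(\R^n)} = 0.
\]
Invoking the convergence \eqref{eq:conv_CS} applied to the extension of $u^f$, and pairing with $\sqrt{|g|}\,\varphi \in H^{\alpha}(\R^n)$, this becomes $\langle (-\Delta_g)^{\alpha} u^f, \sqrt{|g|}\,\varphi\rangle = 0$, which is exactly $\mathcal{E}(u^f,\varphi) = 0$.

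The only delicate point is ensuring compatibility of the two perspectives at the boundary, i.e., correctly invoking \eqref{eq:conv_CS} to pass from the weighted Neumann boundary pairing coming from the extension problem to the bilinear form $\mathcal{E}$ arising from the functional calculus; once this identification is in hand, the remainder is routine. With steps (a) and (b) verified, $u^f$ solves \eqref{eq:spect} in the energy sense, and uniqueness from Proposition~\ref{prop_well-posedness} gives $u^f = u$ almost everywhere in $\R^n$.
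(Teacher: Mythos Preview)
Your proof is correct and follows essentially the same route as the paper's Step~1: showing that $u^f$ is itself an energy solution of \eqref{eq:spect} and then invoking uniqueness from Proposition~\ref{prop_well-posedness}. The only variation is that you apply \eqref{eq:conv_CS} to $u^f$ (after first identifying $\tilde{u}^f$ as its full-space Caffarelli--Silvestre extension), whereas the paper applies \eqref{eq:conv_CS} to the test function $\varphi \in C_0^\infty(\Omega)$ instead, which sidesteps that identification; the paper also records the converse direction (Step~2), but this is not needed for the stated conclusion.
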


\begin{proof}
\emph{Step 1: The restriction of a solution of \eqref{eq:mixed} is an energy solution of \eqref{eq:spect}.}
Let $\tilde{u}^f \in X_{\alpha}$ be the solution to \eqref{eq:mixed} provided by Lemma \ref{lem:well-posedness}. Then, by assumption, we have that
\begin{align*}
\int\limits_{\R^{n+1}_+} x_{n+1}^{1-2\alpha} \tilde{g} \nabla \tilde{u}^f \cdot \nabla \tilde{v} d(x,x_{n+1}) = 0,
\end{align*}
for all $\tilde{v} \in X_{\alpha,0,\Omega_e}$. We note that for $\varphi \in C_0^{\infty}(\Omega)$ and for $\tilde{\varphi}$ being the variable coefficient Caffarelli-Silvestre-type extension of $\varphi$, i.e., $\tilde{\varphi}$ is assumed to be a weak solution of
\begin{align*}
\nabla \cdot \tilde{g} x_{n+1}^{1-2\alpha} \nabla \tilde{\varphi} & = 0 \mbox{ on } \R^{n+1}_+,\\
\tilde{\varphi} & = \varphi \mbox{ on } \R^n \times \{0\},
\end{align*}
by the proof of Lemma \ref{lem:well-posedness}, we have $\tilde{\varphi} \in X_{\alpha,0,\Omega_e}$. Hence, by the equation for $\tilde{u}^f$, the fact that $u^f \in H^{\alpha}(\R^n)$ and the definition of the generalized Neumann derivative (see Remark \ref{rmk:DtN})
\begin{align*}
0 &= \int\limits_{\R^{n+1}_+} x_{n+1}^{1-2\alpha} \nabla \tilde{u}^f \cdot \tilde{g} \nabla \tilde{\varphi} d(x,x_{n+1}) 
 =  -\langle \sqrt{|g|} \lim\limits_{x_{n+1} \rightarrow 0} x_{n+1}^{1-2\alpha} \p_{n+1} \tilde{\varphi}, u^f \rangle_{{H}^{-\alpha}(\R^n),{H}^{\alpha}(\R^n)} \\
 & 
 = -\frac{1}{c_{\alpha}} \langle \sqrt{|g|} (-\D_g)^{\alpha} \varphi, u^f \rangle_{{H}^{-\alpha}(\R^n), {H}^{\alpha}(\R^n)} 
 = -\frac{1}{c_{\alpha}}\mathcal{E}(u^f, \varphi ).
\end{align*}
Again, we here used that $\mathcal{E}$ extends to and is well-defined on $H^{\alpha}(\R^n) \times H^{\alpha}(\R^n)$.
Moreover, by definition and the domain regularity, we also have that $u^f-f \in H_0^{\alpha}(\Omega)$ which concludes the proof of the claim.

\emph{Step 2: An extension of an energy solution to \eqref{eq:spect} is a weak solution to \eqref{eq:mixed}.}
Conversely, we assume that $u \in H^{\alpha}(\R^n)$ is a solution to \eqref{eq:spect}. We define $\tilde{u} \in X_{\alpha}$ to be the extension of $u$. Moreover, let $\tilde{\varphi} \in X_{\alpha,0,\Omega_e}$. Then, $\varphi:= \tilde{\varphi}|_{\R^n} \in \dot{H}^{\alpha}(\R^n)$ and, by Poincar\'e's inequality \eqref{eq:Poincare_nonlocal}, also $\varphi \in H^{\alpha}(\R^n)$. Due to the trace condition for $\tilde{\varphi}$, we even have $\varphi \in H_0^{\alpha}(\Omega)$. In particular, there exist $\varphi_k \in C_0^{\infty}(\Omega)$ such that $\varphi_k \rightarrow \varphi$ in $H^{\alpha}(\R^n)$. Let $\tilde{\varphi}_k \in X_{\alpha}$ denote the Caffarelli-Silvestre-type extension of $\varphi_k$. Then, by construction, $\tilde{\varphi}_k \in X_{\alpha,0,\Omega_e}$. In particular, since $u$ is a solution to \eqref{eq:spect}, similarly as above,
\begin{align*}
\int\limits_{\R^{n+1}_+} x_{n+1}^{1-2\alpha} \nabla \tilde{u} \cdot \tilde{g}\nabla \tilde{\varphi}_k d(x,x_{n+1}) = -\frac{1}{c_{\alpha}}\mathcal{E}(u,\varphi_k) = 0.
\end{align*}
We seek to replace $\tilde{\varphi}_k$ by the function $\tilde{\varphi}$. To this end, by the definition of the generalized normal derivative and since $\varphi_k \rightarrow \varphi$ in $H^{\alpha}(\R^n)$, we observe that
\begin{align*}
&\left|\int\limits_{\R^{n+1}_+} x_{n+1}^{1-2\alpha} \nabla \tilde{u} \cdot \tilde{g}\nabla (\tilde{\varphi}-\tilde{\varphi}_k) d(x,x_{n+1}) \right| \\
& = \left|  \langle \sqrt{|g|} \lim\limits_{x_{n+1} \rightarrow 0} x_{n+1}^{1-2\alpha} \p_{n+1} \tilde{u},  \varphi - \varphi_k  \rangle_{H^{-\alpha}(\R^n), H^{\alpha}(\R^n)} \right|\\
& \leq \|\lim\limits_{x_{n+1} \rightarrow 0} x_{n+1}^{1-2\alpha} \p_{n+1} \tilde{u}\|_{H^{-\alpha}(\R^n)} \|\sqrt{|g|}(\varphi_k - \varphi)\|_{H^{\alpha}(\R^n)} \rightarrow 0,
\end{align*}
as $k \rightarrow \infty$.
As a consequence, we also obtain that 
\begin{align*}
\int\limits_{\R^{n+1}_+} x_{n+1}^{1-2\alpha} \nabla \tilde{u} \cdot \tilde{g}\nabla \tilde{\varphi}d(x,x_{n+1}) = 0 \mbox{ for all } \tilde{\varphi} \in X_{\alpha,0,\Omega_e},
\end{align*}
which concludes the proof of the second implication.
\end{proof}

\subsection{Proof of Theorem \ref{thm_main}. Uniqueness for the inverse problem}

In this section, we provide the proof of the uniqueness result from Theorem \ref{thm_main} based on the elliptic extension perspective from \eqref{eq:mixed}. To this end, we argue in several steps, successively reducing the measurement data to a source-to-solution problem.

\subsubsection{Step 1: From $\tilde{\Lambda}_{g}^{W_1,W_2}$ to $\tilde{\Lambda}_{g}^{W_1,\Omega_e}$ and $\tilde{\Lambda}_{g}^{ \Omega_e}$}

It is the purpose of this section to extend the available measurement data from the measurements in $ W_2$ to measurements in the whole complement $\Omega_e$ and also the available data from localized ones on the set $W_1$ to data on $\Omega_e$. 
We begin by showing that the generalized Dirichlet-to-Neumann map with data in $H_0^{\alpha}(W_1)$ and measured on $W_2$ determines the fractional Laplacian in the whole complement of $\Omega$ by unique continuation. For completeness, we recall the unique continuation property on the level of the extension problem which takes the form of a boundary unique continuation property \cite{FF14, Yu16,R15,Ghosh_Salo_Uhlmann_2020}.

\begin{prop}[UCP for the extension problem]
\label{prop:UCP}
Let $\Omega \subset \R^n$ satisfy the condition in Assumption \ref{assump:main}(a). 
Let $\alpha \in (0,1)$, let $\tilde{g}$ satisfy the conditions in Assumption \ref{assump:main}(b) and let $\tilde{u} \in H^1_{loc}(\R^{n+1}_+, x_{n+1}^{1-2\alpha})$ be a weak solution of 
\begin{align*}
\nabla \cdot x_{n+1}^{1-2\alpha} \tilde{g} \nabla \tilde{u} & = 0 \mbox{ in } \R^{n+1}_+\setminus \overline{(\Omega \times \R_+)}.
\end{align*}
Assume that for some open set $W \subset \Omega_e$ it holds that $\tilde{u}|_{W \times \{0\}} = 0 = \lim\limits_{x_{n+1} \rightarrow 0} x_{n+1}^{1-2\alpha} \p_{n+1} \tilde{u}|_{W }$ in a trace and in an $H^{-\alpha}(W)$ sense, respectively. Then, $\tilde{u } \equiv 0$ in $\R^{n+1}_+ \setminus \overline{\Omega \times \R_+ }$.
\end{prop}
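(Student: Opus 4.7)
The plan is to reduce Proposition \ref{prop:UCP} to a combination of a local boundary unique continuation statement for the degenerate elliptic operator at $\{x_{n+1}=0\}$ and classical interior strong unique continuation, together with the observation that $\R^{n+1}_+\setminus\overline{\Omega\times\R_+}=\Omega_e\times(0,\infty)$ is connected by Assumption \ref{assump:main}(a).

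First, fix $x_0\in W$ and choose $r>0$ so small that $B_r(x_0)\subset W$ and the half-ball $B_r^+(x_0,0)$ is contained in $\R^{n+1}_+\setminus\overline{\Omega\times\R_+}$. In this half-ball $\tilde u$ solves $\nabla\cdot x_{n+1}^{1-2\alpha}\tilde g\nabla\tilde u=0$ with vanishing Dirichlet trace and vanishing weighted conormal derivative on $B_r(x_0)\times\{0\}$. The boundary unique continuation property for this degenerate elliptic equation, established either via an Almgren-type frequency function as in \cite{FF14, Yu16} or via Carleman estimates adapted to the Muckenhoupt weight $x_{n+1}^{1-2\alpha}$ as in \cite{R15} (see also the application in \cite{Ghosh_Salo_Uhlmann_2020}), then implies that $\tilde u\equiv 0$ in some smaller half-ball $B_\rho^+(x_0,0)$ with $0<\rho<r$.

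Next, on the open set $\Omega_e\times(0,\infty)$ the weight $x_{n+1}^{1-2\alpha}$ is smooth and strictly positive, so dividing by it puts the equation into non-degenerate divergence form with $C^\infty$ coefficients. Aronszajn's strong unique continuation theorem therefore propagates the vanishing from the previous step throughout the connected component of $\Omega_e\times(0,\infty)$ that contains $B_\rho^+(x_0,0)\cap\{x_{n+1}>0\}$. Since $\Omega_e$ is connected, the product $\Omega_e\times(0,\infty)$ is connected as well, and we conclude that $\tilde u\equiv 0$ on all of $\R^{n+1}_+\setminus\overline{\Omega\times\R_+}$.

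The main obstacle is the first step: the degeneracy of the weight at $\{x_{n+1}=0\}$ rules out a direct application of classical Cauchy-type unique continuation, and one must instead invoke the boundary unique continuation machinery for the Caffarelli--Silvestre-type operator developed in the references above. Once local vanishing near $W$ is in hand, the rest is a routine propagation argument for a uniformly elliptic equation with smooth coefficients together with a connectedness observation.
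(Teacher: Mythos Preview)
Your proposal is correct and matches the paper's treatment: the paper does not give its own proof of Proposition~\ref{prop:UCP} but simply refers to \cite{FF14,Yu16,R15,Ghosh_Salo_Uhlmann_2020}, and your two-step outline (boundary unique continuation at $\{x_{n+1}=0\}$ via Almgren frequency or Carleman estimates from those references, followed by interior strong unique continuation on the connected set $\Omega_e\times(0,\infty)$) is precisely the standard argument developed in that literature.
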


For a proof of this, we refer to the extension interpretation of the fractional Laplacian in \cite{FF14,Yu16,R15,Ghosh_Salo_Uhlmann_2020} and the proofs from these articles.

With the unique continuation property in hand, we can turn to the implications of the unique continuation property for our inverse problem.

\begin{lem}[Exterior determination, UCP]
\label{lem:UCP}
Let $\alpha \in (0,1)$ and let $W_1, W_2, \Omega \subset \R^n$ and $ g_1, g_2, \gamma:\R^n \rightarrow \R^{n \times n}_{sym}$ satisfy the conditions from Assumption \ref{assump:main} and assume that $g_1, g_2, \gamma$ are equal to an apriori known smooth metric $g$ in $\Omega_e$.
Let $f\in C_0^{\infty}(W_1)$ and let $\tilde{u}^f \in X_{\alpha}$ denote the weak solution of \eqref{eq:mixed} with data $f$.
Then, the following results hold:
\begin{itemize}
\item[(i)] The knowledge of $(f,\tilde{\Lambda}_{\gamma}^{W_1,W_2}(f)) \in H_0^{\alpha}(W_1) \times H^{-\alpha}(W_2)$ determines 
$\tilde{\Lambda}_{\gamma}^{W_1,\Omega_e}(f)\linebreak \in  H^{-\alpha}(\Omega_e)$. In particular, if for two metrics $g_1, g_2$ and the associated generalized Dirichlet-to-Neumann data $\tilde{\Lambda}_{g_1}^{W_1,W_2}(f)$, $\tilde{\Lambda}_{g_2}^{W_1,W_2}(f)$ it holds that
\begin{align*}
\tilde{\Lambda}_{g_1}^{W_1,W_2}(f) = \tilde{\Lambda}_{g_2}^{W_1,W_2}(f) ,
\end{align*}
then 
\begin{align*}
\tilde{\Lambda}_{g_1}^{W_1,\Omega_e}(f) = \tilde{\Lambda}_{g_2}^{W_1,\Omega_e}(f).
\end{align*}
\item[(ii)] The knowledge of $(h,\tilde{\Lambda}_{\gamma}^{\Omega_e}(h)|_{W_2}) \in H^{\alpha}(\Omega_e) \times H^{-\alpha}(W_2)$ determines $\tilde{\Lambda}_{\gamma}^{\Omega_e}(h)\in H^{-\alpha}(\Omega_e)$. In particular, if for two metrics $g_1, g_2$ and their associated generalized Dirichlet-to-Neumann data $\tilde{\Lambda}_{g_1}^{\Omega_e}(h)$, $\tilde{\Lambda}_{g_2}^{\Omega_e}(h)$  it holds that
\begin{align*}
\tilde{\Lambda}_{g_1}^{\Omega_e}(h)|_{W_2} = \tilde{\Lambda}_{g_2}^{\Omega_e}(h)|_{W_2} ,
\end{align*}
then 
\begin{align*}
 \tilde{\Lambda}_{g_1}^{\Omega_e}(h) =  \tilde{\Lambda}_{g_2}^{\Omega_e}(h).
\end{align*}
\end{itemize}
\end{lem}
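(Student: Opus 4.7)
The plan is to reduce both statements to a direct application of the boundary unique continuation property from Proposition \ref{prop:UCP}, applied to the difference of two extended solutions in the exterior region $\R^{n+1}_+\setminus \overline{\Omega\times \R_+}$.

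For part (i), given $f\in H^\alpha_0(W_1)$, let $\tilde{u}^f_j\in X_\alpha$ denote the weak solutions of \eqref{eq:mixed} associated with the metrics $g_j$, $j=1,2$, and set $\tilde{w}:=\tilde{u}^f_1-\tilde{u}^f_2$. Since $g_1=g_2=g$ on $\Omega_e$, the tensor $\tilde{g}$ from \eqref{eq:tildeg} coincides on $(\Omega_e\cup\Omega^c)\times\R_+$, so $\tilde{w}$ is an $H^1_{loc}$ weak solution of the bulk equation on $\R^{n+1}_+\setminus \overline{\Omega\times\R_+}$. For the Cauchy data on the nonempty open set $W_2\subset\Omega_e$: on the one hand, both $\tilde{u}_j^f$ attain the trace $f$ on all of $\Omega_e\times\{0\}$, so $\tilde{w}|_{\Omega_e\times\{0\}}=0$, and in particular $\tilde{w}|_{W_2\times\{0\}}=0$; on the other hand, the hypothesis $\tilde{\Lambda}^{W_1,W_2}_{g_1}(f)=\tilde{\Lambda}^{W_1,W_2}_{g_2}(f)$ together with the characterization of these operators as generalized weighted Neumann derivatives (Remark \ref{rmk:DtN}) yields $\lim_{x_{n+1}\to 0}x_{n+1}^{1-2\alpha}\partial_{n+1}\tilde{w}=0$ in $H^{-\alpha}(W_2)$. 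Proposition \ref{prop:UCP} applied to $\tilde{w}$ then gives $\tilde{w}\equiv 0$ on $\R^{n+1}_+\setminus\overline{\Omega\times\R_+}$, which (again by Remark \ref{rmk:DtN}) forces $\tilde{\Lambda}_{g_1}^{W_1,\Omega_e}(f)=\tilde{\Lambda}_{g_2}^{W_1,\Omega_e}(f)$. The "determination" statement is the contrapositive form: any metric coinciding with $g$ on $\Omega_e$ and producing the prescribed partial data $\tilde{\Lambda}^{W_1,W_2}(f)$ necessarily produces the same full exterior data $\tilde{\Lambda}^{W_1,\Omega_e}(f)$, so the latter is determined.

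For part (ii), the argument is identical, the only difference being the class of admissible Dirichlet data: for $h\in H^\alpha(\Omega_e)$, let $\tilde{u}^h_j\in X_\alpha$ be the weak solutions of \eqref{eq:mixed} associated with $g_j$. The difference $\tilde{w}:=\tilde{u}^h_1-\tilde{u}^h_2$ again solves the bulk equation in the common-metric exterior region and satisfies $\tilde{w}|_{\Omega_e\times\{0\}}=0$ in the trace sense; the hypothesis $\tilde{\Lambda}_{g_1}^{\Omega_e}(h)|_{W_2}=\tilde{\Lambda}_{g_2}^{\Omega_e}(h)|_{W_2}$ gives vanishing of the weighted normal derivative of $\tilde{w}$ in $H^{-\alpha}(W_2)$. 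An application of Proposition \ref{prop:UCP} on the open set $W_2\subset\Omega_e$ yields $\tilde{w}\equiv 0$ in the exterior, whence $\tilde{\Lambda}_{g_1}^{\Omega_e}(h)=\tilde{\Lambda}_{g_2}^{\Omega_e}(h)$ on all of $\Omega_e$.

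I do not anticipate a genuine obstacle: once the difference $\tilde{w}$ is set up, the only nontrivial ingredient is the unique continuation result of Proposition \ref{prop:UCP}, which has already been recorded. The minor points to verify are (a) that $\tilde{w}\in H^1_{loc}(\R^{n+1}_+\setminus \overline{\Omega\times \R_+},x_{n+1}^{1-2\alpha})$ (immediate from $X_\alpha\subset H^1_{loc}$), (b) that the trace of $\tilde{w}$ on $\Omega_e\times\{0\}$ (and on $W_2\times\{0\}$) makes sense and vanishes (this follows from the trace estimate \eqref{eq:trace} and the Dirichlet condition common to both $\tilde{u}^\cdot_j$), and (c) that the vanishing of the generalized Neumann data translates into the hypothesis of Proposition \ref{prop:UCP} in $H^{-\alpha}(W_2)$ (which is exactly the definition adopted in Definitions \ref{defi:Neumann1}, \ref{defi:Neumann_ext}, via Remark \ref{rmk:DtN}). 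Hence the proof is essentially a bookkeeping exercise around the boundary UCP.
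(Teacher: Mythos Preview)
Your proposal is correct and follows essentially the same route as the paper: set $\tilde{w}=\tilde{u}^f_1-\tilde{u}^f_2$ (respectively $\tilde{u}^h_1-\tilde{u}^h_2$), observe that $\tilde{w}$ satisfies the bulk equation in $\R^{n+1}_+\setminus\overline{\Omega\times\R_+}$ because $g_1=g_2=g$ there, note that both Cauchy data vanish on $W_2$, and invoke Proposition~\ref{prop:UCP}. The paper additionally spells out the single-metric ``determination'' sentence by rewriting the measurement as the weighted Neumann derivative via Remark~\ref{rmk:DtN} and then applying UCP directly to $\tilde{u}^f$, but your contrapositive phrasing amounts to the same thing.
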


\begin{proof}
Both claims follow from the (boundary) unique continuation property and the fact that the metrics $g_1, \gamma, g_2 $ are known to be equal to the metric $g$ in $\Omega_e$. As the arguments are analogous, we only consider the case (i).

We first note that by Definition \ref{defi:Neumann1} (see also Remark \ref{rmk:DtN}) the measurement data $(f,\tilde{\Lambda}_{\gamma}^{W_1,W_2}(f))$ can be rewritten as $(f, \sqrt{|\gamma|} \lim\limits_{x_{n+1} \rightarrow 0} x_{n+1}^{1-2\alpha} \p_{n+1} \tilde{u}^f|_{W_2})$. 

Hence, the determination of $(f, \sqrt{|\gamma|}  \lim\limits_{x_{n+1} \rightarrow 0} x_{n+1}^{1-2\alpha} \p_{n+1} \tilde{u}^f|_{\Omega_e})$ (and thus $(f, \tilde{\Lambda}_{\gamma}^{W_1,\Omega_e}(f))$) from the datum $(f, \tilde{\Lambda}_{\gamma}^{W_1,W_2}(f))$ follows from the boundary unique continuation property, see Proposition \ref{prop:UCP}.

As a consequence, we then also infer the second claim in (i). Indeed, since $g_1 = g_2=:g$ is assumed to be known outside of $\Omega$, we have that, in a weak sense, it holds that for solutions $\tilde{u}^f_1, \tilde{u}^f_2$ of \eqref{eq:mixed} with metrics $g_1, g_2$ the function $\tilde{u}_1^f- \tilde{u}_2^f$ satisfies
\begin{align*}
\nabla \cdot  x_{n+1}^{1-2\alpha} \tilde{g}_1 \nabla (\tilde{u}^f_1- \tilde{u}^f_2) & = 0 \mbox{ in } \R^{n+1}_+ \setminus \overline{(\Omega \times \R_+)},\\
\lim\limits_{x_{n+1} \rightarrow 0} x_{n+1}^{1-2\alpha} \p_{n+1} (\tilde{u}^f_1- \tilde{u}^f_2) & = 0 \mbox{ on } W_2,\\
\tilde{u}_1^f - \tilde{u}_2^f & = 0 \mbox{ on } \Omega_e \times \{0\}.
\end{align*} 
 Thus, by Proposition \ref{prop:UCP} we hence infer that $\tilde{u}^f_1 -\tilde{u}^f_2 \equiv 0$ in $\R^{n+1}_+ \setminus \overline{(\Omega \times \R_+)}$. In particular, this then implies that $\tilde{\Lambda}_{g_1}^{W_1,\Omega_e}(f) = \tilde{\Lambda}_{g_2}^{W_1,\Omega_e}(f)$.
\end{proof}

Next, we discuss an auxiliary lemma on regularity properties, which we will frequently invoke in what follows below. More precisely, we use estimates for the mixed Dirichlet-Neumann problem \eqref{eq:mixed} from above (in the nonlocal context these are also known as Vishik-Eskin estimates \cite{Vishik_Eskin_1965}; we refer to Section \ref{sec:VE_p1} for a discussion of these) to rule out concentration on $\partial \Omega$ for solutions of the nonlocal problem with exterior data.

\begin{lem}
\label{lem:VE}
Let $\alpha \in (0,1)$ and let $W_1, \Omega$ be as in Assumption \ref{assump:main}(a) and let $\gamma:\R^n \rightarrow \R^{n \times n}_{sym}$ satisfy the conditions from Assumption \ref{assump:main}(b).
Let $f\in H^{\alpha+\delta}(\Omega_e)$ with $\delta \in [0,1/2)$. Let $\tilde{u}^f \in X_{\alpha}$ be the unique weak solution of
\eqref{eq:mixed} with data $f$ and metric $\gamma$.
Then the following results hold:
\begin{itemize}
\item[(i)] The restriction $u^f := \tilde{u}^f|_{\R^n \times \{0\}} \in H^{\alpha}(\R^n)$ with data $f \in H^{\alpha+\delta}(\Omega_e)$ enjoys higher regularity $u^f \in H^{\alpha+\delta-\epsilon}(\R^n)$ for any $\epsilon \in (0,\delta)$. Moreover, also for any $\epsilon \in (0,\delta)$
\begin{align*}
\lim\limits_{x_{n+1} \rightarrow 0} x_{n+1}^{1-2\alpha} \p_{n+1} \tilde{u}^f \in H^{-\alpha+\delta-\epsilon}(\R^n), \  \tilde{\Lambda}_{\gamma}^{\Omega_e}(f) \in H^{-\alpha+\delta-\epsilon}_0(\Omega_e).
\end{align*}
Similarly, for  $f \in H^{\alpha+\delta}_0(W_1)$  and for any $\epsilon \in (0,\delta)$ we have $\tilde{\Lambda}_{\gamma}^{ W_1, \Omega_e}(f) \in  H^{-\alpha+\delta-\epsilon}(\Omega_e)$.
\item[(ii)] For  $\delta \in (0,1/2)$, for two solutions $\tilde{u}^f, \tilde{u}^h$ of \eqref{eq:mixed} with data $f,h \in H^{\alpha+\delta}(\Omega_e)$ and their restrictions $u^f, u^h \in H^{\alpha+\delta-\epsilon}(\R^n)$ with $\epsilon \in (0,\delta)$, we have
\begin{align*}
\langle  \tilde{\Lambda}_{\gamma}^{ \Omega_e} (f), u^h|_{\Omega_e} \rangle_{ H^{-\alpha}_0(\Omega_e),H^{\alpha}(\Omega_e)} = \langle  \tilde{\Lambda}_{\gamma}^{\Omega_e} (h), u^f|_{\Omega_e} \rangle_{ H^{-\alpha}_0(\Omega_e),H^{\alpha}(\Omega_e)}.
\end{align*}
An analogous result holds for $\delta = 0$ in which case we then have that $u^f, u^h \in H^{\alpha}(\R^n)$.
\item[(iii)] Let $\delta \in (\max\{0,\alpha - \frac{1}{2}\}, \frac{1}{2})$. Let $f\in C_0^{\infty}(W_1)$ and $h \in H^{\alpha+\delta}(\Omega_e) $ and let $\tilde{u}^f, \tilde{u}^h$ be solutions of \eqref{eq:mixed} with data $f,h$ and metric $\gamma$. Then, for their restrictions ${u}^f, {u}^h$ to $\R^n \times \{0\}$, it holds that 
\begin{align*}
\langle   \lim\limits_{x_{n+1}\rightarrow 0} x_{n+1}^{1-2\alpha} \p_{n+1}  \tilde{u}^f , \sqrt{|\gamma|} u^h\rangle_{ H^{-\alpha}(\R^n),H^{\alpha}(\R^n)}  
& =  \langle  \tilde{\Lambda}_{\gamma}^{\Omega_e} (h), u^f|_{\Omega_e} \rangle_{ H^{-\alpha}_0(\Omega_e),H^{\alpha}(\Omega_e)}\\
&= \langle  \tilde\Lambda_{\gamma}^{\Omega_e}(h), f\rangle_{ H^{-\alpha}_0(\Omega_e),H^{\alpha}(\Omega_e)}.
\end{align*}
\item[(iv)] 
For $\delta \in (\max\{\alpha- \frac{1}{2},0\}, \frac{1}{2} )$ and any $f \in C_0^{\infty}(W_1)$ it holds that $\tilde{\Lambda}_{\gamma}^{W_1, \Omega_e}(f) \in {H}^{-\alpha +\delta}_0(\Omega_e)$ and $\tilde{\Lambda}_{\gamma}^{W_1, \Omega_e}(f)  = \tilde{\Lambda}_{\gamma}^{ \Omega_e}(f) $ as elements in $H^{-\alpha + \delta}_0(\Omega_e)$.
\end{itemize}
\end{lem}

\begin{remark}
We remark that the loss of $\epsilon \in (0,\delta)$ in (i) is not optimal. By using Besov space arguments in the space $B^{\alpha}_{2,\infty}$ instead of $H^{\alpha}$ arguments in combination with (real) interpolation results for Besov spaces these losses could be avoided. We refer to \cite{KM07} for a discussion of this in the case $\alpha = \frac{1}{2}$. In the subsequent sections, we only need a sufficiently large regularity gain which can be achieved by considering $\epsilon>0$ sufficiently small. Hence, we do not discuss the Besov space regularity theory in Appendix \ref{sec:reg} but accept the slightly lossy estimates from above.
\end{remark}

\begin{proof}
(i) Let $f\in H^{\alpha + \delta}(\Omega_e)$ with $\delta \in [0,\frac{1}{2})$. If $\tilde{\gamma}= id_{(n+1)\times (n+1)}$, the regularity of $u^f$ follows from Vishik-Eskin estimates \cite{Vishik_Eskin_1965} and \cite{Grubb_2015}. For general $\tilde{\gamma}$, the regularity can be obtained by proving that $(-\D_{\gamma})^{\alpha}$ is a pseudodifferential operator or by relying on regularity of mixed Dirichlet-Neumann problems. We here follow the second point of view. For $\alpha= \frac{1}{2}$ this is well-known, see, for instance, \cite{Savare,KM07}. In Appendix \ref{sec:reg} we provide an analogous argument for general $\alpha \in (0,1)$; see Proposition \ref{prop:higher_reg} which in particular entails that $u^f \in H^{\alpha +\delta-\epsilon}(\R^n)$.

Given the regularity for the tangential directions from Proposition \ref{prop:higher_reg} in Appendix \ref{sec:reg}, it remains to discuss the regularity of the normal derivative. This follows by duality, a finite difference argument and the regularity of the tangential restriction of the solution. Letting $\tau_{j,h}$ with $j \in \{1,\dots,n\}$, $h\in (0,1)$ denote the tangential finite difference operator, i.e., $\tau_{j,h} \tilde{u}^f(x,x_{n+1}): = \tilde{u}^f(x + h e_j,x_{n+1})- \tilde{u}^f(x,x_{n+1})$, we first observe that if $\tilde{u}^f$ denotes the weak solution of \eqref{eq:mixed} with data $f$, then by \eqref{eq:diff_finite} $\tau_{j,h} \tilde{u}^f$ is a weak solution of
\begin{align}
\label{eq:tangen_diff}
\begin{split}
\nabla \cdot x_{n+1}^{1-2\alpha} \tilde{\gamma} \nabla \tau_{j,h} \tilde{u}^f (x,x_{n+1}) &= -\nabla \cdot x_{n+1}^{1-2\alpha} (\tau_{j,h} \tilde{\gamma})(x) \nabla \tilde{u}^f(x+he_j, x_{n+1}) \mbox{ in } \R^{n+1}_+,\\
 \tau_{j,h} \tilde{u}^f & = \tau_{j,h} u^f \mbox{ on } \R^n \times \{0\}.
 \end{split}
\end{align}
As above, $u^f:= \tilde{u}^f|_{\R^n \times \{0\}}$. 
Using this together with the observation that $(\tau_{j,h} \tilde{\gamma}(x))_{n+1,k} = 0$ for all $k \in \{1,\dots,n+1\}$, we obtain 
\begin{align*}
&\|\sqrt{|\gamma|}\tau_{j,h}\lim\limits_{x_{n+1} \rightarrow 0} x_{n+1}^{1-2\alpha} \p_{n+1}  \tilde{u}^f\|_{H^{-\alpha}(\R^n)}
=\|\sqrt{|\gamma|}\lim\limits_{x_{n+1} \rightarrow 0} x_{n+1}^{1-2\alpha} \p_{n+1} \tau_{j,h} \tilde{u}^f\|_{H^{-\alpha}(\R^n)}\\
&= \sup\limits_{\ell \in H^{\alpha}(\R^n), \|\ell\|_{H^{\alpha}(\R^n)} = 1} \left| \int\limits_{\R^n} \ell  \lim\limits_{x_{n+1} \rightarrow 0} x_{n+1}^{1-2\alpha} \p_{n+1} \tau_{j,h} \tilde{u}^f d V_{\gamma}(x) \right|\\
&\leq  \sup\limits_{\ell \in H^{\alpha}(\R^n), \|\ell\|_{H^{\alpha}(\R^n)} = 1}\left(\left| \int\limits_{\R^{n+1}_+} x_{n+1}^{1-2\alpha} \tilde{\gamma} \nabla \tau_{j,h} \tilde{u}^f \cdot  \nabla \tilde{u}^{\ell} d(x,x_{n+1})\right| \right.\\
& \qquad \qquad \qquad \left. + \left| \int\limits_{\R^{n+1}_+} x_{n+1}^{1-2\alpha} (\tau_{j,h}\tilde{\gamma})(x)  \nabla  \tilde{u}^f(x + he_j,x_{n+1}) \cdot \nabla \tilde{u}^{\ell}(x,x_{n+1}) d(x,x_{n+1})\right| \right)\\
& \leq C \sup\limits_{\ell \in H^{\alpha}(\R^n), \|\ell\|_{H^{\alpha}(\R^n)} = 1} (\|\tau_{j,h} u^f\|_{H^{\alpha}(\R^n)} + \sup\limits_{x \in \R^n} |\tau_{j,h}\gamma(x)| \|f\|_{H^{\alpha}(\Omega_e)}) \|\ell\|_{H^{\alpha}(\R^n)}\\
& \leq C(\|\tau_{j,h} u^f\|_{H^{\alpha}(\R^n)} +\sup\limits_{x \in \R^n} |\tau_{j,h}\gamma(x)| \|f\|_{H^{\alpha}(\Omega_e)}) .
\end{align*}
Here $ \tilde{u}^{\ell} \in X_{\alpha}$ denotes the weak solution to the variable coefficient Caffarelli-Silvestre-type extension problem for the fractional Laplacian with data $ \ell$ on $\R^n$.
In the penultimate line, we used the energy estimates for the solutions $\tilde{u}^f, \tilde{u}^{\ell}$ (see Lemma \ref{lem:well-posedness} with a slight variant for $\tilde{u}^{f}$ treating the right hand side in \eqref{eq:tangen_diff} perturbatively).
Now dividing by $h^{\delta-\epsilon}$ for $h \in (0,1)$ and $\delta  \in [0,\frac{1}{2})$ and using Lemma \ref{lem:Sob_charact}(ii), for any $\epsilon \in (0,\delta)$ and $\epsilon' \in (0,\epsilon)$ we obtain (with a constant which, in particular, depends on $\delta,\epsilon, \epsilon'$)
\begin{align*}
\|\sqrt{|\gamma|} \lim\limits_{x_{n+1} \rightarrow 0} x_{n+1}^{1-2\alpha} \p_{n+1}  \tilde{u}^f\|_{H^{-\alpha+\delta-\epsilon}(\R^n)}
 \leq C (\|u^f\|_{H^{\alpha+\delta-\epsilon'}(\R^n)} + \|f\|_{H^{\alpha}(\Omega_e)}) \leq C\|f\|_{H^{\alpha+\delta}(\Omega_e)} ,
\end{align*}
where we used the tangential regularity of $u^f$ deduced in Appendix \ref{sec:reg} together with Lemma \ref{lem:Sob_charact}(iii) to bound the right hand side as well as the $C^{\infty}$ regularity of $\gamma$ and the fact that $\gamma \neq id$ only on a compact set.
Given the relation between the measurement data $\tilde{\Lambda}_{\gamma}^{ W_1, \Omega_e}$, $ \tilde{\Lambda}_{\gamma}^{\Omega_e}$ and $\lim\limits_{x_{n+1} \rightarrow 0} x_{n+1}^{1-2\alpha} \p_{n+1}  \tilde{u}^f$ (see Remark \ref{rmk:DtN}), the argument for the higher regularity of $\tilde{\Lambda}_{\gamma}^{ W_1, \Omega_e}(f)$, $\tilde{\Lambda}_{\gamma}^{\Omega_e}(f)$ is analogous.
\\

(ii) The identity for solutions follows from the symmetry of the bilinear form defining the generalized Dirichlet-to-Neumann operators. Indeed, first we note that by (i) the restrictions $u^f, u^h$ indeed are $H^{\alpha+\delta-\epsilon}(\R^n)$ regular for $f,h \in H^{\alpha +\delta}(\Omega_e)$ and for any $\epsilon \in (0,\delta)$. Using the inclusion $H^{\alpha +\delta-\epsilon}(\Omega_e)\subset H^{\alpha}(\Omega_e)$, by Definition \ref{defi:Neumann_ext} and the independence of the choice of the extension, we then have, as desired,
\begin{align*}
\langle  \tilde{\Lambda}_{\gamma}^{ \Omega_e} (f), u^h|_{\Omega_e} \rangle_{ H^{-\alpha}_0(\Omega_e),H^{\alpha}(\Omega_e)}
&= - \int\limits_{\R^{n+1}_+} x_{n+1}^{1-2\alpha} \tilde{\gamma} \nabla \tilde{u}^f \cdot  \nabla \tilde{u}^h d(x, x_{n+1})\\
&= \langle \tilde{\Lambda}_{\gamma}^{ \Omega_e} (h), u^f|_{\Omega_e} \rangle_{ H^{-\alpha}_0(\Omega_e),H^{\alpha}(\Omega_e)} .
\end{align*}

(iii) The claimed identity follows from \eqref{eq_Sobolev_duality_mikko}. By assumption, both $u^f$ and $u^h$ are in \linebreak $H^{\alpha+\delta-\epsilon}(\R^n)$ for $\delta \in (\max\{\alpha - \frac{1}{2},0\}, 1/2 )$ and $\epsilon \in (0,\delta)$. Hence, choosing $\epsilon>0$ such that $-\alpha+\delta-\epsilon \in (-1/2,1/2)$ and such that the functions $\lim\limits_{x_{n+1}\rightarrow 0} x_{n+1}^{1-2\alpha} \p_{n+1}  \tilde{u}^f  $, $\tilde{\Lambda}_{\gamma}^{ \Omega_e}(h)$ are in $H^{-\alpha+\delta-\epsilon}(\Omega_e)$ and $H^{-\alpha+\delta-\epsilon}_0(\Omega_e)$, respectively, then allows us to invoke the splitting property from \eqref{eq_Sobolev_duality_mikko}:
\begin{align}
\label{eq:split_higher_reg}
\begin{split}
&\langle   \lim\limits_{x_{n+1}\rightarrow 0} x_{n+1}^{1-2\alpha} \p_{n+1}  \tilde{u}^f , \sqrt{|\gamma|} u^h\rangle_{ H^{-\alpha}(\R^n),H^{\alpha}(\R^n)}  \\
&=\langle   \lim\limits_{x_{n+1}\rightarrow 0} x_{n+1}^{1-2\alpha} \p_{n+1}  \tilde{u}^f , \sqrt{|\gamma|} u^h\rangle_{ H^{-\alpha+\delta-\epsilon}(\R^n),H^{\alpha-\delta+\epsilon}(\R^n)}  \\
& = \langle   \lim\limits_{x_{n+1}\rightarrow 0} x_{n+1}^{1-2\alpha} \p_{n+1}  \tilde{u}^f , \sqrt{|\gamma|} u^h\rangle_{ H^{-\alpha+\delta-\epsilon}(\Omega),H^{\alpha-\delta+\epsilon}_0(\Omega)} \\
& \quad \quad   + \langle   \lim\limits_{x_{n+1}\rightarrow 0} x_{n+1}^{1-2\alpha} \p_{n+1}  \tilde{u}^f , \sqrt{|\gamma|} u^h\rangle_{ H^{-\alpha+\delta-\epsilon}(\Omega_e),H^{\alpha-\delta+\epsilon}_0(\Omega_e)} \\
& = \langle   \lim\limits_{x_{n+1}\rightarrow 0} x_{n+1}^{1-2\alpha} \p_{n+1}  \tilde{u}^f , \sqrt{|\gamma|} u^h\rangle_{ H^{-\alpha+\delta-\epsilon}(\Omega_e),H^{\alpha-\delta+\epsilon}_0(\Omega_e)}  \\
& = \langle   \lim\limits_{x_{n+1}\rightarrow 0} x_{n+1}^{1-2\alpha} \p_{n+1}  \tilde{u}^f , \sqrt{|\gamma|} u^h\rangle_{ H^{-\alpha}_0(\Omega_e),H^{\alpha}(\Omega_e)}  \\
& =  \langle  \tilde{\Lambda}_{\gamma}^{\Omega_e} (f), u^h|_{\Omega_e} \rangle_{ H^{-\alpha}_0(\Omega_e),H^{\alpha}(\Omega_e)}
 =  \langle  \tilde{\Lambda}_{\gamma}^{\Omega_e} (h), u^f|_{\Omega_e} \rangle_{ H^{-\alpha}_0(\Omega_e),H^{\alpha}(\Omega_e)} \\
 & =   \langle  \tilde{\Lambda}_{\gamma}^{\Omega_e} (h), f \rangle_{ H^{-\alpha}_0(\Omega_e),H^{\alpha}(\Omega_e)} .
 \end{split}
\end{align}
In the second line, we used the higher regularity of the weighted generalized normal derivative, the third line follows from \eqref{eq_Sobolev_duality_mikko}, in the fourth line we used the equation \eqref{eq:mixed} for $\tilde{u}^f$, in the fifth line, we again made use of the regularity of $u^h$ and $\tilde{u}^f$, and in the penultimate line we relied on property (ii) from above. In these regularity arguments, we exploited that $H^{\alpha-\delta+\epsilon}(\Omega) = H^{\alpha-\delta + \epsilon}_0(\Omega)$ and $H^{-\alpha+\delta-\epsilon}_0(\Omega) = H^{-\alpha+\delta - \epsilon}(\Omega)$ (and analogously also for these spaces on the set $\Omega_e$) since $-\alpha+\delta-\epsilon \in (-1/2,1/2)$ (see Theorem \ref{thm_Agranovich}(ii)).\\

(iv) The proof of the first assertion follows from (i) by noting that for $\delta \in (\max\{0,\alpha- \frac{1}{2}\}, \frac{1}{2})$ we have that $\beta:=-\alpha+ \delta \in (-\frac{1}{2},\frac{1}{2})$ and that for this range of $\beta$ it holds that $H^{\beta}(\Omega_e) = H^{\beta}_0(\Omega_e)$. By density of $C_0^{\infty}(\Omega_e)$ in $H^{\beta}(\Omega_e)$, it thus suffices to test the identity $\tilde{\Lambda}_{\gamma}^{W_1, \Omega_e}(f)  = \tilde{\Lambda}_{\gamma}^{ \Omega_e}(f) $ on $C_0^{\infty}(\Omega_e)$. On this set it is valid by definition.
\end{proof}

As a consequence of the above regularity estimates, it is possible to pass from the generalized Dirichlet-to-Neumann measurements  with Dirichlet data localized in $W_1$ to measurements based on Dirichlet data (not necessarily compactly supported) in $\Omega_e$. 
More precisely, for $j \in \{1,2\}$, we argue that the knowledge of the data $(f,\tilde{\Lambda}_{g_j}^{W_1,W_2}(f))$ for $f\in H_0^{\alpha}(W_1)$ and $W_2 \subset \Omega_e$ yields the knowledge of the data $(h, \tilde{\Lambda}_{g_j}^{\Omega_e}(h))$ with $h\in H^{\alpha}(\Omega_e)$, the difference between the two measurements being that for the measurements of $\tilde{\Lambda}_{g_j}^{\Omega_e}(h)$ we allow for (not necessarily compactly supported) Dirichlet data with support in $\Omega_e$ (instead of $W_1$).

\begin{lem}
\label{lem:modified_data1}
Let $\alpha \in (0,1)$ and let $W_1, W_2, \Omega$ be as in Assumption \ref{assump:main}(a) and let $g_1,g_2:\R^n \rightarrow \R^{n \times n}_{sym}$ satisfy the condition from Assumption \ref{assump:main}(b).
Let $j\in\{1,2\}$ and 
assume that $\tilde{\Lambda}_{g_j}^{W_1,W_2}$, $\tilde\Lambda_{g_j}^{\Omega_e}$ are defined as in Definitions \ref{defi:Neumann1}, \ref{defi:Neumann_ext}, associated with the metrics $g_1, g_2$, respectively.
Suppose further that $\tilde{\Lambda}_{g_1}^{W_1, W_2} = \tilde{ \Lambda}_{g_2}^{W_1, W_2}$. Then, $\tilde{\Lambda}_{g_1}^{\Omega_e} = \tilde{\Lambda}_{g_2}^{\Omega_e}$.
\end{lem}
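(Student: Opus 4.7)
The plan is to mirror the heat-semigroup argument in Section~\ref{sec_proof_using_heat_semigroup_approach}, combining two successive applications of the boundary unique continuation principle from Lemma~\ref{lem:UCP} with the symmetry of the generalized Dirichlet-to-Neumann pairing supplied by Lemma~\ref{lem:VE}(ii)--(iii), and using the Vishik--Eskin regularity of Lemma~\ref{lem:VE}(i) to close the Sobolev bookkeeping. Throughout I will fix a regularity parameter $\delta \in (\max\{0,\alpha-\tfrac{1}{2}\},\tfrac{1}{2})$ and a sufficiently small $\epsilon>0$ so that $-\alpha+\delta-\epsilon > -\tfrac{1}{2}$.

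As a first step, Lemma~\ref{lem:UCP}(i) upgrades the assumption $\tilde{\Lambda}_{g_1}^{W_1,W_2} = \tilde{\Lambda}_{g_2}^{W_1,W_2}$ to $\tilde{\Lambda}_{g_1}^{W_1,\Omega_e}(f) = \tilde{\Lambda}_{g_2}^{W_1,\Omega_e}(f)$ for every $f \in C_0^\infty(W_1)$, which by Lemma~\ref{lem:VE}(iv) coincides with $\tilde{\Lambda}_{g_1}^{\Omega_e}(f) = \tilde{\Lambda}_{g_2}^{\Omega_e}(f)$ in $H_0^{-\alpha+\delta-\epsilon}(\Omega_e)$. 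For arbitrary $q \in C_0^\infty(\Omega_e) \subset H^{\alpha+\delta}(\Omega_e)$, the symmetry of Lemma~\ref{lem:VE}(ii) then gives, for $j = 1, 2$,
\[
\langle \tilde{\Lambda}_{g_j}^{\Omega_e}(f), q\rangle_{H_0^{-\alpha}(\Omega_e), H^{\alpha}(\Omega_e)} = \langle \tilde{\Lambda}_{g_j}^{\Omega_e}(q), f\rangle_{H_0^{-\alpha}(\Omega_e), H^{\alpha}(\Omega_e)},
\]
and since the first step equates the left-hand sides for $j = 1, 2$, the difference $\tilde{\Lambda}_{g_1}^{\Omega_e}(q) - \tilde{\Lambda}_{g_2}^{\Omega_e}(q)$ annihilates every $f \in C_0^\infty(W_1)$, i.e., its restriction to $W_1$ vanishes. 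A second application of the unique continuation statement Lemma~\ref{lem:UCP}(ii)---whose proof rests on Proposition~\ref{prop:UCP} and is insensitive to which nonempty open subset of $\Omega_e$ hosts the measurement, so that $W_2$ may be swapped for $W_1$---then promotes this to $\tilde{\Lambda}_{g_1}^{\Omega_e}(q) = \tilde{\Lambda}_{g_2}^{\Omega_e}(q)$ in $H_0^{-\alpha}(\Omega_e)$ for every $q \in C_0^\infty(\Omega_e)$.

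To reach arbitrary $h \in H^{\alpha}(\Omega_e)$, I first restrict to $h \in H^{\alpha+\delta}(\Omega_e)$ and apply the symmetry of Lemma~\ref{lem:VE}(ii) once more, in the form
\[
\langle \tilde{\Lambda}_{g_j}^{\Omega_e}(q), h\rangle_{H_0^{-\alpha}(\Omega_e), H^{\alpha}(\Omega_e)} = \langle \tilde{\Lambda}_{g_j}^{\Omega_e}(h), q\rangle_{H_0^{-\alpha}(\Omega_e), H^{\alpha}(\Omega_e)}
\]
for arbitrary $q \in C_0^\infty(\Omega_e)$; the conclusion of the previous step makes the left-hand side coincide for $j=1,2$. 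Hence $\tilde{\Lambda}_{g_1}^{\Omega_e}(h) - \tilde{\Lambda}_{g_2}^{\Omega_e}(h)$ pairs trivially with every $q \in C_0^\infty(\Omega_e)$; viewed as an element of $H^{-\alpha+\delta-\epsilon}(\R^n)$ supported in $\overline{\Omega_e}$, it must then be supported on $\partial\Omega$, and by the choice of $\delta,\epsilon$ Theorem~\ref{thm_Agranovich}(iii) forces it to vanish. The density of $H^{\alpha+\delta}(\Omega_e)$ in $H^{\alpha}(\Omega_e)$ together with the continuity of $\tilde{\Lambda}_{g_j}^{\Omega_e}$ then yields the desired equality on the full space $H^{\alpha}(\Omega_e)$.

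The main technical subtlety, and the reason for the Sobolev bookkeeping imposed from the outset, lies precisely in the final step: testing only against $C_0^\infty(\Omega_e)$ rules out the $\Omega_e$-interior part of the difference but leaves open a potential contribution of $\tilde{\Lambda}_{g_1}^{\Omega_e}(h) - \tilde{\Lambda}_{g_2}^{\Omega_e}(h)$ concentrated on $\partial\Omega$. It is the extra Vishik--Eskin regularity of Lemma~\ref{lem:VE}(i), which moves the image of $\tilde{\Lambda}_{g_j}^{\Omega_e}$ into a Sobolev space mild enough for Theorem~\ref{thm_Agranovich}(iii) to apply, that forces the boundary-supported part to vanish, and the need to accommodate this consistently through the two symmetry swaps and the two UCP applications dictates the admissible range of $\delta$.
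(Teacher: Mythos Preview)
Your proof is correct and follows essentially the same strategy as the paper: upgrade the measurement set via Lemma~\ref{lem:UCP}(i), exploit the symmetry of $\tilde{\Lambda}_{g_j}^{\Omega_e}$ (Lemma~\ref{lem:VE}(ii)) to swap Dirichlet data, and finish with unique continuation and density. The paper proceeds in one symmetry swap from $f\in C_0^\infty(W_1)$ directly to $h\in C^\infty(\Omega_e)\cap H^\alpha(\Omega_e)$, obtaining $\tilde{\Lambda}_{g_1}^{\Omega_e}(h)|_{W_1}=\tilde{\Lambda}_{g_2}^{\Omega_e}(h)|_{W_1}$, and then closes with a single final application of Lemma~\ref{lem:UCP}(ii); you instead insert an intermediate layer $q\in C_0^\infty(\Omega_e)$ and, after a second symmetry swap, replace the final UCP by the support argument $\supp(\tilde{\Lambda}_{g_1}^{\Omega_e}(h)-\tilde{\Lambda}_{g_2}^{\Omega_e}(h))\subset\partial\Omega$ together with Theorem~\ref{thm_Agranovich}(iii). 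The extra $q$-step is harmless but not needed, and your closing device (regularity $+$ triviality of boundary-supported distributions) is a legitimate alternative to the paper's second UCP call; both routes lean on the same Vishik--Eskin regularity from Lemma~\ref{lem:VE}(i) to keep the exponents above $-\tfrac12$.
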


\begin{proof}
Let $f\in C_0^{\infty}(W_1)$, $h \in C^{\infty}(\Omega_e)\cap H^{\alpha}(\Omega_e)$, and denote by $u^f_j$, $u^h_j$ the restrictions of the weak solutions of \eqref{eq:mixed} with the metrics $g_j$ and data $f,h$, respectively. Then, by unique continuation (Lemma \ref{lem:UCP}(i))
and the fact that $g_j = g$ in $\Omega_e$, we obtain that $\tilde{\Lambda}_{g_1}^{W_1,\Omega_e} (f)= \tilde{\Lambda}_{g_2}^{W_1,\Omega_e} (f)$. By Lemma \ref{lem:VE}(iv) this further entails that 
\begin{align}
\label{eq:assump_local_maps}
\tilde{\Lambda}_{g_1}^{\Omega_e} (f)= \tilde{\Lambda}_{g_2}^{\Omega_e} (f).
\end{align}
Here we used that by Lemma \ref{lem:VE}(i) it holds that $\tilde{\Lambda}_{g_j}^{\Omega_e}(f) \in H^{-\alpha+\delta-\epsilon}(\Omega_e)$,  $j\in \{1,2\}$; and moreover, we also obtain that $\lim\limits_{x_{n+1} \rightarrow 0} x_{n+1}^{1-2\alpha} \p_{n+1} \tilde{u}^f_1\in H^{-\alpha+\delta-\epsilon}(\R^n) $ for any $\delta \in [0,1/2)$ and $\epsilon \in (0,\delta)$ in case that $f \in C_0^{\infty}(W_1)$. In particular, this is valid for $\delta \in (\max\{\alpha-\frac{1}{2},0\},\frac{1}{2})$, which entails that $\alpha - \delta \in (-\frac{1}{2}, \frac{1}{2})$. Let us choose $\epsilon>0$ such that $-\alpha + \delta -\epsilon \in (-\frac{1}{2},\frac{1}{2})$. With such a choice of $\delta>0$ and $\epsilon>0$, using that by virtue of the equation \eqref{eq:mixed}, it holds weakly that
\begin{align*}
\lim\limits_{x_{n+1}\rightarrow 0} x_{n+1}^{1-2\alpha} \p_{n+1}  \tilde{u}^f_1|_{\Omega} = 0 = \lim\limits_{x_{n+1}\rightarrow 0} x_{n+1}^{1-2\alpha} \p_{n+1}  \tilde{u}^f_2|_{\Omega}  
\end{align*}
and by using \eqref{eq:assump_local_maps}, Remark \ref{rmk:DtN} and \eqref{eq_Sobolev_duality_mikko}, similarly as in \eqref{eq:split_higher_reg}, we obtain that
\begin{align}
\label{eq:concatenate1}
\begin{split}
&\langle  \lim\limits_{x_{n+1}\rightarrow 0} x_{n+1}^{1-2\alpha} \p_{n+1}  \tilde{u}^f_1, \sqrt{|g_1|} u^h_1 \rangle_{ H^{-\alpha}(\R^n),H^{\alpha}(\R^n)} \\
&= \langle   \lim\limits_{x_{n+1}\rightarrow 0} x_{n+1}^{1-2\alpha} \p_{n+1}  \tilde{u}^f_2, \sqrt{|g_2|} u^h_2\rangle_{H^{-\alpha}(\R^n),H^{\alpha}(\R^n)} .
\end{split}
\end{align}
By definition of the Dirichlet-to-Neumann maps, and Lemma \ref{lem:VE}(iii)
\begin{align}
\label{eq:concatenate2}
\begin{split}
\langle   \lim\limits_{x_{n+1}\rightarrow 0} x_{n+1}^{1-2\alpha} \p_{n+1}  \tilde{u}^f_1 , \sqrt{|g_1|} u^h_1\rangle_{ H^{-\alpha}(\R^n),H^{\alpha}(\R^n)}  
&= \langle  \tilde\Lambda_{g_1}^{\Omega_e}(h), f\rangle_{ H^{-\alpha}_0(\Omega_e),H^{\alpha}(\Omega_e)} .
\end{split}
\end{align}
Combining \eqref{eq:concatenate1} and \eqref{eq:concatenate2}, we hence arrive at
\begin{align*}
&\langle  \tilde\Lambda_{g_1}^{\Omega_e}(h),f \rangle_{ H^{-\alpha}_0(\Omega_e),H^{\alpha}(\Omega_e)} 
=\langle  \lim\limits_{x_{n+1}\rightarrow 0} x_{n+1}^{1-2\alpha} \p_{n+1}  \tilde{u}^f_1, \sqrt{|g_1|} u_1^h \rangle_{ H^{-\alpha}(\R^n),H^{\alpha}(\R^n)}\\  
&= \langle  \lim\limits_{x_{n+1}\rightarrow 0} x_{n+1}^{1-2\alpha} \p_{n+1}  \tilde{u}^f_2, \sqrt{|g_2|} u_2^h \rangle_{ H^{-\alpha}(\R^n),H^{\alpha}(\R^n)} 
= \langle  \tilde\Lambda_{g_2}^{\Omega_e}(h), f\rangle_{ H^{-\alpha}_0(\Omega_e),H^{\alpha}(\Omega_e)} .
\end{align*}
By density of $C^{\infty}(\Omega_e)\cap H^{\alpha}(\Omega_e) \subset H^{\alpha}(\Omega_e)$ and since $h \in C^{\infty}(\Omega_e)\cap H^{\alpha}(\Omega_e)$ was arbitrary, this implies that $\tilde{\Lambda}_{g_1}^{\Omega_e}|_{W_1} = \tilde{\Lambda}_{g_2}^{\Omega_e}|_{W_1}$. Last but not least, by a unique continuation argument as in Lemma \ref{lem:UCP}(ii), this also implies that $\tilde{\Lambda}_{g_1}^{\Omega_e} = \tilde{\Lambda}_{g_2}^{\Omega_e}$.
\end{proof}

\begin{remark}
Let us comment on the density $C^{\infty}(\Omega_e)\cap H^{\alpha}(\Omega_e) \subset H^{\alpha}(\Omega_e)$. The only slightly non-standard point here is the unboundedness of $\Omega_e$. To observe the desired density, we simply exhaust $\Omega_e$ by $B_{k}(0)\cap \Omega_e$ with $k \in \N$, $k\geq k_0$, where $k_0>1$ such that $\overline{\Omega} \subset B_{k_0}(0)$. Next, we observe that by a cut-off argument, it is possible to approximate any $H^{\alpha}(\Omega_e)$ function $f$ by a sequence of functions $\{f_k\}_{k=k_0}^{\infty}$ such that $f_k|_{\partial B_k(0)}=0$ in a trace sense. 
Moreover, it is possible to approximate any function $h$ in $H^{\alpha}(B_{k}(0)\cap \Omega_e)$ satisfying $h|_{\partial B_k(0)}=0$ by $C^{\infty}(B_{k}(0)\cap \Omega_e)$ functions with vanishing trace on $\partial B_{k}(0)$. Taking the union over all such $k \geq k_0$ hence implies the claim.
\end{remark}

\subsubsection{Step 2: From $\tilde{\Lambda}_{g}^{\Omega_e}$ to $\tilde{L}_{g}^{\Omega_e.\Omega_e}$ }

It is the purpose of this section to reduce the Dirichlet-to-Neumann measurements to measurements for a source-to-solution problem. To this end, we recall that in $\Omega_e$ it holds that $g_1 = g_2 =:g$. For $H= (-\D_g) F$ with $F \in C_0^{\infty}(\Omega_e)$, we consider the following source-to-solution operator
\begin{align}
\label{eq:source_to_sol}
\tilde{L}_{g_j}^{\Omega_e,\Omega_e}: C_0^{\infty}(\Omega_e) \rightarrow H^{\alpha}(\Omega_e), \ F \mapsto  v^F_j|_{\Omega_e},
\end{align} 
where, with slight abuse of notation, $\tilde{v}^F_j\in X_{\alpha}$ denotes a weak solution of \eqref{eq:Neumann} with generalized Neumann boundary data $H:=(-\D_{g_j}) F$ with $v^F_j:= \tilde{v}^F_j|_{\R^n \times \{0\}}$ and with metric $g_j$ with $j\in\{1,2\}$ satisfying the conditions from Assumption \ref{assump:main}. We stress that we have used the result of Proposition \ref{prop:Schwartz_kernel} to infer the $L^2(\R^n)$ integrability of $v^F_j$.

Using the results of Proposition \ref{prop:Schwartz_kernel}, we now reduce the exterior measurement data to the source-to-solution problem. In particular, once this is achieved, we do not return to the original measurement data.

\begin{prop}
\label{prop:source-to-sol}
Let $\alpha \in (0,1)$. Let $ \Omega$ be a smooth set satisfying the conditions in Assumption \ref{assump:main}(a) and let $g_1, g_2$ satisfy the conditions in Assumption \ref{assump:main}(b).
Let $\tilde{\Lambda}_{g_j}^{\Omega_e}$ with $j \in \{1,2\}$ be as above associated with the metrics $g_1, g_2$ and assume that $\tilde{\Lambda}_{g_1}^{\Omega_e} = \tilde{\Lambda}_{g_2}^{\Omega_e}$.  Let the operators $\tilde{L}_{g_j}^{\Omega_e, \Omega_e}$ be as in \eqref{eq:source_to_sol} for $j\in \{1,2\}$. Let $H= (-\D_g) F$ for some $F \in C_0^{\infty}(\Omega_e)$.
Then,
\begin{align*}
\tilde{L}_{g_1}^{\Omega_e,\Omega_e} = \tilde{L}_{g_2}^{\Omega_e,\Omega_e}.
\end{align*}
\end{prop}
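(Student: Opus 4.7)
The plan is to adapt, within the extension framework, the passage from the full exterior Dirichlet-to-Neumann map to the exterior source-to-solution map carried out in Section \ref{sec_proof_using_heat_semigroup_approach}. Fix $F \in C_0^{\infty}(\Omega_e)$ and set $H := (-\Delta_g) F$. Since $g_1 = g_2 = g$ on $\Omega_e$ and $F$ vanishes in $\Omega$, we have $H \in C_0^{\infty}(\Omega_e)$ and $H = (-\Delta_{g_j}) F$ for $j = 1,2$. For $j \in \{1,2\}$, let $\tilde{v}_j^F \in X_\alpha$ denote the weak Neumann solution of \eqref{eq:Neumann} with metric $g_j$ and source $H$, which is admissible by \eqref{eq:neg_norm_bound}.

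The first step is to reinterpret $\tilde{v}_1^F$ as a solution of the mixed Dirichlet-Neumann problem. Setting $h := v_1^F|_{\Omega_e}$ and noting that the prescribed Neumann datum $H$ vanishes on $\Omega$, the function $\tilde{v}_1^F$ is also a weak solution of \eqref{eq:mixed} with metric $g_1$ and Dirichlet datum $h$. By the uniqueness in Lemma \ref{lem:well-posedness}, $\tilde{v}_1^F = \tilde{u}_1^h$, and combining Remark \ref{rmk:DtN} with the Neumann condition satisfied by $\tilde{v}_1^F$ yields
\[
\tilde{\Lambda}_{g_1}^{\Omega_e}(h) \;=\; \bigl(\sqrt{|g|}\, H\bigr)\Big|_{\Omega_e}.
\]
By the hypothesis $\tilde{\Lambda}_{g_1}^{\Omega_e} = \tilde{\Lambda}_{g_2}^{\Omega_e}$, the same identity holds for $\tilde{u}_2^h$, the solution of the mixed problem with metric $g_2$ and datum $h$.

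The second step is to promote $\tilde{u}_2^h$ to a Neumann solution with metric $g_2$ and source $H$; uniqueness (Proposition \ref{prop:Neumann}) then yields $\tilde{u}_2^h = \tilde{v}_2^F$, so that $v_2^F|_{\Omega_e} = h = v_1^F|_{\Omega_e}$, which is the desired conclusion. Writing $N_2 := \lim_{x_{n+1} \to 0} x_{n+1}^{1-2\alpha} \partial_{n+1} \tilde{u}_2^h$, the previous step gives $N_2|_{\Omega_e} = H|_{\Omega_e}$ (after dividing by $\sqrt{|g|}>0$), while $N_2|_\Omega = 0 = H|_\Omega$ by the mixed-problem Neumann condition combined with the support condition on $F$. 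The difference $N_2 - H$ is thus supported on $\partial \Omega$, and the task reduces to ruling out distributional concentration there.

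The main technical obstacle is precisely this concentration step, where a Vishik-Eskin type regularity gain enters. To exploit it, $h$ must enjoy sufficient Sobolev regularity: combining the smoothness of $H$ with iterated tangential difference quotient arguments in the spirit of Lemma \ref{lem:tangential_cont} and the kernel estimates from the proof of Proposition \ref{prop:Schwartz_kernel}, one obtains $v_1^F \in H^s(\R^n)$, and hence $h \in H^s(\Omega_e)$, for every $s \geq 0$. Choosing $\delta \in (\max\{\alpha - 1/2, 0\}, 1/2)$ and $\epsilon \in (0, \delta)$ with $-\alpha + \delta - \epsilon \in (-1/2, 1/2)$, Lemma \ref{lem:VE}(i) then yields $N_2 \in H^{-\alpha + \delta - \epsilon}(\R^n)$; by Theorem \ref{thm_Agranovich}(iii), any distribution of Sobolev regularity strictly greater than $-1/2$ supported on $\partial \Omega$ must vanish identically, so $N_2 = H$ on $\R^n$, completing the argument.
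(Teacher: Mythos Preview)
Your proof is correct and follows essentially the same approach as the paper's own argument: reinterpret the Neumann solution $\tilde{v}_1^F$ as the mixed-problem solution $\tilde{u}_1^h$ with $h = v_1^F|_{\Omega_e}$, transfer the Dirichlet-to-Neumann data to $\tilde{u}_2^h$ via the hypothesis, and then use the Vishik--Eskin regularity from Lemma~\ref{lem:VE}(i) to upgrade $\tilde{u}_2^h$ to a Neumann solution with data $H$. You make the no-concentration-on-$\partial\Omega$ step (via Theorem~\ref{thm_Agranovich}(iii)) more explicit than the paper does, and you justify the regularity of $h$ through Lemma~\ref{lem:tangential_cont} and Proposition~\ref{prop:Schwartz_kernel} rather than Proposition~\ref{prop:higher_reg}, but the logical skeleton is identical.
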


\begin{proof}
For $H := (-\Delta_g) F \in C_0^{\infty}(\Omega_e)$ with some $F \in C_0^{\infty}(\Omega_e)$, we consider a weak solution $\tilde{v}^F_1$ of the Neumann problem \eqref{eq:Neumann} with data $H$ and the metric $g_1$.
By Proposition \ref{prop:Neumann} this problem has a unique weak solution $\tilde{v}^F_1 \in X_{\alpha}$ and thus $v^F_1 \in H^{\alpha}(\R^n)$ (see Proposition \ref{prop:Schwartz_kernel} for the fact that $v^F_1 \in L^2(\R^n)$). By construction, the function $\tilde{v}_1^F$ is a weak solution of \eqref{eq:mixed} with Dirichlet data $h:= v_1^F|_{\Omega_e} \in H^{\alpha}(\Omega_e)$. By higher regularity for the Neumann problem (e.g., see Proposition \ref{prop:higher_reg} for a sufficient setting for our argument) we have that since $H \in C_0^{\infty}(\Omega_e)$, it holds that $h \in H^{\alpha+\delta}(\Omega_e)$ for any $\delta \in [0,\frac{1}{2})$.
With slight abuse of notation, we will also refer to the function $\tilde{v}_1^F$ as $\tilde{u}_1^h$ (in order to be consistent with the notation from Lemma \ref{lem:well-posedness}). In particular, by construction, the function $\tilde{u}_1^h$ has the generalized Dirichlet-to-Neumann data $(h,\tilde{\Lambda}_{g_1}^{\Omega_e}(h))=(h,H|_{\Omega_e}) \in H^{\alpha}(\Omega_e) \times H^{-\alpha}(\Omega_e)$.

Next, we consider the unique weak solution $\tilde{u}_2^h \in X_{\alpha}$ of \eqref{eq:mixed} with data $h \in H^{\alpha}(\Omega_e)$ and metric $g_2$.
As, by the assumption that $\tilde{\Lambda}_{g_1}^{\Omega_e}(h) = \tilde{\Lambda}_{g_2}^{\Omega_e}(h)$ and since $H|_{\Omega_e}=\tilde{\Lambda}_{g_1}^{\Omega_e}(h)$, we obtain that
\begin{align*}
H|_{\Omega_e}=\tilde{\Lambda}_{g_1}^{\Omega_e}(h) = \tilde{\Lambda}_{g_2}^{\Omega_e}(h) . 
\end{align*}

By Lemma \ref{lem:VE}(i), as $h \in H^{\alpha + \delta}(\Omega_e)$, we have that 
\begin{align*}
u_2^h \in H^{\alpha + \delta -\epsilon }(\R^n) \mbox{ and } \lim\limits_{x_{n+1} \rightarrow 0 } x_{n+1}^{1-2\alpha}\p_{n+1} \tilde{u}_2^h \in H^{-\alpha + \delta -\epsilon }(\R^n) \mbox{ for any } \epsilon \in (0,\delta).
\end{align*}
 As a consequence, by this higher $H^{\alpha+\delta-\epsilon}(\R^n)$ regularity of $u_2^h$, we infer that $\tilde{u}_2^h$ is a weak solution of \eqref{eq:Neumann} with data $H \in C_0^{\infty}(\Omega_e)$.
In particular, since $(u_1^h)|_{\Omega_e} = (u_2^h)|_{\Omega_e}$ and since $u_1^h, u_2^h$ satisfy \eqref{eq:Neumann} with data $H$ and metric $g_1, g_2$, we have that
\begin{align*}
\tilde{L}_{g_1}^{\Omega_e,\Omega_e}(F) = \tilde{L}_{g_2}^{\Omega_e,\Omega_e}(F).
\end{align*}
\end{proof}

\subsubsection{Step 3: Conclusion -- uniqueness for the source-to-solution problem}

The final step in our uniqueness proof is to conclude that the source-to-solution data determine the heat semigroup, i.e., to deduce that, since $\tilde{L}_{g_1}^{\Omega_e,\Omega_e} = \tilde{L}_{g_2}^{\Omega_e,\Omega_e}$, it follows that $p_t^{(-\D_{g_1})}(x,z):=e^{t\D_{g_1}}(x,z) = e^{t\D_{g_2}}(x,z)=:p_t^{(-\D_{g_2})}(x,z)$ for all $x,z \in \Omega_e$, where $p_t^{(-\D_{g_j})}(x,z)$ denotes the heat kernel of $(-\D_{g_j})$ with respect to the metric $g_j$ for $j \in \{1,2\}$. The uniqueness of the metrics then follows from the boundary control method as in \cite{FGKU_2021}. See Section \ref{sec:hk} above for more on this.

\begin{prop}[Source-to-solution problem, recovery of heat kernel]
\label{prop:conclude}
Let $\alpha \in (0,1)$ and let the metrics $g_1, g_2$ satisfy the conditions from Assumption \ref{assump:main}(b).
Let $\tilde{L}_{g_1}^{\Omega_e,\Omega_e}, \tilde{L}_{g_2}^{\Omega_e,\Omega_e}$ denote the source-to-solution operators associated with the metrics $g_1, g_2$ and assume that 
\begin{align*}
\tilde{L}_{g_1}^{\Omega_e,\Omega_e} = \tilde{L}_{g_2}^{\Omega_e,\Omega_e}.
\end{align*}
Then, for $x, z \in \Omega_e$, $x\neq z$ and for $t>0$, we have that
\begin{align*}
p_t^{(-\D_{g_1})}(x,z) = p_t^{(-\D_{g_2})}(x,z).
\end{align*}
\end{prop}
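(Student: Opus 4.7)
The plan is to reduce Proposition \ref{prop:conclude} to Lemma \ref{lem_pass_heat_kernel}, which establishes exactly this conclusion for the spectrally defined source-to-solution map $L_{g_j}^{\Omega_e,\Omega_e}$ of Section \ref{sec_proof_using_heat_semigroup_approach}. The first task is to identify the extension-based map $\tilde L_{g_j}^{\Omega_e,\Omega_e}$ of \eqref{eq:source_to_sol} with $L_{g_j}^{\Omega_e,\Omega_e}$. Given $F \in C_0^\infty(\Omega_e)$, I would observe that the restriction $v_j^F := \tilde v_j^F|_{\R^n\times\{0\}}$ of the weak Neumann solution from Definition \ref{defi:Neu} lies in $H^\alpha(\R^n)$ by Proposition \ref{prop:Schwartz_kernel}, and that the convergence \eqref{eq:conv_CS} together with the weak form of \eqref{eq:Neumann} gives $c_\alpha (-\D_{g_j})^\alpha v_j^F = (-\D_g)F$ in $H^{-\alpha}(\R^n)$. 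After absorbing the normalising constant $c_\alpha$ (which cancels in any comparison between $g_1$ and $g_2$), the uniqueness of $H^\alpha$-solutions to the fractional Poisson equation from Lemma \ref{lem_solution_poisson_laplacian} forces $v_j^F = (-\D_{g_j})^{1-\alpha} F$, recovering the spectral definition. Hence the hypothesis $\tilde L_{g_1}^{\Omega_e,\Omega_e} = \tilde L_{g_2}^{\Omega_e,\Omega_e}$ is precisely the datum $L_{g_1}^{\Omega_e,\Omega_e} = L_{g_2}^{\Omega_e,\Omega_e}$ of Lemma \ref{lem_pass_heat_kernel}.

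With this identification in hand I would invoke Lemma \ref{lem_pass_heat_kernel} verbatim. For concreteness, its mechanism is as follows: set $\tilde\alpha := 1-\alpha \in (0,1)$, test the equality of source-to-solution maps against the family $F^{(m)} := \D_g^m F \in C_0^\infty(\Omega_e)$ for $m \in \N$, and apply the subordination identity \eqref{eq_2_6} to $(-\D_{g_j})^{\tilde\alpha}$. This yields, for every $\omega_2 \Subset \Omega_e$ with $\overline{\omega_2}\cap\supp F = \emptyset$,
\[
\int_0^\infty \bigl((e^{t\D_{g_1}} - e^{t\D_{g_2}})\D_g^m F\bigr)(x)\,\frac{dt}{t^{1+\tilde\alpha}} = 0, \quad x \in \omega_2,\ m = 0,1,2,\ldots.
\]
Exploiting $g_1 = g_2 = g$ in $\Omega_e$ together with $\partial_t e^{t\D_{g_j}} = \D_{g_j} e^{t\D_{g_j}}$, the spatial Laplacians are converted into $t$-derivatives away from $\supp F$; successive integration by parts in $t$, justified by the Gaussian bounds \eqref{eq_2_5_1} and \eqref{eq_2_5_1_derivatives}, transfers them into additional negative powers of $t$. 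After the substitution $\tau = 1/t$, the resulting vanishing Mellin moments combined with \cite[Theorem~3.3.7]{Lerner_book} force the integrand to vanish identically, i.e.\ $(e^{t\D_{g_1}}F)(x) = (e^{t\D_{g_2}}F)(x)$ for all $t>0$ and $x \in \omega_2$. Unique continuation for the heat equation on the connected set $\Omega_e$ \cite{Lin_1990} propagates this to all $x \in \Omega_e$; then varying $F \in C_0^\infty(\Omega_e)$ together with the smoothness \eqref{eq_2_4_heat_sym} of the heat kernels yields the pointwise equality of kernels on $\Omega_e\times\Omega_e$ for every $t>0$.

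The main obstacle is the identification step of the first paragraph: one must carefully track the constant $c_\alpha$ in \eqref{eq:conv_CS} and verify that $v_j^F$ belongs to the inhomogeneous space $H^\alpha(\R^n)$, not merely to $\dot H^\alpha(\R^n)$, so that the uniqueness statement of Lemma \ref{lem_solution_poisson_laplacian} is applicable. In low dimensions this is delicate because generic Neumann data produce only homogeneous-space solutions; however, the specific structure $H = (-\D_g)F$ with $F$ compactly supported provides precisely the decay needed, through the Schwartz kernel estimate \eqref{eq:decay_improve} established in Proposition \ref{prop:Schwartz_kernel}, to land in the inhomogeneous Sobolev scale. Once this identification is secured, no genuinely new PDE input is required and the semigroup, Mellin holomorphy and heat-equation unique continuation machinery of Section \ref{sec_proof_using_heat_semigroup_approach} delivers the result.
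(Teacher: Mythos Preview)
Your argument is correct but follows a different route from the paper. You reduce Proposition~\ref{prop:conclude} to Lemma~\ref{lem_pass_heat_kernel} by identifying $\tilde L_{g_j}^{\Omega_e,\Omega_e}$ with (a nonzero constant multiple of) the spectrally defined map $L_{g_j}^{\Omega_e,\Omega_e}$ via \eqref{eq:conv_CS} and the $H^{\alpha}(\R^n)$-integrability from Proposition~\ref{prop:Schwartz_kernel}; this identification is valid, and the constant $c_\alpha$ indeed cancels. The paper, by contrast, stays entirely inside the extension framework: it uses the heat-kernel Neumann--Poisson representation of Lemma~\ref{lem:representation} and the analyticity of $\tilde u_j^F(x,x_{n+1})$ in $x_{n+1}$ from Lemma~\ref{lem:normal_analyticity} to read off the moment identities $C_{m,1}(x)=C_{m,2}(x)$ directly from the power-series coefficients, by iterating the operator $x_{n+1}^{2\alpha-1}\partial_{n+1}x_{n+1}^{1-2\alpha}\partial_{n+1}$ (which equals $-\Delta_g$ on solutions) and restricting to $x_{n+1}=0$. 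Only at the very end does the paper invoke the same Mellin-holomorphy argument as in \eqref{eq_4_20}. Your shortcut is cleaner if one is willing to import Section~\ref{sec_proof_using_heat_semigroup_approach}; the paper's approach is the point of Section~\ref{sec_degenerate_elliptic_extension_approach}, namely to give a proof that lives in the degenerate-elliptic extension world and makes the heat-kernel moments appear geometrically as Taylor coefficients in the normal variable rather than through the spectral identity $v_j^F=c_\alpha(-\Delta_{g_j})^{1-\alpha}F$.
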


We begin by discussing two auxiliary results. First, we consider a Neumann Poisson-type representation formula, which is essentially contained in \cite[Proposition 2.3]{ST10}. For completeness, and since \cite{ST10} does not provide a uniqueness result that would have allowed us to directly invoke Proposition 2.3, we include a proof of this formula.
\begin{lem}
\label{lem:representation}
Let $\alpha \in (0,1)$, let $\gamma$ be a metric satisfying the conditions of Assumption \ref{assump:main}(b) and let $F\in C_0^{\infty}(\R^n)$ and $H= (-\D_{\gamma}) F$.  Let $\tilde{u}^F \in X_{\alpha}$ be the weak solution of \eqref{eq:Neumann} with data $H$. Then, for $x \in \R^n \setminus \supp(F)$ and $x_{n+1}\geq 0$,
\begin{align}
\label{eq:repr_heat}
\tilde{u}^F(x,x_{n+1}) = \hat{c}_{\alpha} \int\limits_{\R^n} \int\limits_{0}^{\infty} p_t^{(-\D_{\gamma})}(x,z) e^{- \frac{x_{n+1}^2}{4 t}} \frac{dt}{ t^{1-\alpha}} H(z) dV_\gamma(z).
\end{align}
Here $p_t^{(-\D_{\gamma})}(x,z)$ denotes the heat kernel of $(-\D_{\gamma})$ on $\R^n$ with metric $\gamma$ and $\hat{c}_{\alpha} \in \R\setminus \{0\}$. 
\end{lem}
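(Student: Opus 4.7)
The strategy is to exhibit the right-hand side of \eqref{eq:repr_heat} as an element of $X_{\alpha}$ that solves the Neumann problem \eqref{eq:Neumann} weakly with data $\sqrt{|\gamma|}H$, and then invoke the uniqueness part of Proposition \ref{prop:Neumann} (whose hypothesis $\sqrt{|\gamma|}H\in \dot H^{-\alpha}(\R^n)$ is verified in \eqref{eq:neg_norm_bound}). Set
\[
\tilde v(x,x_{n+1}):=\hat c_\alpha\int_0^\infty (e^{t\Delta_\gamma}H)(x)\,K(x_{n+1},t)\,dt,\quad K(x_{n+1},t):=t^{\alpha-1}e^{-x_{n+1}^2/(4t)},
\]
for a constant $\hat c_\alpha\ne0$ to be fixed in the last step. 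First I would verify that $\tilde v$ is well-defined and smooth off $\{x_{n+1}=0\}$: using the Gaussian upper bound \eqref{eq_2_5_1}, and the bounds $\|e^{t\Delta_\gamma}H\|_{L^\infty}\le \|H\|_{L^\infty}$ together with $\|e^{t\Delta_\gamma}H-H\|_{L^2}\lesssim t\|\Delta_\gamma H\|_{L^2}$ (valid since $H\in H^2(\R^n)$), the $t$-integral converges absolutely, locally uniformly in $(x,x_{n+1})$, and differentiation under the integral sign is justified to any order on $\R^{n+1}_+$.

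The second step is the bulk equation. A direct calculation shows that the choice of exponent $\alpha-1$ in $t^{\alpha-1}$ is made precisely so that $K$ satisfies
\[
\partial_{n+1}\bigl(x_{n+1}^{1-2\alpha}\partial_{n+1}K\bigr)=x_{n+1}^{1-2\alpha}\,\partial_tK \quad\text{on }(0,\infty)\times(0,\infty).
\]
Combining this with $\partial_t(e^{t\Delta_\gamma}H)=\Delta_\gamma(e^{t\Delta_\gamma}H)$ and integrating by parts in $t$ (boundary terms at $t\to0^+$ vanish by the Gaussian factor since $x_{n+1}>0$, and at $t\to\infty$ by $K\to 0$ together with the boundedness of $e^{t\Delta_\gamma}H$), one obtains
\[
-\partial_{n+1}\bigl(x_{n+1}^{1-2\alpha}\partial_{n+1}\tilde v\bigr)-x_{n+1}^{1-2\alpha}\Delta_\gamma \tilde v=0 \quad\text{in }\R^{n+1}_+,
\]
which is equivalent to the divergence form bulk equation in \eqref{eq:Neumann} by \eqref{eq:LB_extended}.

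The third step is the weighted Neumann trace. Differentiating $\tilde v$ in $x_{n+1}$ under the integral and performing the substitution $s=x_{n+1}^2/(4t)$ one gets
\[
x_{n+1}^{1-2\alpha}\partial_{n+1}\tilde v(x,x_{n+1})=-\hat c_\alpha\,2^{2\alpha-1}\int_0^\infty\! e^{(x_{n+1}^2/(4s))\Delta_\gamma}H(x)\,s^{-\alpha}e^{-s}\,ds.
\]
As $x_{n+1}\to 0^+$, $e^{(x_{n+1}^2/(4s))\Delta_\gamma}H\to H$ locally uniformly by the smoothness of $H$, so by dominated convergence the limit equals $-\hat c_\alpha\,2^{2\alpha-1}\Gamma(1-\alpha)H(x)$. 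Fixing $\hat c_\alpha:=-(2^{2\alpha-1}\Gamma(1-\alpha))^{-1}$ matches the Neumann data.

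Finally, I would show $\tilde v\in X_\alpha$ by the energy identity: testing the bulk equation derived in step~2 against $\tilde v$ itself and integrating by parts on $\R^{n+1}_+$ yields $\|x_{n+1}^{(1-2\alpha)/2}\nabla\tilde v\|_{L^2}^2=-\int_{\R^n}v H\sqrt{|\gamma|}\,dx$ with $v:=\tilde v|_{\R^n\times\{0\}}$, and $v\in \dot H^\alpha(\R^n)$ is controlled by Step 3 together with the trace estimate \eqref{eq:trace}; this closes the argument by Proposition \ref{prop:Neumann}. The main obstacle is precisely this last point—establishing $X_\alpha$ membership is more delicate than the pointwise analysis in steps 1–3, since the pointwise bounds on $e^{t\Delta_\gamma}H$ do not by themselves yield global $L^2$-integrability of the weighted gradient; the energy identity circumvents this cleanly once the bulk equation is established and the boundary flux is identified.
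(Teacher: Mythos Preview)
Your route is genuinely different from the paper's. The paper does \emph{not} verify the formula directly; instead it approximates by bounded domains $B_R(0)\times\R_+$ with Dirichlet lateral conditions, where $-\Delta_\gamma$ has discrete spectrum. There it writes the solution explicitly as an eigenfunction series involving modified Bessel functions $K_\alpha$, matches this series to the heat-kernel integral via the identity $K_\alpha(z)=\tfrac12(\tfrac{z}{2})^{-\alpha}\int_0^\infty e^{-(t+z^2/4t)}\,t^{\alpha-1}dt$, and then passes to the limit $R\to\infty$ using uniform-in-$R$ energy estimates (which give weak compactness in $X_\alpha$) together with monotone convergence of the Dirichlet heat kernels $p_t^{(-\Delta_\gamma^R)}\to p_t^{(-\Delta_\gamma)}$. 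The domain approximation buys exactly what your step~4 lacks: membership in $X_\alpha$ is automatic for the bounded-domain approximants and survives the limit by lower semicontinuity.

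Your steps 1--3 are correct (the kernel identity $\partial_{n+1}(x_{n+1}^{1-2\alpha}\partial_{n+1}K)=x_{n+1}^{1-2\alpha}\partial_tK$ is the right engine, and the Neumann trace computation works, modulo a harmless constant). The real issue is step~4, which as written is circular: to run the energy identity you integrate by parts against $\tilde v$ on all of $\R^{n+1}_+$, which already requires $x_{n+1}^{(1-2\alpha)/2}\nabla\tilde v\in L^2$ and a legitimate trace; and then you divide by $\|x_{n+1}^{(1-2\alpha)/2}\nabla\tilde v\|_{L^2}$, which presupposes finiteness. Also, ``Step~3 together with the trace estimate \eqref{eq:trace}'' does not control $v$ in $\dot H^\alpha$: Step~3 gives the \emph{Neumann} trace, while \eqref{eq:trace} bounds the \emph{Dirichlet} trace by the bulk gradient, so you cannot use them independently of the energy identity you are trying to establish.

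Two clean ways to close the gap: (a) truncate---test against $\eta_R(x)\chi_\epsilon(x_{n+1})\tilde v$ with smooth cutoffs, get an energy inequality with error terms, and pass to the limit (this is effectively what the paper's domain approximation does, but on the solution side); or (b) identify the boundary value directly via functional calculus: since $\int_0^\infty e^{-\lambda t}t^{\alpha-1}\,dt=\Gamma(\alpha)\lambda^{-\alpha}$, one has $v=\hat c_\alpha\Gamma(\alpha)(-\Delta_\gamma)^{-\alpha}H=\hat c_\alpha\Gamma(\alpha)(-\Delta_\gamma)^{1-\alpha}F\in H^k(\R^n)$ for all $k$ by Lemma~\ref{lem_solution_poisson_laplacian}, and then recognize $\tilde v$ as the Stinga--Torrea extension of $v$ (the formula is exactly Proposition~2.3 in \cite{ST10}), which lies in $X_\alpha$ by the standard extension estimate. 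Either route repairs your argument; without one of them, step~4 is a genuine gap.
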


\begin{remark}
We remark that by the heat kernel estimates from \eqref{eq_2_5_1} for $x \in \R^n \setminus \supp(F)$ and $x_{n+1}\geq 0$, the function $ p_t^{(-\D_{\gamma})}(x,z) e^{- \frac{x_{n+1}^2}{4 t}} \frac{1}{ t^{1-\alpha}} H(z) $ is $L^1(\R^n \times (0,\infty))$ integrable as a function in $(z,t)$.
\end{remark}

\begin{proof}

\emph{Step 1: Comparison with the Dirichlet problem in large domains.}
Let $R>0$ be so large that $\supp(F) \subset B_R(0)$ and $H= (-\D_{\gamma}) F$.  We then consider the following auxiliary problem, again understood weakly through the associated bilinear form:
\begin{align}
\label{eq:CS_R}
\begin{split}
\nabla \cdot x_{n+1}^{1-2\alpha} \tilde{\gamma} \nabla \tilde{u}_R^F &= 0 \mbox{ in } B_R(0) \times \R_+,\\
\lim\limits_{x_{n+1} \rightarrow 0} x_{n+1}^{1-2\alpha} \p_{n+1} \tilde{u}_R^F & = H \mbox{ in } B_R(0) ,\\
\tilde{u}_R^F &= 0 \mbox{ on } \partial B_R(0) \times \R_+.
\end{split}
\end{align}
By, for instance, carrying out a tangential diagonalization of the Laplacian and solving the vertical ODE, one obtains that this problem has a unique solution  $\tilde{u}_R^F \in \dot{H}^1(B_R(0)\times \R_+)$. Extending this by zero (and using the zero trace condition) we can also identify this with an element in $ X_{\alpha}$. Let us outline the existence and uniqueness argument in more detail. Let $0<\lambda_1\leq \lambda_2 \leq \dots$ be the Dirichlet eigenvalues of $-\D_{\gamma}$ on $B_R(0)$ and let $\{\psi_k\}_{k =1}^{\infty}$ denote an associated $L^2(B_R(0), dV_{\gamma})$ orthonormal basis of Dirichlet eigenfunctions, i.e.,
\begin{align*}
-\D_{\gamma} \psi_k & = \lambda_k \psi_k \mbox{ in } B_R(0),\\
\psi_k & = 0 \mbox{ on } \partial B_R(0).
\end{align*}
We then diagonalize the tangential contribution of the operator from \eqref{eq:CS_R} (viewed as the Laplace-Beltrami operator on $(B_R(0),\gamma)$ after dividing by $\sqrt{|\gamma|}$). Recalling the formulation of the bulk operator as in \eqref{eq:LB_extended}, we are thus lead to the following decoupled ODEs
\begin{align*}
\p_{n+1}  x_{n+1}^{1-2\alpha} \p_{n+1} (\tilde{u}^F_R,\psi_k)_{L^2(B_R(0),d V_{\gamma})} - \lambda_k (\tilde{u}^F_R,\psi_k)_{L^2(B_R(0), d V_{\gamma})} &= 0 \mbox{ in } \{x_{n+1}>0\},\\
\lim\limits_{x_{n+1} \rightarrow 0} x_{n+1}^{1-2\alpha} \p_{n+1} (\tilde{u}^F_R,\psi_k)_{L^2(B_R(0), d V_{\gamma})} & = (H,\psi_k)_{L^2(B_R(0), d V_{\gamma})} .
\end{align*}
The solution of this family of ODEs is then given by 
\begin{align}
\label{eq:repr1a}
\tilde{u}_R^F(x,x_{n+1}) = c_{\alpha}\sum\limits_{k =1}^{\infty} (H,\psi_k)_{L^2(B_R(0), d V_{\gamma})} \lambda_k^{-\alpha} \left( \sqrt{\lambda_k} x_{n+1} \right)^{\alpha} K_{\alpha}(\sqrt{\lambda_k} x_{n+1}) \psi_k(x),
\end{align}
where $c_{\alpha}\neq 0$ is an explicit constant and $K_{\alpha}$ denotes the modified Bessel function of the second kind (see, for instance, \cite[Chapter 10.25]{NIST} and its defining ODE). We refer to, e.g., \cite{ST10} or Appendix A in \cite{Ruland_2023} for a more detailed derivation of this. We note that the sum in \eqref{eq:repr1a} is convergent in $H^{1}(B_R(0)\times \R_+, x_{n+1}^{1-2\alpha})$. Indeed, this follows from the $L^2(B_R(0),dV_{\gamma})$ orthogonality of the functions $\psi_k$ together with the asymptotics of the modified Bessel functions \cite[equations (10.30.2), (10.29.2)]{NIST}: For $z\geq 0$ we have
\begin{align}
\label{eq:Bessel_ids}
\begin{split}
&K_{\alpha}(z) \sim 2^{\alpha-1} \Gamma(\alpha) z^{-\alpha} \mbox{ for } z \rightarrow 0, \ K_{\alpha}(z) \sim \sqrt{\frac{\pi}{2z}} e^{-z} \mbox{ for } z \rightarrow \infty,\\
&\frac{d}{dz}(z^{\alpha} K_{\alpha}(z)) = z^{\alpha}K_{1-\alpha}(z).
\end{split}
\end{align}
For instance, by monotone convergence and the identities in \eqref{eq:Bessel_ids}, we then obtain that, since $H \in L^2(B_R(0),dV_\gamma)$,
\begin{align*}
&\|x_{n+1}^{\frac{1-2\alpha}{2}} \p_{n+1} \tilde{u}_R^F \|_{L^2(B_R(0)\times \R_+, dV_{\gamma} \times dx_{n+1})}^2\\
&= c_{\alpha}^2\sum\limits_{k=1}^{\infty}|(H,\psi_k)_{L^2(B_R(0),dV_{\gamma})}|^2 \lambda_k^{-\alpha} \int\limits_{\R_+} z |K_{1-\alpha}(z)|^2 dz < \infty.
\end{align*}
By these and similar considerations for the tangential derivatives, we hence conclude that the function $\tilde{u}_R^F$ satisfies that $\tilde{u}_R^F \in H^1(B_{R}(0)\times \R_+, x_{n+1}^{1-2\alpha})$ and that $\tilde{u}_R^F = 0$ on $\partial B_R(0) \times \R_+$ in a trace sense.

Next, we seek to deduce uniform in $R>0$ energy estimates. To this end, we invoke the fact that $\tilde{u}_R^F = 0$ on $\partial B_R(0) \times \R_+$ and extend the functions $\tilde{u}_R^F$ by zero. This yields new functions which are defined on the whole of $\R^{n+1}_+$. With slight abuse of notation, we still refer to these functions as $\tilde{u}_R^F$. Now, by energy estimates, these functions satisfy the following bounds
\begin{align}
\label{eq:apriori}
\begin{split}
\|x_{n+1}^{\frac{1-2\alpha}{2}} \nabla \tilde{u}_R^F\|_{L^2(\R^{n+1}_+)} + \|u_R^F\|_{L^{\frac{2n}{n-2\alpha}}(\R^{n})}
+ \|u_R^F\|_{\dot{H}^{\alpha}(\R^{n})}
&\leq C \|\sqrt{|\gamma|} H\|_{\dot{H}^{-\alpha}(\R^{n})},
\end{split}
\end{align}
where $u_R^F:= \tilde{u}_R^F|_{\R^n \times \{0\}}$ in the trace sense and with $C>0$ independent of $R>0$. Indeed, this follows by inserting the function $\tilde{u}_R^F \in H^1(\R^{n+1}_+, x_{n+1}^{1-2\alpha})$ as a test function into the weak formulation of \eqref{eq:CS_R}. After our extension procedure this reads
\begin{align*}
\int\limits_{\R^{n+1}_+} x_{n+1}^{1-2\alpha} \tilde{\gamma} \nabla \tilde{u}_R^F \cdot \nabla \tilde{\varphi} d(x, x_{n+1}) = -\int\limits_{B_R(0)} H \tilde{\varphi}|_{\R^n \times \{0\}} \sqrt{|\gamma|} dx,
\end{align*}
for all $\tilde\varphi \in \dot{H}^1( \R^{n+1}_+, x_{n+1}^{1-2\alpha})$ with $\tilde\varphi = 0$ on $\R^{n+1}_+ \setminus (B_R(0) \times \R_+)$. Since $\tilde{u}_R^F$ is admissible as a test function in the weak formulation, by the uniform ellipticity of $\tilde{\gamma}$ and the trace estimate \eqref{eq:trace}, we infer a bound on the first left hand side term in \eqref{eq:apriori}:
\begin{align*}
\|x_{n+1}^{\frac{1-2\alpha}{2}} \nabla \tilde{u}_R^F\|_{L^2(\R^{n+1}_+)} 
&\leq C \|\sqrt{|\gamma|}H\|_{\dot{H}^{-\alpha}(\R^{n+1}_+)}.
\end{align*}
The remaining bounds in \eqref{eq:apriori} follow by the trace and Sobolev trace estimates from \eqref{eq:trace}, \eqref{eq:Sob_trace}. Finally, the computation in \eqref{eq:neg_norm_bound} shows that the right hand side of \eqref{eq:apriori} is bounded uniformly in $R>0$.

Next, we show that for $x \notin \supp(F)$ and $x_{n+1}\geq 0$ and some constant $\hat{c}_{\alpha} \neq 0$, the representation formula \eqref{eq:repr1a} can be rewritten as
\begin{align}
\label{eq:repr1}
\tilde{u}_R^F(x,x_{n+1}) =  \hat{c}_{\alpha} \int\limits_{B_R(0)} \int\limits_{0}^{\infty} p_t^{(-\D_{\gamma}^R)}(x,z) e^{- \frac{x_{n+1}^2}{4 t}} \frac{dt}{ t^{1-\alpha}} H(z) dV_\gamma(z).
\end{align}
Here $ p_t^{(-\D_\gamma^R)}(x,z)=: e^{t \Delta_\gamma^R}(x,z)$ denotes the heat kernel for the Dirichlet Laplacian $(-\D_\gamma)$ on $B_R(0)$. We recall the upper heat kernel bounds \cite[Corollary 3.2.8]{Davies_book}
\begin{align*}
|p_t^{(-\D_{\gamma}^R)}(x,z)| \leq C t^{-\frac{n}{2}} \exp\left(-c \frac{|x-z|^2}{t} \right),
\end{align*}
for all $t>0$, $x,z \in B_R(0)$ and some constants $c,C>0$.
In particular, we have that for $H \in C_0^{\infty}(\R^n)$, $x_{n+1}\geq 0$ and $x \notin  \supp(F)$, as a function in $(z,t)$
\begin{align*}
 \frac{p_t^{(-\D_{\gamma}^R)}(x,z) e^{- \frac{x_{n+1}^2}{4 t}}}{ t^{1-\alpha}} H(z) \in L^1(B_R(0) \times (0,\infty)).
\end{align*}
This allows us to invoke Fubini arguments on the right hand side of \eqref{eq:repr1} and to exchange the $z,t$ integrations.

In order to deduce \eqref{eq:repr1} from \eqref{eq:repr1a}, we combine the explicit representation formula \eqref{eq:repr1a} with the representation of the heat kernel in terms of the eigenfunctions $\{\psi_k\}_{k = 1}^{\infty}$ from above.
To this end, we observe that also $p_t^{(-\D^R_{\gamma})}(x,z) $ can be diagonalized using these eigenfunctions of the Laplace-Beltrami operator on $(B_R(0),\gamma)$ and their associated eigenvalues. More precisely, we obtain that
\begin{align*}
p_t^{(-\D^R_\gamma)}(x,z) = \sum\limits_{k = 1}^{\infty} e^{-\lambda_k t} \psi_k(x) \psi_k(z),
\end{align*}
where the series converges in $H^1(B_R(0))$.
Expanding also $H \in C_0^{\infty}(B_R(0))$ in terms of this eigenfunction basis, i.e., 
\begin{align*}
H(x)= \sum\limits_{k=1}^{\infty} (H,\psi_k)_{L^2(B_R(0), dV_\gamma)} \psi_k(x) \mbox{ in } H^1(B_R(0)),
\end{align*}
 we then infer that for $x\notin \supp(F)$ and $x_{n+1}\geq 0$,
\begin{align*}
&\int\limits_{B_R(0)} \int\limits_{0}^{\infty} p_t^{(-\D_\gamma^R)}(x,z) e^{- \frac{x_{n+1}^2}{4 t}} \frac{dt}{ t^{1-\alpha}} H(z) dV_\gamma(z)\\
& = \sum\limits_{k =1}^{\infty} (H,\psi_k)_{L^2(B_R(0), dV_\gamma)}\int\limits_{0}^{\infty} e^{-\lambda_k t} e^{-\frac{x_{n+1}^2}{4t}}\frac{dt}{t^{1- \alpha}} \psi_k(x)\\
& =  2^{1+\alpha} \sum\limits_{k=1}^{\infty}(H, \psi_k)_{L^2(B_R(0), dV_\gamma)} \lambda_k^{-\alpha} (\sqrt{\lambda_k} x_{n+1})^{\alpha} K_{\alpha}(\sqrt{\lambda_k } x_{n+1}) \psi_k(x)\\
& = 2^{1+\alpha} c_{\alpha}^{-1}\tilde{u}^F(x,x_{n+1}).
\end{align*}
In the second line, in interchanging the summation and integration, we used that the integrand is absolutely convergent as by the compact support of $H$ and by integration by parts it holds 
\begin{align*}
|(H,\psi_k)_{L^2(B_R(0),dV_{\gamma})}| \leq \lambda_k^{-m}|((-\Delta_{\gamma})^{m} H, \psi_k)_{L^2(B_R(0),dV_{\gamma})}|
\leq \lambda_k^{-m} \|(-\D_{\gamma})^m H\|_{L^2(B_R(0),dV_{\gamma})}
\end{align*}
 and $\|\psi_k\|_{L^{\infty}(B_R(0))} \leq C\lambda_k^{\frac{n-1}{4}}$ \cite{Grieser01} for $\lambda_k \geq 1$.
In the penultimate equality, we used the integral identity \cite[10.32.10]{NIST}
\begin{align*}
K_{\alpha}(z) = \frac{1}{2} \left( \frac{1}{2} z\right)^{-\alpha}\int\limits_{0}^{\infty} e^{- \left( t +\frac{z^2}{4t} \right)}\frac{dt}{t^{1-\alpha}}
\end{align*}
together with a change of variables $\tilde{t} = \lambda_k t$. This implies claim from \eqref{eq:repr1}.

\emph{Step 2: Limit.} In this step, we pass to the limit $R \rightarrow \infty$ in \eqref{eq:repr1}. This relies on the maximum principle for the heat equation as well as the apriori estimates \eqref{eq:apriori}. On the one hand, by the a priori estimates from \eqref{eq:apriori}, the functions $\tilde{u}_R^F$ have a weak limit $\tilde{u}\in X_{\alpha}$ and this limit satisfies the weak form of the equation \eqref{eq:Neumann} tested against $\tilde\varphi \in C_0^{\infty}(\overline{\R^{n+1}_+})$: for a fixed test function $\tilde{\varphi} \in C_0^{\infty}(\overline{\R^{n+1}_+})$ and as $R \rightarrow \infty$ (in particular such that $\supp(\tilde\varphi) \Subset B_R(0)\times \R_+$ and $\supp(F) \subset B_R(0)$)
\begin{align*}
- \int\limits_{\R^n} H \sqrt{|\gamma|} \tilde{\varphi}|_{\R^n \times \{0\}} dx =  \int\limits_{\R^{n+1}_+} x_{n+1}^{1-2\alpha} \tilde{\gamma} \nabla \tilde{u}_R^F \cdot \nabla \tilde{\varphi} d(x,x_{n+1}) \rightarrow \int\limits_{ \R^{n+1}_+} x_{n+1}^{1-2\alpha} \tilde{\gamma} \nabla \tilde{u} \cdot \nabla \tilde{\varphi} d (x,x_{n+1}) .
\end{align*}
 By density, this also implies that the function $\tilde{u}$ is a solution to the weak equation tested with $\tilde\varphi \in \dot{H}^1(\R^{n+1}_+, x_{n+1}^{1-2\alpha})$. 

 Moreover, by the maximum principle, it holds that $p_t^{(-\D_\gamma^R)}(x,z)  \rightarrow p_t^{(-\D_\gamma)}(x,z) $ uniformly as $R \rightarrow \infty$ if $x\neq z$ \cite[Exercise 7.40]{Grigoryan_book_2009}. Thus, for $x \notin \supp(F)$ and $x_{n+1}\geq 0$, by dominated convergence, also the integral representations from \eqref{eq:repr1} converge to the desired one from \eqref{eq:repr_heat}.

Combining these two observations, we may pass to the limit $R \rightarrow \infty $ in \eqref{eq:repr1} and obtain the desired representation formula \eqref{eq:repr_heat}. By the heat kernel estimates in the whole space, for $x \notin \supp(F)$ and $x_{n+1}\geq 0$, by the same arguments as above, this integral exists in an $L^1$ sense and the $t$ and $z$ integration can be arbitrarily exchanged.
\end{proof}

Using the previous lemma, as in \cite{Ruland_2023}, we deduce analyticity properties in the normal variable for the function $\tilde{u}^F$.

\begin{lem}
\label{lem:normal_analyticity}
Let $\alpha \in (0,1)$ and let $\gamma$ be a metric satisfying the condition from Assumption \ref{assump:main}(b). Let $F\in C_0^{\infty}(\Omega_e)$ and $H= (-\D_\gamma) F$. Let $\tilde{u}^F \in X_{\alpha}$ be the weak solution of \eqref{eq:Neumann} with data $H$. Then,  there exists a constant $\tilde{c}>0$ such that for $x \in \Omega_e \setminus \supp(F)$ and $x_{n+1} \in (0,\tilde{c}\dist(x,\supp(F)))$ we have the absolutely convergent expansion
\begin{align*}
\tilde{u}^F(x,x_{n+1}) = \sum\limits_{j=0}^{\infty} C_j(x) x_{n+1}^{2j},
\end{align*}
with
\begin{align*}
C_j(x) := \hat{c}_{\alpha}\frac{4^{-j}}{j!} (-1)^j \int\limits_{\R^n} \int\limits_{0}^{\infty} p_t^{(-\Delta_\gamma)}(x,z) \frac{dt}{t^{1-\alpha+j}} H(z) dV_\gamma(z),
\end{align*}
where $\hat{c}_{\alpha} \neq 0$ is the constant from Lemma \ref{lem:representation}.
The coefficients satisfy
\begin{align*}
|C_j(x)| \leq  C(c \dist^2(x,\supp(F)))^{-\frac{n}{2}+  \alpha-j} j^{\frac{n}{2}-\alpha}
\end{align*}
for some $c>0$ and $C=C(F)>0$.
In particular, as a function of $x_{n+1}$ for $0\leq x_{n+1}< c\dist(x,\supp(F))/2$, with $0<c/2<\tilde{c}$, the function $\tilde{u}^F$ is analytic.
\end{lem}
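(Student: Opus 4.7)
The plan is to start from the Poisson-type representation formula from Lemma~\ref{lem:representation} and Taylor-expand the exponential factor $e^{-x_{n+1}^2/(4t)}$ around $x_{n+1} = 0$. Formally,
\[
e^{-\frac{x_{n+1}^2}{4t}} = \sum_{j=0}^\infty \frac{(-1)^j}{j!\, 4^j}\, x_{n+1}^{2j}\, t^{-j},
\]
so, substituting this into \eqref{eq:repr_heat} and interchanging the sum with the $(z,t)$-integral, one formally arrives at the claimed formula for $C_j(x)$. The main task is therefore to justify the interchange and to provide the stated quantitative bound on $C_j(x)$, from which the absolute convergence of the series (and hence the tangential analyticity in $x_{n+1}$) follows.

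The justification relies on the Gaussian heat kernel estimate \eqref{eq_2_5_1} together with the compact support of $H$. Writing $d := \dist(x, \supp(F)) > 0$, the upper bound in \eqref{eq_2_5_1} yields $p_t^{(-\Delta_\gamma)}(x,z) \le C t^{-n/2} e^{-c d^2/t}$ uniformly in $z \in \supp(F)$. Plugging this into the formal expression for $C_j(x)$ reduces the inner $(z,t)$-integral to
\[
\|H\|_{L^1(dV_\gamma)} \int_0^\infty t^{-n/2 - 1 + \alpha - j}\, e^{-c d^2/t}\, dt.
\]
The change of variables $s = c d^2/t$ evaluates this integral as $(c d^2)^{-n/2 + \alpha - j}\, \Gamma(n/2 + j - \alpha)$, so one obtains
\[
|C_j(x)| \le C\, \frac{\Gamma(n/2 + j - \alpha)}{j!\, 4^j}\, (c d^2)^{-n/2 + \alpha - j}.
\]
Applying Stirling's ratio $\Gamma(j + n/2 - \alpha)/\Gamma(j+1) \sim j^{n/2 - \alpha - 1}$ as $j \to \infty$ (and absorbing the harmless $4^{-j}$ factor into the constant $c$) yields the desired estimate $|C_j(x)| \le C (c d^2)^{-n/2 + \alpha - j} j^{n/2 - \alpha}$.

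Given this pointwise bound on the coefficients, the series $\sum_j C_j(x)\, x_{n+1}^{2j}$ is dominated, up to a polynomial prefactor $j^{n/2 - \alpha}$, by a geometric series in $x_{n+1}^2/(c d^2)$. Hence it converges absolutely whenever $x_{n+1}^2 < c d^2$, i.e., for $0 \le x_{n+1} < \tilde c\, \dist(x, \supp(F))$ with $\tilde c := \sqrt{c}/2$ (say), which simultaneously justifies the termwise integration by dominated convergence and establishes the $x_{n+1}$-analyticity on this interval. The only subtle point worth isolating is controlling the ratio $\Gamma(j+n/2-\alpha)/j!$ uniformly in $j$ to obtain the correct power of $j$; apart from this, the argument is a direct Fubini/heat-kernel-bound computation.
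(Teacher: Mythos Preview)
Your proposal is correct and follows essentially the same approach as the paper: both start from the heat-kernel representation \eqref{eq:repr_heat}, Taylor-expand $e^{-x_{n+1}^2/(4t)}$, justify the interchange of sum and integral via the Gaussian upper bound \eqref{eq_2_5_1} and the support condition on $H$, evaluate the resulting $t$-integral as a Gamma function, and then control $\Gamma(j+n/2-\alpha)/j!$ by a Stirling-type ratio to obtain the coefficient estimate. The paper is slightly more explicit about the dominated-convergence majorant (writing it as $t^{-n/2-1+\alpha}e^{-c|x-z|^2/t + x_{n+1}^2/(4t)}$, which is integrable precisely when $x_{n+1}$ is small compared to $\dist(x,\supp(F))$), but your sketch captures all the essential ingredients.
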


In the lemma $\dist(x, \cdot)$ denotes the Euclidean distance function.

\begin{proof}
Our starting point is the expression \eqref{eq:repr_heat} from Lemma \ref{lem:representation}.
For $t\geq \epsilon>0$, we expand the factor $e^{-\frac{x_{n+1}^2}{4t}}$ around zero and pass to the limit $t \rightarrow 0$ by dominated convergence. 
More precisely, to this end, we use the heat kernel bounds as well as the fact that $x$ is outside of the support of $F$ which yields that for $x\in \Omega_e \setminus \supp(F)$, $x_{n+1}\geq 0$ and $\epsilon>0$ it holds that
\begin{align*}
\tilde{u}^F(x,x_{n+1}) 
&= \hat{c}_{\alpha} \lim\limits_{\epsilon \rightarrow 0} \int\limits_{\epsilon}^{\infty} \int\limits_{\R^n} p_t^{(-\D_{\gamma})}(x,z) H(z) dV_{\gamma}(z) e^{- \frac{x_{n+1}^2}{4t}} \frac{dt}{t^{1-\alpha}}\\
&= \hat{c}_{\alpha} \lim\limits_{\epsilon \rightarrow 0}  \int\limits_{\epsilon}^{\infty} \int\limits_{\R^n}  \sum\limits_{j=0}^{\infty} \frac{1}{j!} \left(- \frac{x_{n+1}^2}{4 t} \right)^j t^{\alpha-1}  p_t^{(-\D_{\gamma})}(x,z) H(z) dV_{\gamma}(z) dt
\end{align*}
The application of Fubini's theorem is justified by the absolute convergence of the integrals in \eqref{eq:repr_heat}.
We next seek to exchange summation and integration. For $x \in \Omega_e \setminus \supp(F)$ and $x_{n+1}\in (0, \bar{c} \dist(x,\supp(F)))$, with $\bar{c}>0$ a sufficiently small constant, this is achieved by dominated convergence with integrable majorant 
\begin{align*}
t^{-\frac{n}{2}-1+ \alpha}e^{- c\frac{|x-z|^2}{t} + \frac{x_{n+1}^2}{4t}}.
\end{align*}
Here we used the heat kernel estimates \eqref{eq_2_5_1} for $p_t^{(-\Delta_{\gamma})}$ as well as the fact that $x \in \Omega_e \setminus \supp(F)$. We hence obtain
\begin{align*}
\tilde{u}^F(x,x_{n+1}) 
&= \hat{c}_{\alpha}\lim\limits_{\epsilon \rightarrow 0}   \sum\limits_{j=0}^{\infty} \frac{(-1)^j 4^{-j}}{j!}  \int\limits_{\epsilon}^{\infty} \int\limits_{\R^n}   p_t^{(-\D_{\gamma})}(x,z) H(z) dV_{\gamma}(z) \frac{dt}{t^{-\alpha+1+j} } x_{n+1}^{2j}.
\end{align*}
It remains to exchange the limit $\epsilon \rightarrow 0$ and the summation and to deduce that
\begin{align*}
\tilde{u}^F(x,x_{n+1}) = \sum\limits_{j=0}^{\infty} C_j(x) x_{n+1}^{2j},
\end{align*}
with the coefficients $C_j(x)$ as in the lemma. In order to achieve this, it suffices to derive the claimed bounds for these coefficients $\int\limits_{\epsilon}^{\infty} \int\limits_{\R^n}   p_t^{(-\D_{\gamma})}(x,z) H(z) dV_{\gamma}(z) \frac{dt}{t^{-\alpha+1+j} }$ independently of $\epsilon>0$. The absolute convergence of the series then follows from the positive estimate on the radius of convergence. To this end we note that for $x\in \Omega_e \setminus \supp(F)$, $x_{n+1}\geq 0$, we have that $ p_t^{(-\Delta_\gamma)}(x,z) \frac{1}{t^{1-\alpha+j}} H(z) \in L^1(\R^n \times (0,\infty))$ which allows us to apply Fubini. Using this, by the heat kernel estimates from \eqref{eq_2_5_1}, for any $\epsilon>0$
\begin{align*}
&\left|\frac{4^{-j}}{j!} \int\limits_{\R^n} \int\limits_{\epsilon}^{\infty} p_t^{(-\Delta_\gamma)}(x,z) \frac{dt}{t^{1-\alpha+j}} H(z) dV_\gamma(z)\right|\\
& \leq C \frac{4^{-j}}{j!} \left| \int\limits_{\R^n} \int\limits_{0}^{\infty} e^{- \frac{c |x-z|^2}{t}}  \frac{dt}{t^{1- \alpha+j+\frac{n}{2}}} H(z) dV_\gamma(z) \right|\\
& \leq C (c \dist^2(x,\supp(F)))^{-\frac{n}{2}+\alpha +j} j^{\frac{n}{2}-\alpha},
\end{align*}
for some constant $C=C(F,n,\alpha)>0$.
Here we used that for $z \in \supp(F)$, $x \in \Omega_e \setminus \supp(F)$
\begin{align*}
 \int\limits_{0}^{\infty} e^{- \frac{c |x-z|^2}{t}}  \frac{dt}{t^{1- \alpha +j+\frac{n}{2}}}  
\leq C\left(c |x-z|^2 \right)^{-\frac{n}{2}+ \alpha -j} \Gamma(\frac{n}{2} - \alpha +j),
\end{align*}
together with the estimate (see \cite[formula (5.11.12)]{NIST} for a corresponding asymptotic identity)
\begin{align*}
\Gamma(\frac{n}{2}-\alpha + j) \leq C \Gamma(j) j^{\frac{n}{2}-\alpha} \mbox{ for } j \in \N
\end{align*}
and the fact that $\frac{4^{-j}}{j!} \Gamma(j) \leq C$ for all $j \in \N$.
This proves the claim.
\end{proof}

Combining the previous two lemmas, we conclude the uniqueness proof by invoking the proof variant from \cite{Ruland_2023} of the argument from \cite{FGKU_2021}. For completeness and the convenience of the reader, we include it here.

\begin{proof}[Proof of Proposition \ref{prop:conclude}]
We argue in two steps.\\

\emph{Step 1: Moment estimates.}
Combining the results from Proposition \ref{prop:source-to-sol}, Lemmas \ref{lem:representation} and \ref{lem:normal_analyticity} as well as $g_j = g$ in $\Omega_e$, our measurement data yield that for $H= (-\D_{g_j}) F = (-\D_g) F$ and $F \in C_0^{\infty}(\Omega_e)$, $j\in \{1,2\}$,
\begin{align*}
{u}_1^F(x) = \tilde{L}_{g_1}^{\Omega_e, \Omega_e}(F) = \tilde{L}_{g_2}^{\Omega_e, \Omega_e}(F) = {u}_2^F(x), \ x \in \Omega_e\setminus \supp(F),
\end{align*}
and
\begin{align}
\label{eq:repr_series}
\begin{split}
&\tilde{u}_j^F(x,x_{n+1}) = \sum\limits_{k=0}^{\infty} C_{k,j}(x) x_{n+1}^{2k}, \ j\in\{1,2\},\\
& \qquad \mbox{ for }\ x  \in (\Omega_e \setminus \supp(F)), \ x_{n+1}\in (0, \tilde{c} \dist(x,\supp(F)) ),
\end{split}
\end{align}
and $C_{k,j}(x)$ as in Lemma \ref{lem:normal_analyticity} with metric $g_j$. 
In particular, we directly infer that $C_{0,1}(x) = C_{0,2}(x)$ for all $x \in \Omega_e \setminus \supp(F)$.
Recalling that $g_j = g$ in $\Omega_e$ and  invoking the tangential regularity from Lemma \ref{lem:tangential_cont}, we obtain that for $m \in \{1,2,3, \dots, \}$
\begin{align*}
(-\D_g)^m{u}_1^F(x) = (-\D_g)^m{u}_2^F(x), \ x \in \Omega_e \setminus \supp(F).
\end{align*}
Now, by equation \eqref{eq:Neumann} combined with the tangential and normal regularities from Lemmas \ref{lem:tangential_cont} and \ref{lem:normal_analyticity}, we deduce that in a strong sense
\begin{align*}
(-\D_{g_j}) \tilde{u}_j^F(x) = x_{n+1}^{2\alpha-1} \p_{n+1} x_{n+1}^{1-2\alpha} \p_{n+1} \tilde{u}_j^F(x)
\end{align*}
for $x \in \Omega_e \setminus \supp(F)$.
 Recalling the series representation \eqref{eq:repr_series}, we hence arrive at
\begin{align*}
& C_{m,1}(x) \prod_{j=1}^m (2m- 2(j-1)) (2m-2\alpha-2(j-1)) \\
&= \lim\limits_{x_{n+1} \rightarrow 0} (x_{n+1}^{2\alpha -1} \p_{{n+1}} x_{n+1}^{1-2\alpha} \p_{{n+1}})^m \tilde{u}_1^F(x,x_{n+1})\\
&= (-\D_g)^m{u}_1^F(x)
 = (-\D_g)^m{u}_2^F(x)\\
&= \lim\limits_{x_{n+1} \rightarrow 0} (x_{n+1}^{2\alpha -1} \p_{{n+1}} x_{n+1}^{1-2\alpha} \p_{{n+1}})^m \tilde{u}_2^F(x)\\
& = C_{m,2}(x) \prod_{j=1}^m (2m- 2(j-1)) (2m-2\alpha -2(j-1)), \\
& \qquad \mbox{ for }  x  \in \Omega_e \setminus \supp(F),\ m \in \{1,2,3,\dots\}.
\end{align*}
In this computation differentiation and summation can be interchanged in the representation from \eqref{eq:repr_series} by the absolute convergence of the series for $x\in \Omega_e \setminus \supp(F)$ for $x_{n+1}\geq 0$ sufficiently small.

\emph{Step 2: Conclusion.}
By the argument from Step 1, it holds that for all $H = (-\D_g)F$ and $F \in C_0^{\infty}(\Omega_e)$
\begin{align}
\label{eq:coeff1}
 C_{m,1}(x)= C_{m,2}(x)  ,  \ x \in \Omega_e \setminus \supp(F), \ m \in \{0,1,2,3,\dots\}.
\end{align}
We rewrite this condition by using the commutation of the heat semi-group and the Laplace-Beltrami operator, by using the equation satisfied by the heat semi-group and by integrating by parts in time (which is justified by the heat kernel bounds from \eqref{eq_2_5_1})
\begin{align*}
& \int\limits_{\R^n} \int\limits_{0}^{\infty}  p_t^{(-\Delta_{g_1})}(x,z) (-\D_{g_1,z} ) F(z) \frac{dt}{t^{1-\alpha+m}}  dV_{g_1}(z)\\
& = \int\limits_{\R^n} \int\limits_{0}^{\infty}  (-\D_{g_1,z} ) p_t^{(-\Delta_{g_1})}(x,z)  F(z) \frac{dt}{t^{1-\alpha+m}}  dV_{g_1}(z)\\
 & = -\int\limits_{\R^n} \int\limits_{0}^{\infty}  \p_t p_t^{(-\Delta_{g_1})}(x,z)  F(z) \frac{dt}{t^{1-\alpha+m}}  dV_{g_1}(z)\\
 & = -(m+1-\alpha)\int\limits_{\R^n} \int\limits_{0}^{\infty}   p_t^{(-\Delta_{g_1})}(x,z)  F(z) \frac{dt}{t^{2-\alpha+m}}  dV_{g_1}(z)
\mbox{ for } x \in \Omega_e \setminus \supp(F).
\end{align*}
As a consequence, \eqref{eq:coeff1} implies that for all $m \in \{0,1,2,3,\dots\}$ and $\mbox{ for } x \in \Omega_e \setminus \supp(F)$, we have
\begin{align*}
\int\limits_{\R^n} \int\limits_{0}^{\infty}   p_t^{(-\Delta_{g_1})}(x,z)  F(z) \frac{dt}{t^{2-\alpha+m}}  dV_{g_1}(z)
= \int\limits_{\R^n} \int\limits_{0}^{\infty}   p_t^{(-\Delta_{g_2})}(x,z)  F(z) \frac{dt}{t^{2-\alpha+m}}  dV_{g_2}(z).
\end{align*}
Relying on an argument as in \eqref{eq_4_20} and following, this then implies that  for all $x\neq z$, $x,z \in \Omega_e$, $t>0$
\begin{align*}
p_t^{(-\Delta_{g_1})}(x,z) =  p_t^{(-\Delta_{g_2})}(x,z).
\end{align*}
This concludes the argument.
\end{proof}

The remainder of the proof of Theorem \ref{thm_main} follows by combining Lemmas \ref{lem:modified_data1}, Propositions \ref{prop:source-to-sol} and \ref{prop:conclude} together with a reduction to the wave equation and the boundary control method as in \cite{FGKU_2021} by using the completeness of $(\R^n,g_j)$, $j \in \{1,2\}$ (see the remark on this below Assumption \ref{assump:main}(b)). See Section \ref{sec:hk}.

\begin{appendix}

\section{Obstruction to uniqueness in the anisotropic fractional Calder\'on problem with external data}

\label{app_obstruction}

Let $g_1$ and $g_2$ be $C^\infty$ Riemannian metrics on $\mathbb{R}^n$,  $n \ge 2$, which agree with the Euclidean metric outside a compact set. Let $\Omega \subset \mathbb{R}^n$ be a bounded open set with $C^\infty$ boundary, and let $W_1, W_2 \subset \Omega_e$ be nonempty open subsets. Let $\Phi: (\mathbb{R}^n, g_1) \to (\mathbb{R}^n, g_2)$ be a $C^\infty$ Riemannian isometry fixing the exterior $\Omega_e$ of $\Omega$, i.e., $\Phi: \mathbb{R}^n \to \mathbb{R}^n$ is a $C^\infty$ diffeomorphism such that:
\begin{itemize}
    \item[(i)] $g_1 = \Phi^* g_2$ on $\mathbb{R}^n$, i.e.,
    \begin{equation}
    \label{eq_app_new_1}
    g_1(x)(t, s) = g_2(\Phi(x))(\Phi'(x)t, \Phi'(x)s), \quad x \in \mathbb{R}^n, \quad t, s \in \mathbb{R}^n,
    \end{equation}
    \item[(ii)] $\Phi(x) = x$ for all $x \in \Omega_e$.
\end{itemize}
Thus, it follows that $g_1 = g_2$ on $\Omega_e$ and $\Phi: \Omega \to \Omega$ is a diffeomorphism.

We need the following result, see also \cite[Theorem 4.2]{Ghosh_Uhlmann_2021}.
\begin{prop}
\label{prop_app_obstruction}
We have $\Lambda_{g_1}^{W_1, W_2} = \Lambda_{g_2}^{W_1, W_2}$ with $g_1 = \Phi^* g_2$.
\end{prop}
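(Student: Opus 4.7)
The plan is to use the standard gauge-invariance argument: show that $\Phi^\ast$ intertwines the fractional Laplace--Beltrami operators associated with $g_1$ and $g_2$, and then verify that $\Phi^\ast u_2^f$ is the unique energy solution of the $g_1$-exterior Dirichlet problem with boundary data $f$.

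First I would establish the intertwining relation at the level of the Laplace--Beltrami operators. Because $\Phi:(\mathbb{R}^n,g_1)\to(\mathbb{R}^n,g_2)$ is a smooth Riemannian isometry that equals the identity outside a compact set, the pullback $\Phi^\ast$ acts as a unitary map $L^2(\mathbb{R}^n;dV_{g_2})\to L^2(\mathbb{R}^n;dV_{g_1})$, preserves $H^2(\mathbb{R}^n)$, and satisfies $-\Delta_{g_1}(\Phi^\ast u)=\Phi^\ast(-\Delta_{g_2}u)$ for $u\in H^2(\mathbb{R}^n)$. In particular, $\Phi^\ast$ maps the domain of $-\Delta_{g_2}$ onto the domain of $-\Delta_{g_1}$ unitarily and intertwines the two self-adjoint operators. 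By the functional calculus for self-adjoint operators, for any Borel function $F$ on $[0,\infty)$ one then has $F(-\Delta_{g_1})\Phi^\ast=\Phi^\ast F(-\Delta_{g_2})$; applied with $F(\lambda)=\lambda^\alpha$ this yields
\[
(-\Delta_{g_1})^\alpha \Phi^\ast u \;=\; \Phi^\ast\bigl((-\Delta_{g_2})^\alpha u\bigr),\qquad u\in H^{2\alpha}(\mathbb{R}^n).
\]
Since $\Phi$ is a smooth diffeomorphism equal to the identity outside a compact set, $\Phi^\ast$ is a bounded isomorphism of $H^s(\mathbb{R}^n)$ for every $s\in\mathbb{R}$, so by density and the continuity statement of Proposition \ref{prop_eq_2_6}, the intertwining relation extends to all $u\in H^\alpha(\mathbb{R}^n)$ as an identity in $H^{-\alpha}(\mathbb{R}^n)$.

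Next, given $f\in C^\infty_0(W_1)$, let $u_j=u_j^f\in H^\alpha(\mathbb{R}^n)$ denote the unique energy solution to the $g_j$-exterior Dirichlet problem of the form \eqref{eq_int_1} for $j=1,2$, as provided by Proposition \ref{prop_well-posedness}. I would then check that $v:=\Phi^\ast u_2\in H^\alpha(\mathbb{R}^n)$ solves the $g_1$-problem with the same data $f$. The exterior condition is immediate: since $\Phi(x)=x$ for $x\in\Omega_e$ and $\supp f\subset W_1\subset\Omega_e$, one has $v|_{\Omega_e}=u_2|_{\Omega_e}=f$ and in particular $v-f\in H^\alpha_0(\Omega)$ (the latter follows from $\Phi(\Omega)=\Omega$ and $\Phi|_{\partial\Omega}=\mathrm{Id}$). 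The interior equation follows from the intertwining relation:
\[
(-\Delta_{g_1})^\alpha v = \Phi^\ast\bigl((-\Delta_{g_2})^\alpha u_2\bigr)=0\text{ in }\Omega,
\]
because $(-\Delta_{g_2})^\alpha u_2=0$ in $\Omega$ and $\Phi(\Omega)=\Omega$. By the uniqueness statement in Proposition \ref{prop_well-posedness} we conclude $u_1=\Phi^\ast u_2$.

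Finally, restricting the intertwining identity to $W_2\subset\Omega_e$ where $\Phi$ is the identity,
\[
\Lambda_{g_1}^{W_1,W_2}(f)=\bigl((-\Delta_{g_1})^\alpha u_1\bigr)\big|_{W_2}=\bigl(\Phi^\ast(-\Delta_{g_2})^\alpha u_2\bigr)\big|_{W_2}=\bigl((-\Delta_{g_2})^\alpha u_2\bigr)\big|_{W_2}=\Lambda_{g_2}^{W_1,W_2}(f).
\]
The main technical obstacle is a clean verification of the functional-calculus intertwining $(-\Delta_{g_1})^\alpha\Phi^\ast=\Phi^\ast(-\Delta_{g_2})^\alpha$ at the level of $H^\alpha(\mathbb{R}^n)\to H^{-\alpha}(\mathbb{R}^n)$; this can be done either through the heat-semigroup representation \eqref{eq_2_6} (since $\Phi^\ast e^{t\Delta_{g_2}}=e^{t\Delta_{g_1}}\Phi^\ast$ from the unitary equivalence on $L^2$) combined with the continuity properties of Proposition \ref{prop_eq_2_6}, or directly from the spectral theorem and a density argument using $C_0^\infty(\mathbb{R}^n)\subset H^{2\alpha}(\mathbb{R}^n)$.
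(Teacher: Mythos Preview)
Your proposal is correct and follows essentially the same approach as the paper: establish the unitary equivalence $(-\Delta_{g_1})^\alpha\Phi^\ast=\Phi^\ast(-\Delta_{g_2})^\alpha$ via the functional calculus, then check that $\Phi^\ast u_2^f$ solves the $g_1$-exterior Dirichlet problem with data $f$ and invoke uniqueness. The paper phrases the intertwining as $(-\Delta_{g_1})^\alpha=U\circ(-\Delta_{g_2})^\alpha\circ U^{-1}$ with $U u=u\circ\Phi$, and appeals to Lemma~\ref{lem_distrubutional_solution} to pass between energy and distributional solutions, but the substance is the same.
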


\begin{proof}
First, since $\Phi: (\mathbb{R}^n, g_1) \to (\mathbb{R}^n, g_2)$ is a $C^\infty$ Riemannian isometry, we have
\begin{equation}
\label{eq_app_new_2}
-\Delta_{g_1}(u \circ \Phi) = (-\Delta_{g_2} u) \circ \Phi \quad \text{in } \mathcal{D}'(\mathbb{R}^n),
\end{equation}
for all $u \in \mathcal{D}'(\mathbb{R}^n)$; see \cite[pp. 99-100]{GPR_book}.

We claim that the map
\[
U: L^2(\mathbb{R}^n; dV_{g_2}) \to L^2(\mathbb{R}^n; dV_{g_1}), \quad u \mapsto u \circ \Phi,
\]
is unitary. Indeed, we have
\begin{align*}
\|u \circ \Phi\|_{L^2(\mathbb{R}^n; dV_{g_1})}^2 &= \int_{\mathbb{R}^n} |u(\Phi(x))|^2 \sqrt{|g_1(x)|} \, dx \\
&= \int_{\mathbb{R}^n} |u(y)|^2 \sqrt{|g_1(\Phi^{-1}(y))|} \frac{dy}{|\det \Phi'(\Phi^{-1}(y))|} = \|u\|_{L^2(\mathbb{R}^n; dV_{g_2})}^2,
\end{align*}
showing the claim. Here, we have used that $g_1(x) = (\Phi'(x))^t g_2(\Phi(x)) \Phi'(x)$, which follows from \eqref{eq_app_new_1}, and made the change of variables $y = \Phi(x)$.

Rewriting \eqref{eq_app_new_2} as 
\[
-\Delta_{g_1} = U \circ (-\Delta_{g_2}) \circ U^{-1},
\]
and using the functional calculus of self-adjoint operators, we get 
\begin{equation}
\label{eq_app_new_3}
(-\Delta_{g_1})^\alpha = U \circ (-\Delta_{g_2})^\alpha \circ U^{-1}.
\end{equation}

Let $f \in C^\infty_0(W_1)$ and let $u^f_2 \in H^\alpha(\mathbb{R}^n)$ be the unique energy solution to the following exterior Dirichlet problem:
\[
\begin{cases}
(-\Delta_{g_2})^\alpha u^f_2 = 0 & \text{in} \quad \Omega, \\
u^f_2 = f & \text{in} \quad \Omega_e.
\end{cases}
\]

By Lemma \ref{lem_distrubutional_solution}, $u^f_2$ satisfies $(-\Delta_{g_2})^\alpha u^f_2 = 0$ in $\mathcal{D}'(\Omega)$. Thus, using \eqref{eq_app_new_3}, we get 
\begin{equation}
\label{eq_app_new_5}
0 = (-\Delta_{g_2})^\alpha u^f_2 = (-\Delta_{g_1})^\alpha (u^f_2 \circ \Phi) \circ \Phi^{-1} \quad \text{in} \ \mathcal{D}'(\Omega).
\end{equation}
Therefore, since $\Phi: \Omega \to \Omega$ is a $C^\infty$ diffeomorphism, using \eqref{eq_app_new_5} and Proposition \ref{prop_eq_2_6}, we see that $u^f_2 \circ \Phi \in H^\alpha(\mathbb{R}^n)$ is the unique energy solution to the exterior  Dirichlet problem:
\[
\begin{cases}
(-\Delta_{g_1})^\alpha (u^f_2 \circ \Phi) = 0 & \text{in} \quad \Omega, \\
u^f_2 \circ \Phi = f & \text{in} \quad \Omega_e.
\end{cases}
\]
It follows that 
\[
\Lambda_{g_1}^{W_1,W_2}(f) = ((-\Delta_{g_1})^\alpha (u^f_2 \circ \Phi))|_{W_2} = ((-\Delta_{g_2})^\alpha u^f_2)|_{W_2} = \Lambda_{g_2}^{W_1,W_2}(f).
\]
\end{proof}

\section{The fractional Laplace--Beltrami operator on the Euclidean space as a pseudodifferential operator. Proof of Theorem \ref{thm_pseudodiff_op}}

\label{sec_pseudodiff_op}

The purpose of this appendix is to provide a complete proof of Theorem \ref{thm_pseudodiff_op}, see also \cite[Theorem 4.7]{Strichartz_1983}. We rely on this proof to derive the Vishik--Eskin estimates for solutions to the exterior Dirichlet problem for the fractional Laplace--Beltrami operator in our setting, based on the theory developed in \cite{Hormander_1965-66, Grubb_2015, Eskin_book_1981, Vishik_Eskin_1965}. Before presenting the proof, we note that the origins of such results trace back to the seminal work \cite{Seeley_1966}, which, in particular, establishes that fractional Laplace--Beltrami operators on compact Riemannian manifolds without boundary, defined via functional calculus, are pseudodifferential operators (see Theorem \ref{thm_Seeley} below; see also \cite[Lemma 2.9]{Grubb_2015}). In \cite[Section 3]{Grubb_2020}, it is shown that fractional powers of second-order strongly elliptic operators on $\mathbb{R}^n$ are pseudodifferential operators, provided they coincide with $1 - \Delta$  near infinity. Additionally, \cite{Ammann_Lauter_Nistor_Vasy} investigates complex powers of elliptic, strictly positive pseudodifferential operators on non-compact manifolds.

We now proceed to prove Theorem \ref{thm_pseudodiff_op}. To begin, we first gather some necessary facts for the proof.

\subsection{The fractional Laplacian as a pseudodifferential operator on a compact manifold}

Let $(M,g)$ be a $C^\infty$ compact Riemannian manifold of dimension $n \geq 2$ without boundary, and let $P = -\Delta_g$ be the Laplace--Beltrami operator on $M$, associated with the metric $g$. The operator $P$ is self-adjoint on $L^2(M)$ with the domain $\mathcal{D}(P) = H^2(M)$, the standard Sobolev space on $M$, and it has a discrete spectrum $\sigma(P) \subset [0,\infty)$. 

The following result was established in \cite{Seeley_1966}.
\begin{thm}
\label{thm_Seeley}
Let $(M,g)$ be a $C^\infty$ compact Riemannian manifold without boundary, and let $0 < \alpha < 1$. Then $(-\Delta_g)^\alpha$, defined via functional calculus, satisfies $(-\Delta_g)^\alpha \in \Psi^{2\alpha}_{1,0}(M)$ and is a classical elliptic pseudodifferential operator on $M$. 
\end{thm}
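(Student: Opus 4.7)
\medskip

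\noindent\textbf{Proof proposal for Theorem \ref{thm_Seeley}.}
The plan is to follow Seeley's original strategy \cite{Seeley_1966}: represent the fractional power as a Cauchy integral of the resolvent and exploit the parameter-dependent pseudodifferential calculus to identify the result as a classical elliptic pseudodifferential operator. Because $0 \in \sigma(P)$ (the kernel consists of constants), we first pass to the strictly positive operator $\widetilde{P} := P + \Pi_0$, where $\Pi_0$ is the $L^2(M, dV_g)$-orthogonal projection onto $\ker(P)$. Since $\Pi_0$ is a finite-rank smoothing operator, $\widetilde{P} \in \Psi^2_{1,0}(M)$ has the same principal symbol $|\xi|_g^2 := g^{jk}(x)\xi_j\xi_k$ as $P$, and by the functional calculus
\[
P^\alpha = \widetilde{P}{}^\alpha - \Pi_0,
\]
so it suffices to prove the theorem for $\widetilde{P}$.

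Next, I would set up a contour $\Gamma$ in $\mathbb{C} \setminus [\delta, \infty)$ (with $\delta > 0$ smaller than the smallest positive eigenvalue of $P$) winding once around $\sigma(\widetilde{P}) \subset [\delta, \infty)$ from above, and represent
\[
\widetilde{P}{}^\alpha = \frac{i}{2\pi} \int_\Gamma \lambda^\alpha (\widetilde{P} - \lambda)^{-1}\, d\lambda,
\]
interpreted weakly (e.g.\ between smooth functions and distributions, after the contour is deformed so that the integrand decays). The core analytic step is to construct, locally in a chart, a parameter-dependent parametrix $Q(\lambda) \in \Psi^{-2,-2}(M;\Lambda)$ of $\widetilde{P} - \lambda$, where $\Lambda$ is a sector in $\mathbb{C}$ disjoint from $[0,\infty)$. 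One builds $Q(\lambda)$ recursively: the principal symbol is $q_{-2}(x,\xi,\lambda) = (|\xi|_g^2 - \lambda)^{-1}$, with lower-order homogeneous terms $q_{-2-j}(x,\xi,\lambda)$, $j \ge 1$, obtained by solving the usual transport-type identities arising from $(\widetilde{P} - \lambda) Q(\lambda) \sim \mathrm{Id}$ in the parameter-dependent calculus. Here $(|\xi|^2 + |\lambda|)^{1/2}$ plays the role of the combined parameter in symbol estimates, and each $q_{-2-j}$ satisfies
\[
|\partial_x^\beta \partial_\xi^\gamma \partial_\lambda^k q_{-2-j}(x,\xi,\lambda)| \le C_{\beta\gamma k}(|\xi|^2+|\lambda|)^{-1-j/2-|\gamma|/2-k}
\]
away from $\xi = \lambda = 0$. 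Standard parameter-dependent ellipticity gives $(\widetilde{P} - \lambda)^{-1} = Q(\lambda) + R(\lambda)$, where $R(\lambda)$ is smoothing with operator-norm decay of order $|\lambda|^{-N}$ for every $N$.

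Substituting into the Cauchy integral and exchanging the order of integration on the symbol level, I would then define
\[
a^\alpha_{-2\alpha - j}(x,\xi) := \frac{i}{2\pi}\int_\Gamma \lambda^\alpha\, q_{-2-j}(x,\xi,\lambda)\, d\lambda,
\]
deform $\Gamma$ to a Hankel-type contour around $[0, |\xi|_g^2]$, and use the homogeneity of $q_{-2-j}$ together with the identity
\[
\frac{1}{2\pi i}\int_\Gamma \lambda^\alpha (|\xi|_g^2 - \lambda)^{-k-1} d\lambda = \binom{\alpha}{k}(|\xi|_g^2)^{\alpha - k}
\]
to show that each $a^\alpha_{-2\alpha - j}$ is a classical symbol positively homogeneous of degree $2\alpha - j$ in $\xi$, with
\[
a^\alpha_{-2\alpha}(x,\xi) = (|\xi|_g^2)^\alpha = \bigg(\sum_{j,k=1}^n g^{jk}(x)\xi_j \xi_k\bigg)^\alpha.
\]
The smoothing remainder $R(\lambda)$ contributes a smoothing operator to $\widetilde{P}{}^\alpha$ via $\int_\Gamma \lambda^\alpha R(\lambda)\, d\lambda$, which converges after the contour deformation. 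Asymptotically summing the $a^\alpha_{-2\alpha - j}$ gives a classical symbol $a^\alpha \in S^{2\alpha}_{cl}$ whose quantization agrees with $\widetilde{P}{}^\alpha$ modulo smoothing, proving $\widetilde{P}{}^\alpha \in \Psi^{2\alpha}_{1,0}(M)$ is classical elliptic with the claimed principal symbol, and hence so is $P^\alpha$.

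The main obstacle is the resolvent construction and, more precisely, justifying the various Fubini-type exchanges: one must ensure that the Cauchy integral of the formal symbol expansion converges, that the resulting symbol $a^\alpha(x,\xi)$ genuinely lies in the classical symbol class $S^{2\alpha}_{cl}$ (with the required uniform estimates in $x$ obtained from the compactness of $M$), and that the error terms in the parametrix construction translate, after integration in $\lambda$, into genuinely smoothing operators on $M$. This is where the parameter-dependent calculus with its precise control in both $\xi$ and $\lambda$ is indispensable, and it is the delicate technical heart of Seeley's theorem.
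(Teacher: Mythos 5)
The paper does not actually prove Theorem \ref{thm_Seeley}: it is quoted as a known result with a citation to \cite{Seeley_1966}, and the appendix only proves the noncompact analogue, Theorem \ref{thm_pseudodiff_op}, by periodization plus a resolvent/commutator argument that uses Seeley's theorem as a black box. Your proposal reconstructs Seeley's original argument (pass to the strictly positive $\widetilde P = P + \Pi_0$, build a parameter-dependent parametrix of $\widetilde P - \lambda$, integrate the homogeneous symbol terms $q_{-2-j}$ against $\lambda^\alpha$ over a contour, and check that the remainder integrates to a smoothing operator). That is the right strategy, it is exactly the route of the cited source, and the reduction $P^\alpha = \widetilde P^{\,\alpha} - \Pi_0$ and the identification of the principal symbol $(|\xi|_g^2)^\alpha$ are correct.

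The one genuine soft spot is the starting formula $\widetilde P^{\,\alpha} = \frac{i}{2\pi}\int_\Gamma \lambda^\alpha(\widetilde P - \lambda)^{-1}\,d\lambda$ for $0<\alpha<1$: the integrand decays only like $|\lambda|^{\alpha-1}$ along the rays of the sector, so the integral is not absolutely convergent, and no deformation of $\Gamma$ improves the decay at infinity, so "interpreted weakly after the contour is deformed" does not by itself justify the manipulations that follow (in particular the exchange of the $\lambda$-integration with the symbolic expansion and the convergence of $\int_\Gamma \lambda^\alpha R(\lambda)\,d\lambda$ requires the full decay bookkeeping). The standard repair, which you should make explicit, is to first treat a power of negative order: define $\widetilde P^{\,\alpha-1}$ (order $2(\alpha-1)\in(-2,0)$) by the now absolutely convergent contour integral, run the symbol computation there, and then set $\widetilde P^{\,\alpha} = \widetilde P\,\widetilde P^{\,\alpha-1}$, using that composition with the differential operator $\widetilde P \in \Psi^2_{1,0}(M)$ preserves classicality and multiplies principal symbols; alternatively integrate by parts in $\lambda$ to gain decay. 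This is precisely the device the paper itself uses in its proof of Theorem \ref{thm_pseudodiff_op}, where the power is lowered from $\alpha$ to $\beta=\alpha-1\in(-1,0)$ via the splitting \eqref{eq_4_22_p} before invoking the Dynkin--Helffer--Sj\"ostrand formula of Proposition \ref{prop_Dynkin_Helffer_Sjostrand_formula}. You should also record (one line, via the spectral theorem) that the contour-integral definition coincides with the functional-calculus definition, and fix the orientation/sign conventions in the residue identity $\frac{1}{2\pi i}\int_\Gamma \lambda^\alpha(|\xi|_g^2-\lambda)^{-k-1}\,d\lambda$, which as written is off by a factor $(-1)^{k+1}$ depending on the orientation of $\Gamma$; these are bookkeeping items, not gaps in the method.
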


\subsection{Resolvent bounds in Sobolev spaces}

In what follows, let $(M, g)$ be either a $C^\infty$ compact Riemannian manifold of dimension $n$ without boundary, or $(\mathbb{R}^n, g)$, where $g$ is a $C^\infty$ Riemannian metric on $\mathbb{R}^n$ that agrees with the Euclidean metric in a neighborhood of infinity. Assume further that $n \geq 2$, and let $P = -\Delta_g$ be the Laplace--Beltrami operator on $M$. 

For $s \in \mathbb{R}$, define the $L^2$-based Sobolev space on $M$ by
\begin{equation}
\label{eq_3_1_p}
H^s(M) = \{u \in \mathcal{S}'(M) : (1 + P)^{s/2} u \in L^2(M)\},
\end{equation}
equipped with the norm
\begin{equation}
\label{eq_3_2_p}
\|u\|_{H^s(M)} = \|(1 + P)^{s/2} u\|_{L^2(M)}.
\end{equation}
Here, in the case where $M$ is a compact manifold, we set $\mathcal{S}'(M) = \mathcal{D}'(M)$. The Bessel potential $(1 + P)^{s/2}$ is defined by the self-adjoint functional calculus. In the case where $(M, g) = (\mathbb{R}^n, g)$, with $g$ a $C^\infty$ Riemannian metric on $\mathbb{R}^n$ that agrees with the Euclidean metric in a neighborhood of infinity, the standard Sobolev space on $\mathbb{R}^n$ coincides with the one defined in \eqref{eq_3_1_p}, and the standard Sobolev norm is equivalent to the norm given in \eqref{eq_3_2_p}; see the discussion in Section~\ref{sec_Sobolev_spaces_direct_problem}.

We have the following basic result. In what follows, $ \sigma(P)$ denotes the spectrum of~$P$. 
\begin{prop}
\label{prop_resolvent_large_z}
Let $C_1, C_2 > 0$, and define 
\begin{equation}
\label{eq_3_3_p}
\Omega_{C_1,C_2} := \bigg\{z \in \mathbb{C} : \text{\emph{dist}}(z, \sigma(P)) \geq \frac{|z|}{C_1}, \, |z| \geq C_2\bigg\}.
\end{equation}
Then, for any $s \in \mathbb{R}$, the resolvent $(P - z)^{-1}$ satisfies the following estimates:  
\begin{equation}
\label{eq_3_4_p}
(P - z)^{-1} = \mathcal{O}_{s, C_1, C_2}(1): H^s(M) \to H^{s+2}(M),
\end{equation}
and 
\begin{equation}
\label{eq_3_5_p}
(P - z)^{-1} = \mathcal{O}_{s, C_1}\left(\frac{1}{|z|}\right): H^s(M) \to H^s(M),
\end{equation}
for all $z \in \Omega_{C_1, C_2}$.
\end{prop}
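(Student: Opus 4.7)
The plan is to reduce both estimates to the spectral theorem for the self-adjoint operator $P = -\Delta_g$, exploiting two key facts: the Sobolev norm in \eqref{eq_3_2_p} is defined precisely so that $(1+P)^{s/2}$ is an isometry from $H^s(M)$ onto $L^2(M)$, and the operators $(1+P)^{\sigma}$ and $(P-z)^{-1}$ commute by the joint functional calculus applied to~$P$.

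First, I would establish the $L^2 \to L^2$ bound. Since $P$ is nonnegative self-adjoint with $\sigma(P) \subset [0, \infty)$, the spectral theorem gives
\[
\|(P-z)^{-1}\|_{L^2(M) \to L^2(M)} = \frac{1}{\dist(z, \sigma(P))} \leq \frac{C_1}{|z|}, \quad z \in \Omega_{C_1, C_2}.
\]
Second, to obtain \eqref{eq_3_5_p}, I would simply intertwine: for $u \in H^s(M)$,
\[
\|(P-z)^{-1}u\|_{H^s(M)} = \|(1+P)^{s/2}(P-z)^{-1}u\|_{L^2(M)} = \|(P-z)^{-1}(1+P)^{s/2}u\|_{L^2(M)} \leq \frac{C_1}{|z|}\|u\|_{H^s(M)}.
\]

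Third, for the $H^s \to H^{s+2}$ bound \eqref{eq_3_4_p}, the key algebraic identity is
\[
(1+P)(P-z)^{-1} = \Id + (1+z)(P-z)^{-1},
\]
which follows from writing $1+P = (P-z) + (1+z)$. Combining this with the commutativity of $(1+P)^{s/2}$ and $(P-z)^{-1}$, I would compute
\[
\|(P-z)^{-1}u\|_{H^{s+2}(M)} = \|(1+P)(P-z)^{-1}(1+P)^{s/2}u\|_{L^2(M)} \leq \|u\|_{H^s(M)} + |1+z| \cdot \frac{C_1}{|z|}\|u\|_{H^s(M)}.
\]
Since $|1+z|/|z| \leq 1 + 1/C_2$ on $\Omega_{C_1,C_2}$, the right-hand side is bounded by a constant depending only on $s, C_1, C_2$, yielding the claim.

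There is no real obstacle here; the entire argument is a routine consequence of the self-adjoint functional calculus. The only nontrivial point is that, in the case $M = \mathbb{R}^n$, the norm in \eqref{eq_3_2_p} must be equivalent to the standard Sobolev norm — but this was already recorded in the discussion of Section \ref{sec_Sobolev_spaces_direct_problem}, and so the same argument applies uniformly in both settings under consideration.
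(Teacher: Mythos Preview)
Your proof is correct and follows essentially the same approach as the paper: both reduce the estimates to the functional calculus for the self-adjoint operator $P$, using that $(1+P)^{s/2}$ commutes with $(P-z)^{-1}$ and then bounding $\sup_{t\in\sigma(P)}\bigl|\tfrac{1+t}{t-z}\bigr|$ and $\sup_{t\in\sigma(P)}\tfrac{1}{|t-z|}$. Your explicit identity $(1+P)(P-z)^{-1}=\mathrm{Id}+(1+z)(P-z)^{-1}$ is just the algebraic unpacking of the paper's supremum bound.
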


\begin{proof}
Let $u \in H^s(M)$. Then, using \eqref{eq_3_2_p}, the functional calculus for self-adjoint operators, and \eqref{eq_3_3_p}, we get
\begin{align*}
\|(P-z)^{-1}u\|_{H^{s+2}(M)}&=\|(1+P)^{s/2+1}(P-z)^{-1}(1+P)^{-s/2}(1+P)^{s/2}u\|_{L^2(M)}\\
&\le \|(1+P)(P-z)^{-1}\|_{\mathcal{L}(L^2(M), L^2(M))}\|u\|_{H^s(M)}\\
&\le \sup_{t\in \sigma(P)}\bigg|\frac{1+t}{t-z}\bigg|\|u\|_{H^s(M)}=\mathcal{O}_{C_1,C_2}(1)\|u\|_{H^s(M)},
\end{align*}
which shows \eqref{eq_3_4_p}.

Similarly, we have 
\begin{align*}
\|(P-z)^{-1}u\|_{H^{s}(M)}&=\|(1+P)^{s/2}(P-z)^{-1}u\|_{L^2(M)}\\
&\le \|(P-z)^{-1}\|_{\mathcal{L}(L^2(M), L^2(M))}\|u\|_{H^s(M)}\\
&\le \sup_{t\in \sigma(P)}\frac{1}{|t-z|}\|u\|_{H^s(M)}=\mathcal{O}_{C_1}\bigg(\frac{1}{|z|}\bigg)\|u\|_{H^s(M)},
\end{align*}
which establishes \eqref{eq_3_5_p}.
\end{proof} 

By interpolating between the bounds \eqref{eq_3_4_p} and \eqref{eq_3_5_p}, we obtain the following result.
\begin{cor}
Let $C_1,C_2 > 0$, and let $\Omega_{C_1,C_2}$ be given by \eqref{eq_3_3_p}. Then, for any $s \in \mathbb{R}$ and $0 \leq \theta \leq 1$, the resolvent $(P - z)^{-1}$ satisfies the estimate
\[
(P-z)^{-1} = \mathcal{O}_{s,C_1,C_2, \theta}\left(\frac{1}{|z|^{1-\theta}}\right): H^s(M) \to H^{s+2\theta}(M),
\]
for all $z \in \Omega_{C_1,C_2}$. 
In particular, for $\theta = \frac{1}{2}$, and for any $s \in \mathbb{R}$, we have
\begin{equation}
\label{eq_3_7_p}
(P-z)^{-1} = \mathcal{O}_{s,C_1,C_2}\left(\frac{1}{|z|^{\frac{1}{2}}}\right): H^s(M) \to H^{s+1}(M),
\end{equation}
for all $z \in \Omega_{C_1,C_2}$.
\end{cor}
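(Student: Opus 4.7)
The plan is to derive the intermediate bound directly from the functional calculus, which handles both endpoint cases in a single stroke and shortcuts the complex interpolation formalism. Setting $v := (1+P)^{s/2} u$, so that $\|v\|_{L^2(M)} = \|u\|_{H^s(M)}$, and using that $(1+P)^{s/2}$, $(1+P)^\theta$, and $(P-z)^{-1}$ all mutually commute by the joint functional calculus of the self-adjoint operator $P$, I would write
\[
\|(P-z)^{-1} u\|_{H^{s+2\theta}(M)} = \|(1+P)^\theta (P-z)^{-1} v\|_{L^2(M)} \leq \sup_{t \in \sigma(P)} \frac{(1+t)^\theta}{|t-z|} \, \|u\|_{H^s(M)}.
\]

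The main, and essentially only, step is therefore the spectral estimate
\[
\sup_{t \in \sigma(P)} \frac{(1+t)^\theta}{|t-z|} \leq C(C_1, C_2, \theta)\, |z|^{\theta-1} \quad \text{for } z \in \Omega_{C_1, C_2}.
\]
I would split $t \in \sigma(P) \subset [0,\infty)$ into two regimes: (i) $t \leq 2|z|$, where $(1+t)^\theta \leq C(1+|z|)^\theta \lesssim |z|^\theta$ (absorbed into the constant via $|z| \geq C_2$) and $|t-z| \geq |z|/C_1$ by the definition of $\Omega_{C_1, C_2}$; (ii) $t > 2|z|$, where $|t-z| \geq t/2$ and the function $t \mapsto (1+t)^\theta / t \lesssim t^{\theta-1}$ is decreasing for $t$ large since $\theta-1 \leq 0$, giving $\lesssim |z|^{\theta-1}$. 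Both regimes yield the desired bound of order $|z|^{\theta-1}$.

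I do not anticipate serious obstacles: the estimate is a one-line spectral computation once one recognizes that the norms $\|\cdot\|_{H^s(M)}$ are defined via the very same functional calculus that diagonalizes $(P-z)^{-1}$. If one prefers the interpolation route hinted at by the statement, the same conclusion follows from complex interpolation of $(P-z)^{-1}: H^s(M) \to H^{s+2\eta}(M)$ between $\eta = 0$ (with bound $O(|z|^{-1})$ from \eqref{eq_3_5_p}) and $\eta = 1$ (with bound $O(1)$ from \eqref{eq_3_4_p}), together with the identification $[H^s(M), H^{s+2}(M)]_\theta = H^{s+2\theta}(M)$, which itself is transparent from the Bessel-potential definition of the Sobolev scale. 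The specialization $\theta = 1/2$ immediately yields \eqref{eq_3_7_p}.
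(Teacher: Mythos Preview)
Your argument is correct. The paper's own justification is the single clause ``by interpolating between the bounds \eqref{eq_3_4_p} and \eqref{eq_3_5_p}'', i.e.\ exactly the complex-interpolation route you sketch at the end. Your primary approach---bounding the spectral multiplier $\sup_{t\in\sigma(P)}(1+t)^\theta/|t-z|$ directly---is a minor but genuine variation: it reproduces, in one stroke, the same computation the paper used for the endpoint cases in Proposition~\ref{prop_resolvent_large_z}, and avoids invoking the interpolation identification $[H^s,H^{s+2}]_\theta=H^{s+2\theta}$. Since the Sobolev scale here is \emph{defined} via $(1+P)^{s/2}$, the two routes are essentially the same calculation viewed from different angles; your direct version is arguably the more self-contained one.
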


We have the following analog of Proposition \ref{prop_resolvent_large_z} in the case when the spectral parameter $z$ belongs to a bounded set.  
\begin{prop}
\label{prop_resolvent_bounded_z}
Let $C > 0$, and define
\begin{equation} 
\label{eq_3_9_new_D_C}
D_C := \{z \in \mathbb{C} \setminus \mathbb{R} : |z| \leq C\}.
\end{equation} 
Then, for any $s \in \mathbb{R}$, the resolvent $(P - z)^{-1}$ satisfies the estimate
\begin{equation}
\label{eq_3_9_p}
(P-z)^{-1} = \mathcal{O}_{s,C}\bigg(\frac{1}{|\emph{\text{Im}}\,z|}\bigg): H^s(M) \to H^{s+2}(M),
\end{equation}
for all $z \in D_C$.
\end{prop}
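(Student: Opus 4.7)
The plan is to follow essentially the same template as in the proof of Proposition \ref{prop_resolvent_large_z}, now exploiting only the self-adjointness of $P$ on $L^2(M)$ together with its nonnegativity ($\sigma(P)\subset[0,\infty)$), and tracking how the constants depend on $z$ when $z$ is confined to the bounded set $D_C$ from \eqref{eq_3_9_new_D_C}. The starting observation is that by the self-adjoint functional calculus, $P$ commutes with $(1+P)^{s/2}$ and with $(P-z)^{-1}$ for any $z\in\mathbb{C}\setminus\mathbb{R}$, so for $u\in H^s(M)$ one can write
\[
\|(P-z)^{-1}u\|_{H^{s+2}(M)}=\|(1+P)(P-z)^{-1}(1+P)^{s/2}u\|_{L^2(M)}.
\]

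Next I would employ the algebraic identity
\[
(1+P)(P-z)^{-1}=I+(1+z)(P-z)^{-1},
\]
so that the problem reduces to bounding $\|(P-z)^{-1}\|_{L^2(M)\to L^2(M)}$. Since $P$ is self-adjoint and nonnegative, the standard spectral bound yields
\[
\|(P-z)^{-1}\|_{L^2(M)\to L^2(M)}=\sup_{\lambda\in\sigma(P)}\frac{1}{|\lambda-z|}\le\frac{1}{|\mathrm{Im}\,z|},
\]
for every $z\in\mathbb{C}\setminus\mathbb{R}$. Combining these two facts, one gets
\[
\|(1+P)(P-z)^{-1}\|_{L^2(M)\to L^2(M)}\le 1+\frac{|1+z|}{|\mathrm{Im}\,z|}.
\]

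Finally, I would use the hypothesis $z\in D_C$, i.e. $|z|\le C$, which gives $|1+z|\le 1+C$ and $|\mathrm{Im}\,z|\le|z|\le C$, so that $1\le C/|\mathrm{Im}\,z|$. Hence
\[
1+\frac{|1+z|}{|\mathrm{Im}\,z|}\le\frac{C+(1+C)}{|\mathrm{Im}\,z|}=\frac{2C+1}{|\mathrm{Im}\,z|},
\]
and putting everything together,
\[
\|(P-z)^{-1}u\|_{H^{s+2}(M)}\le\frac{2C+1}{|\mathrm{Im}\,z|}\|(1+P)^{s/2}u\|_{L^2(M)}=\frac{2C+1}{|\mathrm{Im}\,z|}\|u\|_{H^{s}(M)},
\]
which is precisely \eqref{eq_3_9_p}. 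There is no real obstacle here: all ingredients (functional calculus, the elementary spectral resolvent bound, and the algebraic identity above) are routine; the only point requiring a bit of care is extracting the dependence on $C$ in such a way that a single factor $|\mathrm{Im}\,z|^{-1}$ absorbs both the $L^2\to L^2$ resolvent bound and the gain of two derivatives, which is handled by the estimate $1\le C/|\mathrm{Im}\,z|$ that holds throughout $D_C$.
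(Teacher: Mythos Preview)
Your proof is correct. The paper does not give an explicit proof of this proposition; it simply states it as the analog of Proposition~\ref{prop_resolvent_large_z} for bounded $z$, and your argument fills in precisely the expected details, using the same functional-calculus reduction to $\sup_{t\in\sigma(P)}|(1+t)/(t-z)|$ as in that proof.
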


\subsection{The Dynkin--Helffer--Sj\"ostrand formula}
Let $\psi\in C^\infty_0(\R)$ be such that $\psi=1$ near $0$, and let $-1<\beta<0$. To pass from resolvent estimates to estimates for the operator $(1-\psi)(P)P^{\beta}$  in the proof of Theorem \ref{thm_pseudodiff_op}, we shall use the following extension of the Dynkin--Helffer--Sj\"ostrand formula, see \cite[Theorem 8.1]{Dimassi_Sjostrand_book}. To state the result, we let  $f(t)=(1-\psi(t))t^{\beta}$, $t\ge 0$. Note that $f(t)=0$ near $t=0$, and $f\in (C^\infty\cap L^\infty)([0,\infty))$.  Let $\tilde \psi\in C^\infty_0(\C)$ be an almost holomorphic extension of $\psi$ such that $\tilde \psi=1$ in a complex neighborhood of $0$. Here we recall from \cite[page 93]{Dimassi_Sjostrand_book} that $\tilde \psi\in C^\infty_0(\C)$ is an  almost holomorphic extension of $\psi$ if for all $N\ge 0$, 
\begin{equation}
\label{eq_3_20_p}
|\bar\p\tilde \psi(z)|\le C_N |\text{Im}\, z|^N, \quad z\in\C,
\end{equation}
and 
\begin{equation}
\label{eq_3_21_p}
\tilde \psi|_{\R}=\psi. 
\end{equation}

\begin{prop} 
\label{prop_Dynkin_Helffer_Sjostrand_formula}
Let $-1<\beta<0$. We have in the sense of bounded linear operators on $L^2(M)$, 
\begin{equation}
\label{eq_3_22_p}
\begin{aligned}
f(P)&=(1-\psi(P))P^{\beta}\\
&=\frac{1}{2\pi i}\int_{\p \omega} (1-\tilde \psi(z))z^{\beta }(z-P)^{-1}dz+\frac{1}{\pi}\int_{\omega}(\bar\p \tilde \psi)(z)z^{\beta}(z-P)^{-1}L(dz).
\end{aligned}
\end{equation}
Here $z^\beta=e^{\beta \log z}$, where we take the principal branch of the logarithm, $\omega:=\{z\in \C: |\arg z|<\pi/4\}$,  the boundary $\p \omega$ is positively oriented, and $L(dz)$ is the Lebesgue measure on $\C$.  
\end{prop}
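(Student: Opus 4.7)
The plan is to prove \eqref{eq_3_22_p} by first establishing the corresponding scalar identity pointwise on the spectrum $\sigma(P) \subset [0,\infty)$, verifying absolute convergence of both integrals in the operator norm of $\mathcal{L}(L^2(M))$, and then lifting to operators via the spectral theorem. This is the standard route for Helffer--Sj\"ostrand type formulas adapted here to an unbounded symbol that is only almost holomorphic in a sectorial region.

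For the scalar step, I fix $t \in [0,\infty)$ and set $\omega_{R,\epsilon} := (\omega \cap B_R(0)) \setminus \overline{B_\epsilon(t)}$ for $R$ large and $\epsilon$ small. Since $\omega = \{|\arg z|<\pi/4\}$ avoids the negative real axis, $z^\beta$ is holomorphic on $\omega$; consequently
\[
\bar\partial\bigl[(1-\tilde\psi(z)) z^\beta (z-t)^{-1}\bigr] = -(\bar\partial\tilde\psi)(z)\, z^\beta (z-t)^{-1} \quad \text{on } \omega_{R,\epsilon}.
\]
Applying Stokes' formula to the 1-form $(1-\tilde\psi(z)) z^\beta (z-t)^{-1}\, dz$ and using $d\bar z\wedge dz = 2i\, L(dz)$, the inner circle $\partial B_\epsilon(t)$ contributes $2\pi i (1-\psi(t))t^\beta = 2\pi i f(t)$ in the limit $\epsilon \to 0$ by Cauchy's formula (note $1-\tilde\psi = 1-\psi$ on $\mathbb{R}$ by \eqref{eq_3_21_p}, and at $t=0$ the factor $1-\psi$ vanishes so the evaluation is well-posed). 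The arc $\omega \cap \{|z|=R\}$ contributes $O(R^\beta)$, which tends to $0$ as $R\to\infty$ precisely because $\beta<0$: indeed the integrand has size $O(|z|^{\beta-1})$ and the arc length is $O(R)$. The remaining boundary pieces yield the first integral on the right of \eqref{eq_3_22_p}, and the interior produces the second, giving the scalar identity.

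For the operator step, I verify absolute convergence of both sides as Bochner integrals in $\mathcal{L}(L^2(M))$. On $\partial\omega$ one has $\arg z = \pm\pi/4$, so $\mathrm{dist}(z,\sigma(P))\geq c|z|$; for $|z|$ large, $z\in\Omega_{C_1,C_2}$ of \eqref{eq_3_3_p}, and \eqref{eq_3_5_p} gives $\|(z-P)^{-1}\| = O(|z|^{-1})$. Combined with $|1-\tilde\psi(z)|\,|z^\beta|\leq |z|^\beta$ (vanishing in a neighborhood of $0$ where $\tilde\psi=1$), the integrand is bounded by $O(|z|^{\beta-1})$, which is integrable along the two rays of $\partial\omega$ since $\beta<0$. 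For the interior integral, $\bar\partial\tilde\psi$ is supported in a compact subset of $\omega$ that is bounded away from $0$ (since $\tilde\psi=1$ near $0$); on any such compact set Proposition \ref{prop_resolvent_bounded_z} gives $\|(z-P)^{-1}\| = O(|\mathrm{Im}\,z|^{-1})$, and the almost-analyticity bound \eqref{eq_3_20_p} with $N=2$ makes the integrand $O(|\mathrm{Im}\,z|)$, hence bounded.

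Once both integrals define bounded operators, the operator identity \eqref{eq_3_22_p} follows by testing against $u,v\in L^2(M)$, applying Fubini on $\langle\, \cdot\, u,v\rangle$ (justified by the absolute convergence just established) and the spectral measure $d\langle E_\lambda u,v\rangle$ supported on $\sigma(P)\subset[0,\infty)$, and invoking the scalar identity $\lambda\in[0,\infty)$. The main technical obstacle is making the interchange with the spectral measure rigorous and confirming that the resolvent norm bounds \eqref{eq_3_5_p} and \eqref{eq_3_9_p} apply uniformly on the relevant contours; this requires being careful that $\partial\omega$ stays a definite angular distance from $\sigma(P)$ and that $\bar\partial\tilde\psi$ vanishes to any prescribed order on $\sigma(P)$, both of which are guaranteed by construction of $\tilde\psi$ and the geometry of $\omega$.
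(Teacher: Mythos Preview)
Your proposal is correct and follows essentially the same approach as the paper: establish the scalar identity on $[0,\infty)$ via the Cauchy--Pompeiu/Stokes formula on the truncated sector $\omega\cap\{|z|<R\}$, use $\beta<0$ to kill the arc contribution as $R\to\infty$, and lift to operators via the spectral theorem and Fubini. The paper invokes the Cauchy--Pompeiu formula directly for $\tilde f(\zeta)=(1-\tilde\psi(\zeta))\zeta^\beta$ rather than removing a disk around $t$, but this is the same computation; you are slightly more explicit than the paper about the operator-norm absolute convergence of the two integrals as Bochner integrals (using \eqref{eq_3_5_p} on $\partial\omega$ and \eqref{eq_3_20_p} with \eqref{eq_3_9_p} on the compact support of $\bar\partial\tilde\psi$), which is a welcome addition.
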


\begin{rem}
Note that the integration in the first integral in the right hand side of \eqref{eq_3_22_p} occurs over two rays  in the right half plane $\text{Re}\, z > 0$, bounded away from $z=0$, and the integration in the second integral in the right hand side of \eqref{eq_3_22_p} occurs over a compact subset of the right half plane $\text{Re}\, z > 0$. 
\end{rem}

\begin{proof} 
We shall follow the proof of \cite[Theorem 8.1]{Dimassi_Sjostrand_book}. To that end, we let $\tilde f(z)=(1-\tilde \psi(z))z^\beta\in C^\infty(\{z\in \C: \text{Re}\, z>0\})$, and consider $\tilde \omega:=\omega\cap \{z\in\C: |z|< R\}$, $R>0$ large. Then by Cauchy's integral formula, we get 
\begin{equation}
\label{eq_3_23_p}
\tilde f(\zeta)=\frac{1}{2\pi i}\int_{\p \tilde \omega}\frac{\tilde f(z)}{z-\zeta}dz-\frac{1}{\pi}
\int_{\tilde \omega} \frac{ (\bar \p \tilde f)(z)}{z-\zeta}L(dz), \quad \zeta\in \tilde \omega.
\end{equation}

Letting $R\to \infty$ in \eqref{eq_3_23_p}, we obtain that 
\begin{equation}
\label{eq_3_24_p}
\tilde f(\zeta)=\frac{1}{2\pi i}\int_{\p \omega}\frac{\tilde f(z)}{z-\zeta}dz-\frac{1}{\pi}\int_{\omega} \frac{ (\bar \p \tilde f)(z)}{z-\zeta}L(dz), \quad \zeta\in \omega.
\end{equation}
Indeed, this follows from the fact that as $\beta<0$, we have 
\[
\bigg| \int_{|z|=R, |\arg z|<\pi/4}\frac{(1-\tilde \psi(z))z^\beta}{z-\zeta}dz\bigg|\le \mathcal{O}(R^\beta)\to 0,
\]
as $R\to \infty$, and the fact that in  the second integral in the right hand side of \eqref{eq_3_23_p}, the integrand has a compact support, independent of $R$, thanks to $(\bar \p \tilde f)(z)=-(\bar \p \tilde \psi)(z)z^\beta$.

Restricting \eqref{eq_3_24_p} to the positive real axis, in view of \eqref{eq_3_21_p}, we get 
\begin{equation}
\label{eq_3_25_p}
 f(\lambda)=\frac{1}{2\pi i}\int_{\p \omega}\frac{\tilde f(z)}{z-\lambda}dz-\frac{1}{\pi}\int_{\omega} \frac{ (\bar \p \tilde f)(z)}{z-\lambda}L(dz), \quad \lambda> 0.
\end{equation}

As $f\in (C^\infty\cap L^\infty)([0,\infty))$, using the bounded functional calculus for self-adjoint operators, we get 
\begin{equation}
\label{eq_3_26_p}
(f(P)u,u)_{L^2(M)}=\int_0^\infty f(\lambda)d(E_\lambda u, u)_{L^2(M)},
\end{equation}
for all $u\in L^2(M)$. Here $d(E_\lambda u, u)_{L^2(M)}$ is the spectral measure of $P$. Substituting \eqref{eq_3_25_p} into \eqref{eq_3_26_p} and using Fubini's theorem, we obtain that 
\begin{equation}
\label{eq_3_27_p}
\begin{aligned}
(f(P)u,u)_{L^2(M)}=&\frac{1}{2\pi i}\int_0^\infty \int_{\p \omega}\frac{\tilde f(z)}{z-\lambda}dzd(E_\lambda u, u)_{L^2(M)}\\
&+\frac{1}{\pi}\int_0^\infty \int_{\omega} \frac{ (\bar \p \tilde \psi)(z)z^\beta}{z-\lambda}L(dz)  d(E_\lambda u, u)_{L^2(M)}\\
= &(Qu,u)_{L^(M)},
\end{aligned}
\end{equation}
where 
\[
Q=\frac{1}{2\pi i} \int_{\p \omega}\tilde f(z)(z-P)^{-1}dz+\frac{1}{\pi} \int_{\omega}(\bar \p \tilde \psi)(z) z^\beta(z-P)^{-1}L(dz).
\]
Note that an application of Fubini's theorem in the first integral in the right hand side of \eqref{eq_3_27_p} is justified thanks to the fact that $\beta<0$ and it is justified in the second integral due to \eqref{eq_3_20_p}. This shows  \eqref{eq_3_22_p}. 
\end{proof}

\subsection{Some useful facts about pseudodifferential operators}

In order to check that an operator is pseudodifferential, we shall use the following result, which is a direct consequence of \cite[Proposition 18.1.19]{Hormander_book_III}. 
\begin{prop}
\label{prop_Hormander_pseudo}
If $A: C_0^\infty(\R^n)\to C^\infty(\R^n)$ is a continuous linear map and for all $\chi_0\in C^\infty_0(\R^n)$, the operator $\chi _0A \chi_0\in \Psi^m_{1,0}(\R^n)$ with some $m\in \R$, then $A\in \Psi^m_{1,0}(\R^n)$. 
\end{prop}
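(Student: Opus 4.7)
The plan is to reduce the statement to H\"ormander's local characterization of pseudodifferential operators from \cite[Proposition 18.1.19]{Hormander_book_III}. That result asserts that a continuous linear map $A: C_0^\infty(\R^n) \to C^\infty(\R^n)$ belongs to $\Psi^m_{1,0}(\R^n)$ if and only if $\phi_1 A \phi_2 \in \Psi^m_{1,0}(\R^n)$ for every pair $\phi_1, \phi_2 \in C_0^\infty(\R^n)$. Since the continuity of $A: C_0^\infty(\R^n) \to C^\infty(\R^n)$ is already part of the hypothesis, it suffices to establish this (a priori stronger) off-diagonal localization property.

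First, I would promote the assumed diagonal property $\chi_0 A \chi_0 \in \Psi^m_{1,0}(\R^n)$ to its off-diagonal counterpart. Given arbitrary $\phi_1, \phi_2 \in C_0^\infty(\R^n)$, choose $\chi_0 \in C_0^\infty(\R^n)$ with $\chi_0 \equiv 1$ on a neighborhood of $\supp \phi_1 \cup \supp \phi_2$, and write
\[
\phi_1 A \phi_2 = \phi_1 (\chi_0 A \chi_0) \phi_2.
\]
The class $\Psi^m_{1,0}(\R^n)$ is closed under multiplication by $C_0^\infty(\R^n)$ functions acting on the $x$ or $y$ variable, because any such multiplier is a symbol of order zero whose derivatives remain compactly supported; this is an immediate consequence of the Leibniz rule applied to the symbol estimates defining $S^m_{1,0}(\R^n \times \R^n \times \R^n)$. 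Combined with $\chi_0 A \chi_0 \in \Psi^m_{1,0}(\R^n)$, this yields $\phi_1 A \phi_2 \in \Psi^m_{1,0}(\R^n)$ for all $\phi_1, \phi_2 \in C_0^\infty(\R^n)$.

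Second, I would invoke \cite[Proposition 18.1.19]{Hormander_book_III} to conclude that $A \in \Psi^m_{1,0}(\R^n)$. I do not anticipate a genuine obstacle: once the off-diagonal localization is in hand, the conclusion is essentially immediate from the cited proposition, whose proof assembles a global symbol from the local ones via a partition-of-unity argument. For independent verification, one could also construct the symbol directly: taking a locally finite partition of unity $\{\phi_j\}_{j \in \N}$ of $\R^n$, decomposing $A = \sum_{j,k} \phi_j A \phi_k$, and selecting local symbols $a_{jk} \in S^m_{1,0}(\R^n \times \R^n \times \R^n)$ representing each piece with $(x,y)$-support in $\supp \phi_j \times \supp \phi_k$. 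The only bookkeeping point is that on any compact $K \Subset \R^n \times \R^n$ only finitely many pairs $(j,k)$ contribute, so the assembled sum is a well-defined element of $S^m_{1,0}(\R^n \times \R^n \times \R^n)$ in the local sense used in Section~\ref{sec_pseudodiff_op_1}, and it represents $A$.
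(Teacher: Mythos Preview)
Your proof is correct and follows essentially the same approach as the paper: both invoke \cite[Proposition 18.1.19]{Hormander_book_III} to reduce to the off-diagonal condition $\phi_1 A \phi_2 \in \Psi^m_{1,0}(\R^n)$, then derive it from the diagonal hypothesis by choosing a single cutoff $\chi_0$ equal to $1$ near $\supp\phi_1 \cup \supp\phi_2$ and writing $\phi_1 A \phi_2 = \phi_1(\chi_0 A \chi_0)\phi_2$. Your additional remarks on closure under cutoff multiplication and the partition-of-unity reconstruction are correct elaborations but not needed beyond what the paper already does.
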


\begin{proof}
Indeed, it follows from \cite[Proposition 18.1.19]{Hormander_book_III} that to check that  $A\in \Psi^m_{1,0}(\R^n)$, one has to show that for all $\chi,\tilde \chi\in C^\infty_0(\R^n)$, the operator $\chi A\tilde \chi\in \Psi^m_{1,0}(\R^n)$. Now letting $\chi_0\in C^\infty_0(\R^n)$ be such that $\chi_0=1$ near $\supp(\chi)\cup\supp(\tilde \chi)$, we may write $\chi A\tilde \chi=\chi (\chi_0A\chi_0)\tilde \chi$. Therefore, if one shows that $\chi_0A\chi_0\in  \Psi^m_{1,0}(\R^n)$ then $\chi A\tilde \chi\in \Psi^m_{1,0}(\R^n)$.
\end{proof}

Following \cite[page 28]{Grigis_Sjostrand_book}, the operator $A: C^\infty(M)\to \mathcal{D}'(M)$ is said to be smoothing if its Schwartz kernel  $K_A(x,y)\in C^\infty(M\times M)$. A smoothing operator $A$ satisfies $A\in \Psi^{m}_{1,0}(M)$ for all $m\in \R$, see 
\cite[page 28]{Grigis_Sjostrand_book}.
\begin{prop}
\label{prop_smoothing}
If for all 
$N\ge 0$, 
\begin{equation}
\label{eq_3_11_p}
A: H^{-N}(M)\to H^N(M)
\end{equation}
is bounded then $A$ is smoothing. 
\end{prop}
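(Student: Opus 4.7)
The plan is to show $K_A \in H^N(M \times M)$ for every integer $N \geq 0$; together with the Sobolev embedding theorem on $M \times M$, this yields $K_A \in C^\infty(M \times M)$. Recall that $K_A \in \mathcal{D}'(M \times M)$ is characterized by $\langle K_A, \phi \otimes \psi \rangle = \langle A\psi, \phi \rangle_{L^2(M)}$ for $\phi, \psi \in C_0^\infty(M)$.

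The key estimate, obtained from the hypothesis \eqref{eq_3_11_p} combined with the fact that, via the $L^2(M)$-pairing, $H^N(M)$ is the (anti-)dual of $H^{-N}(M)$ --- which is immediate from the definitions \eqref{eq_3_1_p}, \eqref{eq_3_2_p} and the self-adjointness of $(1+P)^{N/2}$ --- is the bilinear bound
\[
|\langle K_A, \phi \otimes \psi \rangle| = |\langle A\psi, \phi \rangle_{L^2(M)}| \leq \|A\psi\|_{H^N(M)} \|\phi\|_{H^{-N}(M)} \leq C_N \|\phi\|_{H^{-N}(M)} \|\psi\|_{H^{-N}(M)},
\]
valid for all $\phi, \psi \in C_0^\infty(M)$.

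Next I would introduce the anisotropic Sobolev space $H^{N,N}(M \times M) := \{u \in \mathcal{D}'(M \times M) : (1+P_x)^{N/2} (1+P_y)^{N/2} u \in L^2(M \times M)\}$ together with its negative-order counterpart $H^{-N,-N}(M \times M)$, noting that, under the $L^2(M \times M)$-pairing, these spaces are in natural duality. Since finite sums of pure tensor products $\phi \otimes \psi$ with $\phi, \psi \in C_0^\infty(M)$ are dense in $H^{-N,-N}(M \times M)$ --- a standard fact, checked via an eigenfunction expansion in the compact case and via Plancherel in the $\mathbb{R}^n$ case, using the equivalence of Sobolev norms recalled in Section \ref{sec_Sobolev_spaces_direct_problem} --- the estimate above extends by continuity to a bounded linear functional on $H^{-N,-N}(M \times M)$, and Riesz representation then gives $K_A \in H^{N,N}(M \times M)$. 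The continuous inclusion $H^{N,N}(M \times M) \hookrightarrow H^N(M \times M)$, valid for $N \geq 0$, follows from the pointwise inequality $(1 + a + b)^N \leq (1 + a)^N (1 + b)^N$, $a, b \geq 0$, applied via the joint functional calculus of the commuting nonnegative self-adjoint operators $P_x$ and $P_y$ on $L^2(M \times M)$; recall that the Laplace--Beltrami operator for the product metric on $M \times M$ is $P_x + P_y$, so that the $H^N(M \times M)$ norm is realized by $(1 + P_x + P_y)^{N/2}$ acting on $L^2(M \times M)$. Letting $N$ range over all of $\mathbb{N}$ and applying Sobolev embedding on $M \times M$ then concludes $K_A \in C^\infty(M \times M)$, as desired.

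The hard part in carrying out this plan is the careful identification of $H^{N,N}(M \times M)$ with the $L^2$-dual of $H^{-N,-N}(M \times M)$ and the corresponding density of tensor products $\phi \otimes \psi$ in the latter; these are routine in the compact case (using Weyl's law together with an orthonormal eigenbasis of $-\Delta_g$) but require a slightly more careful formulation in the $\mathbb{R}^n$ setting, where one uses that the Sobolev spaces defined via $P = -\Delta_g$ coincide with the standard ones and that $C_0^\infty(\mathbb{R}^n)$ is dense in $H^{-N}(\mathbb{R}^n)$ for every $N$.
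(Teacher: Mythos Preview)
Your argument is correct and essentially self-contained, but it takes a different route from the paper. The paper's proof is a two-line reduction: since $\mathcal{E}'(M)=\bigcup_N H^{-N}(M)$ and $C^\infty(M)=\bigcap_N H^N(M)$ (with the natural topologies), the hypothesis immediately gives a continuous extension $A:\mathcal{E}'(M)\to C^\infty(M)$, and then the equivalence of this mapping property with $K_A\in C^\infty(M\times M)$ is simply cited from \cite[Exercise~1.11]{Grigis_Sjostrand_book}. Your approach instead unpacks this equivalence by hand: you bound $K_A$ as a functional on $H^{-N,-N}(M\times M)$ via the bilinear estimate, use density of tensor products to identify $K_A\in H^{N,N}(M\times M)$, embed into $H^N(M\times M)$ via joint functional calculus, and conclude with Sobolev embedding. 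What your approach buys is independence from the external reference and an explicit understanding of why the result holds; what it costs is the bookkeeping you flag at the end (duality and density for the anisotropic spaces, especially in the noncompact case), which the paper simply outsources.
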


\begin{proof}
 Indeed, if \eqref{eq_3_11_p} holds for all $N\ge 0$, then $A$ can be extended to a continuous operator: $\mathcal{E}'(M)\to C^\infty(M)$.  By \cite[Exercise 1.11, page 17]{Grigis_Sjostrand_book}, the latter is equivalent to the fact that $K_A(x,y)\in C^\infty(M\times M)$. 
\end{proof}

\subsection{Proof of Theorem \ref{thm_pseudodiff_op}}

Let $(M_1,g_1)$ be $(\R^n,g_1)$, $n\ge 2$, where $g_1$ is a $C^\infty$ Riemannian metric on $\R^n$, which agrees with the Euclidean one in a neighborhood of infinity. Let $P_1=-\Delta_{g_1}$ be the Laplace--Beltrami operator on $M_1$, and  let $0<\alpha<1$. 

Let $\psi\in C^\infty_0(\R)$ be such that $\psi=1$ near $0$. We have the following observation. 
\begin{lem}
\label{lem_near_zero}
The operator $\psi(P_1)P_1^\alpha$ is smoothing on $M_1$. 
\end{lem}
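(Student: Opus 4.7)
The plan is to apply Proposition \ref{prop_smoothing}, so it suffices to show that $\psi(P_1)P_1^\alpha$ extends to a bounded map $H^{-N}(M_1)\to H^N(M_1)$ for every $N\ge 0$. The key observation is that all operators in sight are functions of the self-adjoint operator $P_1$, so everything reduces to a single bounded-functional-calculus check.

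First, I would use the characterization \eqref{eq_3_1_p}--\eqref{eq_3_2_p} of the Sobolev spaces $H^s(M_1)$ via the Bessel potential $(1+P_1)^{s/2}$. From this it follows that $\psi(P_1)P_1^\alpha:H^{-N}(M_1)\to H^N(M_1)$ is bounded if and only if
\[
(1+P_1)^{N/2}\,\psi(P_1)\,P_1^\alpha\,(1+P_1)^{N/2}: L^2(M_1)\to L^2(M_1)
\]
is bounded. Since the three factors all commute as functions of $P_1$, this composition equals $f(P_1)$ where
\[
f(\lambda) := (1+\lambda)^N \psi(\lambda)\lambda^\alpha,\qquad \lambda\in\sigma(P_1)\subset[0,\infty).
\]

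Second, since $\psi\in C^\infty_0(\R)$ has compact support, say contained in $[-R,R]$, the function $f$ vanishes outside $[0,R]$, while on $[0,R]$ it is continuous (here we use $\alpha>0$, which makes $\lambda^\alpha$ continuous up to $\lambda=0$). Thus $f$ is a bounded Borel function on $\sigma(P_1)$, so by the bounded self-adjoint functional calculus, $f(P_1)$ is a bounded operator on $L^2(M_1)$ with
\[
\|f(P_1)\|_{L^2\to L^2}\ \le\ \sup_{\lambda\ge 0}|f(\lambda)|\ <\ \infty.
\]
Combining this with the reformulation from the first step yields the boundedness $\psi(P_1)P_1^\alpha:H^{-N}(M_1)\to H^N(M_1)$ for every $N\ge 0$, and Proposition \ref{prop_smoothing} then gives the smoothing property.

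There is essentially no obstacle here: the only thing to be mindful of is the behavior of $\lambda^\alpha$ at $\lambda=0$, which is harmless because $\alpha\in(0,1)$ ensures continuity of $\lambda\mapsto\lambda^\alpha$ at the origin, and the boundedness at infinity is taken care of by the compact support of $\psi$.
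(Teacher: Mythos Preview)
Your proof is correct and essentially identical to the paper's: both invoke Proposition~\ref{prop_smoothing}, rewrite the $H^{-N}\to H^N$ bound via the Bessel potentials $(1+P_1)^{N/2}$, and conclude by noting that $\sup_{t\in\sigma(P_1)}|(1+t)^N\psi(t)t^\alpha|<\infty$ thanks to the compact support of $\psi$.
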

\begin{proof}
Indeed, by Proposition \ref{prop_smoothing} it suffices to check that for all $N\ge 0$, 
\[
\psi(P_1)P_1^\alpha: H^{-N}(M_1)\to H^N(M_1)
\]
is bounded. To that end, letting $N\ge 0$, $u\in H^{-N}(M_1)$, and using \eqref{eq_3_2_p}, we get 
\begin{align*}
\|\psi(P_1)P_1^\alpha u\|_{H^N(M_1)}&=\|(1+P_1)^{N/2} \psi(P_1)P_1^\alpha (1+P_1)^{N/2} (1+P_1)^{-N/2}u\|_{L^2(M_1)}\\
&\le \|(1+P_1)^N \psi(P_1)P_1^\alpha \|_{\mathcal{L}(L^2(M_1), L^2(M_1))}\|u\|_{H^{-N}(M_1)}\\
&\le \sup_{t\in\sigma(P_1)}|(1+t)^N\psi(t)t^\alpha|\|u\|_{H^{-N}(M_1)}=\mathcal{O}_N(1)\|u\|_{H^{-N}(M_1)}.
\end{align*}
\end{proof}

In view of Lemma \ref{lem_near_zero}, we only have to show that $(1-\psi)(P_1)P_1^\alpha\in \Psi^{2\alpha}_{1,0}(M_1)$. In doing so, we shall use Proposition \ref{prop_Hormander_pseudo}. To that end, we first claim that 
\begin{equation}
\label{eq_4_1_p}
(1-\psi)(P_1)P_1^\alpha: C^\infty_0(M_1)\to C^\infty(M_1)
\end{equation}
is continuous. 
Indeed, we have $C^\infty_0(\R^n)\subset \mathcal{D}(P_1^k) = H^{2k}(M_1)$ for all $k\in \N$, and by the functional calculus of self-adjoint operators, $(1-\psi)(P_1)P_1^\alpha: \mathcal{D}(P_1^k)\to \mathcal{D}(P_1^{k-\alpha})$ continuously. Therefore, $(1-\psi)(P_1)P_1^\alpha : C^\infty_0(M_1)\to H^k(M_1)$ is continuous, for all $k\in\N$, and therefore, \eqref{eq_4_1_p} follows by the Sobolev embedding.

In view of Proposition \ref{prop_Hormander_pseudo}, it suffices to check that for all $\chi_0\in C^\infty_0(M_1)$,  
\begin{equation}
\label{eq_Hormander_check_p}
\chi_0(1-\psi)(P_1)P_1^\alpha\chi_0\in \Psi^{2\alpha}_{1,0}(M_1).
\end{equation}
 To that end, we let $\chi_0\in C^\infty_0(M_1)$ and let $B(0,R_0):=\{x\in \R^n: |x|<R_0\}$ be the Euclidean ball such that 
\[
\supp(\chi_0)\cup\supp(g_1-e)\subset B(0,R_0). 
\]
Here $e$ stands for the Euclidean metric on $\R^n$. Let $R>R_0$ and let $\Pi_R:=\{x\in\R^n: |x_j|<R,\, j=1,2\}$. We have $B(0,R_0) \Subset \Pi_R$.
Let us extend the metric $g_1|_{\Pi_R}$ periodically to all of $\R^n$ and denote this extension by $g_2\in C^\infty(\R^n)$, and let $M_2=\R^n/(2R\Z)^n$. Then $(M_2,g_2)$ is a $C^\infty$ compact Riemannian manifold without boundary. We can view $B(0,R_0)\subset M_2$, and let us set $P_2=-\Delta_{g_2}$ on $M_2$.  Note that $P_1=P_2$ on $B(0,R_0)$. 

We observe that $(1-\psi)(P_2)P_2^\alpha\in \Psi^{2\alpha}_{1,0}(M_2)$ as by Theorem \ref{thm_Seeley}, $P_2^\alpha\in \Psi^{2\alpha}_{1,0}(M_2)$ and $(1-\psi)(P_2)\in \Psi^{0}_{1,0}(M_2)$. Thus, to prove \eqref{eq_Hormander_check_p} it suffices to show that 
\begin{equation}
\label{eq_4_21_p}
\chi_0 (1-\psi)(P_1)P_1^\alpha\chi_0- \chi_0 (1-\psi)(P_2)P_2^\alpha\chi_0
\end{equation}
is smoothing on $M_1$. In doing so, we shall first prove a similar property for the difference of cutoff resolvents. In what follows $\sigma(P_j)$ stands for the spectrum of $P_j$, $j=1,2$.

\begin{lem}
Let $C_1, C_2 > 0$, and define
\[
\tilde{\Omega}_{C_1,C_2} := \bigg\{z \in \mathbb{C} : \text{\emph{dist}}(z,\sigma(P_1)\cup \sigma(P_2)) \geq \frac{|z|}{C_1}, \, |z| \geq C_2\bigg\}.
\]
Then, for any $s \in \mathbb{R}$ and $N, L \geq 0$, we have 
\begin{equation}
\label{eq_4_17_p}
\chi_0(P_1-z)^{-1}\chi_0-\chi_0(P_2-z)^{-1}\chi_0=\mathcal{O}_{s,N,L,C_1,C_2}\bigg(\frac{1}{|z|^L}\bigg): H^{s}(M_1)\to H^{s+N}(M_1),
\end{equation}
for all $z \in \tilde{\Omega}_{C_1,C_2}$.
Furthermore, let $C > 0$, and let $D_C$ be given by \eqref{eq_3_9_new_D_C}. Then, for any $s \in \mathbb{R}$ and $N \geq 0$, there exists $L = L(N) \geq 0$ such that 
\begin{equation}
\label{eq_4_18_p}
\chi_0(P_1-z)^{-1}\chi_0- \chi_0(P_2-z)^{-1}\chi_0=\mathcal{O}_{s,N,C}\bigg(\frac{1}{|\text{\emph{Im}}\, z|^{L}}\bigg): H^{s}(M_1)\to H^{s+N}(M_1),
\end{equation}
for all $z \in D_C$.
\end{lem}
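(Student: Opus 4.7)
The plan is to establish an exact gluing identity for $\chi_0(P_1-z)^{-1}\chi_0 - \chi_0(P_2-z)^{-1}\chi_0$ as a composition involving disjoint-support cutoffs, and then iterate commutator identities to convert the disjointness into arbitrary regularity gain coupled with arbitrary polynomial decay in $|z|$ (respectively $|\mathrm{Im}\,z|^{-1}$ for \eqref{eq_4_18_p}).

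First, I would pick $\phi \in C_0^\infty(B(0,R_0))$ with $\phi = 1$ near $\mathrm{supp}(\chi_0)$. Since $\mathrm{supp}(\phi)$ sits compactly inside a fundamental domain of $M_2$ (as well as inside $B(0,R_0) \subset M_1$), the function $\phi(P_2-z)^{-1}\chi_0 f$, initially defined on $M_2$, can be identified unambiguously via extension by zero with a smooth compactly supported function on $M_1 = \mathbb{R}^n$. As $g_1 = g_2$ on $B(0,R_0)$, the coefficients of $P_1$ and $P_2$ agree on $\mathrm{supp}(\phi)$, which yields
$$(P_1-z)\bigl[\phi(P_2-z)^{-1}\chi_0 f\bigr] = \chi_0 f + [P_2,\phi](P_2-z)^{-1}\chi_0 f$$
(using $\phi\chi_0 = \chi_0$). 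Applying $(P_1-z)^{-1}$, then multiplying on the left by $\chi_0$ and using $\chi_0\phi = \chi_0$, the identity
$$\chi_0(P_1-z)^{-1}\chi_0 - \chi_0(P_2-z)^{-1}\chi_0 = -\chi_0(P_1-z)^{-1}[P_2,\phi](P_2-z)^{-1}\chi_0$$
will follow.

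Next, I would exploit that $[P_2,\phi]$ has coefficients supported in $\mathrm{supp}(\nabla\phi)$, disjoint from $\mathrm{supp}(\chi_0)$. Choose nested cutoffs $\phi_1,\ldots,\phi_{k+1} \in C_0^\infty(B(0,R_0))$ with supports disjoint from $\mathrm{supp}(\chi_0)$, such that $\phi_1 = 1$ on $\mathrm{supp}(\nabla\phi)$ and $\phi_{j+1} = 1$ on $\mathrm{supp}(\nabla\phi_j)$. The elementary commutator identity $(P_1-z)^{-1}\psi = \psi(P_1-z)^{-1} - (P_1-z)^{-1}[P_1,\psi](P_1-z)^{-1}$ (which follows from $(P_1-z)^{-1}P_1 = I + z(P_1-z)^{-1}$), together with $\chi_0\phi_j = 0$, yields $\chi_0(P_1-z)^{-1}\phi_j = -\chi_0(P_1-z)^{-1}[P_1,\phi_j](P_1-z)^{-1}$. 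Writing $[P_1,\phi_j] = \phi_{j+1}[P_1,\phi_j]$ and iterating $k$ times would give
$$-\chi_0(P_1-z)^{-1}[P_2,\phi](P_2-z)^{-1}\chi_0 = (-1)^{k+1}\bigl(\chi_0(P_1-z)^{-1}\phi_{k+1}\bigr)\, C_k \cdots C_1 \cdot [P_2,\phi](P_2-z)^{-1}\chi_0,$$
where $C_j := [P_1,\phi_j](P_1-z)^{-1}$.

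Finally, interpolating \eqref{eq_3_4_p} and \eqref{eq_3_5_p} produces $(P_1-z)^{-1} : H^\tau(M_1) \to H^{\tau+2\theta}(M_1)$ with norm $\mathcal{O}(|z|^{-(1-\theta)})$ for $\theta \in [0,1]$, uniformly on $\tilde\Omega_{C_1,C_2}$. Choosing $\theta = 3/4$ makes each $C_j : H^\tau \to H^{\tau+1/2}$ of norm $\mathcal{O}(|z|^{-1/4})$, so the $k$-fold product will map $H^s \to H^{s+k/2}$ with norm $\mathcal{O}(|z|^{-k/4})$. Together with $\chi_0(P_1-z)^{-1}\phi_{k+1} : H^\tau \to H^{\tau+2}$ of norm $\mathcal{O}(1)$ and $[P_2,\phi](P_2-z)^{-1}\chi_0 : H^s \to H^{s+1}$ of norm $\mathcal{O}(1)$, taking $k \geq \max(2N,4L)$ will deliver \eqref{eq_4_17_p}. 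For \eqref{eq_4_18_p}, interpolation is unavailable on $D_C$; instead I would apply \eqref{eq_3_9_p} directly, giving each $C_j : H^\tau \to H^{\tau+1}$ of norm $\mathcal{O}(|\mathrm{Im}\,z|^{-1})$ and thus the $k$-fold product of norm $\mathcal{O}(|\mathrm{Im}\,z|^{-k})$ from $H^s$ to $H^{s+k}$, yielding \eqref{eq_4_18_p} with some $L = L(N)$ for $k$ of order $N$. The principal technical obstacle will be the careful setup of the gluing identity: the extension-by-zero identification between $M_1$- and $M_2$-functions must be shown to intertwine the actions of $P_1$ and $P_2$, which relies crucially on $\mathrm{supp}(\phi)$ lying strictly inside $B(0,R_0)$, the region where the two metrics coincide.
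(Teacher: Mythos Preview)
Your proof is correct and follows essentially the same strategy as the paper: both reduce the difference to an iterated product of resolvents and first-order commutators supported away from $\supp(\chi_0)$, then extract simultaneous regularity gain and $|z|$-decay from the basic resolvent bounds of Proposition~\ref{prop_resolvent_large_z} (respectively Proposition~\ref{prop_resolvent_bounded_z}). The only differences are organizational---the paper derives the iterated product identity via a telescoping expansion of $(P_j-z)^{-1}\chi_0$ (after Sj\"ostrand) and then interpolates two endpoint estimates at the end, whereas you obtain the gluing identity in one step, iterate commutator moves on the $P_1$-side, and interpolate the resolvent bound at each factor.
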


\begin{proof}
Following \cite[Lemma 4.1]{Sjostrand_1997}, we shall first recall a telescoping formula for the resolvent of $P_1$. Let $z\in \C\setminus \sigma(P_1)$ and let $\chi_1\in C^\infty_0(B(0,R_0))$ be such that $\chi_1=1$ near $K:=\supp(\chi_0)$. We have
\begin{equation}
\label{eq_2_1_p}
\begin{aligned}
(P_1-z)^{-1}\chi_0=&\chi_1(P_1-z)^{-1}\chi_0+(1-\chi_1)(P_1-z)^{-1}\chi_0\\
=&\chi_1(P_1-z)^{-1}\chi_0+ (P_1-z)^{-1}(1-\chi_1)\chi_0+[(1-\chi_1),(P_1-z)^{-1}]\chi_0\\
=&\chi_1(P_1-z)^{-1}\chi_0+[(1-\chi_1),(P_1-z)^{-1}]\chi_0.
\end{aligned}
\end{equation}
Here 
\begin{equation}
\label{eq_2_2_p}
[(1-\chi_1),(P_1-z)^{-1}]\chi_0=-[\chi_1,(P_1-z)^{-1}]\chi_0=-(P_1-z)^{-1}[P_1,\chi_1](P_1-z)^{-1}\chi_0.
\end{equation}
Combining \eqref{eq_2_1_p} and \eqref{eq_2_2_p}, we get the following resolvent identity
\begin{equation}
\label{eq_2_3_p}
(P_1-z)^{-1}\chi_0=\chi_1(P_1-z)^{-1}\chi_0-(P_1-z)^{-1}[P_1,\chi_1](P_1-z)^{-1}\chi_0.
\end{equation}
Here, $[P_1,\chi_1]$ is a differential operator of order one on $M_1$ with coefficients in $C^\infty_0(B(0,R_0) \setminus K)$. Let $\chi_2 \in C^\infty_0(B(0,R_0) \setminus K)$ be such that $\chi_2 = 1$ near the support of the coefficients of $[P_1,\chi_1]$ and $\supp(\chi_2) \cap \supp(\chi_0) = \emptyset$. We can then write, similarly to \eqref{eq_2_3_p}, 
\begin{equation}
\label{eq_2_4_p}
(P_1-z)^{-1}[P_1,\chi_1] =\chi_2(P_1-z)^{-1}[P_1,\chi_1] -(P_1-z)^{-1}[P_1,\chi_2](P_1-z)^{-1}[P_1,\chi_1].
\end{equation}
Substituting \eqref{eq_2_4_p} into \eqref{eq_2_3_p} we get
\begin{equation}
\label{eq_2_5_p}
\begin{aligned}
(P_1-z)^{-1}\chi_0=&\chi_1(P_1-z)^{-1}\chi_0-  \chi_2(P_1-z)^{-1}[P_1,\chi_1]  (P_1-z)^{-1}\chi_0\\
&+ (P_1-z)^{-1}[P_1,\chi_2](P_1-z)^{-1}[P_1,\chi_1](P_1-z)^{-1}\chi_0.
\end{aligned}
\end{equation}
This procedure can be iterated as follows: let $L \in \N$ with $L \geq 3$, and let $\chi_3, \dots, \chi_L \in C^\infty_0(B(0,R_0) \setminus K)$ be such that $\chi_j = 1$ near the support of the coefficients of $[P_1,\chi_{j-1}]$ and $\supp(\chi_j) \cap \supp(\chi_0) = \emptyset$ for $j = 3, \dots, L$.
 We have the iterated resolvent identity, 
\begin{equation}
\label{eq_2_6_p}
\begin{aligned}
&(P_1-z)^{-1}\chi_0=\chi_1(P_1-z)^{-1}\chi_0-  \chi_2(P_1-z)^{-1}[P_1,\chi_1]  (P_1-z)^{-1}\chi_0\\
&+ \chi_3(P_1-z)^{-1}[P_1,\chi_2](P_1-z)^{-1}[P_1,\chi_1](P_1-z)^{-1}\chi_0\\
& -\dots \\
&+(-1)^{L-1} \chi_L(P_1-z)^{-1}[P_1,\chi_{L-1}](P_1-z)^{-1}\cdots(P_1-z)^{-1}[P_1,\chi_1](P_1-z)^{-1}\chi_0\\
& +(-1)^L\times\\
& (P_1-z)^{-1}[P_1,\chi_L](P_1-z)^{-1}[P_1,\chi_{L-1}]\cdots (P_1-z)^{-1}[P_1,\chi_1](P_1-z)^{-1}\chi_0.
\end{aligned}
\end{equation}
Note that if we apply $\chi_0$ on the left in \eqref{eq_2_6_p}, all of the terms in the right hand side, except for the first and the last ones, drop out. 

Now let $z\in \C\setminus (\sigma(P_1)\cup \sigma(P_2))$, and let $\chi_1\in C^\infty_0(B(0,R_0))$ be such that $\chi_1=1$ near $K=\supp(\chi_0)$. We have the following resolvent identity analogous to \eqref{eq_2_3_p},
\begin{equation}
\label{eq_2_7_p}
(P_2-z)^{-1}\chi_0=\chi_1(P_1-z)^{-1}\chi_0-(P_2-z)^{-1}[P_1,\chi_1](P_1-z)^{-1}\chi_0.
\end{equation}
 Note that thanks to the cutoff functions $\chi_0$ and $\chi_1$, supported in the common region $B(0,R_0)\subset M_1\cap M_2$, the identity \eqref{eq_2_7_p}   holds in the sense of linear continuous operators: $L^2(M_2)\to \mathcal{D}(P_2)$.  
To check \eqref{eq_2_7_p}, it suffices to apply the injective operator $P_2-z: \mathcal{D}(P_2)\to L^2(M_2)$, and use that $P_2\chi_1=P_1\chi_1$. 

Let $L \in \N$ with $L \geq 2$, and let $\chi_2, \dots, \chi_L \in C^\infty_0(B(0,R_0) \setminus K)$ be such that $\chi_j = 1$ near the support of the coefficients of $[P_1,\chi_{j-1}]$ and $\supp(\chi_j) \cap \supp(\chi_0) = \emptyset$ for $j = 2, \dots, L$. By analogy with \eqref{eq_2_6_p}, we obtain that
\begin{equation}
\label{eq_2_8_p}
\begin{aligned}
&(P_2-z)^{-1}\chi_0=\chi_1(P_1-z)^{-1}\chi_0-  \chi_2(P_1-z)^{-1}[P_1,\chi_1]  (P_1-z)^{-1}\chi_0\\
&+ \chi_3(P_1-z)^{-1}[P_1,\chi_2](P_1-z)^{-1}[P_1,\chi_1](P_1-z)^{-1}\chi_0\\
& -\dots \\
&+(-1)^{L-1}\chi_L(P_1-z)^{-1}[P_1,\chi_{L-1}](P_1-z)^{-1}\cdots(P_1-z)^{-1}[P_1,\chi_1](P_1-z)^{-1}\chi_0\\
& +(-1)^{L}\times\\
& (P_2-z)^{-1}[P_1,\chi_L](P_1-z)^{-1}[P_1,\chi_{L-1}]\cdots (P_1-z)^{-1}[P_1,\chi_1](P_1-z)^{-1}\chi_0.
\end{aligned}
\end{equation}
Applying $\chi_0$ on the left in \eqref{eq_2_8_p}, we get 
\begin{equation}
\label{eq_2_9_p}
\begin{aligned}
\chi_0&(P_1-z)^{-1}\chi_0-\chi_0(P_2-z)^{-1}\chi_0=(-1)^{L+1}\times \\
 & \chi_0(P_2-z)^{-1}[P_1,\chi_L](P_1-z)^{-1}[P_1,\chi_{L-1}]\cdots (P_1-z)^{-1}[P_1,\chi_1](P_1-z)^{-1}\chi_0.
\end{aligned}
\end{equation}

Let us now proceed to show the estimate \eqref{eq_4_17_p}. First, since $[P_1,\chi_j]$ is a differential operator of order one on $M_1$, we have, for any $s \in \mathbb{R}$,
\begin{equation}
\label{eq_4_10_new_p}
[P_1,\chi_j] = \mathcal{O}_s(1): H^{s+1}(M_1) \to H^s(M_1),
\end{equation}
$j = 1, \dots, L$. Using that $\chi_0: H^s(M_1)\to H^s(M_1)$, \eqref{eq_3_7_p}, and \eqref{eq_4_10_new_p}, we see that for all $z \in \tilde{\Omega}_{C_1,C_2}$, 
\begin{equation}
\label{eq_4_11_p}
[P_1,\chi_1](P_1-z)^{-1}\chi_0=\mathcal{O}_{s,C_1,C_2}\bigg(\frac{1}{|z|^\frac{1}{2}}\bigg): H^s(M_1)\to H^s(M_1).
\end{equation}
Thus, in view of \eqref{eq_4_11_p}, and with the help of \eqref{eq_3_4_p}, we obtain from \eqref{eq_2_9_p} that for any $L\in \N$ and $s\in\R$, 
\begin{equation}
\label{eq_4_12_p}
\chi_0(P_1-z)^{-1}\chi_0-\chi_0(P_2-z)^{-1}\chi_0= \mathcal{O}_{s,L,C_1,C_2}\bigg(\frac{1}{|z|^\frac{L}{2}}\bigg): H^s(M_1)\to H^{s+2}(M_1),
\end{equation}
for all $z \in \tilde{\Omega}_{C_1,C_2}$.

On the other hand, using  \eqref{eq_3_4_p}, we get 
\[
[P_1,\chi_1](P_1-z)^{-1}\chi_0=\mathcal{O}_{s,C_1,C_2}(1): H^s(M_1)\to H^{s+1}(M_1),
\]
for all  $z \in \tilde{\Omega}_{C_1,C_2}$, 
and therefore, it follows from  \eqref{eq_2_9_p} that for  any $N\in \N$ and $s\in \R$, 
\begin{equation}
\label{eq_4_13_p}
\chi_0(P_1-z)^{-1}\chi_0-\chi_0(P_2-z)^{-1}\chi_0=\mathcal{O}_{s,N,C_1,C_2}(1): H^s(M_1)\to H^{s+N+2}(M_1),
\end{equation}
 for all $z\in \tilde{\Omega}_{C_1,C_2}$. 

Interpolating between \eqref{eq_4_12_p} and \eqref{eq_4_13_p}, and varying $N$ and $L$, we obtain that for any $s\in \R$, $N,L\ge 0$,
\[
\chi_0(P_1-z)^{-1}\chi_0-\chi_0(P_2-z)^{-1}\chi_0= \mathcal{O}_{s,L,N,C_1,C_2}\bigg(\frac{1}{|z|^{L}}\bigg): H^s(M_1)\to H^{s+N}(M_1),
\]
for all $z\in \tilde{\Omega}_{C_1,C_2}$. Thus, the bound \eqref{eq_4_17_p} follows. 

Let us next show \eqref{eq_4_18_p}. To that end, using \eqref{eq_4_10_new_p} and \eqref{eq_3_9_p}, we obtain, for all $z \in D_C$,
\[
[P_1,\chi_1](P_1-z)^{-1}\chi_0 = \mathcal{O}_{s,C}\bigg(\frac{1}{|\text{Im}\, z|}\bigg): H^s(M_1) \to H^{s+1}(M_1).
\]
 Hence, it follows from  \eqref{eq_2_9_p} with the help of \eqref{eq_3_9_p} that for any $N\in \N$ and $s\in \R$, 
\begin{equation}
\label{eq_4_16_p}
\chi_0(P_1-z)^{-1}\chi_0-\chi_0(P_2-z)^{-1}\chi_0= \mathcal{O}_{s,N,C}\bigg(\frac{1}{|\text{Im}\, z|^{N+1}}\bigg): H^s(M_1)\to H^{s+N+2}(M_1),
\end{equation}
for all $z\in D_C$. The bound  \eqref{eq_4_18_p} follows. 
\end{proof}

\begin{lem}
We have that the operator in \eqref{eq_4_21_p} is smoothing on $M_1$. 
\end{lem}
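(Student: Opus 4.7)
The plan is to factor $f(t):=(1-\psi(t))t^\alpha=t\,g(t)$ with $g(t):=(1-\psi(t))t^{\alpha-1}$. Since $\alpha-1\in(-1,0)$, the function $g$ fits the hypotheses of Proposition~\ref{prop_Dynkin_Helffer_Sjostrand_formula}, which yields a convergent bounded-operator representation of $g(P_j)$ on $L^2(M_j)$ as an integral over $\partial\omega\cup\omega$ of $(1-\tilde\psi(z))z^{\alpha-1}(z-P_j)^{-1}$ (and the corresponding $\bar\partial$ piece). Next I would use the functional-calculus identity $f(P_j)(\chi_0 v)=g(P_j)P_j(\chi_0 v)$, valid for $v\in C_0^\infty(M_1)$ since $\chi_0 v\in C_0^\infty\subset\mathcal D(P_j)$, move the bounded multiplier $\chi_0$ inside the Bochner integral, and apply pointwise the algebraic identity $(z-P_j)^{-1}P_j=-I+z(z-P_j)^{-1}$. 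This would express $\chi_0 f(P_j)\chi_0 v$ as an integral whose integrand has the form
$(1-\tilde\psi(z))z^{\alpha-1}\bigl[-\chi_0^2 v+z\,\chi_0(z-P_j)^{-1}\chi_0 v\bigr]$ on $\partial\omega$ (and analogously for the $\bar\partial\tilde\psi$ piece).

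The crucial observation is that the term $-\chi_0^2 v$ is independent of $j$ and therefore cancels upon subtraction in $j=1,2$, leaving
\begin{align*}
\chi_0(f(P_1)-f(P_2))\chi_0 v&=\frac{1}{2\pi i}\int_{\partial\omega}(1-\tilde\psi(z))z^{\alpha}\bigl[\chi_0(z-P_1)^{-1}\chi_0-\chi_0(z-P_2)^{-1}\chi_0\bigr]v\,dz\\
&\quad+\frac{1}{\pi}\int_{\omega}(\bar\partial\tilde\psi)(z)z^{\alpha}\bigl[\chi_0(z-P_1)^{-1}\chi_0-\chi_0(z-P_2)^{-1}\chi_0\bigr]v\,L(dz).
\end{align*}
To finish, I would estimate the operator-valued integrand in $\mathcal L(H^s(M_1),H^{s+N}(M_1))$ for arbitrary $s\in\R$ and $N\geq 0$. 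On $\partial\omega$ the parameter $z$ lies in $\tilde\Omega_{C_1,C_2}$ for suitable $C_1,C_2$, so \eqref{eq_4_17_p} yields the resolvent-difference bound $\mathcal O(|z|^{-L})$ for any $L\geq 0$; multiplied by $|z|^\alpha$ this gives an integrand of size $\mathcal O(|z|^{\alpha-L})$, integrable on the rays once $L$ is taken large. On the compact support of $\bar\partial\tilde\psi$, estimate \eqref{eq_4_18_p} provides $\mathcal O(|\mathrm{Im}\,z|^{-L(N)})$, and this singularity is absorbed by the almost-holomorphicity bound $|\bar\partial\tilde\psi(z)|\leq C_M|\mathrm{Im}\,z|^M$ for $M\geq L(N)$. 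Continuity $\chi_0(f(P_1)-f(P_2))\chi_0\colon H^s\to H^{s+N}$ for every $s$ and $N$ then follows, and Proposition~\ref{prop_smoothing} concludes that the operator is smoothing.

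I expect the main obstacle to be the rigorous justification of the integral representation in Step~2. Splitting the integrand via $P_j(z-P_j)^{-1}=-I+z(z-P_j)^{-1}$ produces two pieces whose integrals against $z^{\alpha-1}$ each diverge on $\partial\omega$ when $\alpha>0$, since $\int_c^\infty s^{\alpha-1}ds=\infty$. What saves the argument is the cancellation of the leading-order term in $z(z-P_j)^{-1}v$ with $-v$ (as seen from the formal expansion $z(z-P_j)^{-1}=I+z^{-1}P_j+\cdots$), a cancellation which is preserved exactly by keeping the combined integrand intact and justifying the Bochner convergence of $\chi_0 g(P_j)P_j(\chi_0 v)$ in this un-split form. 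Only after passing to the difference in $j$ does the delicate $j$-independent leading term vanish outright, producing the manifestly integrable operator-valued integrand displayed above.
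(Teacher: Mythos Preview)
Your approach is correct and closely parallels the paper's, but the reduction from the exponent $\alpha$ to $\beta=\alpha-1$ is handled differently. The paper first commutes $P_j$ to the right of $\chi_0$, writing
\[
\chi_0 (1-\psi)(P_j)P_j^{\alpha}\chi_0
=\chi_0 (1-\psi)(P_j)P_j^{\alpha-1}\chi_0\cdot P_j\chi_1
+\chi_0\chi_1 (1-\psi)(P_j)P_j^{\alpha-1}\chi_1\cdot[P_j,\chi_0],
\]
and uses that the right factors $P_j\chi_1$ and $[P_j,\chi_0]$ are $j$-independent (since $P_1=P_2$ on $B(0,R_0)$); this reduces the problem to showing that $\chi\,(1-\psi)(P_1)P_1^{\beta}\chi-\chi\,(1-\psi)(P_2)P_2^{\beta}\chi$ is smoothing for $\chi\in\{\chi_0,\chi_1\}$, after which DHS applies directly with $z^{\beta}$ in the integrand. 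You instead keep $P_j$ inside the DHS integral for $g(P_j)$ and use the pointwise identity $(z-P_j)^{-1}P_j=-I+z(z-P_j)^{-1}$, so that the $j$-independent $-\chi_0^2v$ term cancels upon subtraction and leaves an integrand with $z^{\alpha}$. Both routes terminate in the same way, by invoking the resolvent-difference bounds \eqref{eq_4_17_p} on $\partial\omega$ and \eqref{eq_4_18_p} (absorbed by almost-holomorphicity) on $\supp\bar\partial\tilde\psi$. The paper's version avoids the divergent-split issue you flag, at the cost of introducing the auxiliary cutoff $\chi_1$ and a second smoothing statement; your version is more economical but requires the observation that the Bochner integral for $g(P_j)P_j(\chi_0v)$ remains convergent in its unsplit form, and that linearity of the Bochner integral then legitimizes the cancellation after subtraction. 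Both are valid.
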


\begin{proof}
First, we would like to pass from the operator in \eqref{eq_4_21_p} with $0<\alpha<1$ to the same operator with $\alpha$ replaced by $\beta := \alpha-1 < 0$. In doing so,  we  write
\begin{equation}
\label{eq_4_22_p}
\begin{aligned}
\chi_0 (1-\psi)(P_j)&P_j^\alpha\chi_0
=\chi_0 (1-\psi)(P_j)P_j^{\alpha-1}\chi_0P_j+
\chi_0 (1-\psi)(P_j)P_j^{\alpha-1}[P_j,\chi_0]\\
&=\chi_0 (1-\psi)(P_j)P_j^{\alpha-1}\chi_0P_j\chi_1+
\chi_0\chi_1 (1-\psi)(P_j)P_j^{\alpha-1}\chi_1[P_j,\chi_0],
\end{aligned}
\end{equation}
for $j=1,2$. Here $\chi_1\in C^\infty_0(B(0,R_0))$ is such that $\chi_1=1$ near $\supp(\chi_0)$, and we used that $\chi_0P_j(1-\chi_1)=0$, and $(1-\chi_1)[P_j,\chi_0]=0$. 
In view of \eqref{eq_4_21_p} and \eqref{eq_4_22_p} it suffices to show that both operators
\begin{equation}
\label{eq_4_23_p}
\chi_0 (1-\psi)(P_1)P_1^{\beta}\chi_0-\chi_0 (1-\psi)(P_2)P_2^{\beta}\chi_0
\end{equation}
and 
\begin{equation}
\label{eq_4_24_p}
\chi_1 (1-\psi)(P_1)P_1^{\beta}\chi_1- \chi_1 (1-\psi)(P_2)P_2^{\beta}\chi_1
\end{equation}
are smoothing on $M_1$. The operators  in \eqref{eq_4_23_p} and \eqref{eq_4_24_p} are similar and we shall only show that the operator in \eqref{eq_4_23_p} is smoothing on $M_1$. 

In doing so, we use Proposition \ref{prop_Dynkin_Helffer_Sjostrand_formula} and get  
\begin{equation}
\label{eq_4_25_p}
\chi_0 (1-\psi)(P_1)P_1^{\beta}\chi_0-\chi_0 (1-\psi)(P_2)P_2^{\beta}\chi_0=I_1+I_2, 
\end{equation}
where 
\begin{align*}
I_1=\frac{1}{2\pi i}\int_{\p \omega} (1-\tilde \psi(z))z^{\beta }\bigg(\chi_0(z-P_1)^{-1}\chi_0- \chi_0(z-P_2)^{-1}\chi_0\bigg)dz\\
I_2=\frac{1}{\pi}\int_{\omega}(\bar\p \tilde \psi)(z)z^{\beta}\bigg(\chi_0(z-P_1)^{-1}\chi_0- \chi_0(z-P_2)^{-1}\chi_0\bigg) L(dz).
\end{align*}
Here $\tilde \psi\in C^\infty_0(\C)$ is an almost holomorphic extension of $\psi$ such that $\tilde \psi=1$ in a complex neighborhood of $0$, $\omega:=\{z\in \C: |\arg z|<\pi/4\}$, and the boundary $\p \omega$ is positively oriented.  

We now claim that $I_2$ is smoothing on $M_1$. Indeed, noting that the integration in $I_2$ is over a compact set away from $0$, and using \eqref{eq_3_20_p} and \eqref{eq_4_18_p}, we get that for all $N\ge 0$, there exists $L=L(N)>0$ such that 
\[
\|I_2\|_{\mathcal{L}(H^{-N}(M_1), H^{N}(M_1))}\le \mathcal{O}(1)\int_{\omega}\frac{|z|^\beta}{|\text{Im}\,z|^L}|\text{Im}\, z|^{\infty} L(dz)<\infty.
\]
The claim follows.

We next claim that the operator $I_1$ is smoothing. Indeed, by \eqref{eq_4_17_p}, we obtain that for all $N \geq 0$,
\[
\|I_1\|_{\mathcal{L}(H^{-N}(M_1), H^{N}(M_1))} \leq \mathcal{O}(1) \int_{\p \omega\cap\{|z|\ge c>0\}} \frac{|z|^\beta}{|z|^{L_1}} |dz| < \infty.
\]
Here, $L_1 \geq 1$ is arbitrary. The claim follows.
\end{proof}

We have therefore proved that $P_1^\alpha\in \Psi^{2\alpha}_{1,0}(M_1)$ and it follows from the arguments above that the pseudodifferential operator $P_1^\alpha$ is classical, with the principal symbol of $P_1^\alpha$ given by 
\[
\bigg(\sum_{j,k=1}^n g_1^{jk}(x)\xi_j\xi_k\bigg)^\alpha, \quad (x,\xi)\in T^*M_1\setminus 0. 
\]
This implies the ellipticity of $P_1^\alpha$.  The proof of Theorem \ref{thm_pseudodiff_op} is complete.

\section{Higher regularity for the variable coefficient fractional Laplacian with mixed Dirichlet-Neumann conditions}
\label{sec:reg}

In this section, we deduce higher regularity for the mixed Dirichlet-Neumann problem \eqref{eq:mixed} for the variable coefficient fractional Laplacian in $C^{\infty}$ domains as stated in Lemma \ref{lem:VE}. We view this as an elliptic extension perspective on Vishik-Eskin type estimates \cite{Vishik_Eskin_1965}. We formulate and deduce these estimates as a consequence of a local interpretation of the variable coefficient fractional Laplacian. In the whole section, we suppose that the condition from Assumption \ref{assump:main}(b) holds.

\subsection{Higher tangential regularity following \cite{KM07}}
We present the desired higher regularity results claimed in Lemma \ref{lem:VE}(i). Contrary to \cite{KM07} we directly work in $L^2$ based spaces which leads to a slight loss in the estimates (reflected in the presence of the $\epsilon$ in Lemma \ref{lem:VE}).
We consider the case of the half-space $\Omega = \R^{n}_{-}$ only, the general case follows from a transformation of the domain. We note that this transformation preserves the structure of the metric $\tilde{g}$ in being of the form as in Assumption \ref{assump:main}(b) since the change of coordinates does not affect the $x_{n+1}$ variable, the domain $\Omega \subset \R^n$ being purely tangentially defined in view of the extension. The key technical result is an $L^2$-based variant of \cite[Lemma 2.5]{KM07} which relies on an iteration of (one-sided) finite difference arguments. More precisely, we consider the following variant of \eqref{eq:mixed}:

\begin{align}
\label{eq:mixed_full}
\begin{split}
\nabla \cdot x_{n+1}^{1-2\alpha} \tilde{g} \nabla \tilde{v} & = 0 \mbox{ in } \R^{n+1}_+,\\
\tilde{v} & = f_D \mbox{ on } \R^n_+ \times \{0\},\\
\lim\limits_{x_{n+1} \rightarrow 0} x_{n+1}^{1-2\alpha} \p_{n+1} \tilde{v} & = f_N \mbox{ on } \R^n_-.
\end{split}
\end{align}
Here we have used the notation $\R^n_+:=\{x\in \R^n: x_n>0\}$ and $\R^n_{-}:= \{x\in \R^n: \ x_n \leq 0\}$ and assume that $f_D \in H^{\alpha+\delta}(\R^n_+)$, $f_N\in H^{-\alpha + \delta}(\R^n_-)$ for some $\delta \in [0,1/2)$. 
By considering a Dirichlet extension of $f_D$ (see the proof of Proposition \ref{prop:higher_reg} below), we may further assume that $f_D= 0$ and that only $f_N$ is nontrivial (see the argument in the proof of Proposition \ref{prop:higher_reg} below). The weak solution notion is then modified compared to Definition \ref{defi:sol} by adding in condition (i) a right hand side of the form $\langle \sqrt{|g|} f_N,\tilde{v}|_{\R^n \times \{0\}}\rangle_{{H}^{-\alpha}(\R^n), {H}^{\alpha}(\R^n)}$, i.e., by requiring that
\begin{align}
\label{eq:mixed_weak}
\int\limits_{\R^{n+1}_+} x_{n+1}^{1-2\alpha} \tilde{g} \nabla \tilde{v} \cdot \nabla \tilde{\varphi} d(x,x_{n+1}) = - \langle \sqrt{|g|} f_N,\tilde{v}|_{\R^n \times \{0\}}\rangle_{{H}^{-\alpha}(\R^n), {H}^{\alpha}(\R^n)}
\end{align}
for all $\tilde{\varphi} \in X_{\alpha,0,\Omega_e}$ with $\Omega_e := \R^{n}_+ \times \{0\}$ in the outlined half-space setting. For this solution notion, we then have the following regularity improvement result.

\begin{lem}
\label{lem:improve}
Let $\alpha \in (0,1)$ and let $\tilde{v} \in \dot{H}^{1}(\R^{n+1}_+, x_{n+1}^{1-2\alpha})$ with $\tilde{v}|_{\R^n \times \{0\}} \in H^{\alpha+\nu}(\R^{n})$ for some $\nu \in [0,1/2)$ be a weak solution to \eqref{eq:mixed_full} with data $f_D = 0$, $f_N\in H^{-\alpha + \delta}(\R^n)$ for some $\delta \in [0,1/2)$. Let $\beta = \frac{\delta + \nu}{2}$.
Then, for any $\mu>0$ there exists a constant $C= C(\alpha, g,n)>0$ such that
\begin{align*}
\sup\limits_{h \in (0,1)} h^{-\beta}\|\tau_{j,h} \tilde{v}|_{\R^n \times \{0\}} \|_{\dot{H}^{\alpha}(\R^n)} 
& \leq C(\mu^{-1}\|\sqrt{|g|} f_N\|_{H^{-\alpha+\delta}(\R^n)} + \|x_{n+1}^{\frac{1-2\alpha}{2}} \nabla \tilde{v}\|_{L^2(\R^{n+1}_+)})\\
& \quad  + C\mu \|\tilde{v}|_{\R^n \times \{0\}} \|_{H^{\alpha+\nu}(\R^n)}.
\end{align*}
\end{lem}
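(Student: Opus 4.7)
The plan is to test the weak formulation \eqref{eq:mixed_weak} with a tangential second-order finite-difference quotient of $\tilde v$, derive a bulk energy bound on $\nabla\tau_{j,h}\tilde v$ in the weighted $L^2$ space, and then transfer this bound to the trace $\tau_{j,h}\tilde v|_{\R^n\times\{0\}}$ via the trace estimate \eqref{eq:trace}. For $j\in\{1,\dots,n-1\}$, which is tangent to the change-of-type set $\R^n\cap\{x_n=0\}$, the translations $\tau_{j,\pm h}$ preserve the Dirichlet set $\R^n_+\times\{0\}$ on which $\tilde v=f_D=0$, so the test function $\tilde\varphi := -\tau_{j,-h}\tau_{j,h}\tilde v$ lies in $X_{\alpha,0,\Omega_e}$. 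Inserting $\tilde\varphi$ into \eqref{eq:mixed_weak}, performing a discrete summation by parts via the identity $\int f\,\tau_{j,-h}g\,dx=\int(\tau_{j,h}f)\,g\,dx$, and using the Leibniz rule \eqref{eq:diff_finite} together with the $C^\infty$ regularity and $x_{n+1}$-independence of $\tilde g$, one obtains after absorbing the commutator term $|h|\,\|\nabla\tilde v\|_{L^2}\|\nabla\tau_{j,h}\tilde v\|_{L^2}$ an estimate of the form
\[
\|x_{n+1}^{\frac{1-2\alpha}{2}}\nabla\tau_{j,h}\tilde v\|_{L^2(\R^{n+1}_+)}^2 \le C\bigl|\langle\sqrt{|g|}\,f_N,\tau_{j,-h}\tau_{j,h}\tilde v|_{\R^n\times\{0\}}\rangle_{H^{-\alpha},H^{\alpha}}\bigr| + Ch^2\|x_{n+1}^{\frac{1-2\alpha}{2}}\nabla\tilde v\|_{L^2(\R^{n+1}_+)}^2.
\]

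The main technical obstacle is the transverse direction $j=n$, because the symmetric second difference $\tau_{n,-h}\tau_{n,h}\tilde v$ fails to vanish on the strip $\{0<x_n<h\}\times\{0\}$ and is therefore inadmissible. To overcome this I would use the one-sided difference $\tau_{n,h}\tilde v$ with $h>0$, which does belong to $X_{\alpha,0,\Omega_e}$ since both $\tilde v(x,0)$ and $\tilde v(x+he_n,0)$ vanish on $\{x_n>0\}$. Testing the shifted equation for $\tilde v(\cdot+he_n,\cdot)$ minus the original equation against $\tau_{n,h}\tilde v$ itself yields a bound on $\|x_{n+1}^{\frac{1-2\alpha}{2}}\nabla\tau_{n,h}\tilde v\|_{L^2}^2$ of the same type as above, with the asymmetric boundary contributions confined to the thin strip $\{-h<x_n<0\}\times\{0\}$ that are controlled by the already-available $H^{\alpha+\nu}$ regularity of the trace together with the smoothness of $\tilde g$.

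The duality pairing with $f_N$ is then estimated via Lemma \ref{lem:Sob_charact}(iv) combined with the second-difference analog of Lemma \ref{lem:Sob_charact}(iii) (which follows by the same Fourier multiplier argument with $\beta$ replaced by $2\beta=\delta+\nu<1$), giving
\[
\bigl|\langle\sqrt{|g|}f_N,\tau_{j,-h}\tau_{j,h}\tilde v|_{\R^n\times\{0\}}\rangle\bigr| \le C\,h^{2\beta}\,\|\sqrt{|g|}f_N\|_{H^{-\alpha+\delta}(\R^n)}\,\|\tilde v|_{\R^n\times\{0\}}\|_{H^{\alpha+\nu}(\R^n)}.
\]
Dividing the resulting bulk estimate by $h^{2\beta}$, noting that $h^{2-2\beta}\le 1$ for $h\in(0,1)$ since $\beta<1$, taking the square root, and applying the elementary inequality $\sqrt{ab}\le\tfrac12(\mu^{-1}a+\mu b)$ produces the $\mu^{-1}$ and $\mu$ coefficients in the stated bound. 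Finally, applying the trace estimate \eqref{eq:trace} to $\tau_{j,h}\tilde v\in\dot H^1(\R^{n+1}_+,x_{n+1}^{1-2\alpha})$ transfers the bulk bound on $\|x_{n+1}^{\frac{1-2\alpha}{2}}\nabla\tau_{j,h}\tilde v\|_{L^2}$ to the desired $\dot H^\alpha$-bound on $\tau_{j,h}\tilde v|_{\R^n\times\{0\}}$, and taking the supremum over $h\in(0,1)$ completes the proof.
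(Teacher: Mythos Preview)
Your overall strategy—test with difference quotients, extract a weighted bulk energy bound on $\nabla\tau_{j,h}\tilde v$, estimate the boundary pairing with $f_N$ by Lemma~\ref{lem:Sob_charact}, and pass to the trace via \eqref{eq:trace}—is the right one, and your treatment of the boundary term and the final Young-inequality step are fine. But the paper avoids your case distinction between $j<n$ and $j=n$ altogether, and your handling of $j=n$ has an unresolved point.

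The paper tests the weak formulation with the \emph{first-order} one-sided difference $\tilde\varphi=-\tau_{j,h}\tilde v$, $h>0$, for \emph{every} $j\in\{1,\dots,n\}$; as you yourself note, this is admissible since $\tilde v(x,0)=\tilde v(x+he_j,0)=0$ whenever $x_n>0$. What makes a first-order test function suffice is the symmetrization identity from \cite{KM07},
\[
-\tilde g\,\nabla\tilde v\cdot\nabla\tau_{j,h}\tilde v
=\tfrac12\,\tilde g\,\nabla\tau_{j,h}\tilde v\cdot\nabla\tau_{j,h}\tilde v
-\tfrac12\sum_{k,\ell}\tilde g_{k\ell}\,\tau_{j,h}\bigl(\partial_k\tilde v\,\partial_\ell\tilde v\bigr),
\]
which is a pointwise algebraic consequence of the symmetry of $\tilde g$. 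The first term on the right produces the quadratic energy; in the second a discrete integration by parts (a pure bulk change of variables in $\R^{n+1}_+$, with no boundary terms) moves $\tau_{j,h}$ onto $\tilde g_{k\ell}$, yielding a contribution bounded by $Ch\,\|x_{n+1}^{\frac{1-2\alpha}{2}}\nabla\tilde v\|_{L^2}^2$. This works identically for $j=n$, and no thin-strip analysis enters.

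Your second-order-difference argument for $j<n$ is correct, but for $j=n$ the step ``test the shifted equation for $\tilde v(\cdot+he_n,\cdot)$ against $\tau_{n,h}\tilde v$'' is not justified as written: the shifted problem carries its Dirichlet condition on $\{x_n>-h\}$, and $\tau_{n,h}\tilde v$ does not vanish there, so it lies outside the admissible test space for the shifted weak formulation. The resulting extra boundary term on the strip involves the generalized Neumann data $\lim_{x_{n+1}\to0}x_{n+1}^{1-2\alpha}\partial_{n+1}\tilde v$ restricted to $\{0<x_n<h\}$, which is not a priori controlled in any norm better than $H^{-\alpha}$; your claim that this is handled by the $H^{\alpha+\nu}$ regularity of the trace is not substantiated. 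The paper's symmetrization trick sidesteps this entirely by never shifting the equation—only the test function.
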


We note that the lemma is only of interest if $\delta>\nu$. In this case $\beta > \nu$ and the lemma indeed encodes a tangential regularity gain when combined with Lemma \ref{lem:Sob_charact}. 

\begin{proof}
For $h \in (0,1)$ we consider the finite difference quotient $\tau_{j,h}\tilde{v}(x,x_{n+1}):= \tilde{v}(x+ h e_j,x_{n+1}) - \tilde{v}(x,x_{n+1})$, where $e_j$ denotes the $j$-th canonical unit vector and $j \in \{1,\dots,n\}$ and insert $\tilde{\varphi}(x,x_{n+1}):=- \tau_{j,h}\tilde{v}(x,x_{n+1})$ as a test function into the weak formulation \eqref{eq:mixed_weak} of \eqref{eq:mixed_full}. We note that $\tau_{j,h} \tilde{v}(x,0) = 0$ for $x_n\geq 0$ is an admissible test function; in particular it satisfies the necessary boundary conditions ensuring that $\tau_{j,h}\tilde{v}(x,x_{n+1}) \in X_{\alpha,0,\Omega_e}$. Hence, we obtain
\begin{align}
\label{eq:reg1}
\begin{split}
&\left|\int\limits_{\R^{n+1}_+} x_{n+1}^{1-2\alpha} \tilde{g} \nabla \tilde{v} \cdot \tau_{j,h} \nabla \tilde{v} d (x,x_{n+1})  \right|
 = \left| \langle f_N,\sqrt{|g|}\tau_{j,h} \tilde{v}|_{\R^n \times \{0\}}\rangle_{{H}^{-\alpha}(\R^n), {H}^{\alpha}(\R^n)} \right|\\
& \leq C h^{2\beta} \|\sqrt{|g|} f_N\|_{H^{-\alpha+\delta}(\R^n)}\sup\limits_{h \in (0,1)} h^{-2\beta}\| \tau_{j,h} \tilde{v}|_{\R^n \times \{0\}}\|_{H^{\alpha-\delta}(\R^n)}.
\end{split}
\end{align}
The last estimate follows from Lemma \ref{lem:Sob_charact}(iii) (which is an adaptation of \cite[Lemma 2.2]{KM07}). It remains to bound the left hand side and to use the trace inequality. To this end, writing out the finite difference operators and invoking the symmetry of $\tilde{g}$ implies (as in the proof of \cite[Theorem 2.3]{KM07}) that 
\begin{align}
\label{eq:reg2}
\begin{split}
-\int\limits_{\R^{n+1}_+} x_{n+1}^{1-2\alpha} \tilde{g} \nabla \tilde{v} \cdot \tau_{j,h} \nabla \tilde{v} d (x,x_{n+1})
&= \frac{1}{2}\int\limits_{\R^{n+1}_+} x_{n+1}^{1-2\alpha} \tilde{g} \nabla \tau_{j,h}\tilde{v} \cdot \nabla  \tau_{j,h}\tilde{v} d(x,x_{n+1}) \\
& \quad - \frac{1}{2}\int\limits_{\R^{n+1}_+} x_{n+1}^{1-2\alpha} \sum\limits_{k,\ell = 1}^{n+1} \tilde{g}_{k \ell} \tau_{j,h}\left( \p_{k} \tilde{v}    \p_{\ell} \tilde{v}\right) d (x,x_{n+1}).
\end{split}
\end{align}
As in \cite{KM07} this is a consequence of the following computation: For $j \in \{1,\dots,n\}$,
\begin{align*}
& \sum\limits_{k,\ell=1}^{n+1} \tilde{g}_{k \ell}(x) \p_k \tilde{v}(x,x_{n+1}) (\p_{\ell} \tilde{v}(x,x_{n+1})- \p_{\ell} \tilde{v}(x + h e_j, x_{n+1}))\\
& = \sum\limits_{k,\ell=1}^{n+1} \tilde{g}_{k \ell}(x)( \p_k \tilde{v}(x,x_{n+1}) - \p_k \tilde{v}(x + h e_j, x_{n+1}) ) (\p_{\ell} \tilde{v}(x,x_{n+1})- \p_{\ell} \tilde{v}(x + h e_j, x_{n+1}))\\
& \quad + \sum\limits_{k,\ell=1}^{n+1} \tilde{g}_{k \ell}(x)  \p_k \tilde{v}(x + h e_j, x_{n+1})  (\p_{\ell} \tilde{v}(x,x_{n+1})- \p_{\ell} \tilde{v}(x + h e_j, x_{n+1}))\\
& = \frac{1}{2}\sum\limits_{k,\ell=1}^{n+1} \tilde{g}_{k \ell}(x) (\tau_{j,h}\p_k \tilde{v})(x,x_{n+1})  (\tau_{j,h}\p_{\ell} \tilde{v})(x,x_{n+1})\\
& \quad - \frac{1}{2}\sum\limits_{k,\ell=1}^{n+1} \tilde{g}_{k \ell}(x) \left(   \p_k \tilde{v}(x,  x_{n+1})  (\tau_{j,h}\p_{\ell} \tilde{v})(x,x_{n+1}) + \p_k \tilde{v}(x + h e_j, x_{n+1}) (\tau_{j,h} \p_{\ell} \tilde{v})(x , x_{n+1}) \right),
\end{align*}
together with the symmetry of $\tilde{g}_{k \ell}$.
Using the regularity of $\tilde{g}$ and combining \eqref{eq:reg1} and \eqref{eq:reg2}, we thus arrive at
\begin{align*}
\frac{\lambda}{2} \| x_{n+1}^{\frac{1-2\alpha}{2}} \nabla \tau_{j,h} \tilde{v} \|^2_{L^2(\R^{n+1}_+)}
& \leq \frac{1}{2}\int\limits_{\R^{n+1}_+} x_{n+1}^{1-2\alpha} \tilde{g} \nabla \tau_{j,h}\tilde{v} \cdot \nabla  \tau_{j,h}\tilde{v} d(x,x_{n+1}) \\
&\leq \left| \frac{1}{2}\int\limits_{\R^{n+1}_+} x_{n+1}^{1-2\alpha} \sum\limits_{k,\ell =1}^{n+1} (\tau_{j,h}\tilde{g}_{k \ell}) \left( \p_{k} \tilde{v}    \p_{\ell} \tilde{v}\right) d (x,x_{n+1}) \right|\\
& \qquad + \left| \langle \sqrt{|g|} f_N,\tau_{j,h} \tilde{v}|_{\R^n \times \{0\}}\rangle_{{H}^{-\alpha}(\R^n), {H}^{\alpha}(\R^n)}\right| \\
& \leq C h^{2\beta} \|\sqrt{|g|} f_N\|_{H^{-\alpha+\delta}(\R^n)}\sup\limits_{h \in (0,1)} h^{-2\beta} \| \tau_{j,h} \tilde{v}|_{\R^n \times \{0\}}\|_{H^{\alpha-\delta}(\R^n)} \\
& \quad + C h  \| x_{n+1}^{\frac{1-2\alpha}{2}} \nabla \tilde{v} \|^2_{L^2(\R^{n+1}_+)}.
\end{align*}
Here we have estimated 
\begin{align*}
|\tau_{j,h}\tilde{g}_{k \ell}| \leq C h,
\end{align*}
and used $\lambda>0$ to denote the ellipticity constant of $\tilde{g}$.
Therefore, using that $\beta \in (0,1/2)$,
\begin{align*}
\lambda \| x_{n+1}^{\frac{1-2\alpha}{2}} \nabla \tau_{j,h} \tilde{v} \|^2_{L^2(\R^{n+1}_+)}
&\leq Ch^{2\beta} \|\sqrt{|g|} f_N\|_{H^{-\alpha+\delta}(\R^n)}\sup\limits_{h \in (0,1)} h^{-2\beta} \| \tau_{j,h} \tilde{v}|_{\R^n \times \{0\}}\|_{H^{\alpha-\delta}(\R^n)} \\
& \quad + C h  \| x_{n+1}^{\frac{1-2\alpha}{2}} \nabla \tilde{v} \|^2_{L^2(\R^{n+1}_+)}\\
&\leq C  h^{2\beta} \left(\|\sqrt{|g|} f_N\|_{H^{-\alpha+\delta}(\R^n)}\sup\limits_{h \in (0,1)} h^{-2\beta} \| \tau_{j,h} \tilde{v}|_{\R^n \times \{0\}}\|_{H^{\alpha-\delta}(\R^n)} \right. \\
& \quad \left. +   \| x_{n+1}^{\frac{1-2\alpha}{2}} \nabla \tilde{v} \|^2_{L^2(\R^{n+1}_+)} \right).
\end{align*}
By the trace estimate \eqref{eq:trace}, we hence infer that for all $j \in \{1,\dots,n\}$ and $h>0$
\begin{align*}
\lambda \| \tau_{j,h} \tilde{v}|_{\R^n \times \{0\}} \|^2_{\dot{H}^{\alpha}(\R^{n})}
&\leq C  h^{2\beta} \left(\|\sqrt{|g|} f_N\|_{H^{-\alpha+\delta}(\R^n)}\sup\limits_{h \in (0,1)} h^{-2\beta} \| \tau_{j,h} \tilde{v}|_{\R^n \times \{0\}}\|_{H^{\alpha-\delta}(\R^n)} \right.\\
& \quad \left. +   \| x_{n+1}^{\frac{1-2\alpha}{2}} \nabla \tilde{v} \|^2_{L^2(\R^{n+1}_+)} \right).
\end{align*}
By Young's inequality, we then obtain that for any $\mu>0$
\begin{align}
\label{eq:almost_final}
\begin{split}
\lambda \| \tau_{j,h} \tilde{v}|_{\R^n \times \{0\}} \|^2_{\dot{H}(\R^{n})}
&\leq C  h^{2\beta} \left(C \mu^{-1}\|\sqrt{|g|} f_N\|_{H^{-\alpha+\delta}(\R^n)}^2 \right. \\
& \left. \quad + C^{-1} \mu \sup\limits_{h \in (0,1)} h^{-4\beta} \| \tau_{j,h} \tilde{v}|_{\R^n \times \{0\}}\|_{H^{\alpha-\delta}(\R^n)}^2 \right.\\
& \quad \left. +   \| x_{n+1}^{\frac{1-2\alpha}{2}} \nabla \tilde{v} \|^2_{L^2(\R^{n+1}_+)} \right).
\end{split}
\end{align}
We note that by the assumptions and Lemma \ref{lem:Sob_charact}(iii),
\begin{align*}
\sup\limits_{h \in (0,1)} h^{-2\beta}\| \tau_{j,h} \tilde{v}|_{\R^n \times \{0\}}\|_{H^{\alpha-\delta}(\R^n)} \leq C \|\tilde{v}|_{\R^n \times \{0\}} \|_{H^{\alpha+\nu}(\R^n)} < \infty.
\end{align*}
Dividing \eqref{eq:almost_final} by $h^{2\beta}$ and taking the sup in $h \in (0,1)$ in this estimate results in 
 \begin{align*}
\frac{\lambda}{2} \sup\limits_{ h \in (0,1)}  h^{-2 \beta }\| \tau_{j,h} \tilde{v}|_{\R^n \times \{0\}} \|^2_{\dot{H}^{\alpha}(\R^{n})}
&\leq C \left(\mu^{-1}\|\sqrt{|g|} f_N\|_{H^{-\alpha+\delta}(\R^n)}^2  +   \| x_{n+1}^{\frac{1-2\alpha}{2}} \nabla \tilde{v} \|^2_{L^2(\R^{n+1}_+)} \right)\\
& \quad + C \mu   \|\tilde{v}|_{\R^n \times \{0\}} \|_{H^{\alpha+\nu}(\R^n)}^2,
\end{align*}
for any $\mu>0$,
which is the desired estimate.
\end{proof}

As a consequence of Lemma \ref{lem:improve}, we obtain the desired higher regularity result by an iteration argument.

\begin{prop}
\label{prop:higher_reg}
Let $\alpha \in (0,1)$ and let $\tilde{v} \in \dot{H}^{1}(\R^{n+1}_+, x_{n+1}^{1-2\alpha})$ with $\tilde{v}|_{\R^n \times \{0\}}  \in H^{\alpha}(\R^{n})$ being a weak solution to \eqref{eq:mixed_full} with data $f_D \in H^{\alpha+\delta}(\R^n)$, $f_N\in H^{-\alpha + \delta}(\R^n)$ for some $\delta \in [0,1/2)$. Then, we have that for any $\epsilon \in (0,\delta)$ there is a constant $C=C(\epsilon, \delta,n)>0$ such that
\begin{align*}
\|\tilde{v}|_{\R^n \times \{0\}} \|_{H^{\alpha+\delta-\epsilon}(\R^n)} \leq C(\|f_D\|_{H^{\alpha+\delta}(\R^n)} + \|\sqrt{|g|} f_N\|_{H^{-\alpha+\delta}(\R^n)}  + \|\tilde{v}|_{\R^n \times \{0\}} \|_{H^{\alpha}(\R^n)}).
\end{align*}
\end{prop}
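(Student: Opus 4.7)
The strategy is to reduce to the case $f_D = 0$ by subtracting an extension of the Dirichlet datum, and then to bootstrap the tangential regularity by iterating Lemma \ref{lem:improve} in combination with the finite-difference characterization of fractional Sobolev spaces from Lemma \ref{lem:Sob_charact}(i). The normal regularity is already controlled through the bulk equation and the weighted energy space $X_\alpha$, so one only needs to chase the tangential index.

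For the reduction, let $F_D \in H^{\alpha+\delta}(\R^n)$ be any extension of $f_D$ satisfying $\|F_D\|_{H^{\alpha+\delta}(\R^n)} \le C \|f_D\|_{H^{\alpha+\delta}(\R^n_+)}$, and let $\tilde F_D \in X_\alpha$ be the variable-coefficient Caffarelli--Silvestre extension of $F_D$, obtained through a Lax--Milgram argument analogous to that in Lemma \ref{lem:well-posedness}. A Fourier plus perturbation argument as in Lemma \ref{lem:VE}(i) shows that the generalized weighted Neumann derivative $\lim_{x_{n+1}\to 0} x_{n+1}^{1-2\alpha}\p_{n+1}\tilde F_D$ lies in $H^{-\alpha+\delta}(\R^n)$ with norm bounded by $\|F_D\|_{H^{\alpha+\delta}(\R^n)}$. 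Then $\tilde w := \tilde v - \tilde F_D \in X_\alpha$ is a weak solution of \eqref{eq:mixed_full} with vanishing Dirichlet data on $\R^n_+$ and Neumann datum $\tilde f_N \in H^{-\alpha+\delta}(\R^n)$ whose norm is controlled by the right-hand side of the claimed estimate; correspondingly, $\|x_{n+1}^{(1-2\alpha)/2}\nabla\tilde w\|_{L^2(\R^{n+1}_+)}$ is controlled by the same quantity. It therefore suffices to prove the estimate for $\tilde w$ under the additional assumption $f_D = 0$, since $F_D = \tilde F_D|_{\R^n\times\{0\}}$ already lies in $H^{\alpha+\delta}(\R^n)$.

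For the bootstrap, define inductively $\nu_0 := 0$ and $\nu_{k+1} := (\delta+\nu_k)/2 - \epsilon_k$, where $\epsilon_k > 0$ will be fixed below. Writing $\eta_k := \delta - \nu_k$ yields the recursion $\eta_{k+1} = \eta_k/2 + \epsilon_k$, so with the choice $\epsilon_k := \epsilon \cdot 2^{-k-2}$ one has $\eta_k \to 0$ and there exists $K = K(\epsilon,\delta) \in \N$ such that $\nu_K > \delta - \epsilon$. The inductive hypothesis at level $k$ is that $\tilde w|_{\R^n\times\{0\}} \in H^{\alpha+\nu_k}(\R^n)$ with norm bounded by a constant (depending on $k$) times $\|\sqrt{|g|}\tilde f_N\|_{H^{-\alpha+\delta}(\R^n)} + \|\tilde w|_{\R^n\times\{0\}}\|_{H^\alpha(\R^n)}$. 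The base case is the assumption. For the inductive step, apply Lemma \ref{lem:improve} with $\nu = \nu_k$ and, say, $\mu = 1$, obtaining the difference-quotient bound $\sup_{h\in(0,1)} h^{-\beta_k}\|\tau_{j,h}\tilde w|_{\R^n\times\{0\}}\|_{\dot H^\alpha(\R^n)} \le K_k$ with $\beta_k = (\delta+\nu_k)/2$ and $K_k$ expressed in terms of the data and $\|\tilde w|_{\R^n\times\{0\}}\|_{H^{\alpha+\nu_k}}$. Lemma \ref{lem:Sob_charact}(i) with $\mu = \alpha$, $\beta = \beta_k$, and gain $\beta_k - \nu_{k+1} = \epsilon_k > 0$ then upgrades this to $\tilde w|_{\R^n\times\{0\}} \in \dot H^{\alpha+\nu_{k+1}}(\R^n)$, and combining with the controlled $H^\alpha(\R^n)$ norm (which handles the low frequencies) yields the full $H^{\alpha+\nu_{k+1}}$-bound. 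Iterating $K$ times gives $\tilde w|_{\R^n\times\{0\}} \in H^{\alpha+\nu_K}(\R^n) \subset H^{\alpha+\delta-\epsilon}(\R^n)$ with the required quantitative estimate; adding back $F_D$ recovers the bound for $\tilde v|_{\R^n\times\{0\}}$.

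The main obstacle is the bookkeeping of constants through the iteration, since the constant in Lemma \ref{lem:improve} depends on the ellipticity and $C^1$ norm of $\tilde g$ (and on $\mu$), while the constant in Lemma \ref{lem:Sob_charact}(i) degenerates as $\epsilon_k \to 0$. Because $K$ is finite and determined solely by $\epsilon$ and $\delta$, the cumulative constant remains finite, though it blows up as $\epsilon \to 0$, which matches the statement. A secondary technical point is verifying that the Neumann trace of $\tilde F_D$ on $\R^n_-$ extends to an element of $H^{-\alpha+\delta}(\R^n)$ admissible in Lemma \ref{lem:improve}: when $-\alpha+\delta \in (-1/2, 1/2)$ this is immediate from Theorem \ref{thm_Agranovich}(i), while for the remaining parameter range one uses that the weighted Neumann map for the constant-coefficient extension is realized by a classical pseudodifferential operator (compare Theorem \ref{thm_pseudodiff_op}), together with a perturbative argument absorbing the variable-coefficient correction.
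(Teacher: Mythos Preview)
Your proposal is correct and follows essentially the same route as the paper: reduce to $f_D=0$ by subtracting a Caffarelli--Silvestre extension of the Dirichlet datum, then bootstrap tangential regularity by iterating Lemma~\ref{lem:improve} together with the finite-difference characterization from Lemma~\ref{lem:Sob_charact}. The only cosmetic differences are in the bookkeeping of the iteration (the paper introduces an auxiliary $\tilde\delta\in(\delta,1/2)$ and tracks $\nu_k=\sum_{j=1}^{k-1}\tilde\delta\,2^{-j}$ with losses $\epsilon_k=\epsilon\,2^{-k}$, whereas you run the cleaner recursion $\eta_{k+1}=\eta_k/2+\epsilon_k$), and in the order of presentation of the reduction step.
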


\begin{proof}
Let us first assume that $f_D = 0$.
Using Lemma \ref{lem:improve} together with Lemma \ref{lem:Sob_charact}, we will iteratively improve the regularity of $\tilde{v}|_{\R^n \times \{0\}}$. As an auxiliary result, we recall that by energy estimates (see Proposition \ref{prop:Neumann} for the first bound)
\begin{align}
\label{eq:aux_Neumann}
\|x_{n+1}^{\frac{1-2\alpha}{2}} \nabla \tilde{v}\|_{L^2(\R^{n+1}_+)} \leq C \|\sqrt{|g|} f_N\|_{H^{-\alpha}(\R^n)} \leq C \|\sqrt{|g|}f_N\|_{H^{-\alpha+\delta}(\R^n)}.
\end{align}
We next choose $\tilde{\delta} \in (\delta, 1/2)$.
In order to infer the desired result, for $k \in \N$, $k\geq 1$, we iteratively apply Lemma \ref{lem:improve}. We begin by considering $\nu=\nu_1 := 0$, $\beta= \beta_1 := \frac{\tilde{\delta}}{2}$. In this case, Lemma \ref{lem:improve} yields that for any $\mu>0$
\begin{align*}
\sup\limits_{h \in (0,1)} h^{-\beta_1}\|\tau_{j,h} \tilde{v}|_{\R^n \times \{0\}} \|_{\dot{H}^{\alpha}(\R^n)} 
& \leq C(\mu^{-1}\|\sqrt{|g|} f_N\|_{H^{-\alpha+\delta}(\R^n)} + \|x_{n+1}^{\frac{1-2\alpha}{2}} \nabla \tilde{v}\|_{L^2(\R^{n+1}_+)})\\
& \quad  + C\mu \|\tilde{v}|_{\R^n \times \{0\}} \|_{H^{\alpha}(\R^n)}.
\end{align*}
Let $\epsilon \in (0,\delta)$. We next apply Lemma \ref{lem:Sob_charact} with $\epsilon_1 := \epsilon/2$ and add a corresponding $L^2$ contribution to both sides of the inequality, thereby producing the inhomogeneous norm on the left-hand side, to obtain that
\begin{align*}
\| \tilde{v}|_{\R^n \times \{0\}} \|_{H^{\alpha+\beta_1 - \epsilon_1}(\R^n)} 
& \leq C(\mu^{-1}\|\sqrt{|g|} f_N\|_{H^{-\alpha+\delta}(\R^n)} + \|x_{n+1}^{\frac{1-2\alpha}{2}} \nabla \tilde{v}\|_{L^2(\R^{n+1}_+)})\\
& \quad  + C(\mu + 1) \|\tilde{v}|_{\R^n \times \{0\}} \|_{H^{\alpha}(\R^n)}.
\end{align*}
Since all contributions on the right hand side of this estimate are finite, we may iterate this argument. Setting for $k\in \N$, $\nu:=\nu_k := \sum\limits_{j=1}^{k-1} \tilde{\delta} 2^{-j} $, $\beta:=\beta_k := \tilde{\delta}/2\left(1+ \sum\limits_{j=1}^{k-1}2^{-j} \right) - \frac{\epsilon}{2}\left( \sum\limits_{j=1}^{k-1}2^{-j} \right)$ and $\epsilon_k := \epsilon 2^{-k}$, we iteratively obtain that for $\mu>0$ arbitrary 
\begin{align*}
\|  \tilde{v}|_{\R^n \times \{0\}} \|_{H^{\alpha + \beta_k - \epsilon_k}(\R^{n})}
&\leq C_k \left(\mu^{-1}\|\sqrt{|g|} f_N\|_{H^{-\alpha+\delta}(\R^n)} +   \| x_{n+1}^{\frac{1-2\alpha}{2}} \nabla \tilde{v} \|_{L^2(\R^{n+1}_+)} \right)\\
& \quad  + C_{k}(\mu + k) \|\tilde{v}|_{\R^n \times \{0\}} \|_{H^{\alpha}(\R^n)}.
\end{align*}

Since $\tilde{\delta}>\delta$, and since $\beta_k \rightarrow \tilde{\delta}-\epsilon/2> \delta - \epsilon$, there exists $k_0 \in \N$ such that $\alpha + \beta_{k_0} - \epsilon_{k_0} > \alpha + \delta - \epsilon$ and, thus, for $\mu>0$,
\begin{align*}
\|  \tilde{v}|_{\R^n \times \{0\}} \|_{H^{\alpha + \delta - \epsilon}(\R^n)}
&\leq  \|  \tilde{v}|_{\R^n \times \{0\}} \|_{H^{\alpha + \beta_{k_0} - \epsilon_{k_0}}(\R^{n})}\\
&\leq C_{k_0} \left(\mu^{-1}\|\sqrt{|g|} f_N\|_{H^{-\alpha+\delta}(\R^n)} +   \| x_{n+1}^{\frac{1-2\alpha}{2}} \nabla \tilde{v} \|_{L^2(\R^{n+1}_+)} \right)\\
& \quad  + C_{k_0}(\mu + k_0) \|\tilde{v}|_{\R^n \times \{0\}} \|_{H^{\alpha}(\R^n)}.
\end{align*}
Fixing $\mu>0$ and recalling \eqref{eq:aux_Neumann} then concludes the proof in the case of $f_D = 0$.\\

For $f_D \neq 0$, we split $\tilde{v} = \tilde{v}_1 + \tilde{v}_2$ with $\tilde{v}_1 \in \dot{H}^{1}(\R^{n+1}_+, x_{n+1}^{1-2\alpha})$ being a solution to 
\begin{align}
\label{eq:Dirichlet_CS}
\begin{split}
\nabla \cdot x_{n+1}^{1-2\alpha} \tilde{g} \nabla \tilde{v}_1 & = 0 \mbox{ in } \R^{n+1}_+,\\
\tilde{v}_1 & = f_D \mbox{ in } \R^n.
\end{split}
\end{align}
This problem can be treated with standard difference quotient methods in the tangential direction (similarly as in Lemma \ref{lem:tangential_cont} but with the final estimates in $L^2$- instead of $L^{\infty}$-based spaces) relying on the finite difference characterization of Sobolev spaces. Such an argument leads to the conclusion that $v_1 \in H^{\alpha+\delta-\epsilon}(\R^n)$ for any $\epsilon \in (0,\delta)$. Inserting the function $v_1$ with its regularity properties into the resulting equation for $v_2$ gives rise to Neumann data $ f_N\in H^{-\alpha+\delta-\epsilon}(\R^n)$ for any $\epsilon \in (0,\delta)$. 
The function $\tilde{v}_2$ hence solves a problem in which $f_D =0$ on $\R^n_+ \times \{0\}$ but with non-trivial Neumann data on $\R^n_{-} \times \{0\}$. This concludes the proof.
\end{proof}

\end{appendix}

\section*{Acknowledgements}

K.K. is sincerely grateful to Andr\'as Vasy for many helpful discussions. K.K. would also like to thank Teemu Saksala for helpful discussions related to the paper \cite{HLOS_2018}. The research of K.K. is partially supported by the National Science Foundation (DMS 2408793). A.R. gratefully acknowledges support by the Deutsche Forschungsgemeinschaft (DFG, German Research Foundation) under Germany’s Excellence Strategy -- EXC-2047/1 -- 390685813. J.S. acknowledges that the IMB receives support from the EIPHI Graduate School (contract ANR-17-EURE-0002). The research of G.U. is partially supported by NSF, a Robert R. and Elaine F. Phelps Endowed Professorship at the University of Washington.

\end{document}